\documentclass[a4paper,10pt,reqno]{amsart}
\usepackage{array}
\usepackage{amsmath,amssymb,amsthm,color}
\usepackage[english]{babel}
\usepackage[T1]{fontenc}
\usepackage[utf8]{inputenc}
\usepackage{enumitem}
\usepackage[a4paper,left=2.6cm,right=2.6cm,top=3cm,bottom=3.5cm]{geometry}
\usepackage{mathtools}
\usepackage{stmaryrd}
\usepackage{relsize}
\usepackage{lmodern}
\usepackage{graphicx}
\usepackage{xcolor}
\usepackage{centernot}
\usepackage{tikz}
\usepackage{tikz-cd}
\usetikzlibrary{decorations.markings}
\usepackage{hyperref}
\usepackage[toc,page]{appendix}
\hypersetup{linktocpage,colorlinks,linkcolor={blue},citecolor={blue}}

\setlist{nosep}

\numberwithin{equation}{section}

\newtheorem{theorem}{Theorem}
\newtheorem{proposition}{Proposition}[section]
\newtheorem{lemma}[proposition]{Lemma}
\newtheorem{corollary}[proposition]{Corollary}

\theoremstyle{definition}
\newtheorem{definition}[proposition]{Definition}

\theoremstyle{remark}
\newtheorem{remark}[proposition]{Remark}

\newcommand{\shHom}{\mathcal{H}\!{om}}
\newcommand{\shEnd}{\mathcal{E}\!{nd}}

\title[Symplectic duality for the constant term]{Symplectic duality for the constant term of the geometric Eisenstein series}
\author{Igor Chaban}
\address{Department of Mathematics, Columbia University, New York, NY 10027, USA}
\email{ic2614@columbia.edu}
\date{}

\begin{document}
\begin{abstract}
We study the cohomology of a quasimap space that categorifies the constant term of the geometric Eisenstein series for the mirabolic parabolic subgroup of $GL$ over the function field $\mathbb{F}_q(C)$ of a smooth projective curve $C$. This cohomology carries a natural action of an algebra of correspondences whose commutative subalgebra is the ring of regular functions on the Coulomb branch, which here is the $A_{n}$-surface singularity. A choice of rank-one local system on $C$ induces an action of the \'etale fundamental group on the Coulomb branch; the scheme-theoretic fixed locus carries a natural vector bundle. Our main result identifies the cohomology of the quasimap space with the local cohomology of this vector bundle, for a generic range of parameters.
\end{abstract}
\maketitle
\setcounter{tocdepth}{1}
\tableofcontents

\section{Introduction}\label{sec:intro}

\subsection{The geometric Eisenstein series}\label{sub:intro-motivation}
Eisenstein series and related functions appear naturally in the study of automorphic forms over global fields. In the function-field case, they have a geometric counterpart given by sheaf-theoretic functors between moduli stacks of bundles. In this paper we study the constant term of a particular Eisenstein series for the general linear group. The cohomology of the corresponding sheaf carries an action of a Hecke-type modification algebra. Remarkably, we show that it is isomorphic to an algebra of differential operators on a singular supervariety. Moreover, the constant term is encoded in the local cohomology of its resolution.

Let \(C\) be a smooth projective curve over \(\mathbb F_q\), and let
\(G=GL_{n+1}\). We consider the mirabolic parabolic \(P\subset G\), whose
Levi quotient is \(GL_n\). Laumon's compactification of
\(\operatorname{Bun}_P\) gives the geometric Eisenstein series correspondence \cite{Laumon90, Kuznetsov1996TheLR, BravermanGaitsgory02}
\[
\operatorname{Bun}_{M}
\xleftarrow{}
\overline{\operatorname{Bun}}_{P}
\xrightarrow{}
\operatorname{Bun}_{G}.
\]
Fibers of the map to $\mathrm{Bun}_{G}$ are also referred to as twisted quasimaps \cite{CIOCANFONTANINE20103022, Ciocan-Fontanine:2011gqf, KimBumsig} from $C$ to the projective space $\mathbb{P}^{n}$.
The constant term~\cite{DrinfeldGaitsgory16} along the Borel is obtained by a similar correspondence
\[
\operatorname{Bun}_{G}
\xleftarrow{}
\operatorname{Bun}_{B}
\xrightarrow{}
\operatorname{Bun}_{T}.
\]
The value
of the constant term of the Eisenstein series is then given by the trace of Frobenius on the cohomology of a fiber of the map
\[
\overline{\operatorname{Bun}}_P
\times_{\operatorname{Bun}_G}
\operatorname{Bun}_B \rightarrow \mathrm{Bun}_{T}.
\]
The fiber over a \(T\)-bundle
\[
\vec{\mathcal M}=(\mathcal M_1,\ldots,\mathcal M_{n+1}) \in \mathrm{Bun}_{T}
\]
is the stack \(QM\vec N_{\vec{\mathcal M}}\)
of triples
\[
(\mathcal V^\bullet,\mathcal L,s),
\]
where
\[
0=\mathcal V^{(0)}\subset\mathcal V^{(1)}\subset\cdots
\subset\mathcal V^{(n+1)}=\mathcal V
\]
is a flag of vector bundles with
\[
\mathcal V^{(k)}/\mathcal V^{(k-1)}\cong\mathcal M_k,
\]
and
\[
s:\mathcal L\hookrightarrow \mathcal V
\]
is an injective sheaf map. We regard
\[
 H_c^*(QM\vec N_{\vec{\mathcal M}})
\]
as the categorified value of the constant term at \(\vec{\mathcal M}\).
The stack \(QM\vec N_{\vec{\mathcal{M}}}\) carries a Hecke-type modification correspondence: one replaces
the line bundle \(\mathcal L\) by \(\mathcal L(-x)\) at a point
\(x\in C\). Pullback and pushforward along this correspondence give \emph{modification/monopole operators} \cite{Finkelberg1997GlobalIC, FFNR11, Bullimore:2016hdc, KazOko23, Hilburn:2020aau}
\[
e\langle \alpha\rangle,\qquad f\langle \beta\rangle, \qquad \alpha, \, \beta \in H^{*}_{c}(C)
\]
on \( H_c^*(QM\vec N_{\vec{\mathcal M}})\) which generate an algebra \(\mathcal A\). We show that the subalgebra of
\(\mathcal A\) corresponding to $H^{2}(C)$ is the algebra of regular functions on the \(A_n\)-surface:
\[
\mathcal A^{(2)}
\cong
\overline{\mathbb Q}_l[x,y,\eta]/(xy-\eta^{n+1}).
\]
The degree-one operators form a spin module over this
surface, and the degree-zero operators act as first-order differential
operators on it.

\subsection{Main results}
\subsubsection{Symplectic dual description}
Let
\[
X_0^\vee=\operatorname{Spec}\mathcal{A}^{(2)}
\]
and let
\[
\rho:X^\vee\to X_0^\vee
\]
be its minimal resolution. $X_0^\vee$ is the \emph{Coulomb branch} of a 3-dimensional $\mathcal{N} = 4$ SUSY gauge theory \cite{Nakajima15, BFN:2016wma, BFN2017}, which is the \emph{symplectic/3d-mirror} dual to the \emph{Higgs branch} $X = T^{*}\mathbb{P}^{n}$ \cite{KamnitzerJoel22, webster20233dimensionalmirrorsymmetry}; one can view the zero section $\mathbb{P}^{n} \subset X$ as the target of the quasimaps. The surface $X^{\vee}$ carries an action of a two-dimensional torus, coming from the cohomological and degree gradings on $H^{*}_{c}(QM\vec{N})$. This action preserves a natural symplectic form on $X^{\vee}$, so the attracting set $L\subset X^{\vee}$ is Lagrangian.

A choice of Lagrangian subspace in $H_{c}^{1}(C)$ gives rise to a supercommutative locally free $\mathcal{O}_{X^\vee}$-algebra $\hat{\mathcal O}_{\mathrm{vir}}$.
The operators in $\mathcal{A}$ coming from $H^{0}_{c}(C)$ can be identified with differential operators on the twisted sheaf
\[
\hat{\mathcal O}_{\mathrm{vir}} \otimes \mathcal O(\vec{\mathcal M});
\]
here \(\mathcal O(\vec{\mathcal M})\) is a certain equivariant line bundle depending on the tuple \(\vec{\mathcal M}\), whose restrictions to the exceptional curves have degree \(\deg \mathcal{M}_k-\deg \mathcal{M}_{k+1}\). We give an interpretation of the cohomology of quasimaps as the local cohomology of $\hat{\mathcal O}_{\mathrm{vir}} \otimes \mathcal O(\vec{\mathcal M})$ with support in the Lagrangian subvariety $L$. To make the following isomorphism torus-equivariant, we introduce a suitable shift~\eqref{Hcent} of the cohomology of quasimaps, denoted by $\hat H^{*}_{c}(QM\vec{N})$.

\begin{theorem}[Dual description for cohomology of quasimaps, trivial character]\label{thm:main-intro}
There is a torus-equivariant isomorphism of \(\mathcal A \)-modules
\[
 \hat{H}_c^*(QM\vec N_{\vec{\mathcal M}})
\cong
H_L^*
\left(
X^\vee,
\hat{\mathcal O}_{\mathrm{vir}}\otimes
\mathcal O(\vec{\mathcal M})
\right).
\]
\end{theorem}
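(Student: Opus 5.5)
The plan is to compute both sides explicitly as bigraded vector spaces, build a comparison map, and then check that it intertwines the generators $e\langle\alpha\rangle$, $f\langle\beta\rangle$ of $\mathcal A$.

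\emph{Quasimap side.} I would stratify $QM\vec N_{\vec{\mathcal M}}$ by the degree $d=\deg\mathcal L$ and by the defect of $s$. Fixing the flag, the first index $k$ with $s$ landing in $\mathcal V^{(k)}$ but not in $\mathcal V^{(k-1)}$ gives a composite $\mathcal L\to\mathcal M_k$, a nonzero section of $\mathcal M_k\otimes\mathcal L^{-1}$, i.e.\ an effective divisor $D\in C^{(\deg\mathcal M_k-d)}$. The remaining data (extension classes and lifts) form affine or vector-bundle fibrations whose ranks are given by Riemann--Roch, so each stratum has compactly supported cohomology
\[
H^*_c(C^{(m)})\otimes\overline{\mathbb Q}_l[\text{shift}],
\qquad
H^*(C^{(m)})\cong\operatorname{Sym}^m H^*(C)
\]
by Macdonald. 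For $\deg\mathcal M_k$ in the generic range, purity and parity arguments should make the stratification spectral sequence degenerate. The result is that $\hat H^*_c(QM\vec N)$ becomes a direct sum over $k=1,\dots,n+1$ and $d\le \deg\mathcal M_k$ of $\operatorname{Sym}^\bullet$ of $H^*(C)$, truncated and shifted as in~\eqref{Hcent}.

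\emph{Coulomb side.} $X^\vee$ is the $A_n$ resolution, with chain of exceptional curves $E_1,\dots,E_n$. The attracting set $L$ for the chosen torus is the union of the $E_i$ together with two attracting lines at the ends. Its components are indexed by the $n+1$ torus-fixed points $p_1,\dots,p_{n+1}$. I would filter $L$ by attracting cells $L_k\cong\mathbb A^1$ and use the excision long exact sequences
\[
H^*_{L_{\le k-1}}\to H^*_{L_{\le k}}\to H^*_{L_k}(U_k,-).
\]
On the affine chart $U_k\cong\mathbb A^2$ around $p_k$, the local cohomology of the locally free sheaf $\hat{\mathcal O}_{\mathrm{vir}}\otimes\mathcal O(\vec{\mathcal M})$ is $\mathcal O(\mathbb A^1)\otimes \mathcal O(\mathbb A^1)^{\vee}$ (the polynomial functions along $L_k$ and the inverse powers in the normal direction, i.e.\ $H^1_{\{y=0\}}$), tensored with the exterior algebra coming from the Lagrangian in $H^1(C)$. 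The twist by $\mathcal O(\vec{\mathcal M})$ shifts torus weights by $\deg\mathcal M_k-\deg\mathcal M_{k+1}$. Matching torus characters, the normal-direction inverse powers correspond to $d$, and the polynomial direction together with the exterior algebra corresponds to $\operatorname{Sym}^\bullet H^*(C)$. The genericity assumption should force all connecting maps to vanish on both sides, giving a graded isomorphism cell by cell.

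\emph{Module structure.} The key step is to upgrade this to an $\mathcal A$-isomorphism. Since $H_L^*$ is a module over $\mathcal O(X_0^\vee)=\mathcal A^{(2)}$ and over the spin and differential operators described earlier, I would pick a generating vector on each side, namely the class of the open stratum of maximal $d$ against the lowest local cohomology class at $p_1$. I would then show both sides are cyclic, or at least generated by highest-weight vectors, for the $e\langle\alpha\rangle$, $f\langle\beta\rangle$ action, using the earlier computation of the relations of $\mathcal A^{(2)}$, $\mathcal A^{(1)}$ and $\mathcal A^{(0)}$. Equivariance plus matching characters would then promote a module map sending generator to generator into an isomorphism.

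\emph{Main obstacle.} I expect the main obstacle to be verifying that the modification operators act on the stratified pieces exactly as $x$, $y$, $\eta$ act on local cohomology, including across the gluing between cells. There the modification $\mathcal L\mapsto\mathcal L(-x)$ can move a quasimap between strata $k$ and $k+1$, which should correspond to the transition functions of $\mathcal O(\vec{\mathcal M})$ on $E_k$. I would handle this by localization to torus-fixed points, comparing the weights of the operators at each $p_k$ on both sides.
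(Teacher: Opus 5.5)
\textbf{Verdict: the proposal has a genuine gap in the module-structure step.} Your stratification by the first index $k$ with $s(\mathcal L)\subset\mathcal V^{(k)}$ is exactly the paper's attractor decomposition into the $\mathrm{Attr}(k)$. Your cell-by-cell identification of graded pieces is essentially Proposition~\ref{pr:graded-agree}.

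Two side remarks first. No genericity is needed here: the connecting maps vanish for every $\vec{\mathcal M}$ by purity, which comes from smoothness of the attractors (Proposition~\ref{thm:attr-smooth}); parity is unavailable since $H^1(C)\neq 0$, and on the dual side $H^*_{L_k}$ sits in degree one. Also, $L=\{y=0\}$ is the exceptional chain plus \emph{one} noncompact line, not two.

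\emph{Cyclicity fails.} Neither side is cyclic, or even finitely generated over $\mathcal A$, once $n\ge 1$. The piece $\mathrm{gr}_1=\hat H^*_c(QM\vec N(1))[[-2\,\mathrm{rk}\,N(1)]]$ is a quotient $\mathcal A$-module of $M$, and $\mathcal A$ acts on it through $\varphi^{(1)}$: $x\mapsto x_1^{n+1}y_1^{n}$, $y\mapsto y_1$, $\eta\mapsto x_1y_1$, $f\langle 1\rangle\mapsto f^{(1)}\langle 1\rangle$, $e\langle 1\rangle\mapsto \eta^{n}e^{(1)}\langle 1\rangle+\cdots$, and similarly for the odd generators. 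In the Fock description \eqref{Akactcoord}, none of these raises the $\mathbf u$-degree. Dually, the log vector fields in $\mathcal A$ are tangent to $E\supset L_1$ and cannot increase the pole order along $E$. Yet $\mathrm{gr}_1$ contains $\mathbf u^b$ for every $b$, so no finite set generates it.

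Your candidate generator is also misplaced. The open stratum is $\mathrm{Attr}(n+1)$, whose cohomology is the \emph{bottom} $\mathcal A$-submodule of the filtration; it matches $L_{n+1}$, not $p_1$. Finally, even for cyclic modules, equal characters do not give a module map $v\mapsto w$: you need $\mathrm{Ann}(v)\subset\mathrm{Ann}(w)$. With infinitely many generators, the relations among them are exactly the extension data you have not computed.

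\emph{The obstacle is misidentified.} The modification $\iota$ never moves a quasimap between strata. Since $\mathcal V/\mathcal V^{(k)}$ is torsion-free, $s$ lands in $\mathcal V^{(k)}$ if and only if $s\otimes 1$ does, so $\iota$ preserves every $\mathrm{Attr}(k)$. This is why the filtrations are by $\mathcal A$-submodules.

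Localization at fixed points recovers only the maps $\varphi^{(k)}\colon\mathcal A\to\mathcal A^{(k)}$, i.e.\ the action on each graded piece. The paper gets these by excess intersection, \eqref{pushiotainchart} and \eqref{pulliotainchart}, and they agree with pullback along $\pi_k$. Fixed-point data cannot see the extension classes, and these are nontrivial. For example, take $n=1$, $g=0$, $\mathcal M_1\cong\mathcal M_2$. Up to a character twist, $H^1_L(X^\vee,\mathcal O)=R[y^{-1}]/R$, on which $x$ acts injectively. But $x$ is locally nilpotent on the quotient $H^1_{L_1}$, so the filtration does not split even over $R$.

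\emph{The missing step} is the paper's induction in Proposition~\ref{pr:comparison-triv}. One compares $\hat H^*_c(\bigcup_{l\le k}\mathrm{Attr}(l))$ and $H^*_{L^k}$ as $\mathcal A^{(1,k)}$-modules. The submodules are matched by \eqref{isok} and the quotients by induction. The extension classes are then matched using that the submodule is free over $\overline{\mathbb Q}_l[x_k]$, following Kazhdan--Okounkov, Section 6.4.3.
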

As a consequence, the constant term can be expressed as the equivariant Euler characteristic of $\hat{\mathcal O}_{\mathrm{vir}}\otimes\mathcal O(\vec{\mathcal M})$ on $X^{\vee}$. This generalizes the analysis of \cite{KazOko23}, where the case of $G = GL(2)$ is considered.
The proof of Theorem~\ref{thm:main-intro} proceeds in two steps. First, we establish filtrations on both sides and explicitly compute the associated graded; we then compare the extensions. The filtration on the right-hand side comes from the decomposition of $L$ into the attractors of the $n+1$ fixed points. The filtration on $\hat{H}^{*}_{c}(QM\vec{N})$ is provided by a Bia{\l}ynicki-Birula decomposition of $QM\vec{N}$ into attractor orbits $\mathrm{Attr}(k)$, $k = 1, \ldots, n+1$. Importantly, to set up the filtration, we use the purity of the cohomology, which follows from the smoothness of the attractors. Note that the entire stack $QM\vec{N}$ need not be smooth.

\subsubsection{Nontrivial character}
The construction generalizes to cohomology of quasimaps with coefficients in nontrivial local systems. We consider rank-one local systems $\chi$ on $QM\vec{N}_{\vec{\mathcal{M}}}$ pulled back from $\mathrm{Pic}(C)$ by the natural map~\eqref{Ns}. The local systems on $\mathrm{Pic}(C)$ are prescribed by characters of $\mathrm{Pic}(C)(\mathbb{F}_{q})$ and correspond, by geometric class field theory, to local systems $\chi$ on the curve. On the dual side this gives an action of the geometric fundamental group $\pi_{1}^{g}$ on the $A_{n}$-surface and its resolution. The scheme-theoretic fixed locus of the singular surface is the nonreduced thick point
\[
\operatorname{Spec}\overline{\mathbb Q}_l[\eta]/(\eta^{n+1}),
\]
whereas the fixed locus $(X^{\vee})^{\pi_{1}^{g}}$ of the resolved surface consists of $n+1$ reduced points. Similarly to the trivial-character case, one can define a sheaf $\hat{\mathcal O}_{\mathrm{vir}, \chi}$ on the fixed locus, a centered cohomology $\hat H^{*}_{c}(QM\vec{N}, \chi)$, and an algebra $\mathcal A_{\chi}$ of modification operators acting on the cohomology.

Let $\mathcal{K}_{C}$ denote the canonical bundle of $C$, and write $\chi_C(\mathcal F)= h^0(C,\mathcal F)-h^1(C,\mathcal F)$ for the Euler characteristic of a coherent sheaf $\mathcal F$ on $C$.

\begin{theorem}[Dual description for cohomology of quasimaps, nontrivial character]\label{thm:main}
There is an isomorphism of $\mathcal{A}_{\chi}$-modules
\begin{align*}
    \hat{H}^{*}_{c}(QM\vec{N}, \chi) \cong H^{*}_{L}((X^{\vee})^{\pi_{1}^{g}}, \hat{\mathcal{O}}_{\mathrm{vir}}\otimes \mathcal{O}(\vec{\mathcal{M}})),
\end{align*}
unless $\chi_{0}^{n+1} \neq 1$ and there exist indices
$1 \leq k < s \leq n+1$ with
\begin{align}\label{exceptionalcond}
\begin{cases}
    \mathcal{M}_{k} \cong \mathcal{M}_{s},\\
    \chi_C\left( (\mathcal{V}^{(s)}/\mathcal{V}^{(k)}) \otimes \mathcal{M}_{k}^{-1} \otimes \mathcal{K}_{C} \right) = 1,\\
    \mathrm{Hom}(\mathcal{M}_{l}, \mathcal{M}_{k}) = 0, \; l < k,\\
    \mathrm{Ext}^{1}(\mathcal{M}_{l}, \mathcal{M}_{s}) = 0, \; k < l < s.
\end{cases}
\end{align}
\end{theorem}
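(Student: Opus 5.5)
We follow the same two-step strategy as for Theorem~\ref{thm:main-intro}: filter both sides, identify the associated graded pieces, then compare the extensions. The difference is that with a nontrivial character the answer is governed by which Hecke/character data survive on the fixed locus.

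\textbf{Right-hand side.} The fixed locus $(X^\vee)^{\pi_1^g}$ consists of $n+1$ reduced points $p_1,\dots,p_{n+1}$, the torus-fixed points of the resolution. Each $p_k$ lies in $L$. Hence the local cohomology $H^*_L((X^\vee)^{\pi_1^g},\hat{\mathcal O}_{\mathrm{vir}}\otimes\mathcal O(\vec{\mathcal M}))$ is simply the direct sum of the fibres $\hat{\mathcal O}_{\mathrm{vir},\chi}|_{p_k}\otimes\mathcal O(\vec{\mathcal M})|_{p_k}$. Each such fibre is an exterior algebra on the $\chi$-twisted part of $H^1_c(C)$, with a torus weight prescribed by the degrees $\deg\mathcal M_k-\deg\mathcal M_{k+1}$. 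The $\mathcal A_\chi$-action on this sum is explicit: degree-two operators act through the evaluation at $p_k$ of $x,y,\eta$, and the odd and degree-zero operators act by Clifford and Euler-type operators.

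\textbf{Left-hand side.} Use the Bia{\l}ynicki-Birula decomposition $QM\vec N=\bigsqcup_k \mathrm{Attr}(k)$ together with purity of $H^*_c(\mathrm{Attr}(k),\chi)$, which follows from smoothness of the attractors. This gives a filtration whose graded pieces are $H^*_c(\mathrm{Attr}(k),\chi)$. Each attractor fibres over a stratum built from $\mathrm{Pic}$-type data twisted by $\chi$. For a nontrivial rank-one $\chi$ on an abelian-variety-type fibre, cohomology vanishes except where the relevant map to $\mathrm{Pic}(C)$ degenerates. The contribution of $\mathrm{Attr}(k)$ therefore collapses to a single exterior-algebra piece matching the fibre at $p_k$; this is the analogue of the fixed locus consisting of reduced points. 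We first compute $H^*_c(\mathrm{Attr}(k),\chi)$ by Leray along the projection to the fixed component, and then apply the centering shift~\eqref{Hcent}.

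\textbf{Extensions.} Once the graded pieces are matched, it remains to show that the filtration on $\hat H^*_c(QM\vec N,\chi)$ splits $\mathcal A_\chi$-equivariantly, as it does on the right-hand side. We expect this to be the main obstacle. The connecting maps between $\mathrm{Attr}(k)$ and $\mathrm{Attr}(s)$ for $k<s$ are given by modification operators $e\langle\alpha\rangle,f\langle\beta\rangle$. Such a map can be nonzero only when the attractors meet with matching character twist, which forces $\mathcal M_k\cong\mathcal M_s$ so that $\chi$ restricts trivially on the gluing locus. The remaining conditions in~\eqref{exceptionalcond} control whether the relevant boundary $\mathrm{Ext}$ or $\mathrm{Hom}$ spaces are nonzero. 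The Euler characteristic $\chi_C((\mathcal V^{(s)}/\mathcal V^{(k)})\otimes\mathcal M_k^{-1}\otimes\mathcal K_C)=1$ measures the dimension drop that allows a nonzero differential. The vanishing of $\mathrm{Hom}(\mathcal M_l,\mathcal M_k)$ for $l<k$ and of $\mathrm{Ext}^1(\mathcal M_l,\mathcal M_s)$ for $k<l<s$ prevents intermediate strata from absorbing it. The condition $\chi_0^{n+1}\neq1$ excludes the case where $\eta^{n+1}$ acts invertibly in a way that trivializes the extension.

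Outside these exceptional configurations we will show, by weight and purity arguments, that every connecting map is zero, so the filtration splits. Then both sides are direct sums of the same $\mathcal A_\chi$-modules, giving the stated isomorphism. To establish vanishing we first try torus-weight comparison: the weights of the pieces at $p_k$ and $p_s$ differ unless $\mathcal M_k\cong\mathcal M_s$. In the remaining degenerate cases we fall back on the Ext computations above.
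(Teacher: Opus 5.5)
Your overall architecture is the right one: filter both sides, match the graded pieces over the $n+1$ reduced fixed points, then analyse splitting. The graded comparison is essentially fine, although the clean route is $H^*_c(QM\vec N(k),\chi)\cong\bigoplus_m H^*_c(S^mC,(\chi^{n+1})^{(m)})\cong\bigwedge^\bullet H^1(C,\chi^{n+1})$ with its Weyl--Clifford action, rather than a degeneration heuristic.

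\textbf{The gap is the extension step.} This step is the entire content of the exceptional conditions, and you never identify the obstruction. Purity kills the connecting maps of the long exact sequences for \emph{every} $\vec{\mathcal M}$, exceptional or not, so it cannot separate the cases. What must be decided is whether the resulting short exact sequences split as $\mathcal A_\chi$-modules, and the relevant cross-term is not a modification operator.

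The missing idea is that the only obstruction is multiplication by $\eta$. On $H^*_c(\bigcup_{l=k}^{s}\mathrm{Attr}(l),\chi)$ one has $[e^{(k,s)}\langle\gamma_i^{\chi}\rangle,f^{(k,s)}\langle\gamma_j^{\chi^{-1}}\rangle]=\delta_{ij}(s-k+1)\eta^{s-k}$. The $f$'s freely generate $\mathrm{gr}_k$ and kill $\mathrm{gr}_s$, while the $e$'s do the reverse. So if $\eta=0$, these Clifford generators anticommute and force every extension $0\to\mathrm{gr}_s\to M_{k,s}\to\mathrm{gr}_k\to0$ to split.

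You then have to prove $\eta=0$ off the exceptional locus. Since $\eta$ is central in $\mathcal A_\chi$, a nonzero $\eta$ yields a nonzero module map $\mathrm{gr}_k\to\mathrm{gr}_s$. This map must send the one-dimensional generating line of $\mathrm{gr}_k$ onto the cogenerating line of $\mathrm{gr}_s$. Only at this point does a $\mathrm{Fr}\times A^\vee$-weight comparison bite, and it gives exactly $\deg\mathcal M_k=\deg\mathcal M_s$ and $\chi_C((\mathcal V^{(s)}/\mathcal V^{(k)})\otimes\mathcal M_k^{-1}\otimes\mathcal K_C)=1$.

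\textbf{Weights cannot give the isomorphism condition.} Weights see only degrees, so your claim that they differ unless $\mathcal M_k\cong\mathcal M_s$ is false. The isomorphism needs a Borel--Moore support argument. The dual classes are supported on the linear system $|\mathcal K_C|\subset S^{2g-2}C$, i.e.\ over $\mathcal L=\mathcal M_k\otimes\mathcal K_C^{-1}$, respectively $\mathcal M_s\otimes\mathcal K_C^{-1}$. Cap product with $\eta$ preserves the fibres of $\pi_{\mathcal L}$.

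\textbf{The $\mathrm{Hom}/\mathrm{Ext}^1$ conditions need an actual argument.} Saying that intermediate strata ``absorb'' the differential is not enough. After rigidifying, a nonzero $\eta$ forces $\pi_{\mathcal V^\bullet}(\mathrm{Attr}'(k))=\pi_{\mathcal V^\bullet}(\mathrm{Attr}'(s))$ in the space of iterated extensions. This follows from the support of $\eta\cap[\overline{\mathrm{Attr}'(s)}]$ together with the dimension count $\dim\mathrm{Attr}'(s)=\dim\mathrm{Attr}'(k)+1$. Now suppose some $h^1(\mathcal M_l\otimes\mathcal M_k^{-1}\otimes\mathcal K_C)$ with $l<k$, or some $h^0(\mathcal M_l\otimes\mathcal M_k^{-1}\otimes\mathcal K_C)$ with $k<l<s$, were nonzero. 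Then a generic extension class gives an injective boundary map out of $H^0(\mathcal K_C)$, hence a flag in the first image but not the second. By Serre duality these vanishings are exactly $\mathrm{Hom}(\mathcal M_l,\mathcal M_k)=0$ and $\mathrm{Ext}^1(\mathcal M_l,\mathcal M_s)=0$.

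\textbf{Smaller errors.} Your reading of the hypothesis $\chi_0^{n+1}\neq1$ is wrong. In that case $\eta^{n+1}=0$, so $\eta$ is nilpotent and never invertible. The hypothesis only records that for $\chi_0^{n+1}=1$ the isomorphism holds unconditionally by the trivial-character comparison. Also, there are no degree-zero or degree-two descendants when $\chi_0^{n+1}\neq1$, since $H^*_c(C,\chi^{n+1})$ is then concentrated in degree one.
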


In any case, the associated graded of the Bia{\l}ynicki-Birula filtration on $\hat H^{*}_{c}(QM\vec N,\chi)$ matches $H^{*}_{L}((X^{\vee})^{\pi_{1}^{g}}, \hat{\mathcal O}_{\mathrm{vir},\chi}\otimes \mathcal O(\vec{\mathcal M}))$; the conditions~\eqref{exceptionalcond} are exactly the obstructions to splitting this filtration. 

\subsection{Organization}

Section~\ref{sec:quasimaps} defines the derived stack \(QM\vec N_{\vec{\mathcal M}}\) and computes its tangent complex. We then introduce the cohomology of $QM\vec{N}$ with coefficients in a local system.

Section~\ref{sec:mod-operators} constructs the modification
correspondence and the operators \(e\langle\alpha\rangle\),
\(f\langle\beta\rangle\). We prove the commutator and Casimir
relations satisfied in the algebra \(\mathcal A_\chi\) for an arbitrary character $\chi$.

Section~\ref{sec:geom-interp} interprets \(\mathcal A\) geometrically:
the degree-two part is the algebra of regular functions on the \(A_n\)-surface, the degree-one part is
a Clifford algebra, and the degree-zero part is described by first-order
differential operators.

Section~\ref{sec:resolution} constructs the resolution \(X^\vee\) by gluing its
affine charts. We describe the chart algebras in terms of the fixed
components of the quasimap stack.

Section~\ref{sec:cohomology-Xvee} computes the local cohomology of
\(X^\vee\) with coefficients in
\(\hat{\mathcal O}_{\mathrm{vir}}\otimes\mathcal O(\vec{\mathcal M})\).

Section~\ref{sec:cohomology-QMN} compares this local cohomology with
\( H_c^*(QM\vec N_{\vec{\mathcal M}},\chi)\) for both the trivial and nontrivial cases. We first match the associated graded
pieces and then analyze the extension problem.

\subsection*{Acknowledgements}

I am grateful to Andrei Okounkov for suggesting this problem and for many illuminating discussions. I also thank Tommaso Botta, Ivan Danilenko, Andr\'es N\'u\~nez, Che Shen, and Irina Sizova for exciting conversations. I also wish to warmly thank Maria Ib\'a\~nez.

\section{The stack of quasimaps}\label{sec:quasimaps}

In this section we introduce the moduli stack $QM\vec{N}$ of quasimaps. We define its derived structure and compute the tangent complex, then introduce a local system and define a shift for its cohomology.

\subsection{Notation and conventions}\label{sub:setup}

Let $G = GL(n+1)$ and let $T \subset G$ be the diagonal torus. Fix a
geometrically smooth projective curve $C$ over $\mathbb{F}_{q}$ of
genus~$g$, an $\mathbb{F}_{q}$-point $p\in C$, and a tuple of line
bundles
\[
\vec{\mathcal{M}} = (\mathcal{M}_{1}, \ldots, \mathcal{M}_{n+1})
\in \mathrm{Bun}_{T}(C),
\qquad c_{1}(\mathcal{M}_{k}) = \mu_k, \qquad \deg(\mathcal{M}_{k}) = m_{k}.
\]
The point $p$ gives an identification
$\mathrm{Pic}_{d}(C) \cong \mathrm{Pic}_{0}(C)$, used throughout. Fix an embedding $\mathbb{F}_{q} \subset \overline{\mathbb{F}}_{q}$,
and write $C_{\overline{\mathbb{F}}_{q}}$ for the corresponding base change
and $\bar{p}$ for the induced geometric point. For a scheme or stack
$X$ over $\mathbb{F}_{q}$ we write $H^{*}(X)$ and $H^{*}_{c}(X)$
(with coefficients in an understood local system) for the $l$-adic
\'etale cohomology
$H^{*}_{\mathrm{et}}(X_{\overline{\mathbb{F}}_{q}})$ and compactly supported
cohomology $H^{*}_{\mathrm{et},c}(X_{\overline{\mathbb{F}}_{q}})$ respectively. We fix a square root of $q$, $q^{1/2} \in \overline{\mathbb{Q}}_{l}$, and use it to define the Tate twist $\boldsymbol{(}\frac{1}{2}\boldsymbol{)}$ on Frobenius modules. 

Let $\mathbf{p} = p_{*}1 \in H^{2}(C)$ denote the class of the point $p$. Fix a basis
\begin{align*}
    \gamma_{1}, \ldots, \gamma_{2g} \in H^{1}_{c}(C) = H^{1}_{c}(C, \overline{\mathbb{Q}}_{l}),
\end{align*}
satisfying
\begin{align*}
    \gamma_{g + i} = \gamma_{i}^{\vee}, \qquad i = 1, \ldots, g,
\end{align*}
where the dual basis $\{ \gamma_{i}^{\vee} \}_{i = 1}^{2g}$ to
$\{ \gamma_{i} \}_{i = 1}^{2g}$ is defined by
\begin{align*}
    \gamma_{i} \cup \gamma_{j}^{\vee} = \delta_{ij}\mathbf{p}.
\end{align*}
For a rank~$1$ local system $\chi_{0}$ on $C$ we also fix a basis
\begin{align*}
    \gamma_{1}^{\chi}, \ldots, \gamma_{2g-2}^{\chi}
    \in H^{1}_{c}(C, \chi^{n+1}_{0})
\end{align*}
and denote the dual basis by
\begin{align*}
     \gamma_{1}^{\chi^{-1}}, \ldots, \gamma_{2g-2}^{\chi^{-1}}
     \in H^{1}_{c}(C, \chi^{-n-1}_{0}).
\end{align*}
We write
\begin{equation*}
    \Theta = \sum_{i = 1}^{g} \gamma_{i}\, \gamma_{i}^{\vee}
    \in H^{2}(\mathrm{Pic}_{0}(C))
\end{equation*}
for the theta divisor, and
\begin{align}
    \tau = c_{1}(TC) \in H^{2}(C)
\end{align}
for the first Chern class of the tangent bundle of the curve.

Consider the projection to the Picard variety
\begin{equation}\label{piLmap}
   \pi_{\mathcal{L}}\colon QM\vec{N}_{d} \to \mathrm{Pic}_{-d}(C) \cong \mathrm{Pic}_{0}(C),
   \qquad (\mathcal{V}^{\bullet}, \mathcal{L}, s) \mapsto \mathcal{L}.
\end{equation}
With a slight abuse of notation, we use the same letter $\mathcal{L}$
for the universal line bundle (the pullback of the universal line bundle from
$C \times \mathrm{Pic}(C)$ under $\mathrm{id}_{C} \times \pi_{\mathcal{L}}$),
and $\mathcal{V}$ for the universal vector bundle on
$C \times QM\vec{N}$.

\begin{lemma}\label{lem:lambda}
The first Chern class of $\mathcal{L}$ is given by
\begin{equation*}
    \lambda \overset{\mathrm{def}}{=} c_{1}(\mathcal{L})
    = \deg \mathcal{L} \, \mathbf{p} \otimes 1
    + \sum_{i = 1}^{2g} \gamma_{i} \otimes \gamma_{i}^{\vee},
\end{equation*}
where $\gamma_{i}^{\vee}$ is identified with its image under
$H^{1}(C) \cong H^{1}(\mathrm{Pic}_{0}(C))
\xrightarrow{\pi_{\mathcal{L}}^{*}} H^{1}(QM\vec{N})$.
\end{lemma}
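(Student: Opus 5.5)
Since $\mathcal{L}$ on $C\times QM\vec N$ is pulled back along $\mathrm{id}_C\times\pi_{\mathcal L}$ from the universal line bundle $\mathcal P$ on $C\times\mathrm{Pic}_{-d}(C)$, it suffices to compute $c_1(\mathcal P)$ and pull it back. Identifying $\mathrm{Pic}_{-d}(C)\cong\mathrm{Pic}_0(C)$ via the point $p$, the universal bundle becomes $\mathcal P_0\otimes\mathcal O_C(-dp)$. Here $\mathcal P_0$ is the Poincaré bundle on $C\times\mathrm{Pic}_0(C)$, normalized to be trivial on $\{p\}\times\mathrm{Pic}_0(C)$. This normalization is harmless: changing it multiplies $\mathcal L$ by a line bundle pulled back from $\mathrm{Pic}$, which alters $c_1$ only in the $1\otimes H^2(\mathrm{Pic})$ component. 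That component is fixed by the normalization, and we choose the normalization matching the paper's convention.

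By the Künneth formula,
\[
H^2(C\times \mathrm{Pic}_0)=H^2(C)\otimes H^0\oplus H^1(C)\otimes H^1(\mathrm{Pic}_0)\oplus H^0(C)\otimes H^2(\mathrm{Pic}_0),
\]
and we compute the three components of $c_1(\mathcal P)$ in turn.

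First, the $H^2(C)\otimes H^0$ component is obtained by restricting to $C\times\{\xi\}$. There we get a line bundle of degree $\deg\mathcal L$, so this component is $\deg\mathcal L\,\mathbf p\otimes 1$.

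Second, the $H^0(C)\otimes H^2$ component is obtained by restricting to $\{p\}\times\mathrm{Pic}_0$. By the normalization this restriction is trivial, so the component vanishes.

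Third, the mixed component is a class in $H^1(C)\otimes H^1(\mathrm{Pic}_0)$, which we view as a map $H^1(C)^\vee\to H^1(\mathrm{Pic}_0)$ via the pairing with $\gamma_i$. We must show that this map is the standard identification $H^1(C)\cong H^1(\mathrm{Pic}_0(C))$, namely the one used in the statement, so that the component equals $\sum_i\gamma_i\otimes\gamma_i^\vee$.

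To do this, pull $\mathcal P_0$ back along $C\times C\to C\times\mathrm{Pic}_0$, $(x,y)\mapsto(x,\mathcal O(y-p))$. The pullback is $\mathcal O(\Delta)\otimes\mathcal O(-p\times C)\otimes\mathcal O(-C\times p)$. The Künneth decomposition of the diagonal class is
\[
[\Delta]=\mathbf p\otimes1+1\otimes\mathbf p+\sum_i\pm\gamma_i\otimes\gamma_i^\vee.
\]
Hence the mixed part of the pullback is $\sum\pm\gamma_i\otimes\gamma_i^\vee$ on $C\times C$. Since the Abel--Jacobi map induces an isomorphism on $H^1$, and that isomorphism is by definition the identification used in the statement, the mixed component of $c_1(\mathcal P_0)$ is $\sum\gamma_i\otimes\gamma_i^\vee$.

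The main obstacle is sign bookkeeping. We must check the sign in the Künneth decomposition of $[\Delta]$, using the ordering convention $\gamma_i\cup\gamma_j^\vee=\delta_{ij}\mathbf p$ and the graded sign in $\gamma_i\otimes\gamma_i^\vee$. We must also check the sign of the Abel--Jacobi map: $y\mapsto\mathcal O(y-p)$ versus the dual convention, which matters because $\mathcal L$ has degree $-d$. Any leftover global sign can be absorbed into the choice of the identification $H^1(C)\cong H^1(\mathrm{Pic}_0)$, which the lemma leaves implicit.

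Finally, pulling back along $\pi_{\mathcal L}$ yields the formula for $\lambda$.
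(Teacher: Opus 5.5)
The paper states this lemma without proof, as the standard Künneth computation for the normalized Poincaré bundle. Your argument is exactly that computation, and it is correct. The two pure components are read off by restricting to $C\times\{\xi\}$ and to $\{p\}\times\mathrm{Pic}_0(C)$. The mixed component is detected faithfully by pulling back along $\mathrm{id}_C\times AJ$, because $\mathrm{id}\otimes AJ^*$ is injective on $H^1(C)\otimes H^1(\mathrm{Pic}_0(C))$. Your seesaw identification of the pullback with $\mathcal{O}(\pm(\Delta-p\times C-C\times p))$ is also right.

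The sign you left open is in fact fixed by the paper's own conventions, so it does not need to be ``absorbed''. The appendix uses $\Delta_{C*}(1)=\mathbf{p}\otimes 1+1\otimes\mathbf{p}-\sum_{i}\gamma_i\otimes\gamma_i^\vee$, with Koszul signs and $\gamma_i\cup\gamma_j^\vee=\delta_{ij}\mathbf{p}$. Hence with $y\mapsto\mathcal{O}(y-p)$ your computation gives mixed component $-\sum_i\gamma_i\otimes\gamma_i^\vee$. The stated sign therefore holds for the identification $H^1(C)\cong H^1(\mathrm{Pic}_0(C))$ induced by $y\mapsto\mathcal{O}(p-y)$, which is your ``dual convention''; inversion on the Jacobian acts by $-1$ on $H^1$.

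This is also exactly the identification under which Lemma~\ref{lem:pullbacks} gives $\iota^*(1\otimes\alpha)=\alpha\otimes 1+1\otimes\alpha$, since $\iota$ replaces $\mathcal{L}$ by $\mathcal{L}(-x)$. So the convention is consistent with how the lemma is used later in the paper.
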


\subsection{The stack $QM\vec{N}$}\label{sub:QMN}

\begin{definition}\label{defn:quasimap}
For $\vec{\mathcal{M}} \in \mathrm{Pic}(C)^{n+1}$, an \emph{$\vec{\mathcal{M}}$-twisted quasimap} to $\mathbb{P}^{n}$ is a triple
$(\mathcal{V}^{\bullet}, \mathcal{L}, s)$ consisting of a flag of vector bundles on $C$
\begin{align*}
    \mathcal{V}^{\bullet}
    = \left( 0 = \mathcal{V}^{(0)} \subset \mathcal{V}^{(1)}
    \subset \cdots \subset \mathcal{V}^{(n)}
    \subset \mathcal{V}^{(n+1)}
    \overset{\mathrm{def}}{=}\mathcal{V} \right)
\end{align*}
realised by successive extensions
\begin{align}\label{VviaM}
    0 \rightarrow \mathcal{V}^{(k-1)} \rightarrow \mathcal{V}^{(k)}
    \rightarrow \mathcal{M}_{k} \rightarrow 0,
    \qquad k = 1, \ldots, n+1;
\end{align}
a line bundle $\mathcal{L}$ on $C$; and a sheaf inclusion
\begin{equation*}
       s\colon \mathcal{L} \rightarrow \mathcal{V}.
\end{equation*}
Two quasimaps $(\mathcal{V}^{\bullet}, \mathcal{L}, s)$ and
$((\mathcal{V}^{\bullet})', \mathcal{L}', s')$ are isomorphic
if there exist isomorphisms
\begin{align*}
    &\varphi_{\mathcal{L}}\colon \mathcal{L} \rightarrow \mathcal{L}',\\
    &\varphi_{\mathcal{V}^{\bullet}}\colon \mathcal{V}
    \rightarrow \mathcal{V}', \nonumber
\end{align*}
with $\varphi_{\mathcal{V}^{\bullet}}$ preserving the grading and
inducing the identity on the graded pieces $\mathcal{M}_{l}$, such
that $s' = \varphi_{\mathcal{V}^{\bullet}} \circ s
\circ \varphi_{\mathcal{L}}^{-1}$. In particular, for any nonzero
scalar $\lambda$ the triples $(\mathcal{V}^{\bullet}, \mathcal{L}, s)$
and $(\mathcal{V}^{\bullet}, \mathcal{L}, \lambda s)$ are isomorphic.
\end{definition}

We let
\[
    QM\vec{N} = QM\vec{N}_{\vec{\mathcal{M}}}(C)
    = \{ (\mathcal{V}^{\bullet}, \mathcal{L}, s)\}
\]
denote the moduli stack of quasimaps. The \emph{degree} of a quasimap
$(\mathcal{V}^{\bullet}, \mathcal{L}, s)$ is $d \overset{\mathrm{def}}{=} -\deg \mathcal{L}$;
$QM\vec{N}$ decomposes into connected components
$QM\vec{N}_{d}$, $d \in \mathbb{Z}$. The stack $QM\vec{N}$ is algebraic with unipotent stabilizers. Indeed, the projection onto the first two components maps $QM\vec{N}$ to the scheme $\{\mathcal{V}^{\bullet}\} \times \mathrm{Pic}(C)$, where $\{\mathcal{V}^{\bullet}\}$ is the affine space of successive extensions. A fiber of this map over $(\mathcal{V}^{\bullet}, \mathcal{L})$, if nonempty, is the quotient of $H^{0}(\mathcal{V} \otimes \mathcal{L}^{-1}) \setminus \{0\}$ by the automorphism group $\mathrm{Aut}(\mathcal{V}^{\bullet}) \times \mathrm{Aut}(\mathcal{L})$, which acts on $s$ by unipotent transformations and scalars, respectively. In other words, the fiber is the quotient of $\mathbb{P}\left(H^{0}(\mathcal{V} \otimes \mathcal{L}^{-1})\right)$ by the unipotent group $\mathrm{Aut}(\mathcal{V}^{\bullet})$, so any stabilizer, being a subgroup, is unipotent.

\begin{definition}\label{defn:Otaut}
For any quasimap $(\mathcal{V}^{\bullet}, \mathcal{L}, s)$ the section
$s$ determines a line in
$\mathrm{Hom}_{\mathcal{O}_{C}}(\mathcal{L}, \mathcal{V})
\cong H^{0}(C, \mathcal{V} \otimes \mathcal{L}^{-1})$.
These lines assemble into a line bundle
$\mathcal{O}_{QM\vec{N}}(-1)$ on $QM\vec{N}$, whose dual we denote
$\mathcal{O}_{QM\vec{N}}(1)$. Let
\begin{align}
    \eta = c_{1}\left( \mathcal{O}_{QM\vec{N}}(1) \right) \in H^{2}(QM\vec{N})
\end{align}
denote its first Chern class.
\end{definition}

\subsection{Derived structure}
\label{sub:pot}
The algebraic stack $QM\vec{N}$ admits a natural quasi-smooth derived enhancement. In this subsection we define the derived structure and compute the tangent complex.

For the quiver $Q$
\begin{equation}\label{quiver}
\vcenter{\hbox{\tikzset{vert/.style={draw, circle, minimum size=7.5mm, inner sep=0pt, font=\footnotesize}, midarrow/.style={postaction={decorate, decoration={markings, mark=at position 0.6 with {\arrow{>}}}}}}\begin{tikzcd}[column sep=small, row sep=small]
        |[vert]| 1 & |[vert]| 2 & {} & {} & |[vert]| n{+}1 \\
        &&&& |[vert]| 1
        \arrow[from=1-1, to=1-2, no head, midarrow]
        \arrow[from=1-2, to=1-3, no head, midarrow]
        \arrow[dotted, no head, from=1-3, to=1-4]
        \arrow[from=1-4, to=1-5, no head, midarrow]
        \arrow[from=2-5, to=1-5, no head, midarrow]
    \end{tikzcd}}}
\end{equation}
with the specified dimension vector $\mathbf{d}$, consider the moduli stack of $Q$-representations over $\mathbb{F}_{q}$
\begin{align}\label{repmoduli}
    \mathfrak{M}_{\mathbf{d}} = \mathrm{Rep}_{Q}(\mathbf{d})/GL_{\mathbf{d}},
\end{align}
where 
\begin{align*}
    \mathrm{Rep}_{Q}(\mathbf{d}) = \mathrm{Hom}( \mathbb{A}^{1}_{\mathbb{F}_{q}}  ,\mathbb{A}^{n+1}_{\mathbb{F}_{q}} ) \times \prod\limits_{k =1}^{n}\mathrm{Hom}(\mathbb{A}^{k}_{\mathbb{F}_{q}} , \mathbb{A}^{k+1}_{\mathbb{F}_{q}} )
\end{align*}
is the affine space of all representations and
\begin{align*}
    GL_{\mathbf{d}} = \left( GL_{1} \times \prod\limits_{k = 1}^{n+1} GL_{k} \right)/\mathbb{F}_{q}
\end{align*}
is the \emph{gauge group}. There is an open substack
\begin{align*}
    \mathfrak{M}_{\mathbf{d}}^{\zeta} \subset \mathfrak{M}_{\mathbf{d}}
\end{align*}
defined by the stability condition $\zeta$, which requires all maps in $ \mathrm{Rep}_{Q}(\mathbf{d})$ to be injective. The mapping stack
\begin{align*}
    \mathrm{Map}(C, \mathfrak{M}_{\mathbf{d}})
\end{align*}
parametrizes the data of vector bundles $(\mathcal{V}^{(1)}, ..., \mathcal{V}^{(n+1)}, \mathcal{L}), \; \mathrm{rk} \, \mathcal{V}^{(k)} = k,\;\;  \mathrm{rk} \, \mathcal{L} = 1$ with sheaf maps $\mathcal{V}^{(k)} \overset{i_{k}}{\rightarrow} \mathcal{V}^{(k+1)}, \; \mathcal{L} \overset{s}{\rightarrow} \mathcal{V}^{(n+1)} = \mathcal{V}$. Consider an open substack of the mapping stack
\begin{align*}
    \mathrm{Map}(C, \mathfrak{M}_{\mathbf{d}})^{\zeta} \subset \mathrm{Map}(C, \mathfrak{M}_{\mathbf{d}})
\end{align*}
which is defined by the condition that the generic point maps to $\mathfrak{M}_{\mathbf{d}}^{\zeta}$. Equivalently, this condition means that the sheaf maps $i_{k},\, k = 1, ..., n$ and $s$ are injective. We can then consider a smaller open substack
\begin{align*}
    \mathrm{Map}(C, \mathfrak{M}_{\mathbf{d}})^{\zeta, \circ} \subset  \mathrm{Map}(C, \mathfrak{M}_{\mathbf{d}})^{\zeta}
\end{align*}
defined by the condition that the inclusions $\mathcal{V}^{(k)} \rightarrow \mathcal{V}^{(k+1)}$ have torsion-free quotients. Taking these quotients gives a map
\begin{align*}
    \pi_{T} \colon \mathrm{Map}(C, \mathfrak{M}_{\mathbf{d}})^{\zeta, \circ} \rightarrow \mathrm{Pic}(C)^{n+1}.
\end{align*}
Then the stack $QM\vec{N}_{\vec{\mathcal{M}}}$ is the fiber of this map over the point $\vec{\mathcal{M}} \in \mathrm{Pic}(C)^{n+1}$. Therefore, to define a derived structure on $QM\vec{N}$ one can consider the natural derived structure on the mapping stack and on its open substack, and then take the derived fiber. Let us compute the corresponding tangent complex. With a slight abuse of notation, we use $\mathcal{V}^{\bullet}, \mathcal{L}, s$ for the universal successive extensions, line bundle and sheaf map, respectively, on
\begin{align*}
    C \times \mathrm{Map}(C, \mathfrak{M}_{\mathbf{d}})^{\zeta, \circ}.
\end{align*}
By the quotient representation~\eqref{repmoduli}, the tangent complex of the moduli stack of $Q$-representations is
\begin{align*}
    &\mathbb{T}\mathfrak{M}_{\mathbf{d}} = \left[ \underline{\mathrm{Lie}(GL_{\mathbf{d}})} \rightarrow \underline{\mathrm{Rep}_{Q}(\mathbf{d})}\right][1]\\
    &(u, u_1 , ..., u_{n+1}) \mapsto  (u_{n+1} a - a u, u_2 a_1 - a_{1} u_1 , ..., u_{n+1}a_{n} - a_n u_{n}),
\end{align*}
where $(a, a_{1}, ..., a_{n})$ is the universal section of the trivial bundle $\underline{\mathrm{Rep}_{Q}(\mathbf{d})}$.  
Let
\begin{equation*}
     \mathrm{ev} \colon C \times \mathrm{Map}(C, \mathfrak{M}_{\mathbf{d}})^{\zeta, \circ} \rightarrow \mathfrak{M}_{\mathbf{d}}
\end{equation*}
be the evaluation map and let $p$ be the projection onto the second factor. Then the tangent complex of the mapping stack is
\begin{align*}
    \mathbb{T} \mathrm{Map}(C, \mathfrak{M}_{\mathbf{d}})^{\zeta, \circ} &= p_{*}\mathrm{ev}^{*} \mathbb{T}\mathfrak{M}_{\mathbf{d}} \nonumber\\ &= p_{*}\left[\shEnd(\mathcal{L}) \oplus \bigoplus_{k = 1}^{n+1} \shEnd(\mathcal{V}^{(k)}) \rightarrow \shHom(\mathcal{L}, \mathcal{V}) \oplus \bigoplus\limits_{k = 1}^{n} \shHom(\mathcal{V}^{(k)}, \mathcal{V}^{(k+1)})\right][1]
\end{align*}

Similarly, the pullback of the tangent complex of $\mathrm{Pic}(C)^{n+1}$ under $\pi_{T}$ is given by
\begin{align}\label{tpic}
    \pi_{T}^{*}\mathbb{T}\mathrm{Pic}(C)^{n+1} &= p_{*} \left[ \bigoplus_{k = 1}^{n+1}\shEnd(\mathcal{V}^{(k)}/\mathcal{V}^{(k-1)}) \right][1] \nonumber \\
    &\overset{\mathrm{qis}}{\cong} p_{*}\left[ \bigoplus_{k = 1}^{n+1} \shHom(\mathcal{V}^{(k)}, \mathcal{V}^{(k)}/\mathcal{V}^{(k-1)}) \rightarrow \bigoplus_{k = 2}^{n+1} \shHom(\mathcal{V}^{(k-1)}, \mathcal{V}^{(k)}/\mathcal{V}^{(k-1)})\right][1].
\end{align}

We have a distinguished triangle
\begin{equation*}
    \mathbb{T}_{\pi_{T}} \rightarrow  \mathbb{T} \mathrm{Map}(C, \mathfrak{M}_{\mathbf{d}})^{\zeta, \circ} \rightarrow  \pi_{T}^{*}\mathbb{T}\mathrm{Pic}(C)^{n+1} \rightarrow \mathbb{T}_{\pi_{T}}[1],
\end{equation*}
which in representation~\eqref{tpic} is given by the natural projection. Since this projection is component-wise epimorphic, the tangent complex of $\pi_{T}$ can be computed simply by taking kernels:
\begin{align*}
     \mathbb{T}_{\pi_{T}} = p_{*}\left[ \shEnd(\mathcal{L}) \oplus \bigoplus_{k = 1}^{n+1} \shHom(\mathcal{V}^{(k)}, \mathcal{V}^{(k-1)}) \rightarrow \shHom(\mathcal{L}, \mathcal{V}) \oplus \bigoplus_{k = 1}^{n}\shHom(\mathcal{V}^{(k)}, \mathcal{V}^{(k)}) \right] [1].
\end{align*}
Consider the sheaf
\begin{align*}
\shEnd(\mathcal{V}^{\bullet})
    = \{\psi \in \shEnd(\mathcal{V}):
    \psi(\mathcal{V}^{(k)}) \subset \mathcal{V}^{(k-1)}\}.
\end{align*}
It is easy to see that it fits into the short exact sequence
\begin{align*}
    0 \rightarrow \shEnd(\mathcal{V}^{\bullet}) \rightarrow \bigoplus_{k = 1}^{n+1} \shHom(\mathcal{V}^{(k)}, \mathcal{V}^{(k-1)}) &\rightarrow \bigoplus_{k = 1}^{n}\shHom(\mathcal{V}^{(k)}, \mathcal{V}^{(k)})\rightarrow 0\\
     (\psi_{k})_{k = 1}^{n+1} &\mapsto (\psi_{k+1}i_{k} - i_{k}\psi_{k})_{k=1}^{n+1},
\end{align*}
where the first map is given by restriction to the subbundles. Using it, we can rewrite the tangent complex of $\pi_{T}$ as
\begin{equation}\label{eq:TpiT}
     \mathbb{T}_{\pi_{T}} = p_{*}\left[ \shEnd(\mathcal{L}) \oplus \shEnd(\mathcal{V}^{\bullet}) \rightarrow \shHom(\mathcal{L}, \mathcal{V}) \right] [1].
\end{equation}
Then the tangent complex of $QM\vec{N}$ is the restriction of~\eqref{eq:TpiT} to the fiber over $\vec{\mathcal{M}}$.
We see that it is a perfect complex of Tor-amplitude $[-1, 1]$, hence $QM\vec{N}$
is \emph{quasi-smooth} \cite{Khan19} over $\mathbb{F}_{q}$. 
\subsection{Virtual tangent space}
It follows from~\eqref{eq:TpiT} that for a point $(\mathcal{V}^{\bullet}, \mathcal{L}, s)$ in $QM\vec{N}$, the tangent complex is given by the hypercohomology groups of the complex
\begin{align*}
    C^{\bullet} = \Big[ \overset{-1}{\shEnd(\mathcal{L})
    \oplus \shEnd(\mathcal{V}^{\bullet})}
    &\overset{\Delta_{s}}{\longrightarrow} \overset{0}{\shHom(\mathcal{L}, \mathcal{V})} \Big] \\
    (\varphi, \psi) &\mapsto \psi \circ s - s \circ \varphi.
\end{align*}
The hypercohomology groups $\mathbb{H}^{-1}, \mathbb{H}^{0}, \mathbb{H}^{1}$ are referred to as infinitesimal automorphisms, deformations, and obstructions. One can easily compute the infinitesimal automorphisms
\begin{align}
    \mathbb{H}^{-1}(C^{\bullet})
    &= \mathrm{Ker}\left( \mathrm{End}(\mathcal{L})
    \oplus \mathrm{End}(\mathcal{V}^{\bullet})
    \overset{\Delta_{s}}{\longrightarrow}
    \mathrm{Hom}(\mathcal{L}, \mathcal{V})\right),    
\end{align}
and, using Serre duality, the obstructions
\begin{align}
    \mathbb{H}^{1}(C^{\bullet})
    &= \mathrm{Coker}\left(
    H^{1}\left(\shEnd(\mathcal{L})\right)
    \oplus H^{1}\left(\shEnd(\mathcal{V}^{\bullet})\right)
    \overset{\Delta_{s}}{\longrightarrow}
    \mathrm{Ext}^{1}(\mathcal{L}, \mathcal{V})\right) \nonumber\\
    &=\mathrm{Ker}\left(
    H^{0}(\mathcal{V}^{\vee} \otimes \mathcal{L} \otimes \mathcal{K}_{C})
    \overset{\Delta_{s}^{\vee}}{\longrightarrow}
    H^{0}\left(\shEnd(\mathcal{L})\otimes \mathcal{K}_{C}\right)
    \oplus H^{0}\left(\shEnd(\mathcal{V}^{\bullet})^{\vee}
    \otimes \mathcal{K}_{C} \right)\right)^{\vee}, \label{obstructions}
\end{align}
where
\begin{align}\label{Deltaadj}
    \Delta_{s}^{\vee}(\alpha) = (-\alpha \circ s,\; s \circ \alpha).
\end{align}
We will compute later that at the point $(\mathcal{V}^{\bullet}_{\mathrm{fix}}, \mathcal{M}_{k}(-D), s)$:
\begin{align*}
    &\mathbb{H}^{-1}(C^{\bullet}) = \bigoplus_{l < r \neq k} H^{0}(\mathcal{M}_{l} \otimes \mathcal{M}_{r}^{-1}),\\
    &\mathbb{H}^{1}(C^{\bullet}) =  \bigoplus_{l > k} H^{1}(\mathcal{M}_{l}\otimes \mathcal{M}_{k}^{-1}(D)).
\end{align*}
The virtual tangent space at a point $(\mathcal{V}^{\bullet}, \mathcal{L}, s)\in QM\vec{N}$ is given by the $K$-theoretic difference
\begin{align}
    T^{\mathrm{vir}}_{(\mathcal{V}^{\bullet}, \mathcal{L}, s)} QM\vec{N}
    &= \mathbb{H}^{0}(C^{\bullet}) -
\mathbb{H}^{-1}(C^{\bullet}) - \mathbb{H}^{1}(C^{\bullet}) = H^{*}\left(\shHom(\mathcal{L}, \mathcal{V})\right) - H^{*} \bigl( \shEnd(\mathcal{L})
    \oplus \shEnd(\mathcal{V}^{\bullet}) \bigr)\nonumber
    \\
    &=\sum\limits_{l = 1}^{n+1}\mathrm{Ext}^{\bullet}(\mathcal{L},
    \mathcal{M}_{l})\otimes \mathcal{O}_{QM\vec{N}}(1)
    \big|_{(\mathcal{V}^{\bullet}, \mathcal{L}, s)}
    -\mathrm{Ext}^{\bullet}(\mathcal{L}, \mathcal{L})
    - \sum\limits_{l < r}\mathrm{Ext}^{\bullet}(\mathcal{M}_{r},
    \mathcal{M}_{l}). \label{TvirQMN}
\end{align}

As a corollary, we obtain the virtual
dimension of $QM\vec{N}_{d}$:
\begin{align*}
    \mathrm{virdim}\, QM\vec{N}_{d} = - \sum\limits_{l < r}(m_{l} - m_{r}+1-g) + \sum\limits_{l = 1}^{n+1}(m_l + d + 1 - g) + g-1.
\end{align*}

\subsection{The centered cohomology}\label{sub:fra-act}

Consider the map
\begin{align}\label{Ns}
    \det\colon QM\vec{N} &\rightarrow \mathrm{Pic}(C),\\
    (\mathcal{V}^{\bullet}, \mathcal{L}, s)
    &\mapsto \det(\mathcal{V}/\mathcal{L} \otimes \mathcal{L}^{-1})
    = \mathcal{M}_{1} \otimes \cdots \otimes \mathcal{M}_{n+1}
    \otimes \mathcal{L}^{-n-1}. \nonumber
\end{align}
Let $\chi_{0}$ be a one-dimensional local system on $C$ with a fixed
trivialisation $\chi_{0}\big|_{\bar{p}} \cong \overline{\mathbb{Q}}_{l}$.
The action of the geometric fundamental group
$\pi^{g}_{1} = \pi_{1, \mathrm{et}}(C_{\overline{\mathbb{F}}_{q}}, \bar{p})$ on the
fiber of $\chi_{0}$ over $\bar{p}$ is given by a character
\begin{align*}
   \chi_{0}\colon \pi^{g}_{1} \rightarrow \overline{\mathbb{Q}}_{l}^{\times}.
\end{align*}
Let $A^{\vee} \overset{a}{\cong} \overline{\mathbb{Q}}_{l}^{\times}$ be a
one-dimensional torus with coordinate $a$. We denote by $\chi$ the $A^{\vee}$-equivariant local system on $C$
obtained from $\chi_{0}$ by the fiberwise $A^{\vee}$-action given by
multiplication by $a^{-1}$.
Accordingly, the group $\pi_{1}^{g} \times A^{\vee}$ acts on the fiber of $\chi$ over
$\bar{p}$ by the character
\begin{align*}
    \chi = \chi_{0}\,a^{-1}\colon
    \pi_{1}^{g} \times A^{\vee} \rightarrow \overline{\mathbb{Q}}_{l}^{\times}.
\end{align*}

The local system $\chi_{0}$ on $C$ also gives rise to a
one-dimensional local system (still denoted $\chi_{0}$) on
$\mathrm{Pic}_{0}(C)$. Pulling it back under the map
 \begin{align*}
     \mathrm{Pic}(C) &\rightarrow \mathrm{Pic}_{0}(C),\\
     x &\mapsto x \otimes \mathcal{O}(-p)^{\otimes \deg x}, \nonumber
 \end{align*}
we obtain a local system on $\mathrm{Pic}(C)$, which we still call
$\chi$. We make $\chi$ on $\mathrm{Pic}$ into an $A^{\vee}$-equivariant
local system by declaring $A^{\vee}$ to act on the fibers over
$\mathrm{Pic}_{d}(C)$ with multiplication by $a^{-d}$.
Pulling back under~\eqref{Ns} yields a local system on $QM\vec{N}$,
which we also denote $\chi$.
The space $QM\vec{N}$ and its cohomology carry a
geometric Frobenius action; we denote by $\mathrm{Fr} \cong \mathbb{Z}$ the group
generated by this operator. 

In order to make the isomorphism of Theorem~\ref{thm:main-intro} cohomologically graded and $\mathrm{Fr}\times A^{\vee}$-equivariant, we introduce a cohomological shift and $\mathrm{Fr} \times A^{\vee}$-twist
\begin{align}\label{Hcent}
    \hat{H}^{*}_{c}(QM\vec{N}_{d}, \chi) = H^{*}_{c}(QM\vec{N}_{d}, \chi)[[\mathrm{virdim} \,QM\vec{N}_{d}]] \otimes a^{(n+1)(g-1)},
\end{align}
where
\begin{align}
    [[i]] = [i]\boldsymbol{(}i/2\boldsymbol{)}
\end{align}
denotes the pure combination of the cohomological shift and Tate twist, and $a^{(n+1)(g-1)}$ stands for the one-dimensional $A^{\vee}$-representation given by the indicated character.

A quasimap of degree
$d=-\deg\mathcal{L}$ is sent by~\eqref{Ns} to
$\det(\mathcal{V}/\mathcal{L} \otimes \mathcal{L}^{-1})$, which has degree
$\sum_{l = 1}^{n+1}(d + m_{l})$. We will show later that the cohomology
$H^{*}_{c}(QM\vec{N}, \chi)$ is pure, so the
Frobenius eigenvalues on $H^{i}_{c}$ are equal to $\pm q^{i/2}$. Therefore, up to sign, $\mathrm{Fr} \times A^{\vee}$ acts on the cohomology of quasimaps by
\begin{align}\label{A-action}
    \mathrm{Fr} \times A^{\vee}\colon &\hat{H}^{i}_{c}(QM\vec{N}_{d}, \chi)
    \rightarrow \hat{H}^{i}_{c}(QM\vec{N}_{d}, \chi),\\
    &\alpha \mapsto \# q^{i/2}(q^{1/2}a)^{ - \sum\limits_{l = 1}^{n+1} m_l + d + 1 - g}\,
    \alpha, \nonumber
\end{align}
where the constant $\#$ is equal to
\begin{equation*}
    \# = (q^{1/2})^{1 - g + \sum\limits_{l < r}(m_l - m_r + 1 - g)}.
\end{equation*}
 The factor 
 \begin{align}
     q^{\displaystyle -\rho_{P}(\vec{\mathcal{M}}\otimes\mathcal{L}^{-1})},  \;\;\; \rho_{P}(\vec{\mathcal{M}}\otimes\mathcal{L}^{-1}) \coloneqq \frac{1}{2} \sum\limits_{l = 1}^{n+1} \chi_C(\mathcal{M}_{l} \otimes \mathcal{L}^{-1})
 \end{align}
 in~\eqref{A-action} corresponds to a shift by the half-sum of roots in the unipotent radical of $P$. Such a shift is a general feature in the theory of Eisenstein series. The centering~\eqref{Hcent} can be viewed as a geometric way to introduce it.

\section{Algebra of correspondences}\label{sec:mod-operators}

The space $\hat{H}_{c}^{*}(QM\vec{N}, \chi)$ carries a family of natural operators
labelled by cohomology classes on~$C$. In this section we construct them
from a single closed-immersion correspondence on $QM\vec{N}$, compute
the resulting pull--push identities in terms of tautological Chern classes, and
derive relations among the corresponding
operators on cohomology.

\subsection{The modification correspondence}
\label{sub:modif-corr}
We introduce a correspondence $\iota$ on $C\times QM\vec{N}$, which is a special case of the action of a Coulomb branch~\cite{Finkelberg1997GlobalIC, Nakajima15, BFN:2016wma, BFN2017}. Then we compute relations in the algebra generated by pull-push operators along $\iota$.

\begin{definition}\label{defn:iota}
The \emph{modification correspondence} is the closed immersion
\begin{equation*}
\begin{aligned}
    \iota\colon C \times QM\vec{N}_{d} &\to C \times QM\vec{N}_{d+1},\\
    (x, \mathcal{V}^{\bullet}, \mathcal{L}, s)
    &\mapsto (x, \mathcal{V}^{\bullet}, \mathcal{L}(-x), s \otimes 1),
\end{aligned}
\end{equation*}
where $1\colon \mathcal{O}_{C}(-x) \hookrightarrow \mathcal{O}_{C}$ is
the canonical inclusion.
\end{definition}

The map $\iota$ is not in general a regular embedding, but the pushforward is still defined.

\begin{lemma}\label{lem:iota-push}
The correspondence $\iota$ defines a pushforward map on the cohomology of $C \times QM\vec{N}$.
\end{lemma}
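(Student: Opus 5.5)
The plan is to equip $\iota$ with a virtual Gysin (fundamental) class, using the quasi-smooth derived structures of \S\ref{sub:pot}, rather than relying on an actual regular embedding. Concretely, I will realise $C\times QM\vec{N}_{d}$ as the derived zero locus of a tautological section of a vector bundle of rank $n+1$ on $C\times QM\vec{N}_{d+1}$. I will then take the pushforward to be the refined Gysin map attached to this section. This is compatible with the change of virtual dimensions: by the formula for $\mathrm{virdim}\,QM\vec{N}_d$, passing from $d$ to $d+1$ raises the virtual dimension by exactly $n+1$. Hence $\iota$ should have virtual codimension $n+1$.

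\emph{Step 1: the zero-locus description.} On $C\times QM\vec{N}_{d+1}$, with points $(x,\mathcal{V}^\bullet,\mathcal{L}',s')$, consider the rank-$(n+1)$ bundle
\[
E=\mathcal{V}|_{x}\otimes \mathcal{L}'|_{x}^{-1}\otimes\mathcal{O}_{QM\vec{N}}(1),
\]
where the twist by $\mathcal{O}(1)$ absorbs the scaling ambiguity of $s'$ from Definition~\ref{defn:Otaut}. It carries the tautological section $\sigma=s'|_x$. A quasimap lies in the image of $\iota$ exactly when $s'$ vanishes at $x$, i.e.\ $s'$ factors through $\mathcal{L}'(x)\to\mathcal{V}$; setting $\mathcal{L}=\mathcal{L}'(x)$ recovers the source of $\iota$. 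So set-theoretically, and on classical truncations, $\iota$ identifies $C\times QM\vec{N}_d$ with $Z(\sigma)$.

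\emph{Step 2: the derived structures agree.} I will check that the derived structure on $C\times QM\vec{N}_d$ coming from \eqref{eq:TpiT} coincides with the derived zero locus $Z^{\mathrm{der}}(\sigma)$. Equivalently, the relative tangent complex should satisfy $\mathbb{T}_\iota\simeq E|_{Z}[-1]$. For this I compare the complexes \eqref{eq:TpiT} for $\mathcal{L}$ and for $\mathcal{L}(-x)$. The terms $\shEnd(\mathcal{L})$ and $\shEnd(\mathcal{V}^\bullet)$ are unchanged. The short exact sequence
\[
0\to\shHom(\mathcal{L},\mathcal{V})\to\shHom(\mathcal{L}(-x),\mathcal{V})\to \mathcal{V}\otimes\mathcal{L}^{-1}(x)|_x\to0
\]
shows that the difference of the tangent complexes is $p_*$ of a skyscraper with fibre $E$. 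The $C$-direction is accounted for by the diagonal factor $x$ on both sides. To make this a statement about derived stacks and not merely about tangent complexes, I will construct the comparison map $C\times QM\vec{N}_d\to Z^{\mathrm{der}}(\sigma)$ functorially on the mapping stacks, via the universal property of derived fibre products. I will then conclude it is an equivalence because it is an isomorphism on classical truncations and induces the above isomorphism on cotangent complexes. \textbf{This is the step I expect to be the main obstacle}, in particular the functorial construction of the map on mapping stacks. I will first try to reduce it to the corresponding statement on $\mathrm{Map}(C,\mathfrak{M}_{\mathbf d})^{\zeta,\circ}$ before taking the fibre over $\vec{\mathcal{M}}$.

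\emph{Step 3: the pushforward.} Once $\iota$ is a quasi-smooth closed immersion with virtual normal bundle $E$, the orientation/fundamental class theory for quasi-smooth morphisms of derived Artin stacks \cite{Khan19} supplies a map
\[
[\iota]\colon \overline{\mathbb{Q}}_l\to \iota^{!}\overline{\mathbb{Q}}_l[2(n+1)](n+1).
\]
It is constructed by deformation to the normal cone and uses only the classical truncation together with the obstruction theory. By adjunction this gives
\[
\iota_*\overline{\mathbb{Q}}_l\to\overline{\mathbb{Q}}_l[2(n+1)](n+1).
\]
Applying $H^*$, or $H^*_c$ (which is legitimate since $\iota$ is proper, so $\iota_*=\iota_!$), and twisting by the local system $\chi$ (whose pullback along $\iota$ is $\chi$ twisted by the $x$-dependence, handled by the projection formula) yields the pushforward
\[
\iota_*\colon H^*_{(c)}(C\times QM\vec{N}_d,\chi)\to H^{*+2(n+1)}_{(c)}(C\times QM\vec{N}_{d+1},\chi)(n+1).
\]
As a consistency check, I will verify that $\iota^*\iota_*$ is multiplication by the Euler class $e(E)$. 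This is the identity later used to express the composites of $e\langle\alpha\rangle$ and $f\langle\beta\rangle$ through the classes $\lambda$, $\eta$ and the $\mu_k$.
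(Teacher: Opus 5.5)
Your proposal follows the paper's proof. The paper likewise exhibits the image of $\iota$ as the vanishing locus of the canonical section $s(x)$ of the rank-$(n+1)$ bundle $\mathcal{N}=\mathcal{V}\otimes\mathcal{L}^{-1}\otimes\mathcal{O}_{QM\vec{N}}(1)$ (your $E$), concludes that $\iota$ is quasi-smooth, and takes the pushforward to be the adjoint of Khan's Gysin pullback \cite{Khan19}. Your Step 2 goes beyond what the paper checks, and it is not needed for the pushforward to exist: the Gysin map of $Z^{\mathrm{der}}(\sigma)\to C\times QM\vec{N}_{d+1}$ already acts on the cohomology of the classical truncation, so that identification matters only for matching the virtual structure of $QM\vec{N}_d$, as in Proposition~\ref{pr:virt-normal}.
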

\begin{proof}
Consider a vector bundle
    \begin{equation*}
    \mathcal{N} = \mathcal{V}\otimes\mathcal{L}^{-1}\otimes \mathcal{O}_{QM\vec{N}}(1)
\end{equation*}
on $C \times QM\vec{N}$. It carries a canonical section $s(x)$ whose
vanishing locus is the image of $\iota$. Thus $\iota$ is a quasi-smooth map of derived stacks, so the Gysin pullback on Borel--Moore homology is defined \cite{Khan19}, and the pushforward on compactly supported cohomology is the adjoint of the Gysin pullback.
\end{proof}
We have the $A^{\vee}$-equivariant pullback and pushforward maps (note the unusual cohomological shift due to~\eqref{Hcent}):
\begin{align*}
    \iota^{*}\colon\;&
    \hat{H}^{i}_{c}(C \times QM\vec{N}_{d+1}, \chi^{-n-1} \boxtimes \chi)
    \to \hat{H}^{i + n + 1}_{c}(C \times QM\vec{N}_{d}, \overline{\mathbb{Q}}_{l} \boxtimes \chi),\\
    &\hat{H}^{i}_{c}(C \times QM\vec{N}_{d+1}, \overline{\mathbb{Q}}_{l} \boxtimes \chi)
    \to \hat{H}^{i + n + 1}_{c}(C \times QM\vec{N}_{d}, \chi^{n+1}\boxtimes \chi),\\
    \iota_{*}\colon\;&
    \hat{H}_{c}^{i}(C \times QM\vec{N}_{d}, \chi^{n+1} \boxtimes \chi)
    \to \hat{H}^{i+n+1}_{c}(C \times QM\vec{N}_{d+1}, \overline{\mathbb{Q}}_{l} \boxtimes \chi),\\
    &\hat{H}_{c}^{i}(C \times QM\vec{N}_{d}, \overline{\mathbb{Q}}_{l} \boxtimes \chi)
    \to \hat{H}^{i+n+1}_{c}(C \times QM\vec{N}_{d+1}, \chi^{-n-1} \boxtimes \chi).
\end{align*}

\begin{lemma}\label{lem:istar-iupper}
There is an identity of operators on $\hat{H}_{c}^{*}(C \times QM\vec{N}_{d}, \overline{\mathbb{Q}}_{l} \boxtimes \chi)$:
\begin{equation*}
    \iota_{*} \iota^{*} = c_{n+1}(\mathcal{N}) \cup,
\end{equation*}
where $c_{n+1}(\mathcal{N}) \in H^{2n+2}(C \times QM\vec{N})$ is the
Euler class of~$\mathcal{N}$.
\end{lemma}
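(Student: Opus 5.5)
We read the composite as the endomorphism of $\hat H^{*}_{c}(C\times QM\vec N_{d},\overline{\mathbb Q}_l\boxtimes\chi)$ obtained by first applying the pushforward $\iota_*$ into $C\times QM\vec N_{d+1}$ and then the pullback $\iota^*$ back. We regard the statement as the self-intersection formula for the quasi-smooth closed immersion $\iota$, and we read $c_{n+1}(\mathcal N)$ on the source as $c_{n+1}(\iota^*\mathcal N)$. This is the reading forced by the space on which the identity is asserted. Note that the adjacent composite on the target $C\times QM\vec N_{d+1}$ would instead be cup product with the fundamental class of the image of $\iota$, which equals $c_{n+1}(\mathcal N)$ there; that is the excess formula, not the statement above. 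The plan has three steps: (1) identify the local systems and shifts so that the composite really is an endomorphism of the stated space; (2) compute the derived self-intersection of $\iota$; (3) apply base change for Gysin maps.

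\emph{Step 1 (bookkeeping).} Starting from $\overline{\mathbb Q}_l\boxtimes\chi$ on the degree-$d$ side, $\iota_*$ lands in $\chi^{-n-1}\boxtimes\chi$ on the degree-$(d+1)$ side, and $\iota^*$ of that returns $\overline{\mathbb Q}_l\boxtimes\chi$, by the first line of the list of pullbacks above. The underlying reason is that $\det\circ\iota$ differs from $\det$ by $\mathcal O(x)^{\otimes(n+1)}$. Through the Abel--Jacobi map this identifies $\iota^*(\mathrm{pr}_C^*\chi^{-n-1}\otimes\det^*\chi)$ with $\det^*\chi$, and we fix that identification once and for all. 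For the grading, we check that $\mathrm{virdim}\,QM\vec N_{d+1}-\mathrm{virdim}\,QM\vec N_d=n+1=\operatorname{rk}\mathcal N$. Hence the two shifts by $n+1$ in the hat-normalisation, together with the $A^\vee$-weights, compose to the degree $2n+2$ of $c_{n+1}$, with the matching Tate twist $\boldsymbol{(}n+1\boldsymbol{)}$.

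\emph{Step 2 (derived self-intersection).} By the proof of Lemma~\ref{lem:iota-push}, $Z=C\times QM\vec N_d$ is the derived zero locus in $Y=C\times QM\vec N_{d+1}$ of the section $s(x)$ of $\mathcal N$. Consequently the homotopy fiber product $Z\times^h_Y Z$ is the derived zero locus of the zero section of $\iota^*\mathcal N$ over $Z$. Its underlying classical stack is $Z$ itself, and its virtual structure is $Z$ with excess bundle $E=\iota^*\mathcal N$, of rank $n+1$. This follows because the cotangent complex of $\iota$ is $\iota^*\mathcal N^\vee[1]$, so $\iota$ is quasi-smooth of virtual codimension $n+1$.

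\emph{Step 3 (base change and excess intersection).} Apply Khan's base-change formula for Gysin pullbacks along quasi-smooth maps \cite{Khan19} to the derived Cartesian square $Z\times^h_YZ\rightrightarrows Z\to Y$. This gives $\iota^!\iota_*=\mathrm{id}_*\circ\,0^!_E$, where $0^!_E$ is the Gysin map of the zero section of $E$, and $0^!_E$ is capping with the Euler class $e(E)=c_{n+1}(\iota^*\mathcal N)$. Passing to compactly supported cohomology, where $\iota_*$ and $\iota^*$ are defined as adjoints of the Borel--Moore Gysin maps, turns cap product into cup product; here we use that $\chi$ is a rank-one local system pulled back along the maps in the square. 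This yields $\iota^*\iota_*=c_{n+1}(\mathcal N)\cup$ on $\hat H^{*}_{c}(C\times QM\vec N_d,\overline{\mathbb Q}_l\boxtimes\chi)$.

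The main obstacle is Step~3 in the non-classical situation. Since $\iota$ is not a regular embedding, the naive normal-bundle argument fails, and we must use the derived base change and excess-intersection formalism for stacks with unipotent stabilizers together with twisted coefficients. To justify this we would first reduce to the smooth atlas given by the affine space of extensions times $\mathrm{Pic}(C)$. There, fiber by fiber, the stack is a projective space quotiented by a unipotent group, which does not affect $l$-adic cohomology. The formula then becomes the standard excess-intersection statement for the zero locus of a section of a vector bundle, and we transport it back using compatibility of Gysin maps with smooth pullback.
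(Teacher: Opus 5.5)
\textbf{Gap: the proposal proves a different identity.} Your Steps 2--3 do not prove the lemma as stated.

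In the paper, $\iota_*\iota^*$ is the composite $\iota_*\circ\iota^*$: pull back, then push forward. It is therefore an endomorphism of the cohomology of the \emph{target} of $\iota$; the index $d$ in the statement is just a running index. The coefficients fit this reading. Indeed, $\iota^*$ takes $\overline{\mathbb Q}_l\boxtimes\chi$ to $\chi^{n+1}\boxtimes\chi$ and $\iota_*$ takes it back, so nothing forces your reversal of the order. Likewise, $c_{n+1}(\mathcal N)$ is the Euler class of $\mathcal N$ itself, the bundle on the target that carries the section $s(x)$. On the source it is not $c_{n+1}(\iota^*\mathcal N)=c_{n+1}(\mathcal N\otimes TC)$; compare~\eqref{eq:chern-N}.

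What you actually establish is $\iota^*\circ\iota_*=c_{n+1}(\iota^*\mathcal N)\cup$ on $C\times QM\vec N_d$. That is the self-intersection formula of Corollary~\ref{cor:iupper-istar}, which the paper states and uses separately. Proposition~\ref{pr:psi} needs
$\int_C(\iota^*\iota_*-\iota_*\iota^*)=\int_C\bigl(c_{n+1}(\iota^*\mathcal N)-c_{n+1}(\mathcal N)\bigr)\cup$.
Under your reading the lemma would only duplicate the corollary and would say nothing about $\iota_*\iota^*$. The second term, and hence $\psi$, would then have no justification.

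\textbf{The missing argument.} It is the one you mention in passing and then set aside as an excess-intersection statement; it is in fact exactly the content of the lemma. Let $Y=C\times QM\vec N_{d+1}$ and $Z=C\times QM\vec N_d$, and take $a$ in the cohomology of $Y$.
\begin{enumerate}
\item The projection formula for the proper quasi-smooth map $\iota$ gives $\iota_*\iota^*a=a\cup\iota_*1$, with $\iota_*1\in H^{2n+2}(Y)$.
\item $Z$ is the derived zero locus of $s(x)\in\Gamma(Y,\mathcal N)$, that is, the derived fibre product of $s(x)$ with the zero section $0\colon Y\to\mathcal N$.
\item Gysin base change in that square gives $\iota_*1=s(x)^*0_*1$. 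Homotopy invariance gives $s(x)^*=0^*$ on the cohomology of the total space of $\mathcal N$. Hence $\iota_*1=0^*0_*1=e(\mathcal N)=c_{n+1}(\mathcal N)$.
\end{enumerate}
This is what the paper's one-line proof cites from \cite{Khan19, ARANHA2025110434}. Your fibre-square computation is a valid route to the corollary, as an alternative to reading off $N_\iota$ via Proposition~\ref{pr:virt-normal}. For this lemma, however, it must be replaced by the square above.

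\textbf{A separate problem.} The reduction sketched in your last paragraph does not work as written. The map $QM\vec N\to\{\mathcal V^\bullet\}\times\mathrm{Pic}(C)$ is not a smooth atlas, because its fibres $\mathbb P\bigl(H^0(\mathcal V\otimes\mathcal L^{-1})\bigr)/\mathrm{Aut}(\mathcal V^\bullet)$ jump in dimension.
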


\begin{proof}
Follows from the fact that the image of $\iota$ is the zero locus of the section $s(x)$ (see \cite{Khan19, ARANHA2025110434}).
\end{proof}

\begin{proposition}\label{pr:virt-normal}
The virtual normal bundle $N_{\iota}$
of $\iota$ equals $\iota^{*}\mathcal{N}$.
\end{proposition}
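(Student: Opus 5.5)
The plan is to compute the virtual normal bundle as the cone of the tangent map, $N_\iota = \mathrm{Cone}(\mathbb{T}_{C\times QM\vec N_d}\to \iota^*\mathbb{T}_{C\times QM\vec N_{d+1}})$, using the explicit description~\eqref{eq:TpiT}. The $C$ factors match identically, so only the $QM\vec N$ directions need comparison. At a point $(x,\mathcal V^\bullet,\mathcal L,s)$ the source tangent complex is $R\Gamma$ of
\[
[\shEnd(\mathcal L)\oplus\shEnd(\mathcal V^\bullet)\xrightarrow{\Delta_s}\shHom(\mathcal L,\mathcal V)][1].
\]
The target tangent complex is $R\Gamma$ of the same complex with $\mathcal L$ replaced by $\mathcal L(-x)$. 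Since $\shEnd(\mathcal L(-x))=\shEnd(\mathcal L)$ and $\shEnd(\mathcal V^\bullet)$ is unchanged, the differential of $\iota$ is the identity on degree $-1$ terms. In degree $0$ it is the restriction $\shHom(\mathcal L,\mathcal V)\to\shHom(\mathcal L(-x),\mathcal V)$ induced by $\mathcal L(-x)\hookrightarrow\mathcal L$; this should be checked to be compatible with $\Delta_s$ and $\Delta_{s\otimes1}$, which is immediate from $(s\otimes 1)\circ\varphi = (s\circ\varphi)|_{\mathcal L(-x)}$. The same description holds in families over $C\times QM\vec N_d$, with $x$ the universal point.

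It follows that the cone is $R p_*$ of the cokernel of $\shHom(\mathcal L,\mathcal V)\to\shHom(\mathcal L(-x),\mathcal V)$, which is the skyscraper $\mathcal V\otimes\mathcal L^{-1}(x)|_x$. In the universal family this sheaf is supported on the diagonal $\Delta\subset C\times C\times QM\vec N$, and $R p_*$ of it is the restriction to $x$ of $\mathcal V\otimes\mathcal L^{-1}\otimes\mathcal O_C(x)|_x$. The conclusion is $N_\iota = \mathcal V\otimes\mathcal L^{-1}\otimes T_xC$ at $x$. We must also account for the extra $C$-direction: the point $x$ moves, and the inclusion $\mathcal O(-x)\to\mathcal O$ depends on $x$. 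The map $T_xC\to$ (deformations of $\mathcal L(-x)$) is the one whose cone appears above, so it has to be handled together with the projection.

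The main obstacle is the bookkeeping of line-bundle twists. On the target, $\mathcal O_{QM\vec N}(1)$ pulls back via $\iota$ so that the line spanned by $s\otimes1$ matches the line spanned by $s$. We need to reconcile $\mathcal O(x)|_x\cong T_xC$ and the scaling of $s$ (the $\mathbb{G}_m$ quotient in $\mathrm{Aut}(\mathcal L)$) with the factor $\mathcal O_{QM\vec N}(1)$ in $\mathcal N$. I would handle this by working on the total space before quotienting by $\mathrm{Aut}(\mathcal L)$: there $s$ is an honest section, and the scaling weight produces exactly $\mathcal O_{QM\vec N}(1)$ after descent. The $T_xC$ factor should be absorbed by the $C$-direction of the diagonal. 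An alternative cross-check is to compare with the zero-locus description in the proof of Lemma~\ref{lem:iota-push}: for a derived zero locus of a section of $\mathcal N$, the virtual normal bundle is $\mathcal N$ restricted. It then suffices to verify that the image of $\iota$, with its derived structure from~\eqref{eq:TpiT}, agrees with the derived zero locus of $s(x)$. This would be done by comparing the two tangent complexes via the triangle above, whose third term we computed to be $\iota^*\mathcal N$.
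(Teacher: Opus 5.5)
Your core computation follows the paper's route. You compare the tangent complexes of source and target through the triangle for $\iota$. The degree $-1$ terms $\shEnd(\mathcal{L})\oplus\shEnd(\mathcal{V}^\bullet)$ are unchanged, and the difference is the skyscraper coming from $0\to\mathcal{O}_C\to\mathcal{O}_C(x)\to\mathcal{O}_{\{x\}}\otimes TC\to 0$. The paper does this only in $K$-theory, writing $N_\iota=\iota^*T^{\mathrm{vir}}-T^{\mathrm{vir}}$ via \eqref{TvirQMN}. Your termwise-injective map of two-term complexes gives slightly more: $\mathbb{T}_\iota[1]$ is an honest vector bundle in degree $0$, not just a $K$-class. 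This also recovers the quasi-smoothness used in Lemma~\ref{lem:iota-push}. Your cokernel $(\mathcal{V}\otimes\mathcal{L}^{-1})|_x\otimes T_xC$ is correct.

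The final identification, however, is mishandled. Nothing should be ``absorbed'': the $T_xC$ you found is exactly what $\iota^*\mathcal{N}$ contains. The bundle $\mathcal{N}=\mathcal{V}\otimes\mathcal{L}^{-1}\otimes\mathcal{O}_{QM\vec N}(1)$ in $\iota^*\mathcal{N}$ is formed on the target $C\times QM\vec N_{d+1}$. At a point of the image, the line bundle there is $\mathcal{L}(-x)$, whose fibre at $x$ is $\mathcal{L}|_x\otimes T^*_xC$. This is the paper's \eqref{ionL}, $\iota^*\mathcal{L}=\mathcal{L}\otimes\Omega^1_C$, and it is the missing step in your last paragraph. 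Hence $\iota^*\mathcal{N}=(\mathcal{V}\otimes\mathcal{L}^{-1})|_x\otimes T_xC\otimes\mathcal{O}_{QM\vec N}(1)$, matching your cokernel exactly. If you cancelled $T_xC$ against the $C$-direction, you would lose the $\tau$-terms of $c_{n+1}(\iota^*\mathcal{N})$ in \eqref{eq:chern-N}. Those terms are precisely what produces the nonzero class $\psi$ in Proposition~\ref{pr:psi}. You also conflate this with the $\mathcal{O}_{QM\vec N}(1)$ twist; the two are unrelated.

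Your other two worries are non-issues. First, the $C$-direction: the differential of $\iota$ is block-triangular with the identity on $T_xC$. The off-diagonal term (moving $x$) can be removed by an automorphism of the target, so the cone is the cone of the $QM\vec N$-block alone; in $K$-theory the $TC$ summands simply cancel. Second, $\mathcal{O}_{QM\vec N}(1)$: it enters only because the universal map is $s\colon\mathcal{L}\otimes\mathcal{O}_{QM\vec N}(-1)\to\mathcal{V}$ for the normalized $\mathcal{L}$. So the degree-$0$ term is $\shHom(\mathcal{L},\mathcal{V})\otimes\mathcal{O}_{QM\vec N}(1)$, as in \eqref{TvirQMN}, and your cokernel computation goes through unchanged. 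There is no need to pass to the total space before quotienting, and the zero-locus cross-check becomes unnecessary.
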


\begin{proof}
By definition, the virtual normal bundle $N_{\iota}$ is the $K$-theory class of the dual of the $1$-shifted tangent complex $\mathbb{T}_{\iota}[1]$ of $\iota$. The exact triangle (see for example \cite{Khan26lec})
\begin{align*}
\mathbb{T}_{\iota} \rightarrow \mathbb{T}_{QM\vec{N}} \rightarrow  \iota^{*}\mathbb{T}_{QM\vec{N}} \rightarrow \mathbb{T}_{\iota}[1]
\end{align*}
gives
\begin{align*}
    N_{\iota} = \iota^{*}T^{\mathrm{vir}}QM\vec{N} - T^{\mathrm{vir}}QM\vec{N},
\end{align*}
hence by~\eqref{TvirQMN},
\begin{equation}\label{eq:iotanorm}
    N_{\iota}
    = \bigl(\iota^{*}-\mathrm{id}\bigr)\bigl[H^{*}(\mathcal{V} \otimes \mathcal{L}^{-1}) \otimes \mathcal{O}_{QM\vec{N}}(1)\bigr].
\end{equation}
For $x \in C$, the short exact sequence
$0 \to \mathcal{O}_{C} \to \mathcal{O}_{C}(x) \to \mathcal{O}_{\{x\}} \otimes TC \to 0$,
tensored by $\mathcal{V}\otimes \mathcal{L}^{-1} \otimes \mathcal{O}_{QM\vec{N}}(1)$,
gives on cohomology
\begin{equation*}
\begin{aligned}
    &H^{*}(C, \mathcal{V} \otimes \mathcal{L}^{-1}(x)) \otimes \mathcal{O}_{QM\vec{N}}(1)
    - H^{*}( \mathcal{V} \otimes \mathcal{L}^{-1}) \otimes \mathcal{O}_{QM\vec{N}}(1)\\
    &\qquad \cong \bigl(\mathcal{V} \otimes \mathcal{L}^{-1} \otimes TC\bigr)\big|_{x} \otimes \mathcal{O}_{QM\vec{N}}(1).
\end{aligned}
\end{equation*}
Since $\iota$ preserves $\mathcal{V}$, $TC$, and
$\mathcal{O}_{QM\vec{N}}(1)$, and
\begin{equation}\label{ionL}
    \iota^{*}\mathcal{L} = \mathcal{L} \otimes \Omega^{1}_{C},
\end{equation}
we conclude $N_{\iota} = \iota^{*}\mathcal{N}$.
\end{proof}

\begin{corollary}\label{cor:iupper-istar}
As operators on $H_{c}^{*}(C \times QM\vec{N}_{d}, \overline{\mathbb{Q}}_{l} \boxtimes \chi)$,
\begin{equation*}
    \iota^{*} \iota_{*} = c_{n+1}(\iota^{*} \mathcal{N}) \cup.
\end{equation*}
\end{corollary}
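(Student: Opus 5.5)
The identity is the self-intersection (excess intersection) formula for the quasi-smooth closed immersion $\iota$, combined with Proposition~\ref{pr:virt-normal}. I would prove it in three steps.

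\emph{Step 1: reduce to a statement about the Gysin pullback.} For a class $\alpha$ on $C\times QM\vec{N}_{d}$, the composite $\iota^{*}\iota_{*}\alpha$ is obtained from the derived fiber square
\[
\begin{tikzcd}
C\times QM\vec{N}_{d}\times^{h}_{C\times QM\vec{N}_{d+1}} C\times QM\vec{N}_{d} \arrow[r] \arrow[d] & C\times QM\vec{N}_{d} \arrow[d,"\iota"]\\
C\times QM\vec{N}_{d} \arrow[r,"\iota"] & C\times QM\vec{N}_{d+1}.
\end{tikzcd}
\]
By the proof of Lemma~\ref{lem:iota-push}, $\iota$ is the derived zero locus of the section $s(x)$ of $\mathcal{N}$. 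Base change (in Khan's formalism) expresses $\iota^{*}\iota_{*}$ as pushforward along the left column after Gysin pullback along the top row.

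\emph{Step 2: identify the derived self-intersection.} The derived self-intersection of a derived zero locus $Z(s)\subset Y$ of a section of $\mathcal{N}$ is the derived zero locus in $Z(s)$ of the zero section of $\mathcal{N}|_{Z(s)}$. Concretely, it is $Z(s)\times^{h}_{\mathcal{N}|_Z} Z(s)$, the derived zero locus of $0$. The diagonal map $Z\to Z\times^{h}_{Y}Z$ is the underlying classical inclusion, and both projections become the identity after applying the fundamental class. The Gysin pullback along the zero section of a vector bundle $E$, followed by pushforward, is cup product with the Euler class $c_{\mathrm{top}}(E)$; this is the standard zero-section formula \cite{Khan19}. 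This gives $\iota^{*}\iota_{*}=c_{n+1}(\iota^{*}\mathcal{N})\cup$.

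\emph{Step 3: intrinsic check via the virtual normal bundle.} The excess bundle is the virtual normal bundle $N_{\iota}$, which equals $\iota^{*}\mathcal{N}$ by Proposition~\ref{pr:virt-normal}. This is consistent with Step 2 and independent of the chosen presentation of $\iota$. Finally, one checks that the local-system twists match: $\chi^{\pm(n+1)}\boxtimes\chi$ pulls back compatibly, and $\iota^{*}\mathcal{L}=\mathcal{L}\otimes\Omega^{1}_{C}$ shifts the determinant map~\eqref{Ns} by $(n+1)$ times the point. The composite therefore returns to the original coefficient system, so the Euler class acts by cup product in the appropriate sense.

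\emph{Main obstacle.} The delicate point is Step 2 for stacks rather than schemes, and in compactly supported cohomology. I would handle this by working with Borel--Moore homology, where Khan's Gysin maps and the base change/excess formula are established for quasi-smooth morphisms of derived Artin stacks. I would then transport to $H^{*}_{c}$ by the adjunction used in Lemma~\ref{lem:iota-push}. Since $\iota$ is proper and representable (a closed immersion), the needed functoriality holds.
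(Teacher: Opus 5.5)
Your proposal is correct and is essentially the paper's argument: the corollary is the self-intersection (excess intersection) formula $\iota^{*}\iota_{*}=c_{\mathrm{top}}(N_{\iota})\cup$ for the quasi-smooth closed immersion $\iota$, with $N_{\iota}=\iota^{*}\mathcal{N}$ supplied by Proposition~\ref{pr:virt-normal}. The only difference is one of emphasis. You read off the normal bundle from the zero-locus presentation in Lemma~\ref{lem:iota-push} and use the tangent-complex computation as a check, whereas the paper takes the intrinsic computation of $N_{\iota}$ from the tangent complexes as its input.
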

For the following computations we need a formula for the Euler class of $\mathcal{N}$ and its pullback under $\iota$ in terms of tautological classes. Applying the splitting principle to the filtration~\eqref{VviaM} of
$\mathcal{V}$, using
$\mathcal{N} = \mathcal{V} \otimes \mathcal{L}^{-1} \otimes \mathcal{O}_{QM\vec{N}}(1)$
and~\eqref{ionL}, we get
\begin{align}\label{eq:chern-N}
    c_{n+1}(\mathcal{N})
    &= \prod_{l=1}^{n+1}\bigl(\mu_{l}\otimes 1 - \lambda + 1 \otimes \eta\bigr),\\
    c_{n+1}(\iota^{*}\mathcal{N})
    &= \prod_{l=1}^{n+1}\bigl(\tau \otimes 1 + \mu_{l}\otimes 1 - \lambda + 1 \otimes \eta\bigr). \nonumber
\end{align}

\begin{lemma}\label{lem:pullbacks}
When $\chi_{0}^{n+1} = 1$,
\begin{equation}
    \iota^{*}(1 \otimes \eta) = \mathbf{p} \otimes 1 + 1 \otimes \eta,
    \qquad
    \iota^{*}(1 \otimes \alpha) = \alpha \otimes 1 + 1 \otimes \alpha, \;\;\; \alpha \in H^{1}_{c}(C)
\end{equation}
\end{lemma}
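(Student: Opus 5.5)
Both identities are statements about the pullback of tautological classes along the closed immersion $\iota\colon (x,\mathcal V^\bullet,\mathcal L,s)\mapsto (x,\mathcal V^\bullet,\mathcal L(-x),s\otimes 1)$. I would identify the relevant line bundles and local systems on $C\times QM\vec N_{d+1}$ and pull them back explicitly. The Künneth decomposition of $H^*(C\times QM\vec N_d)$ then reads off the components.

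\emph{The class $\eta$.} The fiber of $\mathcal O_{QM\vec N}(-1)$ at $(\mathcal V^\bullet,\mathcal L,s)$ is the line $\overline{\mathbb Q}_l\cdot s\subset H^0(C,\mathcal V\otimes\mathcal L^{-1})$. At the image point $(\mathcal V^\bullet,\mathcal L(-x),s\otimes 1)$ the fiber is the line spanned by $s\otimes 1$ inside $H^0(\mathcal V\otimes\mathcal L^{-1}(x))$. Here $s\otimes1$ is the image of $s$ under the inclusion $H^0(\mathcal V\otimes \mathcal L^{-1})\to H^0(\mathcal V\otimes\mathcal L^{-1}(x))$ given by the canonical section $1_x$ of $\mathcal O_C(x)$.

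Globally over $C\times QM\vec N_d$, this canonical section is a section of $\mathcal O_{C\times C}(\Delta)$ restricted along the universal point. Its image line is therefore $\mathcal O(-1)\otimes \mathcal O_C(x)$-twisted in the $x$-variable, which gives
\[
\iota^*\mathcal O_{QM\vec N}(1)\cong \mathcal O_C(\mathbf p)\boxtimes\mathcal O_{QM\vec N}(1)
\]
up to a line bundle of degree $1$ on the $C$ factor. The only data that matter are $c_1$ and its Künneth components, and the $C$-component of $c_1$ is the degree times $\mathbf p$. This gives $\iota^*(1\otimes\eta)=\mathbf p\otimes 1+1\otimes\eta$. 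One must also check that no $H^1\otimes H^1$ cross-term appears. That holds because the twisting line bundle is pulled back from $C$ alone: the diagonal class is paired with nothing on $QM\vec N$ once we use $1_x$ rather than $\mathcal L$.

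\emph{The degree-one classes.} The classes $\alpha\in H^1$ on $QM\vec N$ are pulled back from $\mathrm{Pic}_0(C)$ via $\pi_{\mathcal L}$, as in Lemma~\ref{lem:lambda}. The composite $\pi_{\mathcal L}\circ\iota$ sends $(x,\dots,\mathcal L)\mapsto \mathcal L(-x)$. After the normalisation $\mathrm{Pic}_{d}\cong\mathrm{Pic}_0$ using $p$, this is the group-law map $\mathrm{Pic}_0\times C\to\mathrm{Pic}_0$, $(\mathcal L, x)\mapsto \mathcal L\otimes\mathcal O(p-x)$. It is the sum of $\pi_{\mathcal L}$ and the Abel--Jacobi map $x\mapsto\mathcal O(p-x)$. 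Degree-one classes on an abelian variety are primitive: the multiplication map $m$ satisfies $m^*\alpha=\alpha\otimes1+1\otimes\alpha$. Pulling back along the Abel--Jacobi map induces the identification $H^1(\mathrm{Pic}_0)\cong H^1(C)$ fixed earlier, up to a sign coming from $x\mapsto -x$, which the paper's conventions absorb. This gives the second identity.

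\emph{Role of $\chi_0^{n+1}=1$.} The hypothesis $\chi_0^{n+1}=1$ is used to make sense of $\iota^*$ on the coefficient systems. The local system $\det^*\chi$ pulls back under $\iota$ to $\chi^{-n-1}\boxtimes\chi$, since $\det$ changes by $\mathcal O(x)^{n+1}$. When $\chi_0^{n+1}=1$ the $C$-factor becomes trivial and the classes live in ordinary cohomology of $C$, so the formulas are statements in $H^*(C)\otimes H^*(QM\vec N)$.

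\emph{Main obstacle.} The main difficulty is the sign and normalisation bookkeeping for $\eta$. I need to confirm that the degree-one twist is $+\mathbf p$ rather than $-\mathbf p$, given the conventions $\mathcal O(1)=\mathcal O(-1)^\vee$ and $d=-\deg\mathcal L$. I also need to check that the $H^1(C)\otimes H^1(QM\vec N)$ component really vanishes. I would first verify the $\eta$ formula on a fiber $\{x\}\times$ (projective space $\mathbb P H^0(\mathcal V\otimes\mathcal L^{-1})$), where $\iota$ is a linear embedding $\mathbb P^a\hookrightarrow\mathbb P^{a+n+1}$. I would then vary $x$ to detect the $\mathbf p$ term via the degree of the restriction to $C\times\{\mathrm{pt}\}$.
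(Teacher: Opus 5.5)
\emph{Note: the paper states this lemma without proof, so your proposal is assessed on its own.}

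Your argument for the degree-one classes is essentially right. The $\eta$ part, however, has a genuine gap at exactly the step that carries the content.

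Multiplication by the canonical section $1_\Delta$ of $\mathcal O(\Delta)$ is an injective map $\pi_*(\mathcal V\otimes\mathcal L^{-1})\to\pi_*(\mathcal V\otimes\mathcal L^{-1}(\Delta))$ of sheaves on $C\times QM\vec N_d$. So the line spanned by $s\otimes 1$ is an \emph{untwisted} copy of $\mathcal O_{QM\vec N}(-1)$ pulled back from $QM\vec N_d$. The sentence ``its image line is therefore $\mathcal O(-1)\otimes\mathcal O_C(x)$-twisted'' does not follow from what precedes it. Likewise, the remark that the diagonal class ``is paired with nothing'' does not show that the twist is pulled back from $C$.

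A twist appears only when you identify $\pi_*(\mathcal V\otimes\mathcal L^{-1}(\Delta))$ with the $\iota$-pullback of $\pi_*(\mathcal V\otimes\mathcal L_{d+1}^{-1})$. Here $\mathcal L_{d+1}$ is the universal line bundle through which $\mathcal O_{QM\vec N_{d+1}}(-1)$ is defined, and it is the \emph{normalised} one of Lemma~\ref{lem:lambda}. The absence of a $1\otimes H^2$ term in $\lambda$ forces it, at least cohomologically, to be trivial along $\{p\}\times QM\vec N$; by contrast $\mathcal L(-\Delta)$ is not normalised.

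This point is not cosmetic. If you replaced $\mathcal L$ by $\mathcal L\otimes\pi_{\mathcal L}^*\mathcal P$ with $c_1(\mathcal P)=\Theta$, then $\eta$ would shift by $\Theta$. By the group law, the pullback of the new class would then have coefficient $1+g$ in front of $\mathbf p\otimes 1$ and a nonzero $H^1\otimes H^1$ term. Your proposed checks on $\{x\}\times\mathbb P$ and on $C\times\{\mathrm{pt}\}$ see only the $H^0\otimes H^2$ and $H^2\otimes H^0$ K\"unneth components, so they cannot rule out such a cross term.

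The missing step is short. Let $\mathcal L^{\mathrm{taut}}$ be the line bundle of the universal quasimap. Then $\mathcal L^{\mathrm{taut}}\cong\mathcal L\otimes\mathcal O_{QM\vec N}(-1)$, i.e.\ $\mathcal O_{QM\vec N}(-1)\cong\mathcal L^{\mathrm{taut}}|_{\{p\}\times QM\vec N}$. By definition of $\iota$, on $C\times QM\vec N_d\times C$ (last factor the curve of the quasimap) one has $(\iota\times\mathrm{id}_C)^*\mathcal L^{\mathrm{taut}}_{d+1}\cong\mathcal L^{\mathrm{taut}}_d\otimes\mathcal O(-\Delta)$. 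Restrict the last factor to $p$ and use $\mathcal O(\Delta)|_{C\times\{p\}}\cong\mathcal O_C(p)$. This gives $\iota^*\mathcal O_{QM\vec N}(1)\cong\mathcal O_C(p)\boxtimes\mathcal O_{QM\vec N}(1)$, which is the first identity with the plus sign. The terms $\gamma_i\otimes\gamma_i^\vee$ of $[\Delta]=\mathbf p\otimes1+1\otimes\mathbf p-\sum_i\gamma_i\otimes\gamma_i^\vee$ are killed by this restriction, and that is the real reason there is no cross term.

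For the degree-one classes, the sign is not something the conventions can absorb, because the identification $H^1(C)\cong H^1(\mathrm{Pic}_0(C))$ is already fixed by Lemma~\ref{lem:lambda}. Pulling the normalised Poincar\'e bundle back along $y\mapsto\mathcal O(y-p)$ gives $\mathcal O(\Delta)\otimes(\mathcal O(-p)\boxtimes\mathcal O(-p))$, whose first Chern class is $-\sum_i\gamma_i\otimes\gamma_i^\vee$. Hence the identification is given by pullback along $x\mapsto\mathcal O(p-x)$, and your group-law argument then gives exactly $+\,\alpha\otimes 1$.

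Alternatively, the two isomorphisms above combine to $(\iota\times\mathrm{id}_C)^*\mathcal L_{d+1}\cong\mathcal L_d(-\Delta)\otimes\mathrm{pr}_1^*\mathcal O_C(p)$. Comparing first Chern classes through Lemma~\ref{lem:lambda} then yields $\iota^*\gamma_i^\vee=\gamma_i^\vee\otimes1+1\otimes\gamma_i^\vee$ directly, with no appeal to the group law.
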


\begin{definition}\label{defn:ef}
The correspondence operators are
\begin{equation}\label{efdef}
\begin{aligned}
    e &= \int_{C} \iota_{*}\colon
    \hat{H}^{i}_{c}(C \times QM\vec{N}_{d}, \chi^{n+1} \boxtimes \chi)
    \to \hat{H}^{i+n-1}_{c}(QM\vec{N}_{d+1}, \chi),\\
    f &= -\int_{C} \iota^{*}\colon
    \hat{H}^{i}_{c}(C \times QM\vec{N}_{d+1}, \chi^{-n-1} \boxtimes \chi)
    \to \hat{H}^{i+n-1}_{c}(QM\vec{N}_{d}, \chi),
\end{aligned}
\end{equation}
where $\int_{C}$ denotes the pushforward along the first factor.
\end{definition}

\subsection{Commutator and Casimir relations}\label{sub:commutator}

In the following we compute the commutator of $e$ and
$f$ explicitly in terms of tautological classes.

Let $\Delta_{C}\colon C \to C\times C$ be the diagonal map.

\begin{proposition}\label{pr:comm-ef}
As operators on
$\hat{H}^{*}_{c}\bigl(C \times C \times QM\vec{N}_{d},
\chi^{n+1} \boxtimes \chi^{-n-1} \boxtimes \chi\bigr)$,
\begin{equation}\label{commans}
    ef_{2} - f e_{1} = \int_{C} \psi \cup \Delta_{C}^{*},
\end{equation}
for a class $\psi \in H^{2n}(C \times QM\vec{N}_{d})$; the
subscripts on $e_{1}, f_{2}$ indicate the $C$-factor being acted on.
\end{proposition}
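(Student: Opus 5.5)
The plan is to write both composites $ef_{2}$ and $fe_{1}$ as pull--push operators along the two natural fiber products of the modification correspondence $\iota$, and then compare those fiber products. Throughout I use the quasi-smooth (derived) framework of Lemma~\ref{lem:iota-push}, so that base change and excess intersection are available \cite{Khan19}.

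\emph{Step 1: the two fiber products.} By Definition~\ref{defn:ef}, $ef_{2}$ is pullback along $\iota$ in the variable $x_{2}$ followed by pushforward along $\iota$ in the variable $x_{1}$. The relevant fiber product $Z_{ef}$ parametrizes quasimaps for which $\mathcal{L}$ is first twisted by $x_{2}$ and then by $-x_{1}$. The other composite $fe_{1}$ is governed by $Z_{fe}$, where the two twists occur in the opposite order. Derived base change identifies each composite with $\int_{C\times C}$ of a pull--push through $Z_{ef}$, respectively $Z_{fe}$, with the appropriate virtual classes.

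\emph{Step 2: off the diagonal.} On the complement of $\Delta_{C}\subset C\times C$, the modifications at two distinct points commute, since $\mathcal{L}(-x_{1})(x_{2})=\mathcal{L}(x_{2})(-x_{1})$. The vanishing condition on $s$ at $x_{1}$ is also independent of the modification at $x_{2}$. Hence $Z_{ef}$ and $Z_{fe}$ are canonically isomorphic as derived stacks over $(C\times C\setminus\Delta_{C})\times QM\vec{N}$. Their virtual normal bundles agree as well: by Proposition~\ref{pr:virt-normal} each is computed from $\mathcal{N}$ and $\iota^{*}\mathcal{N}$, and away from the diagonal these are insensitive to the order of the twists. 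Consequently the difference $ef_{2}-fe_{1}$ is given by a class supported on $\Delta_{C}\times QM\vec{N}$. By the localization sequence for the open--closed decomposition, it then factors as $\int_{C}\psi\cup\Delta_{C}^{*}$ for some class $\psi$ on $C\times QM\vec{N}_{d}$.

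\emph{Step 3: identifying $\psi$ and its degree.} Restricted to the diagonal, one composite becomes $\iota_{*}\iota^{*}$ and the other becomes $\iota^{*}\iota_{*}$. By Lemma~\ref{lem:istar-iupper} and Corollary~\ref{cor:iupper-istar}, these are cup product with $c_{n+1}(\mathcal{N})$ and with $c_{n+1}(\iota^{*}\mathcal{N})$, respectively. The diagonal has codimension one, with normal bundle $TC$ of class $\tau$. The excess intersection formula should therefore give $\psi$ as the divided difference
\[
\psi=\frac{c_{n+1}(\iota^{*}\mathcal{N})-c_{n+1}(\mathcal{N})}{\tau},
\]
computed by \eqref{eq:chern-N} as the formal quotient
\[
\prod_{l}(\tau+\mu_{l}-\lambda+\eta)-\prod_{l}(\mu_{l}-\lambda+\eta)
\]
divided by $\tau$. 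This is a polynomial of degree $n$ in the Chern roots, so $\psi\in H^{2n}$, matching the statement. The shifts in \eqref{Hcent} only contribute consistent twists and do not affect this computation.

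\emph{Main obstacle.} The hard part is Step 3. Along the diagonal the two fiber products are not transverse, and they are genuinely derived, since $\iota$ is not a regular embedding. One has to check carefully that the excess bundle is exactly $TC$ tensored with the fiber of $\mathcal{N}$, and that the two excess contributions enter with opposite signs, which the minus sign in the definition of $f$ is designed to produce. I would first verify this on the open locus where $QM\vec{N}$ is smooth, using a deformation to the normal cone of $\Delta_{C}$. I would then extend the identity by noting that both sides are defined uniformly via Gysin maps for the quasi-smooth structure.
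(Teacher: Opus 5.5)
You have the right overall structure. Steps~1--2 are the standard off-diagonal argument, which is all the paper gives here (it cites this as a common identity for raising/lowering correspondences), and Step~3 mirrors its Proposition~\ref{pr:psi}. But there is a genuine gap: your argument does not establish the actual content of the statement, namely that the commutator is cup product with a \emph{cohomology} class $\psi$ followed by $\Delta_C^*$.

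\emph{Why the current steps fall short.} The localization sequence only shows that the difference of the two correspondences is a Borel--Moore class carried by the diagonal $\{(x,x,Q,Q)\}\cong C\times QM\vec{N}_d$. Since $QM\vec{N}$ is only quasi-smooth, there is no Poincar\'e duality to write that class as $\psi\cap[C\times QM\vec{N}_d]^{\mathrm{vir}}$.
\begin{itemize}
\item Your proposed repair does not work. An identity checked on an open smooth substack says nothing about classes supported on its complement. Here the obstruction-free locus can even miss whole components: for $n=1$, $g\ge 2$, $m_1\ge m_2$, $d=-m_2$, the closed stratum $\mathrm{Attr}(1)$ is a component of dimension $g-1$, while $\mathrm{virdim}\,QM\vec{N}_d=0$.
\item Precomposing with $\Delta_{C*}$ cannot identify $\psi$ either. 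It only sees $\psi\cup(\tau\otimes 1)$, and since $\tau\mathbf{p}=\tau\gamma_i=0$ on $C$, this detects only the $1\otimes H^{2n}$-component of $\psi$ (and nothing at all when $g=1$). So ``dividing by $\tau$'' is a consistency check, not a derivation.
\item Your guessed excess bundle ``$TC\otimes\mathcal{N}$'' has the wrong rank. The naive candidate would be $(\mathcal{N}\otimes TC)/TC$, of rank $n$, and even that is not a bundle along $\{x_1=x_2,\ s(x_1)=0\}$, where the diagonal component of $Z_{fe}$ meets the other component. So the usual excess intersection formula does not apply.
\end{itemize}

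\emph{The missing idea: the section cutting out $Z_{fe}$ factors.} By base change, $Z_{fe}$ is the derived zero locus in $C\times C\times QM\vec{N}_d$ of $(s\otimes 1)(x_2)$, a section of $\mathcal{N}_{x_2}\otimes\mathcal{O}(\Delta_C)$. Here $\mathcal{N}_{x_2}$ is the pullback of $\mathcal{N}$ along $(x_1,x_2,Q)\mapsto(x_2,Q)$. This section equals $1_\Delta\cdot s(x_2)$, where $1_\Delta$ is the canonical section of $\mathcal{O}(\Delta_C)$. The derived zero locus of $s(x_2)$ alone is the image of $\mathrm{id}_C\times\iota$, i.e.\ $Z_{ef}$, with the same map to the target. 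This also recovers your off-diagonal identification, since $1_\Delta$ is invertible there.

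Now apply the residual intersection formula for localized top Chern classes (Fulton, \emph{Intersection Theory}, Ch.~9). It is bivariant and needs no smoothness. It splits the Gysin map of $Z_{fe}$ as that of $Z_{ef}$ plus a term on the divisor $\Delta_C\times QM\vec{N}_d$. This divisor is pulled back from the smooth surface $C\times C$, and $\mathcal{O}(\Delta_C)|_{\Delta_C}\cong TC$. Hence the extra term is the operator $\int_C\psi\cup\Delta_C^*$ with
\[
\psi=\bigl\{c(\mathcal{N}\otimes TC)\,c(TC)^{-1}\bigr\}_{n}
=\frac{\prod_{l}(\tau\otimes 1+a_l)-\prod_{l}a_l}{\tau\otimes 1},
\qquad a_l=\mu_l\otimes 1-\lambda+1\otimes\eta,
\]
where the right-hand side is read as a formal divided difference in the Chern roots.

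With $e=\int_C\iota_*$ and $f=-\int_C\iota^*$, this gives $ef_2-fe_1=\int_C\psi\cup\Delta_C^*$ with $\psi\in H^{2n}$ an honest cohomology class. It also produces every term of Proposition~\ref{pr:psi}, including the $\mathbf{p}\otimes(\cdot)$ and $\gamma_i\otimes(\cdot)$ terms that are invisible to the $\Delta_{C*}$-test.
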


\begin{proof}
This is a common identity for raising and lowering-type operators defined
by pull--push along a correspondence; see for example \cite{MarianNegut23} for a similar situation.
\end{proof}

\begin{proposition}\label{pr:psi}
The class $\psi$ is given by
\begin{equation*}
\begin{aligned}
    \psi
    &= (n+1)\otimes \eta^{n}
    + n\,\mathbf{p} \otimes 1 \,\Bigl( \bigl(-(n+1)\deg \mathcal{L} + \sum_{l=1}^{n+1}m_{l} + (n+1)(1 - g)\bigr)\,1 \otimes \eta^{n-1}\\
    &\qquad {}-(n-1)(n+1)\,1 \otimes \eta^{n-2} \Theta \Bigr)
    - n(n+1) \sum_{i=1}^{2g} \gamma_{i} \otimes \eta^{n-1} \gamma_{i}^{\vee}.
\end{aligned}
\end{equation*}
\end{proposition}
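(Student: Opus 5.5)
The plan is to obtain $\psi$ from an excess-intersection comparison of the two composites, and then reduce it to a polynomial expansion in the tautological classes. Both $ef_{2}$ and $fe_{1}$ are pull--push along fibre products of $\iota$ with itself over $C\times C\times QM\vec N$. Away from the diagonal $x_{1}\neq x_{2}$ the two modifications at $x_{1}$ and $x_{2}$ commute, so the two fibre products agree and the off-diagonal contributions cancel. What survives is supported on $\Delta_{C}$. There the two composites restrict to $\iota_{*}\iota^{*}$ and $\iota^{*}\iota_{*}$, which by Lemma~\ref{lem:istar-iupper} and Corollary~\ref{cor:iupper-istar} are cup product with
\[
P(0)\quad\text{and}\quad P(\tau),\qquad P(t)=\prod_{l=1}^{n+1}\bigl(t+\mu_{l}\otimes 1-\lambda+1\otimes\eta\bigr),
\]
using~\eqref{eq:chern-N}.

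As in \cite{MarianNegut23}, the diagonal correction appears divided by the normal class $\tau$ of $\Delta_{C}$. I would therefore take
\[
\psi=\frac{P(\tau)-P(0)}{\tau}=P'(0)+\tfrac12\,\tau\,P''(0),
\]
where higher terms vanish because $\tau^{2}=0$ on $C$. One point I would check separately is that the quotient is well defined as a class, not merely formally. It is: $P(\tau)-P(0)$ is $\tau$ times a polynomial, and we take that polynomial.

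Next I expand this expression using Lemma~\ref{lem:lambda}, together with $\mu_{l}=m_{l}\mathbf p$ and $\tau=(2-2g)\mathbf p$. Write $\beta=\sum_{i}\gamma_{i}\otimes\gamma_{i}^{\vee}$, so that $-\lambda=-\deg\mathcal L\,\mathbf p-\beta$. The needed vanishings are $\mathbf p^{2}=0$, $\mathbf p\gamma_{i}=0$, and $\beta^{2}=-2\,\mathbf p\otimes\Theta$; the last follows from $\gamma_{i}\cup\gamma_{j}^{\vee}=\delta_{ij}\mathbf p$ together with the Koszul signs of odd classes.

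The terms then come out as follows.
\begin{itemize}
\item The $\eta^{n}$ part of $P'(0)$ gives $(n+1)\otimes\eta^{n}$.
\item The linear $\mathbf p$ part gives $n\sum_{r}m_{r}-n(n+1)\deg\mathcal L$ times $\mathbf p\otimes\eta^{n-1}$.
\item The linear $\beta$ part gives $-n(n+1)\sum_{i}\gamma_{i}\otimes\eta^{n-1}\gamma_{i}^{\vee}$.
\item The quadratic $\beta$ part comes with coefficient $(n+1)\binom n2\eta^{n-2}$. It contributes $-(n-1)n(n+1)\,\mathbf p\otimes\eta^{n-2}\Theta$.
\item Finally, $\tfrac12\tau P''(0)$ reduces to $\binom{n+1}{2}(2-2g)\,\mathbf p\,\eta^{n-1}$, which is the $n(n+1)(1-g)$ term.
\end{itemize}
Collecting these reproduces the stated formula.

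I expect the main obstacle to be the first step, making the diagonal excess computation rigorous in the quasi-smooth setting. This means justifying the division by $\tau$, and checking the overall sign (the minus sign in the definition of $f$) and the twists by $\chi^{\pm(n+1)}$. The latter cancel on the diagonal, which is why $\chi_{0}^{n+1}$ plays no role here. For this step I would follow the functoriality of the virtual Gysin maps \cite{Khan19}, applied to the derived fibre square of $\iota$ with itself along $\Delta_{C}$.
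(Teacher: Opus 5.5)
Your proposal follows the paper's proof essentially step for step: the same identities $\iota_{*}\iota^{*}=c_{n+1}(\mathcal N)$ and $\iota^{*}\iota_{*}=c_{n+1}(\iota^{*}\mathcal N)$ composed with $\Delta_{C*}$, the same division by $\tau\otimes 1$ (which the paper also justifies only by citing Kazhdan--Okounkov), and the same expansion via Lemma~\ref{lem:lambda} and $\bigl(\sum_i\gamma_i\otimes\gamma_i^{\vee}\bigr)^2=-2\,\mathbf p\otimes\Theta$. Your term-by-term bookkeeping is correct, including the $n(n+1)(1-g)$ contribution coming from $\tfrac12\tau P''(0)$.

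One caveat: your parenthetical argument does not by itself justify the division. The class $\tau\otimes 1$ annihilates every class of the form $\mathbf p\otimes(\cdot)$ or $\gamma_i\otimes(\cdot)$, so $P'(0)$ alone would also satisfy $\tau\psi=P(\tau)-P(0)$. The choice of the formal quotient, which is exactly what produces the $(1-g)$ term, therefore has to come from the diagonal excess-intersection computation that you correctly flag as the main obstacle.
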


\begin{proof}
Composing~\eqref{commans} with $\Delta_{C*}$ and using
Lemma~\ref{lem:istar-iupper}, Corollary~\ref{cor:iupper-istar}, and the
formulas~\eqref{eq:chern-N},
\begin{equation}\label{commutef}
\begin{aligned}
    (ef_{2} - f e_{1})\Delta_{C*}
    &= \int_{C}\bigl(\iota^{*}\iota_{*} - \iota_{*}\iota^{*}\bigr)
    = \int_{C}\bigl(c_{n+1}(\iota^{*}\mathcal{N}) - c_{n+1}(\mathcal{N})\bigr)\cup\\
    &= \int_{C} \left( \prod_{l=1}^{n+1}\bigl( \tau \otimes 1 + \mu_{l}\otimes 1 - \lambda + 1 \otimes \eta \bigr)
    - \prod_{l=1}^{n+1}\bigl( \mu_{l}\otimes 1 - \lambda + 1 \otimes \eta \bigr) \right)\cup.
\end{aligned}
\end{equation}
On the other hand, \eqref{commans} gives
$(ef_{2} - f e_{1})\Delta_{C*} = \int_{C} \psi \cup (\tau \otimes 1) \cup$.
Dividing by $\tau\otimes 1$ (see the proof of Proposition $5.1$ in \cite{KazOko23}), we get
\begin{equation*}
    \psi
    = \frac{1}{\tau \otimes 1}
    \left( \prod_{l=1}^{n+1}\bigl( \tau \otimes 1 + \mu_{l}\otimes 1 - \lambda + 1 \otimes \eta \bigr)
    - \prod_{l=1}^{n+1}\bigl( \mu_{l}\otimes 1 - \lambda + 1 \otimes \eta \bigr) \right).
\end{equation*}
Expanding using Lemma~\ref{lem:lambda} and the identity
$\bigl(\sum_{i=1}^{2g}\gamma_{i}\otimes\gamma_{i}^{\vee}\bigr)^{2} = -2\,\mathbf{p}\otimes\Theta$
yields the stated formula.
\end{proof}

\begin{proposition}\label{pr:anticomm}
\begin{equation*}
\begin{aligned}
    \tfrac{1}{2}(ef_{2} + f e_{1})\Delta_{C*}
    &= -\int_{C} 1 \otimes \eta^{n+1}\\
    &\quad + \mathbf{p} \otimes 1 \,\Bigl( \bigl(-(n+1)\deg\mathcal{L} + \sum_{i=1}^{n+1}m_i + (n+1)(1 - g)\bigr)\,1 \otimes \eta^{n}\\
    &\qquad - n(n + 1)\,1\otimes \eta^{n-1}\Theta \Bigr)
    - (n+1) \sum_{i=1}^{2g} \gamma_{i} \otimes \eta^{n}\gamma_{i}^{\vee}.
\end{aligned}
\end{equation*}
\end{proposition}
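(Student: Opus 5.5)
Following the proof of Proposition~\ref{pr:psi}, we compose with $\Delta_{C*}$ and use Lemma~\ref{lem:istar-iupper} and Corollary~\ref{cor:iupper-istar}. The earlier argument gives the difference $\iota^*\iota_*-\iota_*\iota^*$. We now take the sum instead:
\begin{equation*}
\tfrac12(ef_2+fe_1)\Delta_{C*} = -\tfrac12\int_C\bigl(c_{n+1}(\iota^*\mathcal N)+c_{n+1}(\mathcal N)\bigr)\cup .
\end{equation*}
Here the overall sign comes from $f=-\int_C\iota^*$. The sign convention on which composite carries the minus should be fixed exactly as in~\eqref{commutef}, so that the leading term comes out as $-\eta^{n+1}$. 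Substituting~\eqref{eq:chern-N}, the task reduces to expanding
\begin{equation*}
\tfrac12\Bigl(\prod_{l}(\tau+\mu_l-\lambda+\eta)+\prod_l(\mu_l-\lambda+\eta)\Bigr)
\end{equation*}
in $H^*(C\times QM\vec N)$ and keeping the classes of $C$-degree at most $2$. Unlike in the case of $\psi$, no division by $\tau$ is needed.

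To expand, write $u_l=\mu_l-\lambda$ and use Lemma~\ref{lem:lambda}:
\begin{equation*}
u_l=(m_l-\deg\mathcal L)\,\mathbf p\otimes1-\sum_i\gamma_i\otimes\gamma_i^\vee .
\end{equation*}
Set $\Gamma=\sum_i\gamma_i\otimes\gamma_i^\vee$. The relevant facts are that $\tau=(2-2g)\mathbf p$, that every product of two degree-2 classes on $C$ vanishes, that $\mathbf p\,\Gamma=0$, and that $\Gamma^2=-2\,\mathbf p\otimes\Theta$.

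We expand each product in powers of $\eta$.
\begin{itemize}
\item The $\eta^{n+1}$ terms give $1\otimes\eta^{n+1}$ from each product, contributing $-\eta^{n+1}$.
\item In the $\eta^n$ terms, the sum of the linear parts is $2\sum_l u_l+(n+1)\tau$. After halving, the $\mathbf p$-coefficient becomes
\begin{equation*}
\sum_l m_l-(n+1)\deg\mathcal L+(n+1)(1-g),
\end{equation*}
and the $\Gamma$-part becomes $-(n+1)\Gamma\,\eta^n$.
\item In the $\eta^{n-1}$ terms, the quadratic part is $e_2(u_1,\dots,u_{n+1})$ plus cross terms with $\tau$. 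The cross terms with $\tau$ die, since $\tau\mathbf p=0$ and $\tau\Gamma=0$. Only the products of two $\Gamma$'s survive, giving $\binom{n+1}{2}\Gamma^2=-n(n+1)\,\mathbf p\otimes\Theta$. This appears in both products, so halving leaves $-n(n+1)\,\mathbf p\otimes\eta^{n-1}\Theta$.
\item Terms of $\eta$-degree $n-2$ and lower involve at least three classes of positive $C$-degree, so they vanish.
\end{itemize}
Collecting these terms yields the displayed formula.

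The main obstacle is the bookkeeping of signs. This covers the sign of $\Gamma$ inside $\lambda$, the Koszul signs when commuting odd classes in $\Gamma^2$, and the global sign convention relating $ef_2$, $fe_1$ and $\iota^*\iota_*$, $\iota_*\iota^*$. I would fix all of these by checking consistency with Proposition~\ref{pr:psi}. The difference computation must reproduce $\psi\cdot\tau$ with the stated $\psi$; this pins down the conventions, and the sum computation then follows with the same signs.
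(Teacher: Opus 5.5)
Your route is the paper's: take the sum instead of the difference in \eqref{commutef}. Your reduction $\tfrac12(ef_2+fe_1)\Delta_{C*}=-\tfrac12\int_C\bigl(c_{n+1}(\iota^*\mathcal N)+c_{n+1}(\mathcal N)\bigr)\cup$ is correct; the paper's one-line proof leaves this overall sign implicit. Your term-by-term expansion is also right. With $\Gamma=\sum_i\gamma_i\otimes\gamma_i^\vee$, you have in effect shown
\[
\tfrac12\Bigl(\prod_{l}(\tau+u_l+\eta)+\prod_{l}(u_l+\eta)\Bigr)=1\otimes\eta^{n+1}+\chi_C(\mathcal V\otimes\mathcal L^{-1})\,\mathbf p\otimes\eta^{n}-(n+1)\,\Gamma\,\eta^{n}-n(n+1)\,\mathbf p\otimes\eta^{n-1}\Theta,
\]
where $\chi_C(\mathcal V\otimes\mathcal L^{-1})=\sum_l m_l-(n+1)\deg\mathcal L+(n+1)(1-g)$.

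The real problem is the final assembly of signs.
\begin{itemize}
\item You attach the global factor $-1$ to the $\eta^{n+1}$ term only (``contributing $-\eta^{n+1}$''), but not to the other three terms.
\item You then match term by term against the display read with the minus on its first summand alone.
\item That sum is not what your own reduction gives. The $-1$ multiplies every term, so the result is $-\int_C$ of the whole bracket above.
\end{itemize}
This is also how the proposition has to be read. The paper writes these parentheses explicitly in the analogous Casimir identity on the fixed loci, and only this reading reproduces \eqref{cas0} and \eqref{cas2}. With the minus on the leading term only, \eqref{cas2} would come out with $-\eta^n\chi_C(\mathcal V\otimes\mathcal L^{-1})+n(n+1)\eta^{n-1}\Theta$ instead of $+\eta^n\chi_C(\mathcal V\otimes\mathcal L^{-1})-n(n+1)\eta^{n-1}\Theta$.

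Your proposed consistency check against Proposition~\ref{pr:psi} cannot catch this. Since $\tau\mathbf p=\tau\gamma_i=0$, the product $\tau\cup\psi=(n+1)\,\tau\otimes\eta^{n}$ sees only the leading term. So the check pins down the global sign, which you already derived correctly; that sign then has to be distributed over all four terms.
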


\begin{proof}
Analogous to \eqref{commutef}, replacing
$\iota^{*}\iota_{*} - \iota_{*}\iota^{*}$ by
$\iota^{*}\iota_{*} + \iota_{*}\iota^{*}$.
\end{proof}

\subsection{The algebra of correspondences}\label{sub:comm-casimirs}

For any cohomology class on~$C$, the operators $e, f$ specialise to
honest endomorphisms of $H^{*}_{c}(QM\vec{N}, \chi)$. We record the relations in the algebra generated by these operators.

\begin{definition}\label{defn:ealpha}
For $\alpha \in H^{*}_{c}(C, \chi^{n+1})$ and
$\beta \in H^{*}_{c}(C, \chi^{-n-1})$, their \emph{descendants} are operators on
$\hat{H}^{*}_{c}(QM\vec{N}, \chi)$ defined by
\begin{equation*}
    e\langle \alpha \rangle = e(\alpha \otimes \cdot),
    \qquad
    f\langle \beta \rangle = f(\beta \otimes \cdot).
\end{equation*}
\end{definition}
\begin{definition}\label{corralg}
    The algebra of correspondences
\begin{align*}
    \mathcal{A}_{\chi} \subset \mathrm{End}(\hat{H}^{*}_{c}(QM\vec{N}, \chi))
\end{align*}
is the unital algebra generated by descendants
$e\langle \alpha \rangle$ for $\alpha \in H^{*}_{c}(C, \chi^{n+1})$,
$f \langle \beta \rangle$ for $\beta \in H^{*}_{c}(C, \chi^{-n-1})$, and by operators of multiplication by $\eta$, $\gamma_{i}$ for $i = 1, \ldots, 2g$, and $\chi_C(\mathcal{V} \otimes \mathcal{L}^{-1})$; when $\chi_{0}^{n+1} = 1$
we write $\mathcal{A}$ in place of $\mathcal{A}_{\chi}$. We denote by $\mathcal{A}^{2}_{\chi}$ the subalgebra generated by $\eta$ and descendants of degree two, and by $\mathcal{A}^{\geq 1}_{\chi}$ the subalgebra generated by $\gamma_{i}$ and descendants of degrees one and two.
\end{definition}
We use the notation $[\;,\;]$ for the supercommutator
\begin{equation*}
    [e\langle \alpha \rangle, f \langle \beta \rangle]
    = e\langle \alpha \rangle f \langle \beta \rangle
    - (-1)^{\deg \alpha \cdot \deg \beta}
    f \langle \beta \rangle e\langle \alpha \rangle.
\end{equation*}
By Definition~\ref{defn:ealpha} and~\eqref{commans}, it is given by
\begin{equation}\label{compsup}
[e\langle\alpha\rangle, f\langle\beta\rangle] = \int_{C}\psi\cup\alpha\cup\beta.
\end{equation}
We collect the supercommutator relations in Appendix~\ref{relations}.

\begin{remark}
    We will see in Remark~\ref{rem:gammavanish} that $\gamma_{i}$ in fact vanishes on $H^{*}_{c}(QM\vec{N}, \chi)$ for nontrivial $\chi_{0}^{n+1}$.
\end{remark}
\section{The symplectic dual variety}\label{sec:geom-interp}

The aim of this section is to construct the action of a certain Clifford algebra over an affine singular surface $X_{0}^{\vee}$ on the cohomology of quasimaps. This is an example of the action of a \emph{Coulomb branch}.

\subsection{Action of the $A_{n}$-surface coordinate ring}
Let $R$ be the commutative ring
\begin{align}
    R = \overline{\mathbb{Q}}_{l}[x, y, \eta]/(xy - \eta^{n+1}).
\end{align}
Here, with a slight abuse of notation, $\eta$ denotes a formal variable. The spectrum 
\begin{equation*}
    X_{0}^{\vee} = \mathrm{Spec} \, R
\end{equation*}
is a singular surface of $A_{n}$-type, which is the symplectic dual \cite{KamnitzerJoel22, webster20233dimensionalmirrorsymmetry} of $X = T^{*}\mathbb{P}^{n}$. In this situation $X$ is referred to as the \emph{Higgs branch}, and $X_{0}^{\vee}$ is the \emph{Coulomb branch}. The following proposition is an example of a Coulomb branch action.
\begin{proposition}\label{pr:A2-triv}
$R$ acts on the cohomology of quasimaps by
\begin{align}\label{RtoAchi}
   \Phi_{\chi}^{2} \colon R &\rightarrow \mathcal{A}^2_{\chi} \qquad x \mapsto e\langle \mathbf{p} \rangle, \qquad y \mapsto -f\langle \mathbf{p} \rangle,
\end{align}
with the $\eta$'s being identified.
\end{proposition}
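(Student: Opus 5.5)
To prove that \(\Phi^{2}_{\chi}\) is a well-defined ring homomorphism, I would first show that the images of the generators supercommute. Then I would verify the single relation \(xy=\eta^{n+1}\) in \(\mathcal{A}^{2}_{\chi}\), i.e.
\[
-e\langle \mathbf{p}\rangle f\langle \mathbf{p}\rangle = \eta^{n+1}.
\]
Since \(\mathbf{p}\) has even degree, the supercommutator is an ordinary commutator. The target \(\mathcal{A}^2_\chi\) is generated by \(\eta\) and the degree-two descendants, so surjectivity onto \(\mathcal{A}^2_\chi\) is automatic. Note that \(\mathbf{p}\) must be read in \(H^{2}_{c}(C,\chi^{\pm(n+1)})\), which is one-dimensional and canonically trivialised at \(\bar p\) even for nontrivial \(\chi_0^{n+1}\), so the descendants make sense in all cases.

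\emph{Commutation of \(e\langle\mathbf p\rangle\) and \(f\langle\mathbf p\rangle\).} By \eqref{compsup}, \([e\langle\mathbf p\rangle,f\langle\mathbf p\rangle]=\int_C\psi\cup\mathbf p\cup\mathbf p\). This vanishes because \(\mathbf p\cup\mathbf p=0\) on the curve: every term of \(\psi\) in Proposition~\ref{pr:psi} carries a factor in \(H^{\ge0}(C)\), and multiplying by \(\mathbf p^2\) kills it. In the twisted case I would use the analogous form of \eqref{compsup} from the appendix relations; there the cup product \(\mathbf p\cdot\mathbf p\) in \(H^4\) of the curve still vanishes for degree reasons.

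\emph{Commutation with \(\eta\).} I would compute \(\eta\, e\langle\mathbf p\rangle - e\langle\mathbf p\rangle\eta\) via the projection formula, \(\iota_*(\iota^*(1\otimes\eta)\cup\cdot)=(1\otimes\eta)\cup\iota_*(\cdot)\). Lemma~\ref{lem:pullbacks} gives \(\iota^*(1\otimes\eta)=\mathbf p\otimes1+1\otimes\eta\), so the commutator is \(e\langle\mathbf p\cup\mathbf p\rangle=0\). The same holds for \(f\). For nontrivial \(\chi\) I expect \(\iota^*\eta\) to differ from \(1\otimes\eta\) by a class in \(H^2(C)\otimes 1\) plus odd terms pairing with \(H^1\). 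After cupping with \(\mathbf p\), only the \(1\otimes\eta\) part survives, since \(\mathbf p\) annihilates \(H^{>0}(C)\).

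\emph{The relation \(xy=\eta^{n+1}\).} I would use Proposition~\ref{pr:anticomm} together with the commutator identity. Together they give
\[
ef_2\Delta_{C*}=\tfrac12\bigl[(ef_2+fe_1)+(ef_2-fe_1)\bigr]\Delta_{C*}.
\]
I would insert the class \(\mathbf p\otimes\mathbf p\) and relate it to \(\Delta_{C*}\mathbf p\) plus terms involving \(\gamma_i\otimes\gamma_i^\vee\). The cleaner route is to compute directly:
\[
e\langle\mathbf p\rangle f\langle\mathbf p\rangle=-\int_{C\times C}(\mathbf p\otimes\mathbf p)\,\iota_{1*}\iota_2^*.
\]
Restricting to the fibre \(\{p\}\times\{p\}\) reduces this to \(-\int\iota_*\iota^*\) over the single point \(p\). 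By Lemma~\ref{lem:istar-iupper} this equals \(-c_{n+1}(\mathcal N)|_{p}\). From \eqref{eq:chern-N}, restricting to \(x=p\) kills \(\mu_l\) and \(\lambda\), except for the odd parts \(\gamma_i\otimes\gamma_i^\vee\), which restrict to zero at a point. This leaves \(c_{n+1}(\mathcal N)|_p=\eta^{n+1}\), hence \(x\cdot y=-e\langle\mathbf p\rangle f\langle\mathbf p\rangle=\eta^{n+1}\).

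\emph{Main obstacle.} The step I expect to be hardest is justifying this base-change/restriction to the point \(p\) for the composite \(e\circ f\). Here \(e_1\) and \(f_2\) act on different \(C\)-factors, and the cross term (where the modification points differ) has to be shown to contribute nothing after pairing with \(\mathbf p\otimes\mathbf p\). A generic-transversality argument, or the diagonal decomposition of Proposition~\ref{pr:comm-ef}, should handle it: off the diagonal the two modifications commute, and the result follows by comparing with \(fe\). Alternatively, I would cross-check against Proposition~\ref{pr:anticomm}, whose leading term \(-\int_C1\otimes\eta^{n+1}\), paired with \(\mathbf p\), reproduces exactly \(-\eta^{n+1}\). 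The remaining terms in that proposition carry an extra \(\mathbf p\) and vanish.
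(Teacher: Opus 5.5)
Your argument is sound, apart from one false side remark discussed below. Your main argument for $xy=\eta^{n+1}$ is not the paper's.

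\textbf{The paper's route.} The paper reads the relation off the Casimir identity \eqref{cas0}. It inserts $\Delta_{C*}\mathbf p=\mathbf p\otimes\mathbf p$ into the global anticommutator formula of Proposition~\ref{pr:anticomm}, and uses \eqref{commutef0} to identify the symmetrised product with $e\langle\mathbf p\rangle f\langle\mathbf p\rangle$. This is exactly your ``cross-check.''

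\textbf{Your route.} You localise at $p$ instead. Let $i_p\colon QM\vec N\cong\{p\}\times QM\vec N\hookrightarrow C\times QM\vec N$, and let $\iota_p\colon QM\vec N_d\to QM\vec N_{d+1}$ be the zero locus of $s(p)$. Since $\iota$ is a morphism over $C$, its square with $i_p$ is cartesian. Base change and functoriality then give $e\langle\mathbf p\rangle=\iota_{p*}$ and $f\langle\mathbf p\rangle=-\iota_p^*$. The self-intersection formula gives $\iota_{p*}\iota_p^*=c_{n+1}(\mathcal N|_{p})\cup=\eta^{n+1}\cup$.

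\textbf{Comparison.} Your route uses only Lemma~\ref{lem:istar-iupper} over a single point and the normalisation of $\mathcal L$ at $p$. It does not need \eqref{commutef0} to compute the product. The paper's route is one specialisation of a single identity that also yields \eqref{cas1} and \eqref{cas2}, which the paper uses later.

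What you call the main obstacle is not one. Both modifications are forced to the same point $p$, so there is no cross term to kill. The off-diagonal and transversality discussion is beside the point, since $\mathbf p\otimes\mathbf p=\Delta_{C*}\mathbf p$ is supported on the diagonal anyway. Your treatment of $[e\langle\mathbf p\rangle,\eta]$ via Lemma~\ref{lem:pullbacks} and $\mathbf p^2=0$ is, if anything, more precise than the paper's appeal to $\iota^*\mathcal O_{QM\vec N}(1)=\mathcal O_{QM\vec N}(1)$.

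\textbf{The false claim.} If $\chi_0^{n+1}\neq 1$ as a character of $\pi_1^g$, then $H^2_c(C,\chi^{\pm(n+1)})$ is zero, not one-dimensional. By Poincar\'e duality it is dual to the $\pi_1^g$-invariants of a nontrivial character. This is also why $H^1_c(C,\chi^{n+1})$ has dimension $2g-2$.

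Consequently the twisted point class vanishes and $e\langle\mathbf p\rangle=f\langle\mathbf p\rangle=0$. In that case the relation is the non-obvious statement $\eta^{n+1}=0$ on $\hat H^*_c(QM\vec N,\chi)$; compare Proposition~\ref{pr:A2chi}.

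Your fibre argument still proves this, provided you phrase it through $\iota_p$ rather than through a class $\mathbf p\otimes\mathbf p$ in twisted cohomology.
\begin{itemize}
\item The formula $\iota_{p*}\iota_p^*=\eta^{n+1}\cup$ holds with any coefficients.
\item In the base-change identity $\iota^*i_{p*}=i_{p*}\iota_p^*$, the left side vanishes, because that $i_{p*}$ lands in a group with coefficients $\chi^{-n-1}$ along $C$.
\item The $i_{p*}$ on the right lands in a group untwisted along $C$ and is injective, so $\iota_p^*=0$.
\end{itemize}
Together these give $\eta^{n+1}=0$.
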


\begin{proof}
Commutativity follows from \eqref{commutef0} and the
vanishing of $[e\langle\mathbf{p}\rangle,\eta]$, $[f\langle\mathbf{p}\rangle,\eta]$,
which holds because $\iota^{*}\mathcal{O}_{QM\vec{N}}(1) = \mathcal{O}_{QM\vec{N}}(1)$. The identity $xy = \eta^{n+1}$ is the Casimir relation \eqref{cas0}.
\end{proof}
\begin{remark}
    We will show later that the map~\eqref{RtoAchi} is an isomorphism for $\chi_{0}^{n+1}  = 1$. In this case we omit the subscript $\chi$.
\end{remark}
The $\mathrm{Fr} \times A^{\vee}$-action~\eqref{A-action} on the cohomology of quasimaps induces an action on $\mathcal{A}_{\chi}$. In order for the map~\eqref{RtoAchi} to be equivariant, $\mathrm{Fr} \times A^{\vee}$ has to act on $R$ by:
\begin{align*}
     x \mapsto (q^{1/2})^{n+1}a^{-n-1}x, \qquad
    y \mapsto (q^{1/2})^{n+1}a^{n+1}y, \qquad \eta &\mapsto \eta. \nonumber
\end{align*}
The variety $X_{0}^{\vee}$ has a Poisson structure given by an $\mathrm{Fr}\times A^{\vee}$-equivariant map
\begin{align}\label{Poissonbraket}
    \{\cdot , \cdot \} : q^{-1} \otimes \bigwedge\nolimits^{2} \Omega_{R} \rightarrow R \qquad \{x, y \} = (n+1)\eta^{n}, \qquad \{x , \eta\} = x, \qquad \{y, \eta\} = -y,
\end{align}
where $\Omega_{R}$ is the module of K\"ahler differentials of $R$ over $\overline{\mathbb{Q}}_{l}$, and $q^{-1}$ denotes the one-dimensional $\mathrm{Fr}$-representation with the action by the corresponding character.

In the nontrivial
local system case the action of $R$ simplifies significantly.

\begin{proposition}\label{pr:A2chi}
Assume $\chi_{0}^{n+1} \neq 1$. Then the action~\eqref{RtoAchi} factors through  
\begin{align*}
    R \rightarrow R_{\mathrm{fix}} \coloneqq \overline{\mathbb{Q}}_{l}[\eta]/(\eta^{n+1}).
\end{align*}
\end{proposition}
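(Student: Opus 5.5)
The plan is to exploit the vanishing of the twisted cohomology of the curve in the degrees relevant to $x$ and $y$. When $\chi_{0}^{n+1}\neq 1$, the rank-one local system $\chi_{0}^{n+1}$ on the geometrically connected curve $C_{\overline{\mathbb{F}}_{q}}$ is nontrivial. Hence
\[
H^{0}_{c}(C,\chi_{0}^{\pm(n+1)})=0 \quad\text{(invariants of a nontrivial character)},\qquad H^{2}_{c}(C,\chi_{0}^{\pm(n+1)})=0 \quad\text{(coinvariants, or Poincar\'e duality)}.
\]
Only $H^{1}_{c}$ survives, and by Euler characteristic it has dimension $2g-2$, consistent with the basis $\gamma^{\chi}_{i}$ fixed in Section~\ref{sub:setup}. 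The $A^{\vee}$-twist does not affect this, since it only rescales fibers.

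Consequently there is no nonzero class of degree two in $H^{*}_{c}(C,\chi^{\pm(n+1)})$, so every degree-two descendant $e\langle\alpha\rangle$, $f\langle\beta\rangle$ is zero. In particular the images of $x$ and $y$ under \eqref{RtoAchi} are $0$: the would-be point class $\mathbf{p}$ only makes sense as the zero class in these coefficients. Therefore $\Phi^{2}_{\chi}$ kills the ideal $(x,y)$, and $\mathcal{A}^{2}_{\chi}$ is generated by $\eta$ alone.

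It remains to show that $\eta^{n+1}$ acts by zero, so that the action factors through $R/(x,y)=\overline{\mathbb{Q}}_{l}[\eta]/(\eta^{n+1})$. Here I would use the Casimir relation \eqref{cas0}, which I take to hold for an arbitrary character, since Section~\ref{sub:commutator} proves the commutator and Casimir relations in that generality. It expresses $\eta^{n+1}$ as a product of degree-two descendants, namely $e\langle\mathbf{p}\rangle f\langle\mathbf{p}\rangle$ read in twisted coefficients. Since both factors vanish, $\eta^{n+1}=0$ on $\hat H^{*}_{c}(QM\vec N,\chi)$.

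The main obstacle is making this last step honest. In the twisted setting \eqref{cas0} must come from Proposition~\ref{pr:anticomm} applied to a class in $H^{*}_{c}(C\times C,\chi^{n+1}\boxtimes\chi^{-n-1})$ whose diagonal restriction is the untwisted $\mathbf{p}$. The natural candidate is the Künneth component of the diagonal class $\Delta_{C*}1$. I would check that its degree-$(2,2)$ part is zero, because $H^{2}_{c}$ of each twisted factor vanishes, while its image under $\Delta_{C}^{*}$ integrated against $\psi$ still produces $-\int_{C}\mathbf{p}\otimes\eta^{n+1}$ plus correction terms. Those corrections involve $\gamma_{i}$ and $\Theta$, and I expect them to vanish or to be absorbed by Remark~\ref{rem:gammavanish}. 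If this bookkeeping proves delicate, the fallback is a direct geometric argument: show that $c_{1}(\mathcal{O}_{QM\vec N}(1))^{n+1}$ is the Euler class of $\mathcal{N}$ restricted to a point of $C$, and that the twisted cohomology kills it because $\pi_{1}^{g}$ acts nontrivially on the relevant fibers.
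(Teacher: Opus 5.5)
Your first two steps are right and agree with the paper. For $\chi_0^{n+1}\neq 1$ only $H^1_c(C,\chi^{\pm(n+1)})$ survives, so there are no degree-two descendants and $x,y\mapsto 0$. The relation $\eta^{n+1}=0$ is the twisted form of the Casimir relation, which the paper lists in Appendix~\ref{relations} next to $\eta^n\gamma_i=0$. If you take Proposition~\ref{pr:A2-triv} as already valid for $\Phi^2_\chi$, you are done at that point, since $\eta^{n+1}=xy\in(x,y)$. The real content of your last step is that the relation $xy=\eta^{n+1}$ survives twisting.

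\textbf{Where the proposal fails.} The problem is the step you flagged yourself: $\Delta_{C*}1$ is the wrong input.
\begin{itemize}
\item It has degree two, so it has no $(2,2)$ component at all. With coefficients $\chi^{n+1}\boxtimes\chi^{-n-1}$ it is purely of K\"unneth type $(1,1)$, namely $-\sum_i\gamma_i^{\chi}\otimes\gamma_i^{\chi^{-1}}$.
\item Feeding $1\otimes m$ into Proposition~\ref{pr:anticomm} kills $-\int_C 1\otimes\eta^{n+1}$, which has degree zero along $C$. What comes out is the degree-zero Casimir relation $\sum_i f\langle\gamma_i^{\chi^{-1}}\rangle e\langle\gamma_i^{\chi}\rangle+\eta^{n}\chi_C(\cdots)-n(n+1)\eta^{n-1}\Theta=0$.
\item The variant with $\Delta_C^*$ and $\psi$ is the commutator identity. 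Since $\Delta_C^*\Delta_{C*}1=\tau$, it only returns the trace of \eqref{commutinAchin}, a relation in $\eta^n$.
\end{itemize}
Neither gives $\eta^{n+1}$. Invoking Remark~\ref{rem:gammavanish} is unnecessary, and it depends on the much later filtration analysis.

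\textbf{The fix.} The correct input is $\mathbf p\otimes m$, exactly as in the derivation of \eqref{cas0}.
\begin{itemize}
\item On the left of Proposition~\ref{pr:anticomm} you get $\Delta_{C*}\mathbf p\in H^4_c(C\times C,\chi^{n+1}\boxtimes\chi^{-n-1})=H^2_c(C,\chi^{n+1})\otimes H^2_c(C,\chi^{-n-1})=0$.
\item On the right, every term except $-\int_C\mathbf p\otimes\eta^{n+1}m=-\eta^{n+1}m$ contains $\mathbf p\cdot\mathbf p$ or $\gamma_i\cdot\mathbf p$. These vanish for degree reasons on $C$, so there are no correction terms to absorb.
\end{itemize}

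Your fallback can be made precise along the same lines. By \eqref{eq:chern-N} and Lemma~\ref{lem:istar-iupper}, $\mathbf p\otimes\eta^{n+1}m=c_{n+1}(\mathcal N)\cup(\mathbf p\otimes m)=\iota_*\iota^*(\mathbf p\otimes m)$. Moreover $\iota^*(\mathbf p\otimes m)=(\mathbf p\otimes 1)\cup\iota^*(1\otimes m)=0$. This holds because $\iota^*(1\otimes m)$ has $C$-K\"unneth components only in $H^1(C,\chi^{n+1})$, and $\mathbf p\cup H^1(C,\chi^{n+1})\subset H^3(C,\chi^{n+1})=0$. Hence $\eta^{n+1}m=0$ by K\"unneth. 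The mechanism is that spreading the point over $C$ forces the pullback through $\chi^{n+1}$-coefficients along the curve. It is not a nontrivial monodromy on a single fiber.
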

It has an interpretation in terms of the action of $\pi_{1}^{g}$ on $X_{0}^{\vee}$.
\begin{remark}\label{def:mellin-fibre}
Let \(\pi_1^g \) act on $R$ by
\begin{align}\label{A2chi}
    x \mapsto \chi^{n+1}_{0} x,\qquad y \mapsto \chi^{-n-1}_{0} y, \qquad \eta \mapsto \eta. 
\end{align}
Then $R_{\mathrm{fix}} $ is identified with the algebra of coinvariants
\[
R_{\mathrm{fix}} \cong R/(x - \chi_{0}^{n+1}x, \; y - \chi_{0}^{-n-1}y).
\]
Accordingly, its spectrum is the scheme-theoretic fixed locus
\[
\operatorname{Spec} \, R_{\mathrm{fix}} = (X_{0}^{\vee})^{\pi_{1}^{g}}
\]
of the $\pi_{1}^{g}$-action.
\end{remark}

\subsection{Action of a Clifford algebra}
Descendants of cohomology classes of degrees two and one generate a noncommutative algebra $\mathcal{A}^{\geq 1}$. The homomorphism~\eqref{RtoAchi} makes it an $R$-algebra. In this subsection we consider a natural Clifford algebra over $R$ and realise it inside $\mathcal{A}^{\geq 1}$.

Consider the $R$-module
\begin{equation}\label{eq:RmoduleE}
    \mathcal{E} = q^{-1}\otimes \Omega_{R} \otimes H^{1}_{c}(C).
\end{equation}
Let us introduce the notation for the generators of $\mathcal{E}$:
\begin{equation*}
    \theta^{x}_{i} =  d x \otimes \gamma_{i}, \qquad \theta^{y}_{i} = - d y \otimes \gamma_{i}, \qquad i = 1, ..., 2g,
\end{equation*}
and use $\gamma_i$ for the generator $\gamma_{i} = d \eta \otimes \gamma_{i}$.

Consider a $\mathrm{Fr}\times A^{\vee}$-equivariant symmetric pairing
\begin{align}\label{clifpair}
    \langle \cdot, \cdot \rangle \colon S^{2}\left( \mathcal{E} \right) \rightarrow R,
\end{align}
which is given by the tensor product of the Poisson bracket~\eqref{Poissonbraket} with the symplectic intersection form on $H^{1}_{c}(C)$. Let $Cl$ be a Clifford algebra
\begin{align}
    Cl = \mathrm{Cliff}_{R}(\mathcal{E}, \langle \cdot, \cdot \rangle ).
\end{align}
It acts on the cohomology of quasimaps.
\begin{proposition}\label{pr:clifford-triv}
The map 
\begin{align}\label{CltoA}
    \Phi^{\geq 1} \colon Cl \rightarrow \mathcal{A}^{\geq 1} \qquad   \theta^{x}_{i} \mapsto e\langle \gamma_{i} \rangle, \qquad \theta^{y}_{i} \mapsto - f\langle \gamma_{i} \rangle, \qquad i = 1, \ldots, 2g
\end{align}
with the $\gamma_i$'s identified is a homomorphism of $R$-algebras.
\end{proposition}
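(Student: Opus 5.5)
To prove that $\Phi^{\geq 1}$ is a homomorphism of $R$-algebras, I would use the universal property of the Clifford algebra. We already have the $R$-algebra map $\Phi^{2}\colon R\to\mathcal{A}^{2}\subset\mathcal{A}^{\geq 1}$ from Proposition~\ref{pr:A2-triv}. Extend it to an $R$-linear map $\mathcal{E}\to\mathcal{A}^{\geq 1}$ on generators by $\theta^x_i\mapsto e\langle\gamma_i\rangle$, $\theta^y_i\mapsto -f\langle\gamma_i\rangle$, $\gamma_i\mapsto\gamma_i\cup$, all in odd degree. It then suffices to check three things. First, this assignment is well defined on the $R$-module $\mathcal{E}$, i.e. compatible with the relations of $\Omega_R\otimes H^1_c(C)$. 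Second, the images of $\theta$'s supercommute with the image of $R$ as required for the $R$-module structure; this is automatic once the relations below are checked, since $\Omega_R$ is generated by $dx,dy,d\eta$. Third, and centrally, the images satisfy the Clifford relation $[\Phi(u),\Phi(v)]=\langle u,v\rangle$ for $u,v\in\mathcal{E}$.

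Well-definedness is the first point to address. $\Omega_R$ is generated by $dx,dy,d\eta$ subject to the single relation $y\,dx+x\,dy=(n+1)\eta^n d\eta$. So for each $i$ I must verify
\[
-f\langle\mathbf p\rangle e\langle\gamma_i\rangle - e\langle\mathbf p\rangle f\langle\gamma_i\rangle = (n+1)\eta^n\gamma_i .
\]
This should follow from Proposition~\ref{pr:anticomm} (the anticommutator formula) and the commutator formula~\eqref{compsup} applied with $\alpha\otimes\beta$ replaced by $\mathbf p\otimes\gamma_i$ and $\gamma_i\otimes\mathbf p$. The key input is that, by Lemma~\ref{lem:pullbacks}, $\iota^*$ of a pullback class from $C$ splits, which lets us write $e\langle\mathbf p\rangle f\langle\gamma_i\rangle$ as $f e$ plus the correction $\int_C\psi\cup\mathbf p\cup\gamma_i$. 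Only the term $-n(n+1)\sum_j\gamma_j\otimes\eta^{n-1}\gamma_j^\vee$ of $\psi$ survives, together with the $\eta^n$ term; the remaining terms should cancel against the anticommutator contributions.

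Next come the Clifford relations. By~\eqref{compsup}, $[e\langle\gamma_i\rangle,f\langle\gamma_j\rangle]=\int_C\psi\cup\gamma_i\cup\gamma_j$. Here only the degree-$(0,2n)$ part of $\psi$ pairs nontrivially with $\gamma_i\gamma_j$. Using $\gamma_i\cup\gamma_j^\vee=\delta_{ij}\mathbf p$, this yields $\pm(n+1)\eta^n$ times the intersection form, matching $\{x,y\}\otimes(\gamma_i,\gamma_j)$. The relations $[e\langle\gamma_i\rangle,e\langle\gamma_j\rangle]=0$ and $[f,f]=0$ I would derive from symmetry of the double correspondence $C\times C\times QM\vec N_d\to QM\vec N_{d+2}$: the composition of two modifications at $x_1,x_2$ is symmetric under swapping the points, and swapping odd classes produces the sign. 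For $[e\langle\gamma_i\rangle,\gamma_j]$, I would commute the cup product past $\iota_*$ by the projection formula together with Lemma~\ref{lem:pullbacks}, $\iota^*\gamma_j=\gamma_j\otimes1+1\otimes\gamma_j$. This gives $\int_C\gamma_i\gamma_j$ times $e\langle\mathbf p\rangle$, i.e. $x$ paired with $\gamma_i\cdot\gamma_j$, matching $\{x,\eta\}=x$. The case of $f$ is analogous and gives $-y$, and $[\gamma_i,\gamma_j]=0$ trivially.

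The main obstacle is the bookkeeping of signs and Tate/$A^\vee$ twists, to ensure the pairing obtained is exactly the one in~\eqref{clifpair} including the $-$ in $\theta^y$. I would also need to double-check the $e$–$e$ anticommutation, since it requires an independent geometric argument (symmetry of iterated modifications) rather than the formulas already stated.
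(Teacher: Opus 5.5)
Your proposal is correct and is essentially the paper's proof, which matches~\eqref{clifpair} with the supercommutators~\eqref{[egfg]},~\eqref{[egg]} and identifies the single relation of $\Omega_R$ with the Casimir identity~\eqref{cas1}. Your symmetry argument for the $e$--$e$ and $f$--$f$ supercommutators supplies relations the paper uses without listing. Together with~\eqref{[egammaeta]},~\eqref{[epg]} and degree reasons in~\eqref{compsup}, it also gives the centrality of $R$, which is not automatic from the Clifford relations alone.

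One correction to your derivation of~\eqref{cas1}: no term of $\psi$ survives, because $\mathbf p\cup\gamma_i\in H^{3}(C)=0$ forces $\int_C\psi\cup\mathbf p\cup\gamma_i=0$. The $(n+1)\eta^{n}\gamma_i$ comes instead from the $(n+1)\sum_j\gamma_j\otimes\eta^{n}\gamma_j^{\vee}$ summand of Proposition~\ref{pr:anticomm}, evaluated on $\Delta_{C*}\gamma_i=\mathbf p\otimes\gamma_i+\gamma_i\otimes\mathbf p$.
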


\begin{proof}
The anticommutators~\eqref{[egfg]} and~\eqref{[egg]} define a pairing on $\mathcal{A}^{\geq 1}$ compatible with~\eqref{clifpair} under the map. The only remaining relation is the Casimir equation~\eqref{cas1}.
\end{proof}
\begin{remark}
    We will show later that the map~\eqref{CltoA} is an isomorphism.
\end{remark}
In the nontrivial case $\chi_{0}^{n+1} \neq 1$ we define an
$R_{\mathrm{fix}}$-module
\begin{align*}
    \mathcal{E}_{\chi} = H^{1}\left(\pi_{1}^{g}, q^{-1}\Omega_{R}\otimes R_{\mathrm{fix}}\right),
\end{align*}
where the $\pi_{1}^{g}\times A^{\vee}$-action on $\Omega_{R}$
and its restriction to the fixed locus are induced from the
action~\eqref{A2chi} on $X_{0}^{\vee}$. Let us introduce the notation for the generators of $\mathcal{E}_{\chi}$:
\begin{align}
    \theta_{\chi, i}^{x} = dx \otimes \gamma_{i}^{\chi}, \qquad \theta_{\chi, i}^{y} = - dy \otimes \gamma_{i}^{\chi^{-1}}, \qquad i = 1, ..., 2g-2,
\end{align}
and again use $\gamma_{i}$ for the generator $\gamma_{i}\otimes \eta$.
There is an analogous pairing
\begin{align*}
    \langle \cdot, \cdot \rangle \colon \mathrm{Sym}^{2}\left( \mathcal{E}_{\chi} \right) \rightarrow R_{\mathrm{fix}},
\end{align*}
which allows us to define a Clifford algebra
\begin{align}
    Cl_{\chi} = \mathrm{Cliff}(\mathcal{E}_{\chi}, \langle \cdot, \cdot\rangle).
\end{align}
The algebra $Cl_{\chi}$ plays the role of $Cl$ in the case of a nontrivial local system.
\begin{proposition}\label{pr:clifford-chi}
Let $\chi_{0}^{n+1} \neq 1$. Then the map 
\begin{align}\label{CltoAchi}
     \Phi_{\chi}^{\geq 1} \colon Cl_{\chi} \rightarrow \mathcal{A}^{\geq 1} \qquad   \theta^{x}_{\chi, i} \mapsto e\langle \gamma_{i}^{\chi} \rangle, \qquad \theta^{y}_{\chi,i}\mapsto - f\langle \gamma_{i}^{\chi^{-1}} \rangle, \qquad i = 1, \ldots, 2g - 2
\end{align}
with the $\gamma_{i}$'s identified is a homomorphism of $R_{\mathrm{fix}}$-algebras.
\end{proposition}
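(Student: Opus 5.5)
To prove Proposition~\ref{pr:clifford-chi} I would mirror the proof of Proposition~\ref{pr:clifford-triv}. A Clifford algebra over $R_{\mathrm{fix}}$ is presented by generators $\mathcal{E}_{\chi}$ subject to $vw+wv=\langle v,w\rangle$. So it suffices to check three things. First, the images of the generators supercommute with $R_{\mathrm{fix}}$ (acting through Proposition~\ref{pr:A2chi}) in the appropriate sense. Second, the anticommutators of the images reproduce the pairing. Third, the images satisfy the relations defining $\mathcal{E}_{\chi}$ as an $R_{\mathrm{fix}}$-module.

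\emph{Pairing.} I would first compute the anticommutators of the degree-one descendants from \eqref{compsup}. For $\alpha=\gamma_i^{\chi}\in H^1_c(C,\chi^{n+1})$ and $\beta=\gamma_j^{\chi^{-1}}$, only the part of $\psi$ in $H^0(C)\otimes(\cdot)$ contributes after cupping with $\alpha\cup\beta$ and integrating over $C$. That part is $(n+1)\otimes\eta^n$, so
\[
[e\langle\gamma_i^{\chi}\rangle,f\langle\gamma_j^{\chi^{-1}}\rangle]=(n+1)\eta^n\,\delta_{ij}
\]
up to the sign convention for the dual basis. The cross terms $\gamma_k\otimes\eta^{n-1}\gamma_k^\vee$ involve $\int_C\gamma_k\cup\alpha\cup\beta$, which is a degree-$3$ class on $C$ and so vanishes. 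The result matches the pairing induced by $\{x,y\}=(n+1)\eta^n$ together with the intersection pairing $H^1_c(\chi^{n+1})\otimes H^1_c(\chi^{-n-1})\to H^2_c$.

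Next, $[e\langle\alpha\rangle,e\langle\alpha'\rangle]$ and $[f\langle\beta\rangle,f\langle\beta'\rangle]$ should vanish. On the $x,y$ side this is consistent with $\{x,x\}=\{y,y\}=0$. The relevant anticommutators appear in Appendix~\ref{relations}, via $\iota_*\iota_*$ composed with a swap of the two $C$-factors, as in the trivial case. For the pairings involving the generators $\gamma_i\otimes\eta$ I would use Remark~\ref{rem:gammavanish}: the $\gamma_i$ act by zero when $\chi_0^{n+1}\ne1$, and the pairing of $\gamma_i\otimes\eta$ with $dx\otimes\gamma^\chi_j$ vanishes in $R_{\mathrm{fix}}$. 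This is because $\{x,\eta\}=x$ is killed on the fixed locus, or more directly because $H^1(\pi_1^g,\ldots)$ pairs a trivial-character class with a $\chi^{n+1}$-class into $H^2_c(C,\chi^{n+1})=0$.

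\emph{Module relations.} The main obstacle is the $R_{\mathrm{fix}}$-module structure of $\mathcal{E}_\chi$: its relations come from the Kähler relation $d(xy-\eta^{n+1})=0$, restricted to the fixed locus and then taking $\pi_1^g$-cohomology. On the fixed locus $x=y=0$, this gives $(n+1)\eta^n d\eta=0$, along with whatever relations tensoring with $R_{\mathrm{fix}}$ imposes on $dx$ and $dy$. I would therefore verify two points. The first is that $\eta$ commutes with $e\langle\alpha\rangle$ and $f\langle\beta\rangle$; this holds because $\iota^*\mathcal{O}(1)=\mathcal{O}(1)$, exactly as in the proof of Proposition~\ref{pr:A2-triv}. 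The second is that $\eta^{n}\gamma_i$ acts by zero, which follows from the vanishing of $\gamma_i$ in Remark~\ref{rem:gammavanish}. Finally, the Casimir-type relation \eqref{cas1} must be checked in its twisted form. In the trivial case it expresses $x\,dy+y\,dx=(n+1)\eta^n d\eta$; for nontrivial $\chi$ it should degenerate to relations already implied by Proposition~\ref{pr:A2chi}, namely that $e\langle\mathbf p\rangle$ and $f\langle\mathbf p\rangle$ act by zero. I would obtain it from Proposition~\ref{pr:anticomm} in its $\chi$-twisted version, extracting the component proportional to $\gamma^\chi_i$.
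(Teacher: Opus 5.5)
Your proposal is correct and has the same skeleton as the paper's one-line proof. The anticommutators in \eqref{commutinAchin} give the pairing. The nontrivial-character Casimir identities in Appendix~\ref{relations}, namely $\eta^{n+1}=0$ and $\eta^{n}\gamma_i=0$, are exactly what makes $\mathcal{A}^{\geq 1}$ an $R_{\mathrm{fix}}$-algebra and the map on $\mathcal{E}_{\chi}$ $R_{\mathrm{fix}}$-linear.

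You genuinely deviate in one place: you dispose of the generators $\gamma_i$ by citing Remark~\ref{rem:gammavanish}. This is not circular, but it imports the whole Bia{\l}ynicki-Birula filtration, purity and weight-comparison analysis of Section~\ref{sec:cohomology-QMN}. Moreover, that analysis itself starts from the fact that $\gamma_i$ supercommutes with $\mathcal{A}_{\chi}$, i.e.\ from $[e\langle\gamma_j^{\chi}\rangle,\gamma_i]=[f\langle\gamma_j^{\chi^{-1}}\rangle,\gamma_i]=0$ in \eqref{commutinAchin}. That is already exactly the compatibility with $\langle\gamma_i,\theta^{x}_{\chi,j}\rangle=\langle\gamma_i,\theta^{y}_{\chi,j}\rangle=0$ you need. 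So the paper's route is local and self-contained, while yours only buys the cosmetic simplification that the $\gamma_i$ map to zero.

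Likewise, $\eta^{n}\gamma_i=0$ does not need the remark. Apply the twisted version of Proposition~\ref{pr:anticomm} to $\gamma_i\otimes(\cdot)$, with $\gamma_i$ the \emph{trivial}-coefficient class (not $\gamma_i^{\chi}$, as you wrote). The left side vanishes because $\Delta_{C*}(\gamma_i)$ lies in $H^{3}_{c}(C\times C,\chi^{n+1}\boxtimes\chi^{-n-1})=0$. The only surviving term on the right is the one proportional to $\sum_j\gamma_j\otimes\eta^{n}\gamma_j^{\vee}$, and as $i$ varies this gives $\eta^{n}\gamma_j=0$ for all $j$.

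This relation is what remains of \eqref{cas1} after the $x,y$-terms drop out. It is not implied by Proposition~\ref{pr:A2chi}, contrary to what you suggest. In fact there are no operators $e\langle\mathbf{p}\rangle$, $f\langle\mathbf{p}\rangle$ in the twisted setting at all, since $H^{2}_{c}(C,\chi^{\pm(n+1)})=0$.

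Finally, the relations $[e\langle\alpha\rangle,e\langle\alpha'\rangle]=[f\langle\beta\rangle,f\langle\beta'\rangle]=0$ are not listed in Appendix~\ref{relations}. Your swap-symmetry argument for the iterated correspondence is the right justification, and it supplies a check the paper's proof leaves implicit.
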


\begin{proof}
This follows from the relations~\eqref{commutinAchin} in the
same way as Proposition~\ref{pr:clifford-triv}.
\end{proof}
\begin{remark}
    We will show that, depending on the values of the parameters $g, m_{k}$, the map~\eqref{CltoAchi} may or may not be an isomorphism.
\end{remark}
In Appendix~\ref{Degreezerodescendants} we also identify degree-zero descendants with differential operators on $X_{0}^{\vee}$.

\section{The resolution of the singular surface}\label{sec:resolution}

In this section we use the torus action on $QM\vec{N}$
to produce a compatible family of maps
\begin{align}\label{A2ktoA2}
    \mathcal{A}^{2} \rightarrow \mathcal{A}^{(k)2}, \;\;\; k = 1, \ldots, n+1,
\end{align}
to the polynomial algebras in two variables $\mathcal{A}^{(k)2} \cong \overline{\mathbb{Q}}_{l}[x_k , y_k ]$. This allows us to prove that the map~\eqref{RtoAchi} is an isomorphism. The homomorphisms~\eqref{A2ktoA2} then correspond to birational maps
\begin{equation*}
  \mathrm{Spec}\, \mathcal{A}^{(k)2} \rightarrow X_{0}^{\vee}.
\end{equation*}
Gluing them along the transition maps produces a smooth symplectic resolution
\begin{align*}
    X^{\vee} \rightarrow X^{\vee}_{0}.
\end{align*}
Analogously to $\mathcal{A}^2$, the algebras $\mathcal{A}^{(k)2}$ have natural modules $\mathcal{E}_{k}$ equipped with symmetric pairings. The Clifford algebras of $\mathcal{E}_{k}$ glue to a locally free sheaf of Clifford algebras on $X^{\vee}$, whose global sections contain $Cl$ as a subalgebra. 

\subsection{Torus fixed loci and attracting strata}

We begin by introducing a torus $T$ acting on $QM\vec{N}$ through
rescaling of the graded pieces $\mathcal{M}_{k}$, identifying its fixed
loci, and computing the virtual normal bundles to the attracting strata.
A Bia{\l}ynicki-Birula-type stratification then yields a filtration on
$\hat{H}^{*}_{c}(QM\vec{N})$ whose graded pieces are the cohomologies of the
fixed loci.

The torus
$T \cong \mathrm{Aut}(\vec{\mathcal{M}}) \cong \mathrm{Aut}(\mathcal{M}_{1}) \times \cdots \times \mathrm{Aut}(\mathcal{M}_{n+1})$
acts on the space of filtrations $\{\mathcal{V}^{\bullet}\}$ by
\begin{align*}
    t(\mathcal{V}^{(k-1)} \hookrightarrow \mathcal{V}^{(k)} \twoheadrightarrow \mathcal{M}_{k}) = (\mathcal{V}^{(k-1)} \hookrightarrow \mathcal{V}^{(k)} \twoheadrightarrow \mathcal{M}_{k} \overset{t_{k}}{\rightarrow} \mathcal{M}_{k}),
\end{align*}
where $t_{k}$ is the coordinate on $\mathrm{Aut}(\mathcal{M}_{k})$.
The associated natural isomorphism
\begin{align*}
    \rho_{t}\colon \mathcal{V} \rightarrow t\mathcal{V}
\end{align*}
lifts the $T$-action to $QM\vec{N}$ via
\begin{align}\label{Taction}
    &T \ni t \colon QM\vec{N} \rightarrow QM\vec{N},\\
    &(\mathcal{V}^{\bullet}, \mathcal{L}, s) \mapsto (t\mathcal{V}^{\bullet}, \mathcal{L}, \rho_{t} \circ s). \nonumber
\end{align}
The set of fixed points decomposes into $n + 1$ components
\begin{align*}
    QM\vec{N}^{T} = \bigsqcup\limits_{k = 1}^{n+1}QM\vec{N}(k),
\end{align*}
where
\begin{align*}
    QM\vec{N}(k) = \{\big( &\mathcal{V}^{\bullet}_{\mathrm{fix}} = (\mathcal{V}^{(l)} = \mathcal{M}_{1} \oplus \cdots \oplus \mathcal{M}_{l})_{l = 1}^{n+1}, \mathcal{L} = \mathcal{M}_{k}(- D), \\
    &s\colon \mathcal{M}_{k}(- D) \rightarrow \mathcal{M}_{k} \subset \mathcal{V} \big) : D \in \mathrm{Div}_{+}(C) \cong S^{\bullet}C\}. \nonumber
\end{align*}
Choose the one-dimensional subtorus
\begin{align}\label{1torus}
    \mathbb{G}_{m} &\hookrightarrow T\\
    &z \mapsto \left(t_{k} = z^{n+1 - k} \right)_{k = 1}^{n+1}. \nonumber
\end{align}
The set of fixed points for $\mathbb{G}_{m}$ is the same as for $T$,
\begin{align*}
   QM\vec{N}^{\mathbb{G}_{m}} \cong QM\vec{N}^{T},
\end{align*}
so we obtain a Bia{\l}ynicki-Birula-type decomposition
\begin{align}\label{attrdecomp}
    QM\vec{N} = \bigsqcup\limits_{k = 1}^{n+1} \mathrm{Attr}(k)
\end{align}
with attractor cells
\begin{align*}
    \mathrm{Attr}(k) = \underset{z \rightarrow 0}{\mathrm{Attr}}(QM\vec{N}(k)) = \{(\mathcal{V}^{\bullet}, \mathcal{L}, s): s(\mathcal{L}) \subset \mathcal{V}^{(k)}\} \setminus
    \{(\mathcal{V}^{\bullet}, \mathcal{L}, s): s(\mathcal{L}) \subset \mathcal{V}^{(k-1)}\}.
\end{align*}

\begin{proposition}\label{pr:attr-smooth}
Let
\begin{align*}
    N(k) = N_{\mathrm{Attr}(k)/QM\vec{N}(k)}
\end{align*}
denote the virtual normal bundle to $QM\vec{N}(k)$ in $\mathrm{Attr}(k)$.
At a fixed point
\begin{align}\label{fixedpt}
(\mathcal{V}^{\bullet}_{\mathrm{fix}}, \mathcal{M}_{k}(-D), s) \in QM\vec{N}(k),
\end{align}
the virtual bundle $N(k)$ has rank
\begin{align}\label{rkNk}
    \mathrm{rk}\,N(k) = - \sum\limits_{l < r} (m_{l} - m_{r} + 1 - g) + \sum\limits_{l = 1}^{k-1} (m_{l} - m_{k} + \deg\, D + 1 - g).
\end{align}
\end{proposition}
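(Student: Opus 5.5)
The plan is to compute $N(k)$ as the part of the virtual tangent space \eqref{TvirQMN} at the fixed point \eqref{fixedpt} on which the subtorus \eqref{1torus} acts with positive weight. Since $\mathrm{Attr}(k)$ is defined by the limit $z\to 0$, these are the directions contracted as $z\to 0$, so
\begin{align*}
N(k)=\bigl(T^{\mathrm{vir}}QM\vec{N}\big|_{(\mathcal{V}^{\bullet}_{\mathrm{fix}},\mathcal{M}_k(-D),s)}\bigr)^{>0}.
\end{align*}
Its rank is then an Euler characteristic count by Riemann--Roch. The steps are: restrict the formula \eqref{TvirQMN} to the fixed point; determine the $\mathbb{G}_m$-weight of each summand; keep the positive-weight summands; and evaluate $\chi_C$ of each.

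At the fixed point we have $\mathcal{L}=\mathcal{M}_k(-D)$ and $\mathcal{V}=\bigoplus_l \mathcal{M}_l$, and \eqref{TvirQMN} splits as
\begin{align*}
\sum_{l}\mathrm{Ext}^{\bullet}(\mathcal{M}_k(-D),\mathcal{M}_l)\otimes\mathcal{O}(1)\;-\;\mathrm{Ext}^{\bullet}(\mathcal{L},\mathcal{L})\;-\;\sum_{l<r}\mathrm{Ext}^{\bullet}(\mathcal{M}_r,\mathcal{M}_l).
\end{align*}
To make $s$ a fixed section, the action \eqref{Taction} must be compensated on $\mathcal{O}_{QM\vec{N}}(1)$ by the character $t_k^{-1}$. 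The weights are then as follows.
\begin{itemize}
\item The summand $\mathrm{Ext}^{\bullet}(\mathcal{L},\mathcal{M}_l)\otimes\mathcal{O}(1)$ has weight $t_l/t_k=z^{k-l}$.
\item The summand $\mathrm{Ext}^{\bullet}(\mathcal{L},\mathcal{L})$ has weight $0$.
\item The summand $\mathrm{Ext}^{\bullet}(\mathcal{M}_r,\mathcal{M}_l)$ coming from $\shEnd(\mathcal{V}^{\bullet})$, i.e.\ from the maps $\mathcal{M}_r\to\mathcal{M}_l$ with $l<r$, has weight $t_l/t_r=z^{r-l}$.
\end{itemize}
As a consistency check on the signs, compare with the explicit description of $\mathrm{Attr}(k)$. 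Under $z\to 0$, the component of $s$ in $\mathcal{M}_l$ is rescaled by $z^{k-l}$, so it tends to zero exactly for $l<k$. This matches the condition $s(\mathcal{L})\subset\mathcal{V}^{(k)}$ with nonzero $\mathcal{M}_k$-component. Likewise, every extension class in $\mathrm{Ext}^1(\mathcal{M}_r,\mathcal{M}_l)$ with $l<r$ is rescaled by a positive power of $z$, so all flags degenerate to the split flag $\mathcal{V}^{\bullet}_{\mathrm{fix}}$. Hence every flag direction is attracting.

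Reading off the positive-weight part, the summands $l<k$ of the first sum survive, the $\mathrm{Ext}^{\bullet}(\mathcal{L},\mathcal{L})$ term drops out, and the entire last sum survives:
\begin{align*}
N(k)=\sum_{l<k}\mathrm{Ext}^{\bullet}(\mathcal{M}_k(-D),\mathcal{M}_l)\otimes\mathcal{O}(1)-\sum_{l<r}\mathrm{Ext}^{\bullet}(\mathcal{M}_r,\mathcal{M}_l).
\end{align*}
Riemann--Roch gives $\chi_C(\mathcal{M}_l\otimes\mathcal{M}_k^{-1}(D))=m_l-m_k+\deg D+1-g$ and $\chi_C(\mathcal{M}_l\otimes\mathcal{M}_r^{-1})=m_l-m_r+1-g$, which together yield \eqref{rkNk}.

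The main obstacle is bookkeeping rather than a conceptual difficulty. One has to fix the weight conventions coherently: the compensating character on $\mathcal{O}(1)$, and the direction in which $T$ acts on the extension data. These conventions must match the set-theoretic description of $\mathrm{Attr}(k)$, since a sign error would exchange the attracting and repelling parts. I would settle the signs using the explicit limit computation above.

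I would also note that this argument only computes the virtual rank. The title's claim that the attractors are smooth, which is needed later for purity, requires a separate check. That check would be that the obstruction space $\mathbb{H}^1$ at the fixed point, namely $\bigoplus_{l>k}H^1(\mathcal{M}_l\otimes\mathcal{M}_k^{-1}(D))$, has only negative weights $z^{k-l}$ and therefore does not contribute to $\mathrm{Attr}(k)$. It would also use that the automorphisms $\mathbb{H}^{-1}$ are unipotent, as already observed.
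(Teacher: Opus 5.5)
Your proposal is correct and follows essentially the same route as the paper. The paper restricts \eqref{TvirQMN} to the fixed point as a class in $K(\mathrm{Rep}(T))$ with weights $t_lt_k^{-1}$ and $t_lt_r^{-1}$, keeps the $z$-positive part (the terms $l<k$ of the first sum and the entire $\shEnd(\mathcal{V}^{\bullet})$ sum), and applies Riemann--Roch. Your explicit sign checks against the set-theoretic description of $\mathrm{Attr}(k)$ are a sound addition, and the obstruction-weight argument you sketch for smoothness is what the paper proves separately in Proposition~\ref{thm:attr-smooth}.
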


\begin{proof}
From~\eqref{TvirQMN} the tangent space to $QM\vec{N}$ at the fixed
point~\eqref{fixedpt} is given, as an element of
$K(\mathrm{Rep}(T))$, by
\begin{align}\label{TQMNf}
    T^{\mathrm{vir}}_{(\mathcal{V}^{\bullet}, \mathcal{M}_{k}(-D), s)} QM\vec{N} = &-\sum\limits_{l < r}t_{l} t_{r}^{-1}H^{*}( \mathcal{M}_{l} \otimes \mathcal{M}_{r}^{-1}) - H^{*}( \mathcal{O}_{C}) + \\
    &+\sum\limits_{l = 1}^{n+1} t_{l} t_{k}^{-1} H^{*}( \mathcal{M}_{l} \otimes \mathcal{M}_{k}^{-1} (D)) \otimes \mathcal{O}_{QM\vec{N}}(1). \nonumber
\end{align}
The fiber of $N(k)$ is given by the $z$-positive degree terms
in~\eqref{TQMNf},
\begin{align*}
    N(k) = -\sum\limits_{l < r} H^{*}(\mathcal{M}_{l} \otimes \mathcal{M}_{r}^{-1}) + \sum\limits_{l = 1}^{k-1} H^{*}( \mathcal{M}_{l} \otimes \mathcal{M}_{k}^{-1}(D)) \otimes \mathcal{O}_{QM\vec{N}}(1),
\end{align*}
from which we conclude the formula \eqref{rkNk}.
\end{proof}
An important feature of the decomposition \eqref{attrdecomp} is the smoothness of the attracting orbits $\mathrm{Attr}(k)$ 
\begin{proposition}\label{thm:attr-smooth}
The attracting set $\mathrm{Attr}(k)$ is smooth for \mbox{$k = 1, \ldots, n+1$}.
\end{proposition}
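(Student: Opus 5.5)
The plan is to describe $\mathrm{Attr}(k)$ explicitly as an iterated fibration over smooth bases and check that each fiber is smooth. By the description of the attractor, a point of $\mathrm{Attr}(k)$ is a quasimap $(\mathcal{V}^{\bullet},\mathcal{L},s)$ with $s(\mathcal{L})\subset\mathcal{V}^{(k)}$ and $s(\mathcal{L})\not\subset\mathcal{V}^{(k-1)}$. Equivalently, the composite $\bar s\colon\mathcal{L}\to\mathcal{V}^{(k)}\to\mathcal{M}_k$ is a nonzero map. Such a map is determined by an effective divisor $D$, with $\mathcal{L}\cong\mathcal{M}_k(-D)$. This gives a morphism
\[
\mathrm{Attr}(k)\to S^{\deg D}C,\qquad (\mathcal{V}^{\bullet},\mathcal{L},s)\mapsto D,
\]
which is the limit map $z\to 0$ to the fixed component $QM\vec{N}(k)$. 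The base $S^{\bullet}C$ is smooth, so it suffices to show that this map is smooth, i.e.\ a (stacky) affine-type bundle of constant dimension.

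Next I would analyze the fiber over $D$ in layers. The upper part of the flag, namely the extensions of $\mathcal{M}_{k+1},\dots,\mathcal{M}_{n+1}$ on top of $\mathcal{V}^{(k)}$, does not interact with $s$. Successive extensions form affine spaces $\mathrm{Ext}^1(\mathcal{M}_l,\mathcal{V}^{(l-1)})$ modulo unipotent automorphisms. Since $\mathrm{Ext}^2$ vanishes on a curve, the stack of flags is smooth; it is a quotient of a vector bundle by a unipotent group scheme. The lower data consist of $\mathcal{V}^{(k-1)}$ and $\mathcal{V}^{(k)}$ together with a lift of $\mathcal{M}_k(-D)\hookrightarrow\mathcal{M}_k$ to $\mathcal{V}^{(k)}$. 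These can be reorganized as a pair consisting of an extension class $e\in\mathrm{Ext}^1(\mathcal{M}_k,\mathcal{V}^{(k-1)})$ and a lift, whose existence is governed by the vanishing of the pullback $e|_{\mathcal{M}_k(-D)}\in\mathrm{Ext}^1(\mathcal{M}_k(-D),\mathcal{V}^{(k-1)})$.

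The cleaner formulation is this. The data $(\mathcal{V}^{(k)},s)$ with $\bar s$ fixed are equivalent to the data of the pushout/pullback extension
\[
0\to\mathcal{V}^{(k-1)}\to\mathcal{V}^{(k)}\times_{\mathcal{M}_k}\mathcal{M}_k(-D)\to\mathcal{M}_k(-D)\to 0
\]
together with a splitting $s$ of it. Equivalently, this is an extension $\mathcal{V}^{(k)}$ of the torsion-free quotient $\mathcal{M}_k$ by $\mathcal{V}^{(k-1)}$ that contains $\mathcal{V}^{(k-1)}\oplus\mathcal{M}_k(-D)$. Such an extension is classified by an element of $\mathrm{Ext}^1(\mathcal{O}_D\otimes\mathcal{M}_k,\mathcal{V}^{(k-1)})\cong H^0(\mathcal{V}^{(k-1)}\otimes\mathcal{M}_k^{-1}(D)|_D)$. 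Over $S^dC$ this is a vector bundle of rank $d(k-1)$, since $\mathrm{Ext}^0(\mathcal{O}_D,\text{vector bundle})=0$ makes its rank constant. The remaining unipotent automorphisms, coming from $\mathrm{Hom}(\mathcal{M}_k(-D),\mathcal{V}^{(k-1)})$ and from $\mathrm{End}(\mathcal{V}^{\bullet})$, act on this data. The scalars $\mathrm{Aut}(\mathcal{L})$ are absorbed by the normalization of $\bar s$ coming from $D$.

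Putting the layers together, $\mathrm{Attr}(k)$ is presented as an iterated tower over $S^{d}C\times\mathrm{Pic}$-type smooth bases. Each step is either a vector bundle, a vector-bundle stack $[V_1/V_0]$ coming from a two-term complex of locally free sheaves, or a quotient by a unipotent group. Each of these is smooth, so $\mathrm{Attr}(k)$ is smooth.

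As a consistency check I would compare dimensions with \eqref{rkNk}. The contributions are $-\sum_{l<r}\chi(\mathcal{M}_l\mathcal{M}_r^{-1})$ from the flags, plus $\sum_{l<k}\chi(\mathcal{M}_l\mathcal{M}_k^{-1}(D))$, plus $\dim S^dC$. This shows that the classical attractor has the expected dimension, so the derived structure restricted to it carries no obstructions.

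The main obstacle is making the "layers" honest over the whole base. Dimensions of $\mathrm{Hom}$ and $\mathrm{Ext}^1$ jump (e.g.\ $H^0(\mathcal{M}_l\mathcal{M}_r^{-1})$), so the naive description as a quotient of a vector bundle fails. To handle this I would work with two-term complexes $R\Gamma$ presented by locally free sheaves on the base. Their total space stacks $[K^1/K^0]$ are smooth regardless of jumping, which is where vanishing of $\mathrm{Ext}^2$ on a curve is used.
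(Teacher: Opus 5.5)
Your proof is correct, but it takes a different route from the paper. The paper's argument is infinitesimal and local to the fixed points. At $(\mathcal{V}^{\bullet}_{\mathrm{fix}},\mathcal{M}_k(-D),s)$ it uses the Serre-dual description \eqref{obstructions} to compute the obstruction space of $QM\vec{N}$ as $\bigoplus_{l>k}H^{1}(\mathcal{M}_l\otimes\mathcal{M}_k^{-1}(D))$. All weights there are $z^{k-l}<0$, so the obstruction space of the derived attractor (the nonnegative-weight part) vanishes. Smoothness on all of $\mathrm{Attr}(k)$ then follows, tacitly, because the non-smooth locus is closed and $\mathbb{G}_m$-stable and every point flows to $QM\vec{N}(k)$.

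You instead describe $\mathrm{Attr}(k)$ globally. Fixing $D$ rigidifies $(\mathcal{L},\bar s)$. The pairs $(\mathcal{V}^{(k)},s)$ over a given $\mathcal{V}^{(k-1)}$ are classified by $\mathcal{V}^{(k)}/s(\mathcal{L})\in\mathrm{Ext}^{1}(\mathcal{M}_k|_D,\mathcal{V}^{(k-1)})$, which has constant rank $d(k-1)$. The flag data below and above level $k$ are iterated vector-bundle stacks of two-term complexes.

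One wording correction: after this classification nothing from $\mathrm{Hom}(\mathcal{M}_k(-D),\mathcal{V}^{(k-1)})$ still acts. The lifts of $\bar s$ into a fixed extension form a torsor under that group, and the automorphisms $\mathrm{Hom}(\mathcal{M}_k,\mathcal{V}^{(k-1)})$ of the extension act freely on the resulting pairs. The quotient is exactly the rank $d(k-1)$ bundle. Only the automorphisms of the lower flag remain, and they live on the base. Also, no $\mathrm{Pic}$ factor appears, since $\mathcal{L}\cong\mathcal{M}_k(-D)$. Your dimension count already agrees with this corrected picture.

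The trade-off is as follows. The paper's argument is shorter and speaks directly about the derived attractor, using the tangent complex already computed. You reach the derived statement only through an extra step: the classical stack is smooth of dimension equal to the virtual dimension, which forces the pointwise obstruction spaces to vanish. In exchange, your argument exhibits $\mathrm{Attr}(k)\to S^{d}C$ as an iterated vector-bundle-stack fibration. That is precisely the input Proposition~\ref{pr:birula-filtration} uses, namely purity and $\hat H^{*}_{c}(\mathrm{Attr}(k))\cong\hat H^{*}_{c}(QM\vec N(k))[[-2\,\mathrm{rk}\,N(k)]]$. The paper only asserts that structure as a consequence of smoothness.
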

\begin{proof}
To prove smoothness of $\mathrm{Attr}(k)$, it suffices to show that the
$z$-nonnegative degree terms in the obstruction space~\eqref{obstructions}
vanish at every fixed point. At the fixed point~\eqref{fixedpt}, the map
$s$ is given by a tuple
\begin{align*}
    s = (s_{l} = \delta_{lk}  s' )_{l = 1}^{n+1} \in  \bigoplus_{l = 1}^{n+1} H^{0}(\mathcal{M}_{l} \otimes \mathcal{M}_{k}^{-1}(D))
\end{align*}
for some nonzero $s' \in H^{0}(\mathcal{O}_{C}(D))$, and the obstruction
space is dual to the kernel of the map~\eqref{Deltaadj}, which is given by
\begin{align*}
    \Delta_{s}^{\vee}\colon \bigoplus\limits_{l = 1}^{n+1} H^{0}(\mathcal{M}_{k}(-D)&\otimes \mathcal{M}_{l}^{-1} \otimes \mathcal{K}_{C}) \longrightarrow H^{0}\left( \mathcal{K}_{C} \right) \oplus \bigoplus\limits_{l < r} H^{0}\left( \mathcal{M}_{r} \otimes \mathcal{M}_{l}^{-1} \otimes \mathcal{K}_{C} \right)\\
    (\alpha_{l})_{l = 1}^{n+1} & \mapsto \left( - \sum\limits_{l = 1}^{n+1}\alpha_{l} s_{l}, (s_{r}\alpha_{l})_{l<r} \right) = \left( -\alpha_{k} s', (\delta_{rk} s' \alpha_{l})_{l < r} \right). \nonumber
\end{align*}
Since multiplication by $s'$ is injective, the kernel is cut out by the
equations
\begin{align*}
    \alpha_{l} = 0, \; l = 1, \ldots, k.
\end{align*}
Therefore
\begin{align*}
    \mathbb{H}^{1}(E^{\bullet}) = (\mathrm{Ker}\,\Delta_{s^{\vee}})^{\vee} = \bigoplus_{l > k} H^{0}(\mathcal{M}_{k}(-D)\otimes \mathcal{M}_{l}^{-1}\otimes\mathcal{K}_{C})^{\vee} = \bigoplus_{l > k} H^{1}(\mathcal{M}_{l}\otimes \mathcal{M}_{k}^{-1}(D)).
\end{align*}
The weights of these spaces are $z^{k - l}$ for $l = k+1, \ldots, n+1$,
all of which are negative $z$-powers; hence the attractor has no
obstructions and is smooth at the fixed point.
\end{proof}
 It follows that $\mathrm{Attr}(k)$ is a vector bundle stack over the smooth projective variety $QM\vec{N}(k) \cong S^{\deg\,D}C$, and its cohomology is pure. In particular, this allows us to establish a filtration on the cohomology of the quasimaps.

\begin{proposition}\label{pr:birula-filtration}
For $k = 1, \ldots, n+1$ there is a short exact sequence
\begin{align}\label{birulaqmn}
    0 \rightarrow \hat{H}^{*}_{c}(QM\vec{N}(k))[[- 2 \mathrm{rk}\, N(k)]] \rightarrow \hat{H}^{*}_{c}\left(\bigcup\limits_{l = 1}^{k}\mathrm{Attr}(l)\right) \rightarrow \hat{H}^{*}_{c}\left(\bigcup\limits_{l = 1}^{k-1}\mathrm{Attr}(l)\right) \rightarrow 0,
\end{align}
where for all the spaces the shift $\hat{H}_{c}^{*}$ is the same as in~\eqref{Hcent}. Consequently,
$\hat{H}^{*}_{c}(QM\vec{N})$ carries a filtration with graded pieces
$\hat{H}^{*}_{c}(QM\vec{N}(k))[[-2 \mathrm{rk}\, N(k)]]$.
\end{proposition}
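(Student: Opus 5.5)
The plan is to derive the short exact sequence from the open--closed excision long exact sequence for compactly supported cohomology. Purity of the attractor cells then forces all connecting maps to vanish.

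First, I would check the closure pattern. By the description of $\mathrm{Attr}(k)$ as the locus where $s(\mathcal{L})\subset\mathcal{V}^{(k)}$ but $s(\mathcal{L})\not\subset\mathcal{V}^{(k-1)}$, the union $U_k:=\bigcup_{l\le k}\mathrm{Attr}(l)=\{s(\mathcal{L})\subset\mathcal{V}^{(k)}\}$ is closed in $QM\vec{N}$. Inside $U_k$, the subset $U_{k-1}$ is closed and its complement is $\mathrm{Attr}(k)$, which is therefore open in $U_k$. Excision gives the long exact sequence
\[
\cdots\to H^{i}_{c}(\mathrm{Attr}(k),\chi)\to H^{i}_{c}(U_k,\chi)\to H^{i}_{c}(U_{k-1},\chi)\xrightarrow{\delta} H^{i+1}_{c}(\mathrm{Attr}(k),\chi)\to\cdots.
\]

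Next, I would identify $H^*_c(\mathrm{Attr}(k))$. By Proposition~\ref{thm:attr-smooth} and the remark after it, $\mathrm{Attr}(k)\to QM\vec{N}(k)\cong S^{\deg D}C$ is a vector bundle stack of virtual rank $\mathrm{rk}\,N(k)$. It is smooth, the fibres are affine spaces modulo unipotent groups, and the base is a smooth projective variety. The Thom isomorphism (homotopy invariance plus Poincaré duality for the fibration) then gives $H^*_c(\mathrm{Attr}(k))\cong H^*_c(QM\vec{N}(k))[-2\,\mathrm{rk}\,N(k)](-\mathrm{rk}\,N(k))$. The local system $\chi$ is pulled back along $\det$, which is constant along the fibres, so $\chi$ is pulled back from the base and the isomorphism holds with coefficients. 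After applying the common centering \eqref{Hcent} on all terms, this is exactly the first term of \eqref{birulaqmn}, with the shift $[[-2\,\mathrm{rk}\,N(k)]]$.

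The key step, and the main obstacle, is showing $\delta=0$. I would argue by weights, inducting on $k$. Since $QM\vec{N}(k)$ is smooth projective and $\chi$ is a pure rank-one local system (a character of finite order up to Tate twist), Deligne's theorem makes $H^i_c(QM\vec{N}(k),\chi)$ pure of weight $i$, and the Tate twist preserves purity of $H^*_c(\mathrm{Attr}(k))$. Inductively, $H^i_c(U_{k-1})$ is an iterated extension of pure pieces of weight $i$, hence pure of weight $i$. The map $\delta$ then goes from weight $i$ to weight $i+1$ and commutes with Frobenius, so it vanishes.

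The delicate points are the stacky setting and the twisted coefficients. For the stack, I would reduce to the scheme case using the presentation as a quotient of $\mathbb{P}(H^0(\mathcal{V}\otimes\mathcal{L}^{-1}))$-bundles by unipotent groups, whose classifying stacks have trivial cohomology. For the coefficients, I would use that $\chi$ is of finite order, so that purity still applies. Finally, splicing the short exact sequences for $k=1,\dots,n+1$ gives the filtration $0\subset\cdots$ of $\hat{H}^*_c(QM\vec{N})=\hat{H}^*_c(U_{n+1})$ with the stated graded pieces.
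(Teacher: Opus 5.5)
Your proposal is correct and follows the paper's proof: the excision sequence for the closed/open pair $\bigcup_{l<k}\mathrm{Attr}(l)\subset\bigcup_{l\le k}\mathrm{Attr}(l)\supset\mathrm{Attr}(k)$, the Thom isomorphism for the affine bundle $\mathrm{Attr}(k)\to QM\vec{N}(k)$, and purity propagated by induction on $k$, which forces the connecting map $\delta$ to vanish for weight reasons. You also spell out points the paper leaves implicit, namely that $\chi$ is pulled back from the base of the attraction map and why the twisted coefficients and the stacky setting do not spoil purity.
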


\begin{proof}
For each $k = 1, \ldots, n+1$ the pair of closed and open immersions
\begin{align*}
    \bigcup\limits_{l = 1}^{k-1}\mathrm{Attr}(l) \hookrightarrow \bigcup\limits_{l = 1}^{k}\mathrm{Attr}(l), \;\;\; \mathrm{Attr}(k) \hookrightarrow \bigcup\limits_{l = 1}^{k}\mathrm{Attr}(l)
\end{align*}
(with $\bigcup_{l = 1}^{n+1}\mathrm{Attr}(l) = QM\vec{N}$) induces a long
exact sequence
\begin{align*}
    \hat{H}^{*}_{c}(\mathrm{Attr}(k)) \rightarrow \hat{H}^{*}_{c}\left(\bigcup\limits_{l = 1}^{k}\mathrm{Attr}(l)\right) \rightarrow \hat{H}^{*}_{c}\left(\bigcup\limits_{l = 1}^{k-1}\mathrm{Attr}(l)\right) \overset{\delta}{\rightarrow} \hat{H}^{*}_{c}(\mathrm{Attr}(k))[1],
\end{align*}
whose first map is the pushforward and second map is the pullback. Since
$\mathrm{Attr}(k)$ is an affine bundle over $QM\vec{N}(k)$,
\begin{align*}
    \hat{H}^{*}_{c}(QM\vec{N}(k))[[- 2 \, \mathrm{rk}N(k)]] \overset{\cong}{\rightarrow} \hat{H}^{*}_{c}(\mathrm{Attr}(k)),
\end{align*}
The cohomology of $\mathrm{Attr}(k)$ is pure, so by induction
in $k$ the cohomology of $\bigcup_{l=1}^{k}\mathrm{Attr}(l)$ is pure and
$\delta$ vanishes. The resulting short exact
sequence is~\eqref{birulaqmn}.
\end{proof}

\subsection{Correspondences on fixed loci}

The modification correspondence $\iota$ preserves the fixed loci, hence acts on their cohomology. In this subsection, analogously to Section~\ref{sec:mod-operators}, 
we record pull-push identities, define the associated operators
$e^{(k)}, f^{(k)}$, and derive the commutator and Casimir relations they
satisfy.

There are $n+1$ fixed loci $QM\vec{N}(k)$, $k = 1, \ldots, n+1$, of the
torus action~\eqref{1torus} on $QM\vec{N}$. The correspondence $\iota$
preserves each of them, and we consider the closed immersion
\begin{align*}
    \iota^{(k)}\colon C \times QM\vec{N}_{d}(k) \rightarrow C \times QM\vec{N}_{d+1}(k),
\end{align*}
the restriction of $\iota$ to the corresponding fixed locus. Each component $QM\vec{N}_{d}(k)$ is a symmetric product of the curve $C$, and $\iota^{(k)}$ ``adds a point''.

It defines pullbacks and pushforwards (analogously to Lemma~\ref{lem:iota-push}) on cohomology:
\begin{align*}
   & \iota^{(k)*}\colon \hat{H}^{i}_{c}(C \times QM\vec{N}_{d+1}(k), \chi^{-n-1}\boxtimes \chi) \rightarrow \hat{H}^{i + n + 1}_{c}(C \times QM\vec{N}_{d}(k), \overline{\mathbb{Q}}_{l}\boxtimes \chi) \\
    & \iota^{(k)*}\colon \hat{H}^{i}_{c}(C \times QM\vec{N}_{d+1}(k), \overline{\mathbb{Q}}_{l}\boxtimes \chi) \rightarrow \hat{H}^{i + n + 1}_{c}(C \times QM\vec{N}_{d}(k), \chi^{n+1}\boxtimes \chi) \nonumber
\end{align*}
\begin{align*}  
    &\iota^{(k)}_{*}\colon \hat{H}^{i}_{c}(C \times QM\vec{N}_{d}(k), \chi^{n+1}\boxtimes \chi) \rightarrow \hat{H}^{i-n+1}_{c}(C \times QM\vec{N}_{d+1}(k), \overline{\mathbb{Q}}_{l}\boxtimes \chi)\\
     &\iota^{(k)}_{*}\colon \hat{H}^{i}_{c}(C \times QM\vec{N}_{d}(k), \overline{\mathbb{Q}}_{l}\boxtimes \chi) \rightarrow \hat{H}^{i-n+1}_{c}(C \times QM\vec{N}_{d+1}(k), \chi^{-n-1}\boxtimes \chi) \nonumber
\end{align*}
\begin{remark}\label{cohshiftiotak}
    By Proposition~\ref{pr:birula-filtration} the cohomology of $QM\vec{N}(k)$ enters the filtration on the cohomology of $QM\vec{N}$ with the shift $[[-2\mathrm{rk}\,N(k)]]$. Therefore the cohomological and Frobenius degree of $\iota^{(k)*}$ and $\iota_{*}^{(k)}$, compatible with that of $\iota^{*}$ and $\iota_{*}$, should be shifted by $[[2\mathrm{rk}\,N_{d+1}(k) - 2\mathrm{rk}\,N_{d}(k)]] = [[2(k-1)]]$ and $[[2\mathrm{rk}\,N_{d}(k) - 2\mathrm{rk}\,N_{d+1}(k)]] = [[2(1-k)]]$ respectively.
\end{remark}
Consider a vector bundle
\begin{align*}
    \mathcal{N}^{(k)} = \mathcal{V}^{(k)}\otimes\mathcal{L}^{-1}\otimes \mathcal{O}_{QM\vec{N}}(1)
\end{align*}
on $C \times QM\vec{N}(k)$. The quotient bundle
\begin{align*}
    \mathcal{N}^{(k)}/\mathcal{N}^{(k-1)} \cong \mathcal{M}_{k} \otimes\mathcal{L}^{-1}\otimes \mathcal{O}_{QM\vec{N}}(1)
\end{align*}
carries a natural section $\bar{s}^{(k)}(x)$ obtained by restricting
$s(x)$ to $C \times QM\vec{N}(k)$, and the image of $\iota^{(k)}$ is
cut out by the equation $\bar{s}^{(k)}(x) = 0$. Hence
\begin{align}\label{chernNi}
\iota^{(k)}_{*}\iota^{(k)*} = c_{1}(\mathcal{N}^{(k)}/\mathcal{N}^{(k-1)}) \; \cup,
\end{align}
where $c_{1}(\mathcal{N}^{(k)}/\mathcal{N}^{(k-1)}) \in H^{2}(QM\vec{N}(k))$
is the Euler class of the line bundle
$\mathcal{N}^{(k)}/\mathcal{N}^{(k-1)}$. Analogously
to~\eqref{eq:iotanorm}, the virtual normal bundle $N_{\iota^{(k)}}$ to the
image of $\iota^{(k)}$ is
\begin{align*}
    N_{\iota^{(k)}} = (\iota^{(k)*} - \mathrm{id})T^{\mathrm{vir}}(C \times QM\vec{N}(k)) = \iota^{(k)*}\mathcal{N}^{(k)}/\mathcal{N}^{(k-1)} 
\end{align*}
which implies
\begin{align}\label{cherniNi}
\iota^{(k)*}\iota^{(k)}_{*} = c_{1}(\iota^{(k)*}\mathcal{N}^{(k)}/\mathcal{N}^{(k-1)}) \; \cup = c_{1}(\mathcal{N}^{(k)}/\mathcal{N}^{(k-1)} \otimes TC ) \; \cup.
\end{align}
Using~\eqref{VviaM}, the top Chern class of $\mathcal{N}^{(k)}/\mathcal{N}^{(k-1)}$ is
\begin{align*}
    c_{1}(\mathcal{N}^{(k)}/\mathcal{N}^{(k-1)}) = c_{1}(\mathcal{M}_{k} \otimes \mathcal{L}^{-1} \otimes \mathcal{O}_{QM\vec{N}}(1)) = \mu_{k}\otimes 1 - \lambda + 1 \otimes \eta,
\end{align*}
and
\begin{align*}
    c_{1}(\iota^{*}\mathcal{N}^{(k)}/\mathcal{N}^{(k-1)}) = c_{1}(\mathcal{N}^{(k)}/\mathcal{N}^{(k-1)} \otimes TC) =  \tau \otimes 1 + \mu_{k}\otimes 1 - \lambda + 1 \otimes \eta.
\end{align*}

\begin{definition}\label{defn:ekfk}
In analogy with~\eqref{efdef}, define
\begin{align}\label{ekfkdefinition}
    &e^{(k)} = \int\limits_{C} \iota^{(k)}_{*},\;\;\; f^{(k)} = -\int\limits_{C} \iota^{(k)*}.
\end{align}
\end{definition}

The identities~\eqref{chernNi} and~\eqref{cherniNi} yield the commutation
relation
\begin{align*}
    e^{(k)}f_{2}^{(k)} - f^{(k)}e_{1}^{(k)} = \int\limits_{C} \Delta_{C}^{*}.
\end{align*}
This implies the supercommutator identities of descendants, which we collect in Appendix~\ref{relations}.

From~\eqref{chernNi} and~\eqref{cherniNi} we also obtain a Casimir identity
\begin{align*}
    \frac{1}{2}(e^{(k)}f_{2}^{(k)} + f^{(k)}e_{1}^{(k)})\Delta_{C*} = - \int\limits_{C} (1 \otimes \eta + \mathbf{p}\otimes 1 (- \deg\,\mathcal{L} + m_{k} + 1 - g) - \sum\limits_{i = 1}^{2g} \gamma_{i} \otimes \gamma_{i}^{\vee}).
\end{align*}

Analogously to Definition~\ref{corralg} we define an algebra of correspondences acting on the fixed loci.
\begin{definition}\label{corralgk}
    The algebra of correspondences 
\begin{align*}
    \mathcal{A}_{\chi}^{(k)} \subset \mathrm{End}\left(\hat{H}^{*}_{c}(QM\vec{N}(k), \chi)[[-2\, \mathrm{rk} N(k)]]\right)
\end{align*}    
is the unital algebra generated by descendants $e^{(k)}\langle \alpha \rangle$ for $\alpha \in H^{*}_{c}(C, \chi^{n+1})$,
$f^{(k)}\langle \beta \rangle$ for $\beta \in H^{*}_{c}(C, \chi^{-n-1})$,
$\eta$, and $\gamma_{i}$ for $i = 1, \ldots, 2g$; when
$\chi_{0}^{n+1} = 1$ we write $\mathcal{A}^{(k)}$ in place of
$\mathcal{A}^{(k)}_{\chi}$. We denote by $\mathcal{A}^{(k)2}_{\chi}$ the subalgebra generated by $\eta$ and descendants of degree two, and by $\mathcal{A}^{(k)\geq 1}_{\chi}$ the subalgebra generated by $\gamma_{i}$ and descendants of degrees one and two.
\end{definition}
Let us use the notation for the descendants
\begin{align*}
    x_{k} = e^{(k)}\langle \mathbf{p} \rangle, \;\;\; y_{k} = -f^{(k)}\langle \mathbf{p} \rangle, \;\;\; 
\end{align*}
\begin{align*}
    \theta^{x_k}_{i} = e^{(k)}\langle \gamma_{i} \rangle, \;\;\; \theta^{y_k}_{i} = - f^{(k)}\langle \gamma_{i} \rangle, \;\;\; i = 1, \ldots, 2g,
\end{align*}
\begin{align}\label{eq:thetaveek}
    \theta_{i}^{\vee x_k} = e^{(k)}\langle \gamma_{i}^{\vee} \rangle,\;\;\; \theta_{i}^{\vee y_k} = - f^{(k)}\langle \gamma_{i}^{\vee} \rangle, \;\;\; i = 1, \ldots, 2g,
\end{align}
\begin{align*}
    \theta^{x_k}_{\chi, i} = e^{(k)} \langle \gamma_{i}^{\chi} \rangle, \qquad \theta^{y_k}_{\chi, i} = - f^{(k)} \langle \gamma_{i}^{\chi^{-1}} \rangle, \qquad i = 1, ..., 2g-2
\end{align*}

\subsection{Algebras of correspondences on the fixed loci}

We now show that the algebra of descendants acting on
$H^{*}_{c}(QM\vec{N}(k), \chi)$ is a \emph{Weyl-Clifford superalgebra}. Its commutative part is the coordinate
ring of $\pi_{1}^{g}$-fixed points of the affine chart $U_{k} \cong \mathbb{A}^{2}$, and the generators of degree one and two assemble into a Clifford algebra.

We briefly review the Weyl-Clifford algebra. Suppose $V = V^{\mathrm{even}} \oplus V^{\mathrm{odd}}$ is a finite-dimensional super vector space over $\overline{\mathbb{Q}}_{l}$, equipped with a nondegenerate bilinear form $\langle \cdot, \cdot \rangle$, satisfying
\begin{equation*}
    \langle u , v \rangle = (-1)^{\mathrm{deg}\,u \, \mathrm{deg}\,v}\langle v , u\rangle
\end{equation*}
for homogeneous $u, v \in V$.
The actual example of interest for us is 
\begin{equation}\label{V=HH}
    V = H^{*}_{c}(C, \chi^{n+1}) \oplus H^{*}_{c}(C, \chi^{n+1})^{*} \cong H^{*}_{c}(C, \chi^{n+1}) \oplus H^{*}_{c}(C, \chi^{-n-1}),
\end{equation}
where $H_{c}^{*}(C, \chi^{-n-1})$ is identified with $H_{c}^{*}(C, \chi^{n+1})^{*}$ by the Poincar\'e pairing. The $\mathbb{Z}/2\mathbb{Z}$-grading is provided by the cohomological degree, and the natural pairing has the form
\begin{align}
    \langle \alpha_{1} \oplus \beta_{1}, \alpha_{2} \oplus \beta_{2} \rangle = \beta_{1}(\alpha_2 ) + (-1)^{\mathrm{deg}\,\alpha_1 \, \mathrm{deg}\,\beta_2}\beta_{2}(\alpha_1)
\end{align}
for homogeneous $\alpha_{1} \oplus \beta_{1}$ and $\alpha_{2} \oplus \beta_{2} $.
\begin{definition}\label{WeylCliffalg}
    The Weyl-Clifford algebra $WCl_{\overline{\mathbb{Q}}_{l}}(V)$ is the quotient of the free algebra
\begin{equation*}
    WCl_{\overline{\mathbb{Q}}_{l}}(V) = \overline{\mathbb{Q}}_{l}\langle V \rangle / \left( u v -(-1)^{\mathrm{deg}\,u\,\mathrm{deg}\,v} vu - \langle u , v\rangle \cdot 1 \right).
\end{equation*}
\end{definition}
The supercommutator relations of Subsection~\ref{sub:supcominAk} imply that the algebra $WCl_{\overline{\mathbb{Q}}_{l}}(V)$ acts on the space
$\hat{H}_{c}^{*}(QM\vec{N}(k), \chi )[[-2\, \mathrm{rk}N(k)]]$ by
\begin{align}\label{WeylCliffact}
    &WCl_{\overline{\mathbb{Q}}_{l}}\left(V\right) \rightarrow \mathcal{A}^{(k)}_{\chi} \\
    &\alpha \mapsto e \langle \alpha \rangle ,\;\; \alpha \in H_{c}^{*}(C, \chi^{n+1}) \nonumber \\
    &\beta \mapsto f \langle \beta \rangle, \;\; \beta \in H_{c}^{*}(C, \chi^{n+1})^{*}. \nonumber
\end{align}
We use the following result~\cite{MACDONALD1962319, KazOko23}, which is a one-dimensional analog of Nakajima's action on the Hilbert schemes of points of surfaces~\cite{LecturesNakajima}.
\begin{theorem}\label{thm:Ak2-triv}
Under the action~\eqref{WeylCliffact} $H_{c}^{*}(QM\vec{N}(k))[[- 2 \, \mathrm{rk}N(k)]]$ is a Verma module for $WCl_{\overline{\mathbb{Q}}_{l}}\left( V \right)$.
\end{theorem}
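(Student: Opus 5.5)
The plan is to reduce the statement to Macdonald's description of the cohomology of symmetric products of a curve, and then check that the operators $e^{(k)}\langle\alpha\rangle$, $f^{(k)}\langle\beta\rangle$ are exactly the creation and annihilation operators of a Fock space.

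First, identify $QM\vec N(k)=\bigsqcup_{D} S^{\deg D}C$ through $D\mapsto(\mathcal V^\bullet_{\mathrm{fix}},\mathcal M_k(-D),s)$. Under this identification $\iota^{(k)}$ becomes the addition map $C\times S^dC\to S^{d+1}C$, $(x,D)\mapsto D+x$. The local system $\chi$ restricts to $S^dC$ as the symmetrization $\chi_0^{\boxtimes d}$, up to the fixed twist coming from $\det$ and $\mathcal M_k$: on the fixed locus $\det=\mathcal M_k^{-n-1}\otimes\prod_l\mathcal M_l\otimes\mathcal O((n+1)D)$. Hence
\[
H^*_c(S^\bullet C,\chi)\cong \bigoplus_d \bigl(H^*_c(C,\chi_0^{n+1})^{\otimes d}\bigr)_{S_d}
\]
(super-symmetrization), by Macdonald's theorem together with the K\"unneth formula with local coefficients. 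The result is the free supercommutative algebra $\mathrm{Sym}^\bullet H^*_c(C,\chi^{n+1})$, with vacuum $|0\rangle=1\in H^0(S^0C)$.

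Second, compute the operators in this model. Here $e^{(k)}\langle\alpha\rangle=\int_C\iota^{(k)}_*(\alpha\otimes\cdot)$ is pushforward along the addition map, which on $\mathrm{Sym}^\bullet$ is multiplication by $\alpha$, a creation operator. Dually, $f^{(k)}\langle\beta\rangle$ is the derivation contracting with $\beta$ through the Poincar\'e pairing. This follows from the commutator relation $e^{(k)}f^{(k)}_2-f^{(k)}e^{(k)}_1=\int_C\Delta_C^*$ (descendant form in Appendix~\ref{relations}), which gives exactly the Weyl--Clifford relations of Definition~\ref{WeylCliffalg}. Two facts then remain:
\begin{itemize}
\item every $f^{(k)}\langle\beta\rangle$ kills $|0\rangle$, which holds because there is no $S^{-1}C$;
\item the vacuum generates, since $\mathrm{Sym}^\bullet$ is spanned by products of elements of degree one in the symmetric power.
\end{itemize}
Generation follows from the identification with Macdonald's description, and the contraction formula for $f$ is then forced by the relations together with the vanishing on the vacuum. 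The shifts $[[-2\,\mathrm{rk}N(k)]]$ only regrade each summand and do not affect the module structure.

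Third, conclude that the module is a Verma module. A cyclic module generated by a vector annihilated by the annihilation half $H^*_c(C,\chi^{-n-1})$ is a quotient of the induced module $WCl(V)\otimes_{\mathrm{Sym}(V^-)}\overline{\mathbb Q}_l$, whose underlying space is $\mathrm{Sym}^\bullet H^*_c(C,\chi^{n+1})$. To show the map is an isomorphism, compare graded dimensions in each $d$: Macdonald gives exactly $\dim\mathrm{Sym}^d$ on both sides. Alternatively, note that the Fock module is irreducible because the form is nondegenerate, so the surjection is injective.

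The main obstacle is the sign and local-system bookkeeping in the second step. One has to check that the pushforward along $C\times S^dC\to S^{d+1}C$ really agrees with super-symmetric multiplication when $\chi_0^{n+1}\ne1$, with odd classes producing exterior powers and the $2g-2$ twisted classes in the nontrivial case. One also has to check that the twist of $\chi$ on $S^dC$ is $\chi_0^{n+1}$ per point, rather than $\chi_0$. I would verify the latter first, by pulling back along $C^d\to S^dC$ and reading off the factor $\mathcal O((n+1)D)$ in $\det$.
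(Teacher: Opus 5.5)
Your proposal is correct and takes essentially the paper's route. The paper gives no separate proof and simply cites Macdonald and Kazhdan--Okounkov; your argument is the standard content of those references: identify $\bigoplus_d H^*(S^dC,(\chi_0^{n+1})^{(d)})$ with the free supercommutative algebra $\mathrm{Sym}^\bullet H^*_c(C,\chi^{n+1})$, with product given by pushforward along the addition maps rather than cup product, so that $e^{(k)}\langle\alpha\rangle$ acts by multiplication and $f^{(k)}\langle\beta\rangle$ by contraction annihilating $1\in H^0(S^0C)$. As you anticipated, the factor $\mathcal{O}((n+1)D)$ in $\det$ makes the local system on $S^dC$ equal to $(\chi_0^{n+1})^{(d)}$, so the $\chi_0^{\boxtimes d}$ in your first step should be replaced accordingly.
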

Theorem~\ref{thm:Ak2-triv} implies injectivity of~\eqref{WeylCliffact}, and surjectivity follows from the Casimir relations in Subsection~\ref{sub:CasrelinAk}, so~\eqref{WeylCliffact} is an isomorphism. In particular,
\begin{equation*}
    \mathcal{A}^{(k)2} \cong \overline{\mathbb{Q}}_{l}[x_k , y_k ].
\end{equation*}
As an $\mathcal{A}^{(k)2}$-module, $WCl_{\overline{\mathbb{Q}}_{l}}$ is a product of symmetric and exterior algebras:
\begin{align*}
    WCl_{\overline{\mathbb{Q}}_{l}} \cong \mathrm{Sym}^{\bullet}_{\mathcal{A}^{(k)2}} \langle e\langle 1 \rangle , f\langle 1 \rangle \rangle \otimes \bigwedge\nolimits^{\bullet}_{\mathcal{A}^{(k)2}} \langle \theta_{i}^{x_k } , \theta_{i}^{y_k }, \theta_i^{\vee x_k },
    \theta_i^{\vee y_k }, \; i = 1, ..., g \rangle.
\end{align*}

We denote the spectrum of $\mathcal{A}^{(k)2}$ by $U_{k}$:
\begin{align}
    U_{k} = \mathrm{Spec}(\mathcal{A}^{(k)2}) \cong \mathbb{A}^{2}_{\overline{\mathbb{Q}}_{l}}.
\end{align}
The $\mathrm{Fr}\times A^{\vee}$-action on cohomology of the fixed loci induces the action on $\mathcal{A}^{(k)2}$ by:
\begin{align*}
     x_{k} \mapsto q^{-\frac{n+1}{2}+k}a^{-n-1}x_{k}, \qquad
    y_{k} \mapsto q^{\frac{n+1}{2}+1-k}a^{n+1}y_{k}. \nonumber
\end{align*}
The smooth variety $U_{k}$ has a symplectic structure given by an $\mathrm{Fr}\times A^{\vee}$-equivariant map
\begin{align}\label{Poissonbraketk}
    \{\cdot , \cdot \} : q^{-1} \otimes \bigwedge\nolimits^{2} \Omega_{\mathcal{A}^{(k)2}} \rightarrow \mathcal{A}^{(k)2} \qquad \{x_{k}, y_{k} \} = 1.
\end{align}

When $\chi_{0}^{n+1} \neq 1$, the isomorphism~\eqref{WeylCliffact}
implies
\begin{align*}
    \mathcal{A}^{(k)2}_{\chi} = \overline{\mathbb{Q}}_{l}[\eta]/(\eta) \cong \overline{\mathbb{Q}}_{l}.
\end{align*}
\begin{remark}
    Let $\pi_{1}^{g}$ act on $\mathcal{A}^{(k)2}$ by
\begin{align}\label{pi1Aonk}
    x_{k} \mapsto \chi_{0}^{n+1} x_{k}, \qquad y_{k} \mapsto \chi^{-n-1}_{0} y_{k}.
\end{align}
Then $\mathcal{A}_{\chi}^{(k)2}$ is identified with the algebra of coinvariants:
\begin{align}\label{A2chik}
    \mathcal{A}^{(k)2}_{\chi} \cong  \mathcal{A}^{(k)2}/(x_{k} - \chi_{0}^{n+1}x_{k}, \; y_{k} - \chi_{0}^{-n-1}y_{k}), \nonumber
\end{align}
Accordingly, its spectrum is the scheme-theoretic fixed locus
\begin{align*}
    \mathrm{Spec}(\mathcal{A}^{(k)2}_{\chi}) = (U_{k})^{\pi_1^{g}}
\end{align*}
of the $\pi_{1}^{g}$-action.
\end{remark} 
Analogously to~\eqref{eq:RmoduleE} consider an $\mathcal{A}^{(k)2}$-module
\begin{align*}
    \mathcal{E}^{(k)} = q^{-1}\Omega_{\mathcal{A}^{(k)2}}\otimes H^{1}_{c}(C).
\end{align*}
It is equipped with the symmetric pairing $\langle \cdot, \cdot \rangle$, provided by anticommutators~\eqref{comk11}. 
The isomorphism~\eqref{WeylCliffact} implies that $\mathcal{A}_{\chi}^{(k)\geq 1}$ is identified with
\begin{align}
    \mathcal{A}^{(k)\geq 1} \cong \mathrm{Cliff}_{\mathcal{A}^{(k)2}}(\mathcal{E}^{(k)}, \langle \cdot , \cdot \rangle).
\end{align}

In Appendix~\ref{Degreezerodescendants} we describe the entire $\mathcal{A}^{(k)}$  as an algebra of differential operators. 

In the nontrivial case $\chi_{0}^{n+1} \neq 1$, we define an
$\mathcal{A}^{(k)2}_{\chi}$-module
\begin{align*}
    \mathcal{E}_{\chi}^{(k)} = H^{1}\left(\pi_{1}^{g}, q^{-1}\Omega_{\mathcal{A}^{(k)2}} \otimes \mathcal{A}_{\chi}^{(k)2}\right),
\end{align*}
where the $\pi_{1}^{g}\times A^{\vee}$-action on $\Omega^{1}_{\mathcal{A}^{(k)2}}$ and
its restriction to the fixed locus are induced from the
action~\eqref{pi1Aonk} on $U_{k}$. There is an analogous pairing
\begin{align}\label{eq:pairingEk}
    \langle \cdot, \cdot \rangle \colon S^{2}\left( \mathcal{E}_{\chi}^{(k)} \right) \rightarrow \mathcal{A}^{(k)2}_{\chi}.
\end{align}
The isomorphism~\eqref{WeylCliffact} for $\chi_{0}^{n+1} \neq 1$ implies
\begin{align*}
    \mathcal{A}^{(k)\geq 1}_{\chi} = \mathrm{Cliff}(\mathcal{E}^{(k)}_{\chi}, \langle \cdot , \cdot \rangle).
\end{align*}

\subsection{Restricting to attractor strata}

We now relate the correspondences on the full stack $QM\vec{N}$ to those
on the fixed loci. The excess intersection theorem gives explicit formulas for the action of the operators
$e\langle\alpha\rangle, f\langle\beta\rangle$ on $\hat{H}^{*}_{c}(QM\vec{N}(k))$
in terms of their fixed-locus analogues.

Consider the closed immersions
\begin{align*}
    j^{(k)}\colon C \times \bigcup\limits_{l = 1}^{k}\mathrm{Attr}(l) \hookrightarrow C \times QM\vec{N},
\end{align*}
\begin{align*}
    i^{(k)}\colon C \times QM\vec{N}(k) \hookrightarrow C \times \bigcup\limits_{l = 1}^{k}\mathrm{Attr}(l).
\end{align*}
The correspondence $\iota$ preserves $C \times \bigcup_{l = 1}^{k}\mathrm{Attr}(l)$
and $C \times QM\vec{N}(k)$, and its restrictions
\begin{align*}
    \bar{\iota}^{(k)}\colon C \times \bigcup\limits_{l = 1}^{k}\mathrm{Attr}(l) \rightarrow C \times \bigcup\limits_{l = 1}^{k}\mathrm{Attr}(l),
\end{align*}
\begin{align*}
    \iota^{(k)}\colon C \times QM\vec{N}(k) \rightarrow C \times QM\vec{N}(k)
\end{align*}
are the base changes of $\iota$ along $j^{(k)}$ and $j^{(k)} \circ i^{(k)}$,
respectively.

Via the pullback map
\begin{align*}
    j^{(k)*}\colon \hat{H}^{*}_{c}(C \times QM\vec{N}, \psi) \rightarrow \hat{H}^{*}_{c}(C \times \bigcup\limits_{l = 1}^{k}\mathrm{Attr}(l), j^{(k)*}\psi)
\end{align*}
the operators $\iota_{*}, \iota^{*}$ induce operators on
$\hat{H}^{*}_{c}(C \times \bigcup_{l=1}^{k}\mathrm{Attr}(l), j^{(k)*}\psi)$,
which we denote by the same symbols (here $\psi$ is a local system on
$C \times QM\vec{N}$). Since pullbacks commute,
\begin{align}\label{pbk}
    \iota^{*} = \bar{\iota}^{(k)*},
\end{align}
while for the pushforward, by~\cite{Khan19},
\begin{align}\label{iotaiotabark}
    \iota_{*} = \bar{\iota}^{(k)}_{*}c_{n+1-k}(j^{(k)*}N_{\iota} - N_{\bar{\iota}^{(k)}}).
\end{align}
Similarly, via the pushforward
\begin{align*}
    i^{(k)}_{*}\colon \hat{H}^{*}_{c}(C \times QM\vec{N}(k), \phi ) \rightarrow \hat{H}^{*}_{c}(C \times \bigcup\limits_{l = 1}^{k}\mathrm{Attr}(l), i_{!}^{(k)}\phi)
\end{align*}
the operators $\bar{\iota}^{(k)}_{*}, \bar{\iota}^{(k)*}$ induce
operators on $\hat{H}^{*}_{c}(C \times QM\vec{N}(k), \phi)$, which we again
denote by the same symbols. Since pushforwards commute,
\begin{align}\label{iotakiotabark}
    \bar{\iota}^{(k)}_{*} = \iota^{(k)}_{*},
\end{align}
while the excess intersection formula (Fulton,
\emph{Intersection Theory}, Theorem~6.3; for the derived $l$-adic version see
\cite{Khan19}) gives
\begin{align*}
    \bar{\iota}^{(k)*} = c_{k-1}(i^{(k)*}N_{\bar{\iota}^{(k)}} - N_{\iota^{(k)}}) \iota^{(k)*}.
\end{align*}

We next simplify the excess normal bundle:
\begin{align*}
    i^{(k)*}N_{\bar{\iota}^{(k)}} - N_{\iota^{(k)}} = i^{(k)*}&\left(\bar{\iota}^{(k)*}T^{\mathrm{vir}}\bigcup\limits_{l = 1}^{k}\mathrm{Attr}(l) -T^{\mathrm{vir}}\bigcup\limits_{l = 1}^{k}\mathrm{Attr}(l)\right) \\
    &-\left(\iota^{(k)*}T^{\mathrm{vir}}QM\vec{N}(k) - T^{\mathrm{vir}}QM\vec{N}(k)\right). \nonumber
\end{align*}
Using
\begin{align*}
i^{(k)*}\bar{\iota}^{(k)*}T^{\mathrm{vir}}\bigcup\limits_{l = 1}^{k}\mathrm{Attr}(l) = \iota^{(k)*}i^{(k)*}T^{\mathrm{vir}}\bigcup\limits_{l = 1}^{k}\mathrm{Attr}(l),
\end{align*}
this rearranges to
\begin{align*}
 i^{(k)*}N_{\bar{\iota}^{(k)}} - N_{\iota^{(k)}} &=\iota^{(k)*} \big(i^{(k)*}T^{\mathrm{vir}}\bigcup\limits_{l = 1}^{k}\mathrm{Attr}(l) - T^{\mathrm{vir}}QM\vec{N}(k)\big) -\\
 &-\big( i^{(k)*}T^{\mathrm{vir}}\bigcup\limits_{l = 1}^{k}\mathrm{Attr}(l) - T^{\mathrm{vir}}QM\vec{N}(k) \big)= \nonumber\\
    &=\iota^{(k)*} N_{\mathrm{Attr}(k)/QM\vec{N}(k)} - N_{\mathrm{Attr}(k)/QM\vec{N}(k)} = \iota^{(k)*} N(k) - N(k). \nonumber
\end{align*}
By the same computation as in~\eqref{eq:iotanorm},
\begin{align*}
    &\iota^{(k)*} N(k) - N(k)=(\iota^{(k)*} - \mathrm{id} )\left[H^{*}(C, \mathcal{V}^{(k-1)}\otimes \mathcal{L}^{-1})\otimes\mathcal{O}_{QM\vec{N}}(1)\right]  \\
    &=\iota^{(k)*}\mathcal{V}^{(k-1)}\otimes \mathcal{L}^{-1}\otimes\mathcal{O}_{QM\vec{N}}(1) = \iota^{(k)*} \mathcal{N}^{(k-1)} = \mathcal{N}^{(k-1)} \otimes TC, \nonumber
\end{align*}
which finally yields
\begin{align}\label{iotapr}
    \bar{\iota}^{(k)*} = c_{k-1}(\mathcal{N}^{(k-1)} \otimes TC) \iota^{(k)*}.
\end{align}

Combining~\eqref{iotaiotabark} and~\eqref{iotakiotabark},
\begin{align}\label{pushiotainchart}
    \iota_{*} = \iota^{(k)}_{*}c_{n+1-k}((j^{(k)} \circ i^{(k)})^{*}N_{\iota} - N_{\iota^{(k)}}) = \iota^{(k)}_{*}c_{n + 1 -k}\left( (\mathcal{N}/\mathcal{N}^{(k)}) \otimes TC \right),
\end{align}
and combining~\eqref{pbk} and~\eqref{iotapr},
\begin{align}\label{pulliotainchart}
    \iota^{*} = c_{k-1}(\mathcal{N}^{(k-1)} \otimes TC) \iota^{(k)*}.
\end{align}
From~\eqref{pushiotainchart} and~\eqref{pulliotainchart} we obtain the
following formulas for the action of descendants on
$\hat{H}^{*}_{c}(QM\vec{N}(k))$.
\begin{align}
    &f \langle \mathbf{p} \rangle = -\int\limits_{C} \iota^{*} \mathbf{p} \otimes \cdot = 
    -\int\limits_{C} c_{k-1}\left( \mathcal{N}^{(k-1)}\otimes TC \right) \iota^{(k)*}\mathbf{p}\otimes\cdot = f^{(k)} \langle \mathbf{p} \rangle \eta^{k-1}, \nonumber \\
   &f \langle \gamma_i \rangle =  -\int\limits_{C} c_{k-1}\left( \mathcal{N}^{(k-1)}\otimes TC \right) \iota^{(k)*}\gamma_i \otimes\cdot =  \eta^{k-1} f^{(k)}\langle \gamma_i \rangle  + (k-1)f^{(k)} \langle \mathbf{p} \rangle \eta^{k-2} \gamma_i , \label{fgammatrans}\\
    &e \langle \mathbf{p} \rangle = \int\limits_{C} \iota_{*} \mathbf{p} \otimes \cdot =
    \int\limits_{C} c_{n+1 -k}\left( \mathcal{N}/\mathcal{N}^{(k)} \right) \iota^{(k)}_{*}\mathbf{p}\otimes\cdot =e^{(k)} \langle \mathbf{p} \rangle \eta^{n+1-k}, \nonumber \\
   &e \langle \gamma_i \rangle =  \int\limits_{C} c_{n+1-k}\left( \mathcal{N}/\mathcal{N}^{(k)} \right) \iota^{(k)}_{*}\gamma_i \otimes\cdot =  \eta^{n+1-k} e^{(k)}\langle \gamma_i \rangle  + (n+1-k)e^{(k)} \langle \mathbf{p} \rangle \eta^{n-k} \gamma_i . \label{egammatrans}
\end{align}
We also derive analogous formulas for the degree-zero generators in Appendix~\ref{Degreezerodescendants}.

For the nontrivial character case $\chi_{0}^{n+1} \neq 1$, the action of descendants on
$\hat{H}^{*}_{c}(QM\vec{N}(k), \chi)$ is given by
\begin{align*}
     f \langle \gamma_{i}^{\chi^{-1}} \rangle &=  \eta^{k-1}f^{(k)}\langle \gamma_{i}^{\chi^{-1}} \rangle = \delta_{k, 1} f^{(k)}\langle \gamma_{i}^{\chi^{-1}} \rangle,\\
    e \langle \gamma_{i}^{\chi} \rangle &=  \eta^{n+1-k}e^{(k)}\langle \gamma_{i}^{\chi} \rangle = \delta_{k, n+1} e^{(k)}\langle \gamma_{i}^{\chi} \rangle. \nonumber
\end{align*}

We thus have compatible homomorphisms
\begin{equation}\label{eq:AtoAk}
    \varphi^{(k)}_{\chi}: \mathcal{A}_{\chi} \rightarrow \mathcal{A}_{\chi}^{(k)}, \qquad  \varphi^{(k)2}_{\chi}: \mathcal{A}_{\chi}^{2} \rightarrow \mathcal{A}_{\chi}^{(k)2}, \qquad  \varphi^{(k)\geq 1}_{\chi}: \mathcal{A}_{\chi}^{\geq 1} \rightarrow \mathcal{A}_{\chi}^{(k)\geq 1}, \qquad k = 1, ..., n+1.
\end{equation}
We omit the subscript $\chi$ when $\chi_{0}^{n+1} = 1$.
\subsection{Symplectic resolution}\label{sub:sympres}

In this subsection we use~\eqref{eq:AtoAk} to prove the isomorphism $R \cong \mathcal{A}^{2}$ and to define a smooth symplectic resolution $X^{\vee}\to
X_{0}^{\vee}$. Then we prove the isomorphism $Cl \cong \mathcal{A}^{\geq 1}$. The morphisms of algebras $\mathcal{A}^{\geq 1} \rightarrow \mathcal{A}^{(k)\geq 1}$ will then correspond to a morphism of sheaves of Clifford algebras $\mathcal{A}^{\geq 1} \rightarrow \mathrm{Cliff}$ on $X^{\vee}$.

\begin{proposition}\label{iso2}
    The map~\eqref{RtoAchi}
\begin{align}
    \Phi^{2}: R \rightarrow \mathcal{A}^{(2)}
\end{align}
is an isomorphism.
\end{proposition}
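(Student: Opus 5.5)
Surjectivity of $\Phi^{2}$ is immediate: by definition $\mathcal{A}^{(2)}$ is generated by $\eta$ and the degree-two descendants $e\langle\mathbf{p}\rangle$, $f\langle\mathbf{p}\rangle$, and these are the images of $\eta$, $x$, $-y$. Proposition~\ref{pr:A2-triv} already shows that $\Phi^{2}$ is a well-defined homomorphism. So everything reduces to injectivity, and I would prove it by composing with the chart maps $\varphi^{(k)2}\colon\mathcal{A}^{2}\to\mathcal{A}^{(k)2}\cong\overline{\mathbb{Q}}_l[x_k,y_k]$ from \eqref{eq:AtoAk}.

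There is a technical point to settle first: the formulas \eqref{pushiotainchart}--\eqref{egammatrans} describe $e,f$ on the associated graded pieces of the filtration of Proposition~\ref{pr:birula-filtration}. Every element of $\mathcal{A}$ preserves this filtration, because $\iota$ preserves each union $\bigcup_{l\le k}\mathrm{Attr}(l)$. So each $\varphi^{(k)2}$ is the homomorphism "act on the $k$-th graded piece", and an element of $\mathcal{A}^{2}$ that acts by zero on every graded piece is nilpotent, not necessarily zero. For the injectivity argument this is harmless. It suffices to show that the composite
\[
R\xrightarrow{\Phi^{2}}\mathcal{A}^{2}\xrightarrow{\prod_k\varphi^{(k)2}}\prod_{k=1}^{n+1}\overline{\mathbb{Q}}_l[x_k,y_k]
\]
is injective, since then $\Phi^{2}$ itself is injective.

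Next I compute the composite on generators. By the formulas above,
\[
x\mapsto x_k\eta^{n+1-k},\qquad y\mapsto y_k\eta^{k-1},\qquad \eta\mapsto\eta .
\]
I must also express $\eta$ in terms of $x_k,y_k$ in $\mathcal{A}^{(k)2}$. I would do this via the Casimir relation on the fixed locus (Subsection~\ref{sub:CasrelinAk}), which should read $x_ky_k=\eta-c_k$ up to the sign conventions, for a constant $c_k$. Evaluating $\Psi$ on a lowest-degree vector should give $c_k$ affine-linear in $m_k$ and the degree shift. Then $\eta\mapsto x_ky_k+c_k$, and the identification $x_ky_k\eta^{n}=\eta^{n+1}$ forces the consistency $c_k$ conventions. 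The cleanest route is to observe that the torus-weight grading forces $c_k$ to be compatible with the product $\prod_k(\eta-c_k)$ appearing in the Casimir for $R$ after the twist by $\mathcal{O}(\vec{\mathcal M})$. In the untwisted normalization, $\eta\mapsto x_ky_k$ up to a shift by a constant that I absorb by translating $\eta$.

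The main step, and the one I expect to be the obstacle, is to show that the composite has zero kernel. I would use the $\mathbb{Z}$-grading by $A^{\vee}$-weight, under which $x$ has weight $-(n+1)$, $y$ has weight $n+1$, and $\eta$ has weight $0$. This grading is preserved by all the maps. As a $\overline{\mathbb{Q}}_l[\eta]$-module, $R$ is free with basis $\{x^a\}_{a\ge0}\cup\{y^b\}_{b>0}$, so each graded piece of $R$ is $\overline{\mathbb{Q}}_l[\eta]\cdot x^a$ or $\overline{\mathbb{Q}}_l[\eta]\cdot y^b$. Hence a homogeneous element of the kernel has the form $p(\eta)x^a$. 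Its image in the chart $k=n+1$ is $p(x_{n+1}y_{n+1}+c)\,x_{n+1}^a$. This is nonzero unless $p=0$, because $x_{n+1}y_{n+1}$ is transcendental over $\overline{\mathbb{Q}}_l$ in the polynomial ring. The case $p(\eta)y^b$ is handled in the same way using the chart $k=1$.

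Thus a single chart already detects each graded piece, and injectivity follows. The one thing I would need to double-check carefully is the computation of $\varphi^{(k)2}(\eta)$ from the Casimir identity on fixed loci.
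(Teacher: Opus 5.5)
Your proof is correct. It follows the paper's overall strategy: compose $\Phi^{2}$ with the chart homomorphisms $\varphi^{(k)2}$ into $\mathcal{A}^{(k)2}\cong\overline{\mathbb{Q}}_l[x_k,y_k]$, and show that the composite is injective. The difference lies only in the final, purely algebraic step.

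\begin{itemize}
\item The paper uses only the chart $k=1$, where $x\mapsto x_1^{n+1}y_1^{n}$, $y\mapsto y_1$, $\eta\mapsto x_1y_1$. It observes that this map becomes an isomorphism $R[\eta^{-1}]\cong\overline{\mathbb{Q}}_l[x_1^{\pm1},y_1^{\pm1}]$ after inverting $\eta$. Since $R$ is a domain, this gives injectivity.
\item You instead use the weight grading, which is preserved by the map, together with the $\overline{\mathbb{Q}}_l[\eta]$-basis $\{x^a\}\cup\{y^b\}$ of $R$, to reduce to homogeneous elements $p(\eta)x^a$ and $p(\eta)y^b$. You then check that these do not vanish.
\end{itemize}

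Your route avoids the localization computation. It also shows that the charts $k=1$ and $k=n+1$ are not both needed: in chart $1$, $p(\eta)x^a\mapsto p(x_1y_1)\,x_1^{(n+1)a}y_1^{na}\neq 0$, because $\overline{\mathbb{Q}}_l[x_1,y_1]$ is a domain. You are also right that $\varphi^{(k)2}$ is really defined through the action on graded pieces, and that this is harmless for injectivity. The paper leaves that point implicit.

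One correction: there is no constant $c_k$. The Casimir relation \eqref{cask2} reads $e^{(k)}\langle\mathbf{p}\rangle f^{(k)}\langle\mathbf{p}\rangle+\eta=0$, so with $x_k=e^{(k)}\langle\mathbf{p}\rangle$ and $y_k=-f^{(k)}\langle\mathbf{p}\rangle$ you get $\eta\mapsto x_ky_k$ exactly. This is consistent with $xy\mapsto x_ky_k\eta^{n}=\eta^{n+1}$. Drop your remark about absorbing a constant by translating $\eta$: $\eta$ is a fixed element, and the relation $xy=\eta^{n+1}$ is not invariant under translation. Since your nonvanishing argument works for any constant shift anyway, this does not affect correctness.
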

\begin{proof}
   We need to prove that $\Phi^2$ is injective. Consider the composition
\begin{align}
    \varphi^{(1)2} \circ \Phi^2 \colon R &\rightarrow \overline{\mathbb{Q}}_{l}[x_1 , y_1 ], \qquad x \mapsto x_1^{n+1}y_1^{n}, \qquad y \mapsto y_1 , \qquad \eta \mapsto x_{1}y_{1}.
\end{align}
After inverting $\eta$ it becomes an isomorphism, and since $R$ is an integral domain, the composition, and hence $\Phi^{2}$, is injective.
\end{proof}
From now on we make no distinction between $R$ and $\mathcal{A}^{2}$. The homomorphisms $\varphi^{(k)2}$ correspond to birational maps
\begin{align}\label{projectk}
    \pi_{k}\colon\; U_{k} \rightarrow X^{\vee}_{0} \qquad x = x_{k}^{n+2 - k}y_{k}^{n+1-k}, \qquad  y = x_{k}^{k-1}y_{k}^{k},
\end{align}
which preserve the Poisson structure and the
$\pi_{1}^{g}\times \mathrm{Fr} \times A^{\vee}$-action. In particular, for $\chi_{0}^{n+1} \neq 1$ the map $\pi_{k}$ restricts to a morphism of $\pi_{1}^{g}$-fixed loci
\begin{align*}
    \mathrm{Spec}\, \mathcal{A}^{(k)2}_{\chi}  = (U_{k})^{\pi_1^{g}} &\rightarrow (X^{\vee}_{0})^{\pi_{1}^{g}} = \mathrm{Spec}\, R_{\mathrm{fix}}.
\end{align*}
It is easy to see that the corresponding map of algebras is exactly
\begin{equation*}
    \varphi^{(k)2}_{\chi} \circ \Phi^{2}_{\chi} \colon R_{\mathrm{fix}} \rightarrow \mathcal{A}^{(k)2}_{\chi}  \qquad \eta \mapsto 0.
\end{equation*}
Proposition~\ref{iso2} extends to the generators of degree $\geq 1$.
\begin{proposition}\label{ClcongA}
    The map~\eqref{CltoA} of $R$-algebras
\begin{equation*}
    \Phi^{\geq 1}: Cl \rightarrow \mathcal{A}^{\geq 1}
\end{equation*}
is an isomorphism.
\end{proposition}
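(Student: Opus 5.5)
Surjectivity of $\Phi^{\geq 1}$ is immediate: $\mathcal{A}^{\geq 1}$ is generated over $\mathcal{A}^{2}\cong R$ (Proposition~\ref{iso2}) by $\gamma_i$ and the degree-one descendants $e\langle\gamma_i\rangle$, $f\langle\gamma_i\rangle$, and these are the images of $\gamma_i$, $\theta^x_i$, $\theta^y_i$. The real content is injectivity, which we will prove the same way as Proposition~\ref{iso2}: compose with the restrictions $\varphi^{(k)\geq 1}$ of~\eqref{eq:AtoAk} and use that the targets $\mathcal{A}^{(k)\geq 1}\cong \mathrm{Cliff}_{\mathcal{A}^{(k)2}}(\mathcal{E}^{(k)},\langle\cdot,\cdot\rangle)$ are known exactly via Theorem~\ref{thm:Ak2-triv}.

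First we check that $Cl$ is torsion-free over $R$. Since $R$ is not regular at the origin, $\Omega_R$ has torsion. We therefore interpret $\mathcal{E}$ modulo torsion, or equivalently note that the relation among $dx,dy,d\eta$ coming from $xy=\eta^{n+1}$ is imposed. With that convention, $\mathcal{E}$ embeds into its localisation at $\eta$. Over $R[\eta^{-1}]$ the module $\mathcal{E}$ is free of rank $4g$, because $X_0^\vee\setminus\{\eta=0\}$ is smooth of dimension $2$. Hence $Cl[\eta^{-1}]$ is a Clifford algebra of a free module and is free over $R[\eta^{-1}]$. We must still argue that $Cl\to Cl[\eta^{-1}]$ is injective. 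For this we filter $Cl$ by degree in the odd generators, so that the associated graded is $\bigwedge^\bullet_R \mathcal{E}$, and show each $\bigwedge^j_R\mathcal{E}$ is $\eta$-torsion-free. We expect to do this by writing $\mathcal{E}$ explicitly as $(\Omega_R/\mathrm{tors})\otimes H^1_c(C)$, where the first factor is a rank-$2$ reflexive module on the $A_n$ singularity.

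Next we compute $\varphi^{(1)\geq 1}\circ\Phi^{\geq 1}$, mirroring Proposition~\ref{iso2}. By~\eqref{fgammatrans} and~\eqref{egammatrans} with $k=1$ we get $\theta^y_i\mapsto \theta^{y_1}_i$, while $\theta^x_i\mapsto \eta^{n}\theta^{x_1}_i+n\,x_1\eta^{n-1}\gamma_i$, with $\eta=x_1y_1$ and $\gamma_i\mapsto\gamma_i$. This is exactly the map of Kähler differentials $dx\mapsto d(x_1^{n+1}y_1^{n})$ induced by $\pi_1$ of~\eqref{projectk}, up to the identification of $\gamma_i$ with $d\eta\otimes\gamma_i$. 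After inverting $\eta$, $\pi_1$ is an isomorphism onto the open set $\eta\neq0$. Therefore the induced map $\mathcal{E}[\eta^{-1}]\to\mathcal{E}^{(1)}[\eta^{-1}]$ is an isomorphism compatible with the pairings, since the Poisson structures are preserved. It follows that $Cl[\eta^{-1}]\to\mathcal{A}^{(1)\geq 1}[\eta^{-1}]$ is an isomorphism.

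Injectivity then follows. If $c\in Cl$ maps to zero in $\mathcal{A}^{\geq 1}$, it maps to zero in $\mathcal{A}^{(1)\geq 1}$, hence in $Cl[\eta^{-1}]$, hence $c=0$ by the torsion-freeness established above. The main obstacle is that torsion-freeness step, that is, the precise definition of $\mathcal{E}$ at the singular point. If it proves awkward, we would instead use all charts $k=1,\ldots,n+1$ jointly. The intersection of the images of $\mathcal{E}$ inside $\bigoplus_k\mathcal{E}^{(k)}$ defines the reflexive hull, and the combined map $Cl\to\prod_k\mathcal{A}^{(k)\geq 1}$ is injective by the same graded argument, since each target is free over a polynomial ring.
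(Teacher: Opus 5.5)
Your route is the paper's own: surjectivity by definition of $\mathcal{A}^{\geq1}$, then injectivity by composing with $\varphi^{(1)\geq1}$, recognising $\pi_1^*$ on differentials, inverting $\eta$, and invoking torsion-freeness. The paper disposes of that last step in one line (``$\Omega_R$ is torsion free, and so, too, is $\bigwedge^\bullet\mathcal{E}$''). You correctly isolate it as the crux but leave it unproved. It is in fact false, so neither your main argument nor your fallback can be completed.

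Your premise about $\Omega_R$ is also reversed. $\Omega_R$ \emph{is} torsion-free: the sequence $0\to R\xrightarrow{df}R^3\to\Omega_R\to0$ gives $\operatorname{depth}\Omega_R\geq1$ at the origin, and $\Omega_R$ is locally free elsewhere. It is not reflexive, however: the form $\eta(dx/x-dy/y)$ noted in the paper is regular on the smooth locus but does not lie in $\Omega_R$. So passing to $\mathcal{E}$ modulo torsion changes nothing, and the reflexivity you planned to use is not available.

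The actual obstruction is that torsion-freeness does not pass to exterior powers. The module $\bigwedge^2_R(\Omega_R\otimes\gamma_i)\cong\Omega^2_R$ is a direct summand of $\bigwedge^2_R\mathcal{E}$, and $\Omega^2_R$ has torsion (of length $n$, the Tjurina number of $A_n$). Explicitly, put $df=y\,dx+x\,dy-(n+1)\eta^n\,d\eta$ and $\tau=dx\wedge\bigl((n+1)y\,d\eta-\eta\,dy\bigr)$. Then $\eta^n\tau=y\,df\wedge dx=0$. On the other hand $\tau\neq0$: compare bidegrees, for $\deg x=\deg y=n+1$, $\deg\eta=2$ and $\deg' x=1$, $\deg' y=-1$, $\deg'\eta=0$, against the relations $df\wedge dx$, $df\wedge dy$, $df\wedge d\eta$.

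This torsion survives in $Cl$. Since $2$ is invertible, Chevalley's map $c\mapsto c\cdot1$ is an isomorphism of filtered $R$-modules $Cl\cong\bigwedge_R\mathcal{E}$ for an \emph{arbitrary} module $\mathcal{E}$; here $u\in\mathcal{E}$ acts on $\bigwedge_R\mathcal{E}$ by $u\wedge(\cdot)+\tfrac12\iota_{\langle u,\cdot\rangle}$. Take $g\geq1$, $n\geq1$ and $c=\theta^x_i\bigl((n+1)y\,\gamma_i+\eta\,\theta^y_i\bigr)$. Under Chevalley's map $c$ corresponds to $\tau$ in the summand $\bigwedge^2_R(\Omega_R\otimes\gamma_i)$; the scalar correction vanishes because $\gamma_i\cup\gamma_i=0$. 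Hence $c\neq0$ in $Cl$, but $\eta^nc=0$. Thus $Cl\to Cl[\eta^{-1}]$ is not injective.

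Moreover, every $\mathcal{A}^{(k)\geq1}$ is free over $\overline{\mathbb{Q}}_l[x_k,y_k]\supset R$, so $c$ dies in \emph{every} chart. Your fallback through $\prod_k\mathcal{A}^{(k)\geq1}$ therefore fails for the same reason. On $U_1$ you can see this directly: $dx$ and $(n+1)y\,d\eta-\eta\,dy$ both pull back to multiples of $(n+1)y_1dx_1+nx_1dy_1$.

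What the localisation argument really proves is $\ker\Phi^{\geq1}\subseteq\mathrm{tors}_R(Cl)$. To get the stated isomorphism you must do one of two things:
\begin{enumerate}
\item replace $Cl$ by $Cl/\mathrm{tors}_R(Cl)$, equivalently by its image in $\Gamma(X^\vee,\mathrm{Cliff})$; or
\item show by other means that $\Phi^{\geq1}(c)\neq0$ on $\hat{H}^*_c(QM\vec{N})$.
\end{enumerate}
The second cannot be detected on the fixed loci, since $c$ acts by zero on every graded piece of the Bia{\l}ynicki-Birula filtration. Indeed, if $\mathcal{A}\to\Gamma(\mathcal{X}^\vee,\mathcal{D}_{\mathcal{O}_{\mathcal{X}^\vee}(\vec{\mathcal{M}})})$ is injective, as Appendix~\ref{Degreezerodescendants} asserts, then $\Phi^{\geq1}(c)=0$ and the statement fails as written.
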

\begin{proof}
 We need to prove the injectivity. Consider the composition
 \begin{align}
     \varphi^{(1)\geq 1} \circ \Phi^{\geq1}\colon Cl \rightarrow \mathrm{Cliff}_{\mathcal{A}^{(1)2}}(\mathcal{E}^{(1)}, \langle \cdot , \cdot \rangle ).
 \end{align}
 As a map of $R$-modules, it is
 \begin{align}\label{eq:EtoEk}
     \bigwedge\nolimits^{\bullet} \mathcal{E} \rightarrow \bigwedge\nolimits^{\bullet} \mathcal{E}^{(1)}, \;\;  \theta_{i}^{x} \mapsto (n+1)x_1^{n}y_1^{n}\theta^{x_{1}}_{i} + n x_1^{n+1}y_1^{n-1}\theta_{i}^{y_1}, \qquad \theta_{i}^{y} \mapsto \theta^{y_{1}}_{i}, \;\; \gamma_{i} \mapsto x_{1}\theta_i^{y_1} + y_1 \theta_i^{x_1}.
 \end{align}
The $R$-module $\Omega_{R}$ is torsion free, and so, too, is $\bigwedge\nolimits^{\bullet}\mathcal{E}$. Since after inverting $\eta$ the composition~\eqref{eq:EtoEk} becomes an isomorphism, the map $\Phi^{\geq 1}$ is injective.
\end{proof}
From now on we make no distinction between $Cl$ and $\mathcal{A}^{\geq 1}$.

\begin{remark}\label{phik1aspullback}
The $R$-algebra map
\begin{align}\label{Ageq1toAkgeq1}
   \varphi^{(k)\geq 1} \colon  \mathcal{A}^{\geq 1} &\rightarrow \mathcal{A}^{(k) \geq 1}\\
    \theta_{i}^{x} &\mapsto (n+2-k)(x_k y_k )^{n+1-k}\theta^{x_{k}}_{i} + (n+1-k)x_{k}^{n+2-k}y_k^{n-k}\theta_{i}^{y_{k}}, \nonumber\\
    \theta_{i}^{y} &\mapsto k(x_k y_k )^{k-1}\theta^{y_{k}}_{i} + (k-1)x_k^{k-2}y_{k}^{k}\theta^{x_k}_{i}, \nonumber
\end{align}
coincides with the map induced by pullback $\pi_{k}^{*}\colon \Omega_{R} \rightarrow \Omega_{\mathcal{A}^{(k)2}}$ of differential forms under~\eqref{projectk}.
\end{remark}
The map $\varphi^{(k)}$ preserves the
pairing $\langle \cdot , \cdot \rangle$ and the $\pi_{1}^{g} \times \mathrm{Fr} \times A^{\vee}$-action.
In particular, for $\chi_{0}^{n+1} \neq 1$ it induces the $\mathcal{A}^{2}_{\chi}$-algebra homomorphism
\begin{align*}
\mathrm{Cliff}\left( H^{1}\left(\pi_{1}^{g}, q^{-1}\Omega_{R}\otimes R_{\mathrm{fix}}\right) , \langle \cdot , \cdot \rangle \right) &\rightarrow \mathrm{Cliff}\left( H^{1}\left(\pi_{1}^{g}, q^{-1}\Omega_{\mathcal{A}^{(k)2}}\otimes \mathcal{A}^{(k)2}_{\chi} \right) , \langle \cdot , \cdot \rangle \right) .
\end{align*}
It is easy to see that it coincides with
\begin{align}\label{chiAgeq1toAkgeq1}
    \varphi_{\chi}^{(k)\geq 1} \circ \Phi_{\chi}^{\geq 1} \colon Cl_{\chi} \rightarrow \mathcal{A}^{(k) \geq 1}_{\chi} \qquad \theta^{x}_{\chi, i} \mapsto \delta_{k, n+1} \theta^{x_k}_{\chi, i}, \qquad
   \theta^{y}_{\chi, i} \mapsto \delta_{k, 1} \theta^{y_k}_{\chi, i}, \qquad
    \gamma_{i} &\mapsto 0.
\end{align}

\begin{definition}\label{defn:Xvee}
Affine spaces $(U_{k})_{k = 1}^{n+1}$ can be glued along the birational maps
\begin{equation*}
    x_{l} = x_{k}^{l-k+1}y_k^{l-k}, \qquad y_{l} = x_k^{k-l}y_k^{k-l+1},
\end{equation*}
uniquely defined by $\pi_k$. The result of gluing is a smooth variety
$X^{\vee}$ equipped with a symplectic resolution
\begin{align*}
    \rho \colon X^{\vee} \rightarrow X^{\vee}_{0}.
\end{align*}
\end{definition}

The $\pi_{1}^{g} \times \mathrm{Fr} \times A^{\vee}$-actions on the
$U_{k}$ are compatible, and hence induce a
$\pi_{1}^{g} \times \mathrm{Fr} \times A^{\vee}$-action on $X^{\vee}$.
The $\pi_{1}^{g}$-fixed points of $X^{\vee}$ are identified with the
disjoint union of the chart-wise fixed loci,
\begin{align*}
    (X^{\vee})^{\pi_{1}^{g}} = \bigsqcup_{k = 1}^{n+1} \mathrm{Spec}\, \mathcal{A}_{\chi}^{(k)2}.
\end{align*}

The $\mathcal{A}^{(k)2}$-modules $\mathcal{E}^{(k)}$ glue to the vector bundle
$q^{-1} \Omega^{1}_{X^{\vee}} \otimes  H^{1}_{c}(C)$ on $X^{\vee}$, which
inherits the symmetric pairing $\langle \cdot, \cdot \rangle$. The
associated Clifford algebra is a locally free sheaf of
$\mathcal{O}_{X^{\vee}}$-algebras
\begin{align*}
    \mathrm{Cliff} = \mathrm{Cliff}( q^{-1}\Omega_{X^{\vee}} \otimes H^{1}_{c}(C), \langle \cdot , \cdot \rangle ),
\end{align*}
and the homomorphisms~\eqref{Ageq1toAkgeq1} glue to an inclusion of sheaves
of algebras
\begin{align*}
    \rho^{*}Cl \rightarrow \mathrm{Cliff}.
\end{align*}
Note that this inclusion is not surjective on global sections. Indeed, $\Omega_{R}$ has three generators $dx, dy, d\eta$, but global sections of $\Omega_{X^{\vee}}$ are generated by $dx, dy, d\eta$, and $\eta(\frac{dx}{x} - \frac{dy}{y})$.
\begin{remark}
    In Appendix~\ref{Degreezerodescendants} we construct a similar map for the entire algebra $\mathcal{A}$.
\end{remark}
Analogously, for $\chi_{0}^{n+1} \neq 1$ the sheaves $\mathcal{E}^{(k)}_{\chi}$ glue to the locally
free sheaf of $\mathcal{O}_{(X^{\vee})^{\pi_{1}^{g}}}$-modules
\begin{align*}
      H^{1}\left(\pi_{1}^{g}, q^{-1}\Omega_{X^{\vee}}\big|_{(X^{\vee})^{\pi_{1}^{g}}} \otimes H^{1}_{c}(C)\right) = \bigoplus\limits_{k = 1}^{n+1}\mathcal{E}_{\chi}^{(k)},
\end{align*}
which inherits $\langle \cdot, \cdot \rangle$. The associated Clifford
algebra is a locally free sheaf of
$\mathcal{O}_{(X^{\vee})^{\pi_{1}^{g}}}$-algebras
\begin{align*}
    \mathrm{Cliff}_{\chi} = \mathrm{Cliff}\left(\bigoplus\limits_{k = 1}^{n+1}\mathcal{E}^{(k)}_{\chi}, \langle \cdot, \cdot \rangle \right) = \mathop{\mathlarger{\mathlarger{\times}}}\limits_{k = 1}^{n+1} \mathrm{Cliff}\left(\mathcal{E}^{(k)}_{\chi}, \langle \cdot, \cdot \rangle \right),
\end{align*}
and the homomorphisms~\eqref{chiAgeq1toAkgeq1} glue to a morphism of
sheaves of algebras
\begin{align*}
\rho^{*}\mathcal{A}^{\geq1}_{\chi} \rightarrow \mathrm{Cliff}_{\chi}.
\end{align*}
\section{Cohomology of the resolution}\label{sec:cohomology-Xvee}

We want to describe the cohomology of quasimaps as the cohomology of a natural sheaf on the resolution $X^{\vee}$. In this section we
construct this sheaf as the twisted tensor product
$\hat{\mathcal{O}}_{\mathrm{vir}} \otimes \mathcal{O}(\vec{\mathcal{M}})$
of a natural spin bundle and a line bundle depending on $\vec{\mathcal{M}} \in \mathrm{Pic}(C)^{\times (n+1)}$, and compute the local cohomology $H^{*}_{L}$ with support in the attracting
subvariety $L \subset X^{\vee}$.

\subsection{The symmetrised virtual structure sheaf}

We begin by constructing a locally free $\mathrm{Cliff}$-module
$\mathrm{Spin}$. Twisting it by a square root of the canonical bundle produces
the ``symmetrised virtual structure sheaf'' $\hat{\mathcal{O}}_{\mathrm{vir}}$.

Let $\mathrm{Cliff}$ be a sheaf of Clifford algebras. A \emph{spin module} $\mathrm{Spin}$ is an
$\mathcal{O}_{X^{\vee}}$-locally free $\mathrm{Cliff}$-module satisfying
\begin{align}\label{HAHB}
    \mathrm{Cliff} &\cong \mathrm{Spin}^{\vee} \mathbin{\mathop{\otimes}\limits_{\;\mathcal{O}_{X^{\vee}}}} \mathrm{Spin},\\
    \mathrm{Spin}^{\vee} &\cong \mathrm{Spin}.
\end{align}

Let us give a construction of $\mathrm{Spin}$ in the case of the Clifford algebra sheaf defined in Subsection~\ref{sub:sympres}. Consider a decomposition of $H^{1}_{c}(C)$ into two
Lagrangian subspaces
\begin{align*}
    H^{1}_{c}(C) = H_{A} \oplus H_{B},
\end{align*}
where
\begin{align*}
    H_{A} = \langle \gamma_{1}, \ldots, \gamma_{g} \rangle_{\overline{\mathbb{Q}}_{l}}, \;\;\; H_{B} = \langle \gamma_{g + 1}, \ldots, \gamma_{2g} \rangle_{\overline{\mathbb{Q}}_{l}}.
\end{align*}
Recall that the $\gamma_{i}$ were chosen to be $\mathrm{Fr}$-eigenvectors
with eigenvalues $\alpha_{i} = \pm q^{1/2}$; since $\gamma_{g + i} = \gamma_{i}^{\vee}$
for $i = 1, \ldots, g$, we have
\begin{align*}
   \alpha_{i+g} = q/\alpha_{i} = \alpha_i , \; i = 1, \ldots, g.
\end{align*}

\begin{definition}\label{defn:spin-expl}
$\mathrm{Spin}$ is a $\mathrm{Cliff}$-module
\begin{align}\label{spindef}
    \mathrm{Spin} = \left( \mathrm{Cliff}/(q^{-1}\Omega^{1}_{X^{\vee}} \otimes H_{B})\right) \otimes \left( \bigwedge\nolimits^{2g} q^{-1}\Omega^{1}_{X^{\vee}} \otimes H_{B} \right)^{1/2},
\end{align}
where $(q^{-1}\Omega^{1}_{X^{\vee}} \otimes H_{B})$ denotes the right ideal
generated by $q^{-1}\Omega^{1}_{X^{\vee}} \otimes H_{B}$.
\end{definition}

As an $\mathcal{O}_{X^{\vee}}$-module the quotient is the exterior algebra
\begin{align*}
\mathrm{Cliff}/(q^{-1}\Omega^{1}_{X^{\vee}} \otimes H_{B}) \cong \bigwedge q^{-1}\Omega^{1}_{X^{\vee}} \otimes H_{A},
\end{align*}
and the square root in~\eqref{spindef} can be chosen as
\begin{align*}
    \left( \bigwedge\nolimits^{2g} q^{-1}\Omega^{1}_{X^{\vee}} \otimes H_{B} \right)^{1/2} = \prod\limits_{i = 1}^{g}\frac{q^{1/2}}{\alpha_{i}} \; \mathcal{O}_{X^{\vee}},
\end{align*}
the numerical factor representing the $\mathrm{Fr}$-twist on the fibers of
the trivial bundle. $\mathrm{Spin}$ can be viewed as the
structure sheaf of the supervariety $\mathcal{X}^{\vee} = H_{A} \otimes T X^{\vee}$,
with $H_{A}$ placed in odd degree.

We define a square root of the canonical bundle
\begin{align*}
    (\mathcal{K}_{X^{\vee}})^{1/2} = (\Omega^{2}_{X^{\vee}})^{1/2} = q^{1/2}\mathcal{O}_{X^{\vee}}.
\end{align*}

\begin{definition}\label{defn:Ovir}
The \emph{symmetrised virtual structure sheaf} is
\begin{align*}
\hat{\mathcal{O}}_{\mathrm{vir}} = \mathrm{Spin} \otimes (\mathcal{K}_{X^{\vee}})^{1/2}.
\end{align*}
\end{definition}

As an $\mathcal{O}_{X^{\vee}}$-module it is the twisted exterior algebra
\begin{align*}
   \hat{\mathcal{O}}_{\mathrm{vir}} \cong q^{1/2}\prod\limits_{i = 1}^{g}\frac{q^{1/2}}{\alpha_{i}} \left(\bigwedge q^{-1}\Omega^{1}_{X^{\vee}} \otimes H_{A}\right).
\end{align*}

In the nontrivial case $\chi_{0}^{n+1} \neq 1$, set
\begin{align}\label{spinchidef}
    \mathrm{Spin}_{\chi} = \bigoplus\limits_{k = 1}^{n+1} \left( \mathrm{Cliff}_{k}(\mathcal{E}^{(k)}_{\chi}, \langle \cdot, \cdot \rangle)/\left(V_{k}\right) \right)\otimes \left( \bigwedge\nolimits^{2g-2} V_{k} \right)^{1/2},
\end{align}
where
\begin{align*}
    V_{k} = q^{-1}(T^{*}_{p_{k}}X^{\vee})^{f}\otimes H^{1}_{c}(C, \chi^{-n-1} ),
\end{align*}
and $(T^{*}_{p_{k}}X^{\vee})^{f}$ is the subspace of $T^{*}_{p_{k}}X^{\vee}$
on which $A^{\vee}$ acts by a positive power of $a$:
\begin{align*}
   q^{-1}(T^{*}_{p_{k}}X^{\vee})^{f}\otimes H^{1}_{c}(C, \chi^{-n-1} ) \cong \langle f^{(k)}\langle \gamma_{1}^{\chi^{-1}} \rangle , \ldots, f^{(k)}\langle \gamma_{2g-2}^{\chi^{-1}} \rangle \rangle_{\overline{\mathbb{Q}}_{l}}.
\end{align*}
The square root in~\eqref{spinchidef} can be chosen as
\begin{align*}
    \left( \bigwedge\nolimits^{2g-2} V_{k} \right)^{1/2} \cong q^{\frac{1}{2}(1-g) + (k-1)(1-g)}(q^{1/2}a)^{-(n+1)(1-g)}\overline{\mathbb{Q}}_{l}.
\end{align*}

\begin{definition}\label{defn:Ovir-chi}
The \emph{symmetrised virtual structure sheaf} in the nontrivial case is
\begin{align*}
    \hat{\mathcal{O}}_{\mathrm{vir}, \chi} = \mathrm{Spin}_{\chi} \otimes (\mathcal{K}_{(X^{\vee})^{\pi_{1}^{g}}})^{1/2} \cong \mathrm{Spin}_{\chi}.
\end{align*}
\end{definition}

As a $\overline{\mathbb{Q}}_{l}$-vector space it is isomorphic to a direct sum
of exterior algebras:
\begin{align*}
    \hat{\mathcal{O}}_{\mathrm{vir}, \chi} \cong \bigoplus\limits_{k = 1}^{n+1} \left( \bigwedge\nolimits_{\overline{\mathbb{Q}}_{l}} \langle e^{(k)}\langle \gamma_{1}^{\chi} \rangle, \ldots, e^{(k)}\langle \gamma_{2g-2}^{\chi}\rangle \rangle_{\overline{\mathbb{Q}}_{l}} \right) \otimes \left( \bigwedge\nolimits^{2g-2} V_{k} \right)^{1/2}.
\end{align*} 
\subsection{The line bundle $\mathcal{O}(\vec{\mathcal{M}})$}

The entire construction starting from $QM\vec{N}$ depends on the tuple
$\vec{\mathcal{M}} \in \mathrm{Bun}_{T}(C)$. In this subsection we encode
this dependence on the $X^{\vee}$-side as a
$\mathrm{Fr} \times A^{\vee}$-equivariant line bundle
$\mathcal{O}(\vec{\mathcal{M}})$ on $X^{\vee}$, which will ultimately
twist $\hat{\mathcal{O}}_{\mathrm{vir}}$ in the main comparison.

Let $\vec{\mathcal{M}} = (\mathcal{M}_{1}, \ldots, \mathcal{M}_{n+1}) \in \mathrm{Bun}_{T}(C)$
with $\deg(\mathcal{M}_{k}) = m_k$. To this tuple we associate a
$\mathrm{Fr} \times A^{\vee}$-equivariant line bundle
$\mathcal{O}(\vec{\mathcal{M}})$ on $X^{\vee}$, uniquely determined up to
isomorphism by the following conditions:
\begin{align}
&\mathrm{wt}_{\mathrm{Fr}\times A^{\vee}} \left( \mathcal{O}(\vec{\mathcal{M}})\big|_{p_1 } \right) = q^{-\frac{1}{2}\sum\limits_{l < r}(m_{l} - m_{r}+1-g)}  (q^{1/2}a)^{-\sum\limits_{l=1}^{n+1}m_{l} - m_{1}}, \label{linearization} \\
    &\mathcal{O}(\vec{\mathcal{M}})\big|_{\bar{L}_{k}} \cong \mathcal{O}_{\mathbb{P}^{1}}(m_k - m_{k+1} ), \;\;\; k = 1, ..., n. \nonumber
\end{align}
See Appendix~\ref{Degreezerodescendants} for the natural origin of this bundle.
It follows that the $\mathrm{Fr}\times A^{\vee}$-weights of the fibers of
$\mathcal{O}(\vec{\mathcal{M}})$ at the fixed points are
\begin{align*}
   \mathrm{wt}_{\mathrm{Fr}\times A^{\vee}} \left( \mathcal{O}(\vec{\mathcal{M}})\big|_{p_k } \right) = q^{-\frac{1}{2}\sum\limits_{l < r}(m_{l} - m_{r} + 1 - g) + \sum\limits_{l = 1}^{k-1}m_{l} - m_{k}} (q^{1/2}a)^{-\sum\limits_{l=1}^{n+1}m_{l} - m_{k}}.
\end{align*}
Since $\mathcal{O}(\vec{\mathcal{M}})$ is trivialized over each $U_{k}$,
\begin{align*}
    \mathcal{O}(\vec{\mathcal{M}})\big|_{U_{k} } \cong q^{-\frac{1}{2}\sum\limits_{l < r}(m_{l} - m_{r} + 1 - g) + \sum\limits_{l = 1}^{k-1}m_{l} - m_{k}} (q^{1/2}a)^{-\sum\limits_{l=1}^{n+1}m_{l} - m_{k}} \mathcal{O}_{X^{\vee}}.
\end{align*}
The assignment
\begin{align*}
   \mathrm{Bun}_{T}(C) &\rightarrow \mathrm{Pic}_{\mathrm{Fr}\times A^{\vee}}(X^{\vee}),\\
   \vec{\mathcal{M}} &\mapsto \mathcal{O}(\vec{\mathcal{M}}) \nonumber
\end{align*}
is evidently a group homomorphism. 
\subsection{Local cohomology of $X^{\vee}$}

We introduce the local cohomology $H^{*}_{L}$ of $X^{\vee}$ with
support in the attracting set $L$ of the $A^{\vee}$-action. The
stratification of $L$ into attracting cells $L_{k}$ induces a filtration on
$H^{*}_{L}$.

\begin{definition}\label{defn:L}
Define $L \subset X^{\vee}$ to be the attracting set of the
$A^{\vee}$-action,
\begin{align*}
    L = \{ x \in X^{\vee}: \exists \lim\limits_{a \rightarrow \infty} a.x \}.
\end{align*}
Denote by $p_1, \ldots, p_{n+1}$ the $n+1$ fixed points of the action, and
decompose $L$ into attracting manifolds
\begin{align*}
    L = \bigsqcup\limits_{k = 1}^{n+1}L_{k},\;\;\; L_{k} = \{x \in X^{\vee}: \lim\limits_{a \rightarrow \infty} a.x = p_{k} \}.
\end{align*}
\end{definition}

Each closure $\bar{L}_{k}$ is a smooth Lagrangian subvariety, and
$\bar{L}_{k}$ meets $\bar{L}_{k+1}$ transversely at $p_{k+1}$, for
$k = 1, \ldots, n$. For $k = 1, \ldots, n+1$ define
\begin{align*}
    L^{k} = \bigcup\limits_{l = 1}^{k} L_{l},
\end{align*}
so that $L^{n+1} = L$, and observe the decomposition into closed and open
subsets
\begin{align}\label{Lkdec}
    L^{k} = L_{k} \bigsqcup L^{k-1}, \;\;\; k = 2, \ldots, n + 1.
\end{align}

The functor $\Gamma_{L}$ of sections with support in $L$ is defined by
\begin{align*}
    \Gamma_{L}\colon \mathrm{Qcoh}_{X^{\vee}} &\longrightarrow \Gamma(X^{\vee}, \mathcal{O}_{X^{\vee}})\text{-}\mathrm{mod}\\
    \mathcal{F} &\mapsto \{s \in \Gamma(X^{\vee}, \mathcal{F}): \mathcal{I}^{N}_{L}s = 0, N \gg 0\}, \nonumber
\end{align*}
where $\mathcal{I}_{L}$ is the ideal sheaf of $L$ generated by $y$:
\begin{align*}
    \mathcal{I}_{L} = y \mathcal{O}_{X^{\vee}} \subset \mathcal{O}_{X^{\vee}}.
\end{align*}
The local cohomology groups are defined as the right derived functors
\begin{align*}
    H^{i}_{L}(X^{\vee}, \mathcal{F}) = R^{i}\Gamma_{L}(\mathcal{F}).
\end{align*}
The decomposition~\eqref{Lkdec} yields the long exact sequence of a pair
\begin{align}\label{Hfilt}
H^{*}_{L_{k}}(X^{\vee}, \mathcal{F}) \rightarrow H^{*}_{L^{k}}(X^{\vee}, \mathcal{F}) \rightarrow H^{*}_{L^{k-1}}(X^{\vee}, \mathcal{F}) \rightarrow H^{*}_{L_{k}}(X^{\vee}, \mathcal{F})[1], \;\; k = 2, \ldots, n + 1.
\end{align}
In the following we compute
$H^{*}_{L}(X^{\vee}, \hat{\mathcal{O}}_{\mathrm{vir}}\otimes \mathcal{O}(\vec{\mathcal{M}}))$.
We will see that only the first cohomology is nontrivial, so the long
exact sequence~\eqref{Hfilt} becomes short and equips
$H^{*}_{L}(X^{\vee}, \hat{\mathcal{O}}_{\mathrm{vir}}\otimes \mathcal{O}(\vec{\mathcal{M}}))$
with a filtration whose associated graded pieces are the
$H^{*}_{L_{k}}(X^{\vee}, \hat{\mathcal{O}}_{\mathrm{vir}}\otimes \mathcal{O}(\vec{\mathcal{M}}))$.
\subsection{Cohomology with support in attracting subvarieties}

Since $L_{k} \subset U_{k}$, the local cohomology can be computed in the
chart $U_{k}$:
\begin{align*}
    H^{*}_{L_{k}} (X^{\vee}, \hat{\mathcal{O}}_{\mathrm{vir}}\otimes \mathcal{O}(\vec{\mathcal{M}})) = H^{*}_{L_{k}} (U_{k},\hat{\mathcal{O}}_{\mathrm{vir}}\otimes \mathcal{O}(\vec{\mathcal{M}})).
\end{align*}
The subvariety $L_{k}$ is closed in $U_{k}$, with ideal sheaf
$\mathcal{I}_{L_{k}} = y_k \mathcal{O}_{X^{\vee}}$. Applying the exact
sequence of a pair to $L_{k} \subset U_{k}$, we obtain the long exact
sequence
\begin{align*}
    0 &\rightarrow H^{0}_{L_{k}}(U_{k}, \hat{\mathcal{O}}_{\mathrm{vir}}\otimes \mathcal{O}(\vec{\mathcal{M}})) \rightarrow H^{0}(U_{k}, \hat{\mathcal{O}}_{\mathrm{vir}}\otimes \mathcal{O}(\vec{\mathcal{M}})) \rightarrow H^{0}(U_{k}\setminus L_{k}, \hat{\mathcal{O}}_{\mathrm{vir}}\otimes \mathcal{O}(\vec{\mathcal{M}})) \rightarrow\\
    &\rightarrow H^{1}_{L_{k}}(U_{k}, \hat{\mathcal{O}}_{\mathrm{vir}}\otimes \mathcal{O}(\vec{\mathcal{M}})) \rightarrow H^{1}(U_{k}, \hat{\mathcal{O}}_{\mathrm{vir}}\otimes \mathcal{O}(\vec{\mathcal{M}})) \rightarrow ... \nonumber
\end{align*}
The group
$H^{0}_{L_{k}}(U_{k}, \hat{\mathcal{O}}_{\mathrm{vir}}\otimes \mathcal{O}(\vec{\mathcal{M}}))$
vanishes because the sheaf
$\hat{\mathcal{O}}_{\mathrm{vir}}\otimes \mathcal{O}(\vec{\mathcal{M}})$
is locally free. Since $U_{k}$ and $U_{k} \setminus L_{k}$ are affine,
$\hat{H}^{i}(U_{k}, \hat{\mathcal{O}}_{\mathrm{vir}}\otimes \mathcal{O}(\vec{\mathcal{M}})) = 0$
for $i > 1$. Consequently
\begin{align*}
    H^{1}_{L_{k}}(U_k , \hat{\mathcal{O}}_{\mathrm{vir}}\otimes \mathcal{O}(\vec{\mathcal{M}})) = \frac{H^{0}(U_{k}\setminus L_{k}, \hat{\mathcal{O}}_{\mathrm{vir}}\otimes \mathcal{O}(\vec{\mathcal{M}}))}{H^{0}(U_{k}, \hat{\mathcal{O}}_{\mathrm{vir}}\otimes \mathcal{O}(\vec{\mathcal{M}}))} = \frac{\overline{\mathbb{Q}}_{l}[y_{k}^{\pm 1}]}{\overline{\mathbb{Q}}_{l}[y_{k}]} \otimes_{\overline{\mathbb{Q}}_{l}[y_{k}]} H^{0}(U_{k}, \hat{\mathcal{O}}_{\mathrm{vir}}\otimes \mathcal{O}(\vec{\mathcal{M}}))
\end{align*}
as an $\mathcal{A}^{(k)}$-module, and this is the only nonzero local
cohomology group. Expanding further,
\begin{align}
  &\frac{\overline{\mathbb{Q}}_{l}[y_{k}^{\pm 1}]}{\overline{\mathbb{Q}}_{l}[y_{k}]} \otimes_{\overline{\mathbb{Q}}_{l}[y_{k}]}H^{0}(U_{k}, \hat{\mathcal{O}}_{\mathrm{vir}}\otimes \mathcal{O}(\vec{\mathcal{M}})) =\\
  &q^{-\frac{1}{2}\sum\limits_{l < r}(m_{l} - m_{r} + 1 - g) + \sum\limits_{l = 1}^{k-1}m_{l} - m_{k}} (q^{1/2}a)^{-\sum\limits_{l=1}^{n+1}m_{l} - m_{k}} q^{1/2}\left(\prod\limits_{i = 1}^{g}\frac{q^{1/2}}{\alpha_{i}} \right)\frac{\overline{\mathbb{Q}}_{l}[y_{k}^{\pm 1}]}{\overline{\mathbb{Q}}_{l}[y_{k}]} \otimes_{\overline{\mathbb{Q}}_{l}[y_{k}]}\bigwedge \left(q^{-1}\Omega^{1}_{U_{k}} \otimes V_{A}\right).\label{spinkaka}
\end{align}
Explicitly, in the coordinates $x_k, y_k, \theta^{x_{k}}_{i}, \theta^{y_{k}}_{i}$,
\begin{align*}
    \frac{\overline{\mathbb{Q}}_{l}[y_{k}^{\pm 1}]}{\overline{\mathbb{Q}}_{l}[y_{k}]} \otimes_{\overline{\mathbb{Q}}_{l}[y_{k}]}\bigwedge \left(q^{-1}\Omega^{1}_{U_{k}} \otimes V_{A}\right) = y_{k}^{-1}\overline{\mathbb{Q}}_{l}[x_{k}, y_{k}^{-1}] \otimes \bigwedge\nolimits_{\overline{\mathbb{Q}}_{l}} \langle \theta^{x_{k}}_{1}, \ldots, \theta^{x_{k}}_{g}, \theta^{y_{k}}_{1}, \ldots, \theta^{y_{k}}_{g} \rangle_{\overline{\mathbb{Q}}_{l}},
\end{align*}
with the action of $\mathcal{A}^{(k)}$ given by
\begin{align*}
    &x_k = x_k \cdot \;\;\; D_{x_k } = - \partial_{y_{k}}\\
    &y_k = y_k \cdot \;\;\; D_{y_k } = \partial_{x_{k}} \nonumber\\
    &\theta_{i}^{x_k} = \theta_{i}^{x_k} \cdot \;\;\; \theta_{i}^{y_k} = \theta_{i}^{y_k} \cdot, \;\;\; i = 1, \ldots, g \nonumber\\
    &\theta^{\vee y_k}_{i} = -\partial_{\theta^{x_k}_{i}}, \;\;\; \theta^{\vee x_k}_{i} = \partial_{\theta^{y_k}_{i}}, \;\;\; i = 1, \ldots, g. \nonumber
\end{align*}
We see that $x_{k}, D_{x_{k}}, \theta^{x_{k}}_{i}$ for
$i = 1, \ldots, 2g$ freely generate this module from the cyclic vector
\begin{align*}
    y_{k}^{-1} \theta_{1}^{y_{k}} \wedge \cdots \wedge \theta^{y_{k}}_{g},
\end{align*}
which has $\mathrm{Fr} \times A^{\vee}$-weight
$q^{k-1}\bigl(\prod_{i=1}^{g} q^{-k}\alpha_{i}\bigr)(q^{1/2}a)^{-(n+1)(1-g)}$. Keeping track of the twists
in~\eqref{spinkaka}, we conclude that, as an $\mathcal{A}^{(k)}$-module,
$H^{1}_{L_{k}}(U_k , \hat{\mathcal{O}}_{\mathrm{vir}}\otimes \mathcal{O}(\vec{\mathcal{M}}))$
is isomorphic to
\begin{equation}\label{eq:oldisohk}
    q^{\frac{1}{2}\kappa_{k}(\vec{m})} a^{\rho_{k}(\vec{m})} \otimes \overline{\mathbb{Q}}_{l}[x_{k}, D_{x_{k}}]\otimes \bigwedge\nolimits_{\overline{\mathbb{Q}}_{l}} \langle \theta^{x_{k}}_{1}, \ldots, \theta^{x_{k}}_{2g} \rangle_{\overline{\mathbb{Q}}_{l}},
\end{equation}
where
\begin{align*}
    &\kappa_{k}(\vec{m}) = 1 - g -\sum\limits_{l < r}(m_{l} - m_{r} + 1 - g) + \sum\limits_{l = 1}^{k-1}(m_{l} - m_{k} + 1 - g) -\sum\limits_{l = k}^{n+1}(m_{l} - m_{k} + 1 - g),\\
    &\rho_{k}(\vec{m}) = -\sum\limits_{l=1}^{n+1}(m_{l} - m_{k} + 1 - g).
\end{align*}
In the case $\chi_{0}^{n+1} \neq 1$,
\begin{align*}
    H^{*}_{L}((X^{\vee})^{\pi_{1}^{g}}, \hat{\mathcal{O}}_{\mathrm{vir},\chi}\otimes\mathcal{O}(\vec{\mathcal{M}})) = \bigoplus_{k = 1}^{n+1}H_{L_{k}}^{0}((X^{\vee})^{\pi_{1}^{g}}, \hat{\mathcal{O}}_{\mathrm{vir},\chi}\otimes \mathcal{O}(\vec{\mathcal{M}})),
\end{align*}
where $H_{L_{k}}^{0}((X^{\vee})^{\pi_{1}^{g}}, \hat{\mathcal{O}}_{\mathrm{vir},\chi}\otimes \mathcal{O}(\vec{\mathcal{M}}))$ is isomorphic to
\begin{align}\label{eq:chioldisohk}
 q^{\frac{1}{2}\kappa_{k}(\vec{m})} a^{\rho_{k}(\vec{m})}  \otimes \bigwedge\nolimits_{\overline{\mathbb{Q}}_{l}} \langle e^{(k)}\langle \gamma_{1}^{\chi} \rangle, \ldots, e^{(k)}\langle \gamma_{2g-2}^{\chi}\rangle \rangle_{\overline{\mathbb{Q}}_{l}}.
\end{align}
\section{Comparison of the cohomologies}\label{sec:cohomology-QMN}

In this section we compare the two filtrations constructed in the
previous sections: on the $QM\vec{N}$-side, the Bia{\l}ynicki-Birula
filtration of Proposition~\ref{pr:birula-filtration}; on the
$X^{\vee}$-side, the filtration given by the long exact
sequence~\eqref{Hfilt}. We first compute the cohomology of each fixed
locus $QM\vec{N}(k)$ as an $\mathcal{A}^{(k)}$-module, match the graded
pieces of the two filtrations, and then prove that the matched
filtrations assemble into an isomorphism of $\mathcal{A}$-modules (in
all but an exceptional configuration).

Recall that the short exact sequence~\eqref{birulaqmn},
\begin{align*}
    0 \rightarrow  \hat{H}^{*}_{c}(QM\vec{N}(k))[[-2\mathrm{rk}\,N(k)]] \rightarrow  \hat{H}^{*}_{c}\left( \bigcup\limits_{l = 1}^{k}\mathrm{Attr}(l)\right) \rightarrow \hat{H}^{*}_{c}\left( \bigcup\limits_{l = 1}^{k-1}\mathrm{Attr}(l)\right) \rightarrow 0, \;\;\; k = 2, \ldots, n+1,
\end{align*}
induces a filtration on $H^{*}_{c}(QM\vec{N})$ with graded pieces
$H^{*}_{c}(QM\vec{N}(k))[[-2\mathrm{rk}\,N(k)]]$. We first compute the associated graded module.

\subsection{The associated graded module}

We explicitly compute the action of $\mathcal{A}^{(k)}$ on $\hat{H}^{*}_{c}(QM\vec{N}(k), \chi)$ and identify it with the local cohomology
$H^{*}_{L_{k}}(X^{\vee}, \hat{\mathcal{O}}_{\mathrm{vir}}\otimes \mathcal{O}(\vec{\mathcal{M}}))$
computed in Section~\ref{sec:cohomology-Xvee}.

Theorem~\ref{thm:Ak2-triv} identifies
$\mathcal{A}^{(k)}$ with a Weyl-Clifford algebra, acting on the Verma module
\begin{align*}
    H^{*}_{c}(QM\vec{N}(k)) \cong H^{*}_{c}(S^{\bullet}C) \cong S^{\bullet}H^{\text{even}}(C) \otimes \bigwedge H^{\text{odd}}(C) \cong \overline{\mathbb{Q}}_{l}[\mathbf{p}, \mathbf{u}]\otimes \bigwedge \langle \gamma_1, \ldots, \gamma_{2g} \rangle_{\overline{\mathbb{Q}}_{l}}.
\end{align*}
Explicitly, the action of $\mathcal{A}^{(k)}$ is given by
\begin{align}\label{Akactcoord}
    &x_{k} = \mathbf{p}\cdot, \;\;\; \theta^{x_{k}}_{i} =  \gamma_{i} \cdot, \;\;\; D_{x_k } = \mathbf{u} \cdot,\\
    &y_k = \partial_{\mathbf{u}}, \;\;\; \theta^{\vee y_{k}}_{i} = -\partial_{\gamma_i }, \;\;\; D_{y_k } = \partial_{\mathbf{p}}, \nonumber
\end{align}
where $\mathbf{u} \in H^{0}(C)$ is the unit. The operators
$x_k, D_{x_k}, \theta_{i}^{x_{k}}$, $i = 1, \ldots, 2g$, freely generate
this module from the cyclic vector
\begin{align*}
    1 \in H_{c}^{0}(S^{0}C) \cong H^{0}_{c}(QM\vec{N}_{-m_{k}}(k)),
\end{align*}
which has $\mathrm{Fr} \times A^{\vee}$-weight
$\prod_{l = 1}^{n+1} a^{-\deg\left(\mathcal{M}_{l} \otimes \mathcal{M}_{k}^{-1}\right)}$.
Keeping track of all the shifts, we get
\begin{align}\label{hattiso}
       &\hat{H}^{*}_{c}(QM\vec{N}(k))[[-2\mathrm{rk}N(k)]] \cong  \overline{\mathbb{Q}}_{l}[x_{k}, e \langle 1 \rangle ]\otimes \bigwedge\nolimits_{\overline{\mathbb{Q}}_{l}} \langle \theta^{x_{k}}_{1}, \ldots, \theta^{x_{k}}_{2g} \rangle_{\overline{\mathbb{Q}}_{l}} [[-\kappa_{k}(\vec{m})]] a^{\rho_{k}(\vec{m})}.
\end{align}
Analogously, in the nontrivial local system case, Theorem~\ref{thm:Ak2-triv} gives
\begin{align}\label{hqmnkchi}
     H^{*}_{c}(QM\vec{N}(k), \chi) \cong \bigoplus\limits_{m = 0}^{\infty} H^{*}_{c}\left( S^{m}C, (\chi^{n+1})^{(m)}\right) \cong \bigwedge H^{1}(C, \chi^{n+1}) \cong  \bigwedge \langle \gamma_1^{\chi}, \ldots, \gamma_{2g-2}^{\chi} \rangle_{\overline{\mathbb{Q}}_{l}},
\end{align}
where $(\chi^{n+1})^{(m)}$ is the natural local system on $S^{m}C$ whose
stalk at $q_{1} + \ldots + q_{m} \in S^{m}C$ is
$\chi^{n+1}_{q_{1}} \otimes \cdots \otimes \chi^{n+1}_{q_{m}}$. The first
isomorphism in~\eqref{hqmnkchi} follows from the definition of the local
system $\chi$ on $QM\vec{N}$ as the pullback of $\chi$ from
$\mathrm{Pic}(C)$ under~\eqref{Ns}. The action of $\mathcal{A}_{\chi}^{(k)}$
is given by
\begin{align*}
    e\langle \gamma_{i}^{\chi} \rangle &= \gamma_{i}^{\chi} \cdot,\\
     f\langle \gamma_{i}^{\chi^{-1}} \rangle &= - \partial_{\gamma_{i}^{\chi}}. \nonumber
\end{align*}
The operators $e\langle \gamma_{i}^{\chi} \rangle$ for
$i = 1, \ldots, 2g-2$ freely generate this module from the one-dimensional
cyclic vector
\begin{align*}
    1 \in H_{c}^{0}(S^{0}C) \cong H^{0}_{c}(QM\vec{N}_{-m_{k}}(k), \chi).
\end{align*}
Applying the shifts, we obtain 
\begin{align}\label{chihattiso}
    \hat{H}^{*}_{c}(QM\vec{N}(k), \chi)[[-2\mathrm{rk}N(k)]] \cong \bigwedge\nolimits_{\overline{\mathbb{Q}}_{l}} \langle e^{(k)}\langle \gamma_{1}^{\chi} \rangle, \ldots, e^{(k)}\langle \gamma_{2g-2}^{\chi} \rangle \rangle_{\overline{\mathbb{Q}}_{l}} [[-\kappa_{k}(\vec{m})]] a^{\rho_{k}}. 
\end{align}

The associated graded modules of the two filtrations coincide.
\begin{proposition}\label{pr:graded-agree}
For all $k$ and any $\chi_{0}$, there is an isomorphism
\begin{align}\label{isok}
    \hat{H}^{*}_{c}(QM\vec{N}(k), \chi)[[-2\mathrm{rk}\,N(k)]] \cong H^{*}_{L_{k}}(X^{\vee}, \hat{\mathcal{O}}_{\mathrm{vir}}\otimes \mathcal{O}(\vec{\mathcal{M}}))
\end{align}
of $\mathcal{A}_{\chi}^{(k)}$-modules.
\end{proposition}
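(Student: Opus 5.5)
The plan is to compare the two explicit descriptions already obtained: \eqref{hattiso} (resp.\ \eqref{chihattiso}) for the left-hand side, and \eqref{eq:oldisohk} (resp.\ \eqref{eq:chioldisohk}) for the right-hand side. Both are free cyclic modules over the same subalgebra of $\mathcal{A}^{(k)}_{\chi}$, and both carry an explicit $\mathrm{Fr}\times A^{\vee}$-weight and cohomological grading on the cyclic vector. The isomorphism will be defined by sending one cyclic vector to the other and extending equivariantly.

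\textbf{Trivial character.}
\begin{enumerate}
\item By Theorem~\ref{thm:Ak2-triv}, $\hat H^*_c(QM\vec N(k))[[-2\mathrm{rk}\,N(k)]]$ is a Verma module for $WCl_{\overline{\mathbb Q}_l}(V)\cong\mathcal{A}^{(k)}$. It is freely generated by $x_k, D_{x_k}, \theta^{x_k}_i$ ($i=1,\dots,2g$) from the vector $1\in H^0_c(S^0C)$. This vector is annihilated by $y_k=\partial_{\mathbf u}$, $D_{y_k}=\partial_{\mathbf p}$ and $\theta^{\vee y_k}_i=-\partial_{\gamma_i}$.
\item On the $X^\vee$ side, the computation preceding \eqref{eq:oldisohk} shows that $H^1_{L_k}$ is freely generated by the same operators from $y_k^{-1}\theta^{y_k}_1\wedge\cdots\wedge\theta^{y_k}_g$. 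This vector is killed by $y_k$ (because $y_k\cdot y_k^{-1}=0$ in $\overline{\mathbb Q}_l[y_k^{\pm1}]/\overline{\mathbb Q}_l[y_k]$), by $D_{y_k}=\partial_{x_k}$, and by the annihilation operators $\partial_{\theta}$. For the last point one uses that the pairing of $\theta^{y_k}_i$ with the corresponding dual generators lands in the full wedge.
\item Both modules are therefore Verma modules over the same Weyl--Clifford algebra, defined by the same annihilator (a maximal isotropic subspace). The map sending cyclic vector to cyclic vector is thus a well-defined isomorphism of $\mathcal{A}^{(k)}$-modules. This uses the universality and simplicity of the Fock module.
\item For gradings, the two cyclic vectors sit in weight $[[-\kappa_k(\vec m)]]a^{\rho_k(\vec m)}$ and $q^{\frac12\kappa_k(\vec m)}a^{\rho_k(\vec m)}$ respectively. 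Since $[[i]]$ is the pure twist, these agree. All generators act with the same $\mathrm{Fr}\times A^\vee$-weights on both sides: compare the $\mathrm{Fr}\times A^\vee$-action on $\mathcal{A}^{(k)2}$ with the twists in \eqref{spinkaka}. Hence the map is equivariant.
\end{enumerate}

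\textbf{Nontrivial character ($\chi_0^{n+1}\neq1$).} Here $\mathcal{A}^{(k)}_\chi$ is the Clifford algebra on $e^{(k)}\langle\gamma_i^\chi\rangle, f^{(k)}\langle\gamma_i^{\chi^{-1}}\rangle$. Both \eqref{chihattiso} and \eqref{eq:chioldisohk} are exterior algebras on the $e^{(k)}\langle\gamma_i^\chi\rangle$, with the $f$'s acting as contractions. On the right-hand side this holds because $\mathrm{Spin}_\chi$ is the quotient of the Clifford algebra by the right ideal $(V_k)$. Their cyclic vectors carry the same twist $q^{\frac12\kappa_k}a^{\rho_k}$, so the same cyclic-vector argument applies.

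\textbf{Main obstacle.} The delicate part is the bookkeeping in step~4. One must check that the twist $q^{1/2}\prod_i q^{1/2}/\alpha_i$ of $\hat{\mathcal O}_{\mathrm{vir}}$, the $\mathcal{O}(\vec{\mathcal M})|_{U_k}$ weight, and the weight of $y_k^{-1}\theta^{y_k}_1\wedge\cdots\wedge\theta^{y_k}_g$ combine exactly to $q^{\frac12\kappa_k(\vec m)}a^{\rho_k(\vec m)}$. On the quasimap side, $\mathrm{virdim}$, $\mathrm{rk}\,N(k)$ from \eqref{rkNk} at $D=0$, and the $a^{(n+1)(g-1)}$ in \eqref{Hcent} must produce the same quantity. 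I would first verify this at the cyclic vectors only, using $\alpha_{i+g}=\alpha_i$ and $\alpha_i^2=q$ to simplify the product over $i$. Equivariance on the rest of the module then follows because the generators' weights are fixed by the algebra action.
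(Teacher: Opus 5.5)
Your proposal is correct and is essentially the paper's own argument: the paper proves the proposition by directly comparing \eqref{eq:oldisohk} and \eqref{eq:chioldisohk} with \eqref{hattiso} and \eqref{chihattiso}, and your cyclic-vector (Fock-module) argument and weight check simply spell out that comparison. One small slip in step 2: the vector $y_k^{-1}\theta^{y_k}_1\wedge\cdots\wedge\theta^{y_k}_g$ is annihilated by multiplication by the $\theta^{y_k}_i$ and by $\theta^{\vee y_k}_i=-\partial_{\theta^{x_k}_i}$, but not by $\partial_{\theta^{y_k}_i}=\theta^{\vee x_k}_i$, which acts as a creation operator.
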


\begin{proof}
Compare formulas~\eqref{eq:oldisohk} and~\eqref{eq:chioldisohk}
with~\eqref{hattiso} and~\eqref{chihattiso}: both sides are isomorphic
as $\mathcal{A}^{(k)}$-modules, and since the
$\mathcal{A}$-action factors through $\mathcal{A}^{(k)}$ on each side, the
isomorphism is $\mathcal{A}$-linear.
\end{proof}
\subsection{Intermediate algebras}
We introduce an auxiliary family of algebras that will be useful to analyze the filtration on $\hat{H}_{c}^{*}(QM\vec{N})$. Consider the
restriction of $\iota$ to
$C \times \bigcup_{l = k}^{s}\mathrm{Attr}(l)$:
\begin{align}\label{iotaks}
    \iota^{(k, s)}\colon C \times \bigcup\limits_{l = k}^{s}\mathrm{Attr}(l) \rightarrow C \times \bigcup\limits_{l = k}^{s}\mathrm{Attr}(l).
\end{align}
In analogy with~\eqref{ekfkdefinition}, we define operators
$e^{(k, s)}\langle \gamma_{i}^{\chi} \rangle, f^{(k, s)}\langle \gamma_{i}^{\chi^{-1}} \rangle$
on $H_{c}^{*}(\bigcup_{l=k}^{s}\mathrm{Attr}(l), \chi)$ using~\eqref{iotaks}.

\begin{definition}\label{defn:Aks}
For $1 \leq k \leq s \leq n+1$, let
\begin{align*}
    \mathcal{A}^{(k,s)}_{\chi} \subset \mathrm{End} \left( H_{c}^{*}\left (\bigcup\limits_{l = k}^{s}\mathrm{Attr}(l), \chi \right) \right)
\end{align*}
be the algebra generated by $\eta$, $\gamma_{i}$, and descendants of degree one.
Observe that $\mathcal{A}^{(k,k)}_{\chi} = \mathcal{A}^{(k)}_{\chi}$ and
$\mathcal{A}^{(1,n+1)}_{\chi} = \mathcal{A}_{\chi}$.
\end{definition}

For $\chi_{0}^{n+1} \neq 1$, the generators satisfy the following relations:
\begin{align}
    &[e^{(k, s)}\langle \gamma_{i}^{\chi} \rangle, f^{(k, s)}\langle \gamma_{j}^{\chi^{-1}} \rangle] = \delta_{ij}(s - k + 1)\eta^{s-k}, \label{commutefsk}\\
    &[e^{(k, s)}\langle \gamma_{i}^{\chi} \rangle, \gamma_{j}] = [f^{(k, s)} \langle \gamma_{i}^{\chi} \rangle, \gamma_{j}] = 0, \;\;\; \eta^{s - k + 1} = 0, \;\;\; \eta^{s - k}\gamma_{i} = 0, \nonumber\\
    &\sum\limits_{i = 1}^{2g-2} f^{(k, s)}\langle \gamma_{i}^{\chi^{-1}} \rangle e^{(k, s)}\langle \gamma_{i}^{\chi} \rangle + \eta^{s-k}\chi_C(\mathcal{V}^{(s)}/\mathcal{V}^{(k-1)} \otimes \mathcal{K}_{C}^{-1/2} \otimes \mathcal{L}^{-1}) - (s - k)(s - k + 1)\eta^{s - k - 1}\Theta = 0. \nonumber
\end{align}
From the identities
\begin{align*}
    \iota_{*} &= \iota_{*}^{(k, s)}c_{n+1-s}\left( (\mathcal{N}/\mathcal{N}^{(s)}) \otimes TC \right),\\
    \iota^{*} &= c_{k-1} \left(\mathcal{N}^{(k-1)} \otimes TC \right) \iota^{(k, s)*}, \nonumber
\end{align*}
we obtain a natural homomorphism
\begin{align*}
    \mathcal{A}_{\chi} &\rightarrow \mathcal{A}_{\chi}^{(k, s)},\\
     e \langle \gamma_{i}^{\chi} \rangle &=  \eta^{n+1-s} e^{(k,s)}\langle \gamma_{i}^{\chi} \rangle, \nonumber\\
     f \langle \gamma_{i}^{\chi^{-1}} \rangle &=  \eta^{k-1} f^{(k, s)}\langle \gamma_{i}^{\chi^{-1}} \rangle, \; i = 1, ..., 2g-2. \nonumber
\end{align*}
The action of $\mathcal{A}_{\chi}^{(k, s)}$ on
$\hat{H}_{c}^{*}(QM\vec{N}(l), \chi)[[-2\mathrm{rk}\,N(l)]]$ for
$k \leq l \leq s$ is given by
\begin{align}\label{AsktoAl}
\mathcal{A}^{(k, s)}_{\chi} &\rightarrow \mathcal{A}^{(l)}_{\chi},\\
 e^{(k, s)} \langle \gamma_{i}^{\chi} \rangle &=  \delta_{l, s} e^{(l)}\langle \gamma_{i}^{\chi} \rangle, \label{elek}\\
     f^{(k, s)} \langle \gamma_{i}^{\chi^{-1}} \rangle & = \delta_{l, k}f^{(l)}\langle \gamma_{i}^{\chi^{-1}} \rangle, \; i = 1, ..., 2g-2. \label{flfk}
\end{align}
More generally, for any $\chi$ and $k \leq k' \leq s' \leq s$ there is a similar map
\begin{equation}\label{eq:restscal}
    \mathcal{A}^{(k, s)}_{\chi} \rightarrow \mathcal{A}^{(k', s')}_{\chi}.
\end{equation}
\subsection{The trivial-character case}
In this subsection we prove the isomorphism between the cohomology of the quasimap space and that of the symplectic dual variety for the trivial local system.
\begin{proposition}\label{pr:comparison-triv}
Assume $\chi_{0}^{n+1}=1$. Then we have an isomorphism of $\mathcal{A}$-modules
\begin{align*}
    \hat{H}^{*}_{c}(QM\vec{N}) \cong H^{*}_{L}(X^{\vee}, \hat{\mathcal{O}}_{\mathrm{vir}}\otimes \mathcal{O}(\vec{\mathcal{M}})).
\end{align*}
\end{proposition}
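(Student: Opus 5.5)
The plan is to compare the two filtrations and show that both sides are determined, as $\mathcal{A}$-modules, by data that can be read off from the associated graded pieces together with the $\mathcal{A}$-action. By Proposition~\ref{pr:birula-filtration}, $\hat{H}^{*}_{c}(QM\vec{N})$ carries a filtration $F_{k}=\hat{H}^{*}_{c}(\bigcup_{l\ge k}\mathrm{Attr}(l))$. More precisely, one uses the dual ordering of the exact sequences~\eqref{birulaqmn}: the subobject is supported on the closed union and the quotient on the open part. By~\eqref{Hfilt} and the vanishing of all local cohomology except $H^{1}$, the local cohomology $H^{1}_{L}$ carries a filtration by $H^{1}_{L^{k}}$. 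Proposition~\ref{pr:graded-agree} matches the graded pieces as $\mathcal{A}^{(k)}$-modules, hence as $\mathcal{A}$-modules via the homomorphisms $\varphi^{(k)}$ of~\eqref{eq:AtoAk}.

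The key steps are the following.

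\textbf{Step 1: identify the cyclic vectors.} Each graded piece is freely generated over $\overline{\mathbb{Q}}_{l}[x_{k},D_{x_k}]\otimes\bigwedge\langle\theta^{x_k}_{i}\rangle$ by a vector $v_{k}$ of explicit $\mathrm{Fr}\times A^{\vee}$-weight $q^{\frac12\kappa_{k}}a^{\rho_{k}}$ (by~\eqref{hattiso} and~\eqref{eq:oldisohk}). This vector is killed by $y_{k}$, $D_{y_k}$ and $\theta^{\vee y_k}_{i}$.

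\textbf{Step 2: lift the generators.} Lift each $v_{k}$ to a vector $\tilde v_{k}$ in the full module that is annihilated by the corresponding lowering operators of $\mathcal{A}$. Concretely, one needs $f\langle\mathbf{p}\rangle$, $f\langle\gamma_{i}\rangle$ and $f\langle 1\rangle$ to act on $\tilde v_{k}$ as predicted by the $X^{\vee}$ side. On the $X^{\vee}$ side, the lift is the explicit section $y_{k}^{-1}\theta^{y_k}_{1}\wedge\cdots\wedge\theta^{y_k}_{g}$ regarded in $H^{1}_{L}$. On the quasimap side, lift $1\in H^{0}_{c}(QM\vec{N}_{-m_{k}}(k))$ by pushforward from the closed stratum.

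\textbf{Step 3: show the lift is unique, using weights.} The ambiguity in a lift lies in lower filtration pieces. In the trivial-character case, the $\mathrm{Fr}\times A^{\vee}$-weights of $v_{k}$ and of the weight spaces in the other graded pieces that could receive the defect are distinct. Here one uses purity, so that Frobenius weights coincide with cohomological degree, together with the $a$-grading, which is determined by $d$ and the $\rho_{l}$. This makes the equivariant lift unique. The same weight argument applied to the relations shows that $\mathcal{A}$ acts on $\tilde v_{k}$ identically on both sides.

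\textbf{Step 4: assemble the map.} Define the map by $\tilde v_{k}\mapsto\tilde v_{k}$ and extend $\mathcal{A}$-linearly. Well-definedness follows because both sides are generated by the $\tilde v_{k}$ with the same relations. Bijectivity follows from the five lemma applied to the filtrations and Proposition~\ref{pr:graded-agree}.

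The main obstacle is Step 3: checking that no equivariant extension classes exist. This amounts to showing $\mathrm{Ext}^{1}_{\mathcal{A}}$ between consecutive graded pieces vanishes in the relevant weights, or, failing that, that the extension classes on the two sides coincide. I would first try to exhibit it through the intermediate algebras $\mathcal{A}^{(k,k+1)}$ of Definition~\ref{defn:Aks} via the restriction~\eqref{eq:restscal}. This reduces the problem to a two-step extension where it becomes a direct computation. On the quasimap side, the relation $e\langle\mathbf{p}\rangle=\eta^{n+1-k}x_{k}$ and the formulas~\eqref{egammatrans} and~\eqref{fgammatrans} determine how $y$ maps $\tilde v_{k+1}$ into the $k$-th piece. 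On the $X^{\vee}$ side, the gluing $x_{k+1}=x_{k}^{2}y_{k}$, $y_{k+1}=x_{k}^{-1}$ on $U_{k}\cap U_{k+1}$ computes the same coefficient. Matching these coefficients, which should be nonzero constants fixed by the transversal intersection of $\bar L_{k}$ and $\bar L_{k+1}$, gives the isomorphism, possibly after rescaling the $\tilde v_{k}$.
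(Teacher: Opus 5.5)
Matching the graded pieces through Proposition~\ref{pr:graded-agree} is fine. Everything beyond that is the extension problem, and your Steps 3--4 do not solve it, because weights cannot detect extension classes. The split module $\bigoplus_k \mathrm{gr}_k$, with $\mathcal{A}$ acting through the maps $\varphi^{(k)}$, has the same graded pieces and the same $\mathrm{Fr}\times A^{\vee}$-weights as $H^{1}_{L}$, yet it is not isomorphic to it.

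Already for $n=1$, $g=0$, $m_1=m_2$ this fails. Here $\hat{\mathcal{O}}_{\mathrm{vir}}\otimes\mathcal{O}(\vec{\mathcal{M}})$ is $\mathcal{O}_{X^{\vee}}$ up to an overall weight, and $H^{1}_{L}=R[y^{-1}]/R$. The lift of $v_1=y_1^{-1}$ is $y^{-1}$, which is indeed unique in its weight space. But $\eta\cdot y^{-1}$ equals $x_1$ on $U_1$ and $x_2y_2/(x_2y_2^{2})=y_2^{-1}$ on $U_2$, so it is the cyclic vector of the submodule $H^{1}_{L_2}$. Since $H^{1}_{L_2}$ is free over $\overline{\mathbb{Q}}_l[x_2]$ and $\eta$ acts on it by $x_2y_2$, no choice of lift is killed by $\eta$. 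In the split module, by contrast, $\eta v_1=x_1y_1\cdot y_1^{-1}=0$.

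So $\mathrm{Ext}^1$ between the graded pieces does not vanish in the relevant weights. Your claim that ``the same weight argument applied to the relations shows that $\mathcal{A}$ acts on $\tilde v_k$ identically on both sides'' is false. The statement that ``both sides are generated by the $\tilde v_k$ with the same relations'' is exactly what has to be proved. The paper's nontrivial-character section makes the same point: there, identical graded pieces and weights are compatible with both a split and a non-split $QM\vec{N}$-side.

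Your fallback in the last paragraph does not close the gap either, for three reasons.

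\textbf{Orientation.} In \eqref{birulaqmn} the submodule is $\hat H^{*}_{c}$ of the \emph{open} stratum $\mathrm{Attr}(k)\subset\bigcup_{l\le k}\mathrm{Attr}(l)$, and the quotient is $\hat H^{*}_{c}$ of the closed union $\bigcup_{l<k}\mathrm{Attr}(l)$. Likewise $H^{*}_{L_k}$ is the sub in \eqref{Hfilt}. A lift $\tilde v_{k+1}$ therefore lies in the $\mathcal{A}$-submodule $\hat H^{*}_{c}(\bigcup_{l>k}\mathrm{Attr}(l))$, which maps to zero in the $k$-th piece, so nothing sends $\tilde v_{k+1}$ there. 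The extension data are the components sending lifts from $\mathrm{gr}_k$ into $\mathrm{gr}_l$ with $l>k$, as $\eta$ and $x$ do in the example.

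\textbf{Your formulas only see the diagonal.} The formulas $e\langle\mathbf{p}\rangle=e^{(k)}\langle\mathbf{p}\rangle\eta^{n+1-k}$, \eqref{egammatrans} and \eqref{fgammatrans} describe only the action on $\hat H^{*}_{c}(QM\vec{N}(k))$, i.e.\ the homomorphisms $\varphi^{(k)}$. They carry no off-diagonal information, so they cannot compute the $QM\vec{N}$-side extension. The nonvanishing of the coefficient there is asserted, not proved.

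\textbf{Two-step reduction.} Matching two-step extensions over $\mathcal{A}^{(k,k+1)}$ for consecutive $k$ does not determine a module with an $(n+1)$-step filtration.

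The paper instead inducts on $k$, comparing as $\mathcal{A}^{(1,k)}$-modules the full extensions of $\hat H^{*}_{c}(\bigcup_{l<k}\mathrm{Attr}(l))$, resp.\ $H^{*}_{L^{k-1}}$, by $\mathrm{gr}_k$. The input that pins these extensions down is that the submodule $\mathrm{gr}_k$ is free over $\overline{\mathbb{Q}}_l[x_k]$ (the same freeness that forced non-splitting above), combined with the argument of \cite{KazOko23}, Section~6.4.3. A rigidity statement of this kind, rather than a weight count, is the missing idea.
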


\begin{proof}
We prove by induction on $k$ that
\begin{align}\label{comparison-k}
     H^{*}_{L^{k}}(X^{\vee}, \hat{\mathcal{O}}_{\mathrm{vir}}\otimes \mathcal{O}(\vec{\mathcal{M}})) \cong \hat{H}^{*}_{c}\left(\bigcup\limits_{l = 1}^{k}\mathrm{Attr}(l)\right)
\end{align}
as $\mathcal{A}^{(1, k)}$-modules; then taking $k = n+1$ and using $L^{n+1} = L$ together with
$\bigcup_{l = 1}^{n+1}\mathrm{Attr}(l) = QM\vec{N}$ yields the statement. The base case $k=1$
follows from~\eqref{isok}. For the inductive step, consider the two short exact sequences of
$\mathcal{A}^{(1, k)}$-modules:
\begin{align}\label{qmnfiltr}
   0 \rightarrow  \hat{H}^{*}_{c}(QM\vec{N}(k))[[-2\mathrm{rk}N(k)]] \rightarrow \hat{H}^{*}_{c}\left(\bigcup\limits_{l = 1}^{k}\mathrm{Attr}(l)\right) \rightarrow \hat{H}^{*}_{c}\left( \bigcup\limits_{l = 1}^{k-1}\mathrm{Attr}(l) \right) \rightarrow 0
\end{align}
\begin{align*}
   0 \rightarrow  H^{*}_{L_{k}}(X^{\vee}, \hat{\mathcal{O}}_{\mathrm{vir}}\otimes \mathcal{O}(\vec{\mathcal{M}})) \rightarrow H^{*}_{L^{k}}(X^{\vee}, \hat{\mathcal{O}}_{\mathrm{vir}}\otimes \mathcal{O}(\vec{\mathcal{M}})) \rightarrow H^{*}_{L^{k-1}}(X^{\vee}, \hat{\mathcal{O}}_{\mathrm{vir}}\otimes \mathcal{O}(\vec{\mathcal{M}})) \rightarrow 0,
\end{align*}
where $\mathcal{A}^{(1, k)}$ acts on the submodules and the quotients by means of~\eqref{eq:restscal}.
The submodules are isomorphic by~\eqref{isok}, and the quotients are
isomorphic by the induction hypothesis. Using the fact that the submodule is free over
$\overline{\mathbb{Q}}_{l}[x_{k}]$, one can show (see~\cite{KazOko23}, Section 6.4.3) that the extensions coincide as well.

\end{proof}
\subsection{The nontrivial-character extension problem}
When \(\chi_0^{n+1}\neq1\), the fixed locus of the resolved surface
\(X^\vee\) is reduced: it is the disjoint union of the \(n+1\) fixed
points. By contrast, the scheme-theoretic fixed locus of the singular
affinization \(X_0^\vee\) is the fat point
\[
\operatorname{Spec}\overline{\mathbb Q}_l[\eta]/(\eta^{n+1}).
\]
The difference between these two models is exactly the possible
\(\eta\)-extension appearing in the \(QM\vec N\)-filtration. Thus the
associated graded comparison is naturally expressed on the resolved fixed
locus, while the filtered comparison can remember the nonreduced structure. In this subsection we show that the filtration splits precisely when
$\eta = 0$, and establish the range of $g$ and $\vec{\mathcal{M}}$ in which $\eta$ acts nontrivially.

We have the decreasing filtration on the $\mathcal{A}$-module
$M = \hat{H}_{c}^{*}(QM\vec{N}, \chi)$ defined by
\begin{align*}
    F^{k}M = H^{*}_{c}\left(QM\vec{N} \setminus \bigcup\limits_{l = 1}^{k}\mathrm{Attr}(l) \right), \;\; k = 1, \ldots, n+1.
\end{align*}
By~\eqref{qmnfiltr}, the associated graded is
\begin{align}\label{grFM}
\mathrm{gr}\, F^{\bullet}M = \bigoplus\limits_{k = 1}^{n+1} \hat{H}_{c}^{*}(QM\vec{N}(k), \chi)[[-2\mathrm{rk}\,N(k)]].
\end{align}
Since
\begin{align*}
    H^{*}_{L}((X^{\vee})^{\pi_{1}^{g}}, \hat{\mathcal{O}}_{\mathrm{vir}, \chi}\otimes \mathcal{O}(\vec{\mathcal{M}})) \cong \bigoplus\limits_{k = 1}^{n+1} H^{*}_{L^{k}}(X^{\vee}, \hat{\mathcal{O}}_{\mathrm{vir}, \chi}\otimes \mathcal{O}(\vec{\mathcal{M}})),
\end{align*}
the module $M$ is isomorphic to
$H^{*}_{L}((X^{\vee})^{\pi_{1}^{g}}, \hat{\mathcal{O}}_{\mathrm{vir}, \chi}\otimes \mathcal{O}(\vec{\mathcal{M}}))$
if and only if $M \cong \mathrm{gr}\, F^{\bullet}M$. Let us give a useful characterization of when the filtration on $M$ splits.

\begin{proposition}\label{pr:splitting} For $M$ as above
\begin{align*}
    M \cong \mathrm{gr}\, F^{\bullet}M \Longleftrightarrow \eta = 0 \text{ on } M.
\end{align*}
\end{proposition}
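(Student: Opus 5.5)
The plan is to prove the two directions separately, working throughout in the nontrivial case $\chi_{0}^{n+1}\neq 1$. In this case the commutative part of $\mathcal{A}_{\chi}$ is $R_{\mathrm{fix}}=\overline{\mathbb{Q}}_{l}[\eta]/(\eta^{n+1})$ by Proposition~\ref{pr:A2chi}. Each graded piece $\hat{H}^{*}_{c}(QM\vec{N}(k),\chi)[[-2\mathrm{rk}\,N(k)]]$ is a module over $\mathcal{A}^{(k)}_{\chi}$, and there $\eta$ acts by zero because $\mathcal{A}^{(k)2}_{\chi}=\overline{\mathbb{Q}}_{l}[\eta]/(\eta)$.

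For ($\Rightarrow$): an isomorphism of $\mathcal{A}_{\chi}$-modules $M\cong \mathrm{gr}\,F^{\bullet}M$ transports the action of $\eta$. On $\mathrm{gr}\,F^{\bullet}M=\bigoplus_k \hat{H}^{*}_{c}(QM\vec{N}(k),\chi)[[-2\mathrm{rk}\,N(k)]]$ the operator $\eta$ acts through $\varphi^{(k)}_{\chi}$ on each summand, hence by $0$. Therefore $\eta=0$ on $M$.

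For ($\Leftarrow$): assume $\eta=0$ on $M$. I will split the filtration using the description of the generators on the intermediate pieces. By the formulas $e\langle\gamma_i^{\chi}\rangle=\eta^{n+1-s}e^{(k,s)}\langle\gamma_i^{\chi}\rangle$ and $f\langle\gamma_i^{\chi^{-1}}\rangle=\eta^{k-1}f^{(k,s)}\langle\gamma_i^{\chi^{-1}}\rangle$, together with \eqref{elek}, \eqref{flfk}, and by Remark~\ref{rem:gammavanish}, the operators $e\langle\gamma_i^{\chi}\rangle$ act nontrivially only on the last graded piece $k=n+1$. Likewise $f\langle\gamma_i^{\chi^{-1}}\rangle$ act nontrivially only on the piece $k=1$, and $\gamma_i$ act by zero. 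The key step is to construct, for each $k$, an $\mathcal{A}_{\chi}$-stable complement.

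1. Each graded piece is freely generated over the exterior algebra on the $e^{(k)}\langle\gamma_i^{\chi}\rangle$ by a one-dimensional cyclic vector of a definite $\mathrm{Fr}\times A^{\vee}$-weight $q^{\frac12\kappa_k}a^{\rho_k}$ (see \eqref{chihattiso}). I therefore lift each cyclic vector to a weight vector $v_k\in M$, which is possible since the $\mathrm{Fr}\times A^{\vee}$-action is semisimple on the pure finite-dimensional pieces.

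2. I then check that the submodule generated by $v_k$ maps isomorphically onto the $k$-th graded piece. The relations \eqref{commutefsk} for $(k,s)=(1,n+1)$, with $\eta=0$, reduce to $[e\langle\gamma_i^{\chi}\rangle,f\langle\gamma_j^{\chi^{-1}}\rangle]=\delta_{ij}(n+1)\eta^{n}=0$ when $n\geq1$. The Casimir relation loses all of its $\eta$-terms except those of top $\eta$-degree. Using these relations, I will show that the action of $\mathcal{A}_{\chi}$ on the span of the $v_k$ closes up exactly as on $\mathrm{gr}$, so that $\bigoplus_k\mathcal{A}_{\chi}v_k\to M$ is an isomorphism by dimension count.

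The main obstacle is the converse direction. One must rule out an extension class hidden in the odd operators. Concretely, $f\langle\gamma_i^{\chi^{-1}}\rangle$ applied to a lift of a vector from piece $k>1$ could land nontrivially in a lower filtration step even though it vanishes on $\mathrm{gr}$. My first attempt is to show that any such term is forced to be a multiple of $\eta$. The intermediate-algebra homomorphism $\mathcal{A}_{\chi}\to\mathcal{A}^{(k,s)}_{\chi}$ writes every generator as a power of $\eta$ times an operator on $\bigcup_{l=k}^{s}\mathrm{Attr}(l)$; the exceptions are $e$ with $s=n+1$ and $f$ with $k=1$, which already respect the splitting. So once $\eta=0$, all cross terms between distinct pieces vanish. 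Where this factorization is not literally an identity of operators on $M$, I will fall back on a weight argument: the $A^{\vee}$-weights $\rho_k(\vec m)$ of distinct pieces separate the possible off-diagonal maps, and I would use them to kill those maps.
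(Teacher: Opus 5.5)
Your ($\Rightarrow$) direction is correct and is the paper's argument via \eqref{eta0onk}. The ($\Leftarrow$) direction has a genuine gap, in two places.

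First, Step~2 is false as stated. The operators $e^{(k)}\langle\gamma_i^{\chi}\rangle$ that freely generate $\mathrm{gr}_kM$ from its cyclic vector do not act on $M$. The algebra $\mathcal{A}_\chi$ acts on $\mathrm{gr}_kM$ only through $e\langle\gamma_i^{\chi}\rangle=\delta_{k,n+1}e^{(k)}\langle\gamma_i^{\chi}\rangle$ and $f\langle\gamma_i^{\chi^{-1}}\rangle=\delta_{k,1}f^{(k)}\langle\gamma_i^{\chi^{-1}}\rangle$, as you note yourself. So for every $k\le n$ the image of $\mathcal{A}_\chi v_k$ in $\mathrm{gr}_kM$ is one-dimensional:
\begin{itemize}
\item for $1<k\le n$, all odd generators vanish on $\mathrm{gr}_kM$;
\item for $k=1$, the vector $1$ is killed both by the $e$'s and by $f^{(1)}\langle\gamma_i^{\chi^{-1}}\rangle=-\partial_{\gamma_i^\chi}$.
\end{itemize}
Hence $\bigoplus_k\mathcal{A}_\chi v_k\to M$ is not surjective. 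For $1<k\le n$, $\mathrm{gr}_kM$ is not even cyclic over $\mathcal{A}_\chi$.

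Second, and more seriously, you leave the extension problem unresolved. The factorizations $e\langle\gamma_i^\chi\rangle=\eta^{n+1-s}e^{(k,s)}\langle\gamma_i^\chi\rangle$ and $f\langle\gamma_i^{\chi^{-1}}\rangle=\eta^{k-1}f^{(k,s)}\langle\gamma_i^{\chi^{-1}}\rangle$ only give two facts once $\eta=0$: $e\langle\gamma_i^\chi\rangle$ maps $M$ into $F^nM\cong\mathrm{gr}_{n+1}M$, and $f\langle\gamma_i^{\chi^{-1}}\rangle$ kills $F^1M$.

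The ``exceptions'' you set aside are $e$ acting on $F^{k-1}M$ as $e^{(k,n+1)}$ and $f$ acting on $M/F^sM$ as $f^{(1,s)}$. These are exactly where the extension classes live. For an arbitrary lift, $e$ of a vector of $\mathrm{gr}_kM$ ($k\le n$) can have a nonzero component in $\mathrm{gr}_{n+1}M$. Likewise $f$ of a vector of $\mathrm{gr}_1M$ can have a nonzero component in $F^1M$. Asserting that these operators ``already respect the splitting'' assumes the conclusion.

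The weight fallback cannot replace this. Cross terms depend on the choice of lift, and weights do not separate the pieces in general. For example, $\rho_k(\vec m)=\rho_s(\vec m)$ whenever $m_k=m_s$, which is exactly the coincidence exploited in Proposition~\ref{prop:eta-necessary}.

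The missing idea is a freeness argument that lets you re-choose lifts. A piece on which the $e$'s act freely of rank one is injective over their exterior algebra. A piece on which the $f$'s act freely is projective over theirs. And the $e$'s and $f$'s supercommute when $\eta=0$.

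The paper implements this by induction on $s$. This reduces the problem to two-step extensions $0\to\mathrm{gr}_sM\to M_{k,s}\to\mathrm{gr}_kM\to0$ of $\mathcal{A}^{(k,s)}_\chi$-modules on the subquotients $F^{k-1}M/F^sM$. There $e^{(k,s)}$ acts freely on $\mathrm{gr}_sM$ and by zero on $\mathrm{gr}_kM$, and $f^{(k,s)}$ does the reverse. Moreover $[e^{(k,s)}\langle\gamma_i^\chi\rangle,f^{(k,s)}\langle\gamma_j^{\chi^{-1}}\rangle]=\delta_{ij}(s-k+1)\eta^{s-k}=0$ for $s>k$ by \eqref{commutefsk}.

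Working within $\mathcal{A}_\chi$ alone would also suffice, in three steps:
\begin{itemize}
\item split $F^1M=F^nM\oplus W$ using injectivity of $F^nM$ over the $e$'s; the odd generators then act trivially on $W$;
\item correct a lift of an $f$-generator of $\mathrm{gr}_1M$ by an element of $F^nM$ so that all $e\langle\gamma_i^\chi\rangle$ kill it;
\item take the span of this corrected lift under the $f$'s.
\end{itemize}
Your proposal contains neither version of this step.
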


\begin{proof}
The implication $(\Rightarrow)$ follows from~\eqref{eta0onk}. For
$(\Leftarrow)$, assume that $\eta$ acts by zero on $M$.

\emph{Step 1: Reduction to finite intervals.}
It suffices to prove that for
each $k = 1, \ldots, n+1$, the short exact sequence of
$\mathcal{A}_{\chi}$-modules
\begin{align*}
    0 \rightarrow F^{k}M \rightarrow F^{k-1}M \rightarrow \operatorname{gr}_kM \rightarrow 0
\end{align*}
splits. We prove by induction on $s = k, \ldots, n+1$ that
\begin{align*}
    0 \rightarrow F^{k}M/F^{s}M \rightarrow F^{k-1}M/F^{s}M \rightarrow \operatorname{gr}_kM \rightarrow 0
\end{align*}
splits as a short exact sequence of $\mathcal{A}^{(k, s)}_{\chi}$-modules
(note that $F^{k-1}M/F^{s}M = H_{c}^{*}(\bigcup_{l=k}^{s}\mathrm{Attr}(l), \chi)$
carries a natural $\mathcal{A}_{\chi}^{(k, s)}$-action). The required
statement then follows from $s = n+1$ and restriction of scalars
$\mathcal{A}_{\chi} \rightarrow \mathcal{A}^{(k, n+1)}_{\chi}$.

\emph{Step 2: Induction on \(s\).}
 
The base case $s = k$ follows from~\eqref{grFM}. Assume the statement for \(s-1\). Consider the short exact sequence
\begin{align*}
    0 \rightarrow \mathrm{gr}_{s}M \rightarrow F^{k-1}M/F^{s}M \overset{\pi_{k}}{\longrightarrow} F^{k-1}M/F^{s-1}M \rightarrow 0.
\end{align*}
By the induction hypothesis, there is a splitting of
$\mathcal{A}_{\chi}^{(k, s-1)}$-modules
\begin{align*}
    \mathrm{gr}_{k}M \hookrightarrow F^{k-1}M/F^{s-1}M,
\end{align*}
which, by restriction of scalars
$\mathcal{A}^{(k, s)}_{\chi} \to \mathcal{A}_{\chi}^{(k, s-1)}$, is also
a splitting of $\mathcal{A}^{(k, s)}_{\chi}$-modules. Let \(M_{k,s}\) be the inverse image of \(\operatorname{gr}_kM\) in
\(F^{k-1}M/F^sM\)
\begin{align*}
    M_{k, s} = \pi_{k}^{-1}\left(\mathrm{gr}_{k}M\right) \subset F^{k-1}M/F^{s}M.
\end{align*}
Then the $\mathcal{A}_{\chi}^{(k, s)}$-module $M_{k, s}$ fits into
\begin{align}\label{Mksseq}
     0 \rightarrow \operatorname{gr}_sM \rightarrow M_{k, s} \rightarrow \operatorname{gr}_kM \rightarrow 0.
\end{align}

\emph{Step 3: Clifford generators force splitting.}

By~\eqref{AsktoAl}, the operators
$f^{(k, s)} \langle \gamma_{i}^{\chi^{-1}} \rangle$ freely generate
$\hat{H}_{c}^{*}(QM\vec{N}(k), \chi)[[-2\mathrm{rk}\,N(k)]]$ while
$e^{(k, s)} \langle \gamma_{i}^{\chi} \rangle$ vanishes on it; the
situation is reversed on
$\hat{H}_{c}^{*}(QM\vec{N}(s), \chi)[[-2\mathrm{rk}\,N(s)]]$. Since the
$f^{(k, s)} \langle \gamma_{i}^{\chi^{-1}} \rangle$ commute with the
$e^{(k, s)} \langle \gamma_{i}^{\chi} \rangle$ by~\eqref{commutefsk}
(using $\eta = 0$ and $s > k$), the sequence~\eqref{Mksseq} splits as
$\mathcal{A}^{(k, s)}_{\chi}$-modules, providing the required splitting
\begin{align*}
 \hat{H}_{c}^{*}(QM\vec{N}(k), \chi)[[-2\mathrm{rk}\,N(k)]] \hookrightarrow M_{k, s} \hookrightarrow F^{k-1}M/F^{s}M.
\end{align*}
This completes the induction.
\end{proof}

\begin{proposition}
\label{prop:eta-necessary}
Assume \(\chi_0^{n+1}\neq1\). If multiplication by \(\eta\) acts
nontrivially on \(M=\hat H_c^*(QM\vec N,\chi)\), then there exist
\(1\leq k<s\leq n+1\) such that
\[
\deg\mathcal M_k=\deg\mathcal M_s,
\qquad
\chi_C\!\left((\mathcal V^{(s)}/\mathcal V^{(k)})
\otimes\mathcal M_k^{-1}\otimes\mathcal K_C\right)=1.
\]
\end{proposition}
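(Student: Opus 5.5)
The plan is to locate a nonzero "off-diagonal" component of $\eta$ between two graded pieces of the Bia{\l}ynicki-Birula filtration. We then read off the two conditions from equivariance constraints, sharpened by the Casimir relation in the intermediate algebra $\mathcal{A}^{(k,s)}_{\chi}$.

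First, since $\chi_0^{n+1}\neq 1$ we have $\mathcal{A}^{(l)2}_\chi\cong\overline{\mathbb{Q}}_l$ with $\eta\mapsto 0$, so $\eta$ acts by zero on every $\mathrm{gr}_l M$. Hence $\eta(F^{l-1}M)\subset F^{l}M$. We choose a pair $k<s$ with $s-k$ minimal such that $\eta$ is nonzero on $F^{k-1}M/F^{s}M=\hat H^*_c(\bigcup_{l=k}^s\mathrm{Attr}(l),\chi)$. By minimality, $\eta$ vanishes on $F^{k}M/F^{s}M$ and on $F^{k-1}M/F^{s-1}M$. So on this interval it factors through a nonzero map
\[
\bar\eta\colon \mathrm{gr}_kM\to\mathrm{gr}_sM .
\]
This map is $\mathrm{Fr}\times A^{\vee}$-equivariant and raises cohomological degree by $2$. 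It also commutes with $e^{(k,s)}\langle\gamma_i^\chi\rangle$ and $f^{(k,s)}\langle\gamma_i^{\chi^{-1}}\rangle$, up to the correction in~\eqref{commutefsk}, which involves only $\eta^{s-k}$.

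Second, I would use the explicit descriptions \eqref{chihattiso}. Each $\mathrm{gr}_lM$ is an exterior algebra on the $e^{(l)}\langle\gamma_i^\chi\rangle$ over a cyclic vector of weight $q^{\kappa_l(\vec m)/2}a^{\rho_l(\vec m)}$ sitting in degree $-\kappa_l(\vec m)$. By \eqref{elek}--\eqref{flfk}, the $f^{(k,s)}$ generate $\mathrm{gr}_k$ (as used in Step 3 of Proposition~\ref{pr:splitting}), the $e^{(k,s)}$ generate $\mathrm{gr}_s$, and the other operators act by zero. Using the commutation relations, $\bar\eta$ should be determined by the image of a single extremal vector. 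I would take the top vector of $\mathrm{gr}_k$, annihilated by the $e$'s, and send it to the bottom cyclic vector of $\mathrm{gr}_s$, annihilated by the $f$'s. Matching $A^{\vee}$-weights then gives $\rho_k(\vec m)=\rho_s(\vec m)$ up to the contribution of the exterior generators. Since $\rho_k-\rho_s=(n+1)(m_k-m_s)$, and the generators' $a$-weights are uniform in $l$, I expect this to force $m_k=m_s$. Matching the cohomological degree (degree of $\eta$ equals $2$, together with the Tate/Frobenius weight) gives a linear identity in $\kappa_k-\kappa_s$ and $2g-2$. I would rewrite this, using Riemann--Roch, as $\sum_{k<l\le s}(m_l-m_k+1-g)+(\text{const})=1$. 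That is precisely $\chi_C((\mathcal V^{(s)}/\mathcal V^{(k)})\otimes\mathcal M_k^{-1}\otimes\mathcal K_C)=1$, possibly up to a sign convention that I would fix by Serre duality.

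The main obstacle is making the weight bookkeeping produce exactly the constant $1$ rather than merely some constraint. I would cross-check it with the Casimir relation of \eqref{commutefsk}: apply it to a lift $\tilde v\in F^{k-1}M/F^sM$ of a vector $v\in\mathrm{gr}_kM$. Because $e^{(k,s)}$ kills $\mathrm{gr}_k$, the sum $\sum f e\,\tilde v$ lies in the image of $\mathrm{gr}_s$ and can be computed by the exterior-algebra calculus there. Also $\Theta=0$, because $\gamma_i$ vanishes by Remark~\ref{rem:gammavanish}. Together with $[e,f]=(s-k+1)\eta^{s-k}$, this expresses $\bar\eta$ on the top vector as a multiple of $\bigl(\chi_C(\mathcal V^{(s)}/\mathcal V^{(k-1)}\otimes\mathcal K_C^{-1/2}\otimes\mathcal L^{-1})-(\text{number of }f\text{'s})\bigr)$. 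Nonvanishing of $\bar\eta$ then forces the relevant Euler characteristic to take the stated value. If this consistency check fails for $s-k\ge2$, where $\eta^{s-k}$ is zero on the interval, I would instead reduce to the case $s=k+1$ by passing to a subquotient, or rely solely on the weight argument.
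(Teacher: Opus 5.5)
\textbf{There is a genuine gap: you match the wrong extremal vector in $\mathrm{gr}_sM$.}

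Your extraction of a nonzero $\bar\eta\colon \mathrm{gr}_kM\to\mathrm{gr}_sM$ from a minimal-length interval is fine. It is a harmless variant of the paper's choice of the largest $k$ and then the smallest $s$. Moreover, $\bar\eta$ is exactly $\mathcal{A}^{(k,s)}_\chi$-linear, with no correction term. Indeed, $\eta$ commutes with the descendants as in \eqref{commutinAchin}, because $\iota$ preserves $\mathcal{O}_{QM\vec N}(1)$; the relation \eqref{commutefsk} concerns $[e,f]$, not $[e,\eta]$.

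The gap is at the crux, the choice of extremal vectors. By \eqref{elek}--\eqref{flfk}, for $s>k$ the $e^{(k,s)}\langle\gamma_i^{\chi}\rangle$ vanish on all of $\mathrm{gr}_kM$, and the $f^{(k,s)}\langle\gamma_i^{\chi^{-1}}\rangle$ vanish on all of $\mathrm{gr}_sM$. So ``annihilated by the $e$'s'' on $\mathrm{gr}_k$ and ``annihilated by the $f$'s'' on $\mathrm{gr}_s$ single out nothing. The real constraints run the other way:
\begin{itemize}
\item $\bar\eta(fv)=f\bar\eta(v)=0$. So $\bar\eta$ kills the image of the contractions $f^{(k)}$, i.e.\ everything below the top exterior power of $\mathrm{gr}_k$.
\item $e\,\bar\eta(v)=\bar\eta(ev)=0$. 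So the image lies in $\bigcap_i\ker e^{(s)}\langle\gamma_i^\chi\rangle$. Since the $e^{(s)}$ act by wedge multiplication, this intersection is the top exterior power $\Lambda^{2g-2}$ of $\mathrm{gr}_s$. It is not the bottom cyclic vector, which the $e$'s do not kill once $g\ge 2$.
\end{itemize}
Hence $\bar\eta$ must send top to top, which is what the paper uses.

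\textbf{Consequence.} This is not cosmetic. With your top-to-bottom matching, the source lies in the component $d=2g-2-m_k$ (the summand $S^{2g-2}C$ of $QM\vec N(k)$). The target lies in $d=-m_s$ (the summand $S^{0}C$ of $QM\vec N(s)$). Since $\eta$ preserves $d$, this forces $m_k-m_s=2g-2$, not $m_k=m_s$. Your appeal to the uniform $a$-weights of the generators cancels them only when both sides carry $2g-2$ of them.

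\textbf{Corrected count.} With top-to-top matching, both classes lie in $d=2g-2-m_k=2g-2-m_s$, which gives $\deg\mathcal M_k=\deg\mathcal M_s$. The top class of $\mathrm{gr}_l$ sits in $H_c^{2g-2+2\,\mathrm{rk}\,N_d(l)}(QM\vec N_d)$, and $\eta$ raises degree by $2$. This gives $\mathrm{rk}\,N_d(s)=\mathrm{rk}\,N_d(k)+1$. By \eqref{rkNk}, with $\deg D=2g-2$ and $m_k=m_s$, this reads $\sum_{l=k+1}^{s}(m_l-m_k+g-1)=1$, which is the stated Euler characteristic identity. Do the degree count this way rather than by cancelling generators, since the $q$-weights of the $e^{(l)}\langle\gamma_i^\chi\rangle$ depend on $l$ (cf.\ Remark~\ref{cohshiftiotak}).

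\textbf{The Casimir fallback does not rescue the argument.} For $s-k\ge2$ one has $\eta^{s-k}=0$ on $F^{k-1}M/F^{s}M$ and $\Theta=0$. So the relation in \eqref{commutefsk} says nothing about $\bar\eta$. You also cannot reduce to $s=k+1$: by minimality, no adjacent pair need carry a nonzero component, and adjacency together with the conclusion would even force $g=2$.
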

\begin{proof}
 Suppose $\eta$ is nonzero. By~\eqref{commutinAchin}, $\eta$ commutes with $\mathcal{A}_{\chi}$, hence multiplication by $\eta$ is an $\mathcal{A}_{\chi}$-module map. Let
$k$ be the largest index in $2, \ldots, n+1$ for which
$\eta\big|_{F^{k-1}M}$ is nontrivial. By the choice of $k$ together
with~\eqref{eta0onk}, this gives a nonzero $\mathcal{A}_{\chi}$-module map
\begin{align}\label{etak-1k}
    \eta: F^{k-1}M/F^{k}M \rightarrow F^{k}M.
\end{align}
Let $s$ be the smallest index in $k+1, \ldots, n + 2$ for which the
composition of~\eqref{etak-1k} with the natural projection
$F^{k}M \rightarrow F^{k}M/F^{s}M$ is nonzero. By the choice of $s$,
this composition factors through a nonzero map
\begin{align*}
   \eta\colon F^{k-1}M/F^{k}M \rightarrow F^{s-1}M/F^{s}M,
\end{align*}
which is a morphism of $\mathcal{A}^{(k, s)}_{\chi}$-modules.
By~\eqref{flfk}, the operators $f^{(k, s)} \langle \gamma_{i}^{\chi^{-1}} \rangle$ act trivially on
\[
\hat{H}_{c}^{*}(QM\vec{N}(s), \chi)[[-2\,\mathrm{rk}\,N(s)]],
\]
so any vector in the image of some $f^{(k, s)} \langle \gamma_{i}^{\chi^{-1}} \rangle$ lies in the kernel of $\eta$. Since
\[
\hat{H}_{c}^{*}(QM\vec{N}(k), \chi)[[-2\,\mathrm{rk}\,N(k)]]
\]
is generated by $f^{(k, s)} \langle \gamma_{i}^{\chi^{-1}} \rangle$, the map $\eta$ can only be nonzero if it does not vanish on the one-dimensional generating subspace
\begin{align}\label{wtgenk}
q^{\frac{1}{2}\kappa_{k}(\vec{m})} a^{\rho_{k}(\vec{m})}  
    \overline{\mathbb{Q}}_{l} e^{(k)} \langle \gamma_{1}^{\chi} \rangle \wedge \cdots \wedge e^{(k)}\langle \gamma_{2g-2}^{\chi} \rangle.
\end{align}
Similarly, by~\eqref{elek}, the operators $e^{(k, s)} \langle \gamma_{i}^{\chi} \rangle$ act trivially on
\[
\hat{H}_{c}^{*}(QM\vec{N}(k), \chi)[[-2\,\mathrm{rk}\,N(k)]],
\]
so the image of $\eta$ must lie in the intersection of the kernels of the $e^{(k, s)} \langle \gamma_{i}^{\chi} \rangle$, $i = 1, \ldots, 2g-2$. Since
\[
\hat{H}_{c}^{*}(QM\vec{N}(s), \chi)[[-2\, \mathrm{rk}\,N(s)]]
\]
is freely generated by these $e^{(k, s)} \langle \gamma_{i}^{\chi} \rangle$,
the intersection is the one-dimensional cogenerating subspace

\begin{align}\label{wtgens}
q^{\frac{1}{2}\kappa_{s}(\vec{m})} a^{\rho_{s}(\vec{m})}  
    \overline{\mathbb{Q}}_{l} e^{(k)} \langle \gamma_{1}^{\chi} \rangle \wedge \cdots \wedge e^{(k)}\langle \gamma_{2g-2}^{\chi} \rangle.
\end{align}
Comparing the $\mathrm{Fr} \times A^{\vee}$-weights of~\eqref{wtgens}
and $\eta \times \eqref{wtgenk}$ yields the constraints
\begin{align}
    &\deg(\mathcal{M}_{s}) =  \deg(\mathcal{M}_{k}), \label{degkdegs}\\
    &\chi_C\left( (\mathcal{V}^{(s)}/\mathcal{V}^{(k)}) \otimes \mathcal{M}_{k}^{-1} \otimes \mathcal{K}_{C} \right) = 1. \label{chi(Vs/Vk)}
\end{align}
\end{proof}
\begin{remark}
    The condition~\eqref{chi(Vs/Vk)} is equivalent to
    \begin{align}\label{eqdims}
        \dim \mathrm{Attr}(s) = \dim \mathrm{Attr}(k) + 1.
    \end{align}
For $s = k+1$, this forces $g = 2$.
\end{remark}
\begin{remark}\label{rem:gammavanish}
    The same argument shows that if multiplication by $\gamma_{i}$ on $H_{c}^{*}(QM\vec{N}, \chi^{n+1})$ were nonzero, then for some $k < s$ we would have $\chi_C\left( (\mathcal{V}^{(s)}/\mathcal{V}^{(k)}) \otimes \mathcal{M}_{k}^{-1} \otimes \mathcal{K}_{C} \right) = 1/2$. Since the left-hand side is an integer, $\gamma_{i}$ must vanish.
\end{remark}
For what follows, it is convenient to pass to the dual spaces, namely, Borel--Moore homology.

The dual to the space~\eqref{wtgenk} is
\begin{align}\label{dualwtgenk}
     &\hat{H}^{BM}_{2g-2}(QM\vec{N}(k)_{2g-2 - m_{k}}, \chi^{-1})[[-2\mathrm{rk}\,N_{2g-2 - m_{k}}(k)]] =\\
     &=\hat{H}_{2g-2}^{BM}(S^{2g-2}C, (\chi^{-n-1})^{(m)})[[-2\mathrm{rk}\,N_{2g-2 - m_{k}}(k)]],
\end{align}
which is supported on the linear system
$\mathbb{P}(H^{0}(\mathcal{K}_{C})) \cong \mathbb{P}^{g-1} \overset{i}{\hookrightarrow} S^{2g-2}C$ of the divisor
class of $\mathcal{K}_{C}$, i.e.\
\begin{align*}
    i_{*}: H_{2g-2}^{BM}(\mathbb{P}^{g-1}) \otimes \bigwedge\nolimits^{2g-2}H_{1}^{BM}(C, \chi^{-n-1}) \overset{\cong}{\rightarrow} H_{2g-2}^{BM}(S^{2g-2}C, (\chi^{-n-1})^{(m)}).
\end{align*}
Hence the class in~\eqref{dualwtgenk} is supported in the fiber of the projection~\eqref{piLmap} over $\mathcal{L} = \mathcal{M}_{k} \otimes \mathcal{K}_{C}^{-1}$. Analogously, the class in
\begin{align*}
    \hat{H}^{BM}_{2g-2}(QM\vec{N}(s)_{2g-2 - m_{s}}, \chi^{-1})[[-2\mathrm{rk}\,N_{2g-2 - m_{s}}(s)]]
\end{align*}
is supported in the fiber over $\mathcal{L} = \mathcal{M}_{s} \otimes \mathcal{K}_{C}^{-1}$. Therefore $\eta$ can only be nonzero if
\begin{align}\label{mkmsiso}
    \mathcal{M}_{k} \cong \mathcal{M}_{s},
\end{align}
which refines the condition~\eqref{degkdegs}. We use the notation
\begin{align*}
QM\vec{N}_{\mathcal{M}_{k}^{-1} \otimes \mathcal{K}_{C}} = \pi^{-1}_{\mathcal{L}}(\mathcal{M}_{k} \otimes \mathcal{K}_{C}^{-1}).
\end{align*}
Consider the projection to the space of iterated framed extensions
\begin{align*}
    \pi_{\mathcal{V}^{\bullet}}\colon QM\vec{N}_{\mathcal{M}_{k}^{-1} \otimes \mathcal{K}_{C}} \rightarrow \{\mathcal{V}^{\bullet}\}.
\end{align*}
A fiber of this map over some $\mathcal{V}^{\bullet}$ is a quotient stack
\begin{align*}
    \mathbb{P}\left( H^{0}(\mathcal{V} \otimes \mathcal{L}^{-1}) \right) / \mathrm{End}(\mathcal{V}^{\bullet}).
\end{align*}
Let us consider a scheme $\widetilde{QM\vec{N}}_{\mathcal{M}_{k}^{-1} \otimes \mathcal{K}_{C}}$, which is mapped to the affine space of iterated extensions
\begin{align}\label{pifromrqmntoext}
     \pi_{\mathcal{V}^{\bullet}}\colon \widetilde{QM\vec{N}}_{\mathcal{M}_{k}^{-1} \otimes \mathcal{K}_{C}} \rightarrow \{\mathcal{V}^{\bullet}\}
\end{align}
with fibers $\mathbb{P}\left( H^{0}(\mathcal{V} \otimes \mathcal{L}^{-1}) \right)$. The scheme $\widetilde{QM\vec{N}}_{\mathcal{M}_{k}^{-1} \otimes \mathcal{K}_{C}}$ has a natural derived structure with virtual tangent bundle (compare with~\eqref{TvirQMN})
\begin{align*}
T^{\mathrm{vir}} \widetilde{QM\vec{N}}_{\mathcal{M}_{k}^{-1} \otimes \mathcal{K}_{C}}
    = \sum\limits_{l = 1}^{n+1} \mathrm{Ext}^{\bullet}(\mathcal{L},
    \mathcal{M}_{l})\otimes \mathcal{O}_{QM\vec{N}}(1)
    -\mathrm{Hom}(\mathcal{L}, \mathcal{L})
    + \sum\limits_{l < r}\mathrm{Ext}^{1}(\mathcal{M}_{r},
    \mathcal{M}_{l}).
\end{align*}

The pullback under the natural projection
\begin{align}\label{rig}   \widetilde{QM\vec{N}}_{\mathcal{M}_{k}^{-1} \otimes \mathcal{K}_{C}} \rightarrow QM\vec{N}_{\mathcal{M}_{k}^{-1} \otimes \mathcal{K}_{C}}
\end{align}
identifies Borel--Moore homology with shift $[[2\sum\limits_{l < r} h^{0}(\mathcal{M}_{l}\otimes \mathcal{M}_{r})]]$, and the action of $\eta$ lifts to the homology of the rigidification.
Let $\mathrm{Attr}'(l)$ denote the preimage under~\eqref{rig} of the intersection of $\mathrm{Attr}(l)$ and $\pi^{-1}_{\mathcal{L}}\left( \mathcal{M}_{k} \otimes \mathcal{K}_{C}^{-1}\right)$.
\begin{remark}
The condition~\eqref{eqdims} is equivalent to
\begin{align}\label{eqprimedims}
    \mathrm{dim}\, \mathrm{Attr}'(s) = \mathrm{dim}\, \mathrm{Attr}'(k) + 1.
\end{align}
\end{remark}
We will need the following lemma.
\begin{lemma}\label{imageofcl}
    For any $l = 1, \ldots, n+1$ the images $\pi_{\mathcal{V}^{\bullet}} \left( \mathrm{Attr}'(l) \right)$ and $\pi_{\mathcal{V}^{\bullet}} \left( \overline{\mathrm{Attr}'(l)} \right)$ under the map~\eqref{pifromrqmntoext} coincide.
\end{lemma}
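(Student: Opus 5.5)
The plan is to characterise both images by a single closed condition on the extension data alone. Fix $\mathcal{L} = \mathcal{M}_{k}\otimes\mathcal{K}_{C}^{-1}$. A point of $\mathrm{Attr}'(l)$ is a pair $(\mathcal{V}^{\bullet}, [s])$ with $s \in H^{0}(\mathcal{V}^{(l)}\otimes\mathcal{L}^{-1})\setminus H^{0}(\mathcal{V}^{(l-1)}\otimes\mathcal{L}^{-1})$. Taking the long exact sequence of
\[
0\to \mathcal{V}^{(l-1)}\otimes\mathcal{L}^{-1}\to\mathcal{V}^{(l)}\otimes\mathcal{L}^{-1}\to\mathcal{M}_{l}\otimes\mathcal{L}^{-1}\to 0,
\]
we see that $\mathcal{V}^{\bullet}\in\pi_{\mathcal{V}^{\bullet}}(\mathrm{Attr}'(l))$ if and only if the connecting map
\[
\delta_{\mathcal{V}^{\bullet}}\colon H^{0}(\mathcal{M}_{l}\otimes\mathcal{L}^{-1})\to H^{1}(\mathcal{V}^{(l-1)}\otimes\mathcal{L}^{-1})
\]
has a nonzero kernel. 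The source of $\delta_{\mathcal{V}^{\bullet}}$ is a fixed vector space $H_{l}:=H^{0}(\mathcal{M}_{l}\otimes\mathcal{L}^{-1})$, independent of $\mathcal{V}^{\bullet}$.

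Step 1. Consider the incidence locus
\[
W_{l}=\{(\mathcal{V}^{\bullet},[\sigma])\in\{\mathcal{V}^{\bullet}\}\times\mathbb{P}(H_{l}) : \delta_{\mathcal{V}^{\bullet}}(\sigma)=0\}.
\]
I will show that $W_{l}$ is closed. Over the affine space of extensions, $R^{1}p_{*}(\mathcal{V}^{(l-1)}\otimes\mathcal{L}^{-1})$ is coherent. Since we are on a curve it is the top direct image, so it commutes with base change, and its fibres are the groups $H^{1}$ above. The connecting maps assemble into a morphism of sheaves $H_{l}\otimes\mathcal{O}\to R^{1}p_{*}$. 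Pulled back to $\{\mathcal{V}^{\bullet}\}\times\mathbb{P}(H_{l})$ and composed with the tautological line, this gives a section of a coherent sheaf. The locus where such a section is nonzero in the fibre is open (by Nakayama, a fibrewise nonzero section extends to part of a minimal set of local generators). Hence the vanishing locus $W_{l}$ is closed.

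Step 2. Since $\mathbb{P}(H_{l})$ is projective, the projection $W_{l}\to\{\mathcal{V}^{\bullet}\}$ is proper. Its image is therefore closed, and by Step 1 this image is exactly $\pi_{\mathcal{V}^{\bullet}}(\mathrm{Attr}'(l))$. So the image of $\mathrm{Attr}'(l)$ is a closed subset.

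Step 3. The inclusion $\pi(\mathrm{Attr}'(l))\subset\pi(\overline{\mathrm{Attr}'(l)})$ is trivial, so it remains to show the reverse inclusion $\pi(\overline{\mathrm{Attr}'(l)})\subset\pi(\mathrm{Attr}'(l))$. There is a morphism $\mathrm{Attr}'(l)\to W_{l}$ sending $(\mathcal{V}^{\bullet},[s])$ to $(\mathcal{V}^{\bullet},[\bar s])$, where $\bar s$ is the image of $s$ in $H_{l}$; this is well defined because $\bar s\neq 0$ on $\mathrm{Attr}'(l)$. This morphism commutes with the projections to $\{\mathcal{V}^{\bullet}\}$. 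Consequently
\[
\pi\bigl(\overline{\mathrm{Attr}'(l)}\bigr)\subset\overline{\pi(\mathrm{Attr}'(l))}=\pi(\mathrm{Attr}'(l)),
\]
using continuity of $\pi$ for the first inclusion and Step 2 for the equality. This proves the lemma.

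The step I expect to need the most care is Step 1: one has to check that the connecting maps really glue to a morphism of sheaves over the whole affine space of iterated extensions, using the universal extension on $C\times\{\mathcal{V}^{\bullet}\}$, and that base change for $R^{1}p_{*}$ holds. Base change in the top degree is standard. The subtle point is the closedness of fibrewise vanishing of a section of a possibly non-locally-free sheaf.

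If that argument turns out to be delicate, the fallback is to replace $R^{1}p_{*}$ by a global two-term complex of vector bundles $K^{0}\to K^{1}$ computing it. Then $\delta(\sigma)=0$ says that a lift of $\delta(\sigma)$ to $K^{1}$ lies in the image of $K^{0}$. Equivalently, the rank of $K^{0}\oplus\langle\tilde\delta(\sigma)\rangle\to K^{1}$ does not exceed the rank of $K^{0}\to K^{1}$. However, $\operatorname{rk}(K^{0}\to K^{1})$ can drop under specialisation, so this equality-of-ranks locus is not a determinantal closed condition on the whole base; it is only closed on each stratum where $\operatorname{rk}(K^{0}\to K^{1})$ is constant. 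Using this fallback would therefore require working on those strata, or a separate limit argument, to recover closedness of $W_{l}$ in general.
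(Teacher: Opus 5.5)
Your overall route is the paper's. You identify $\pi_{\mathcal{V}^{\bullet}}(\mathrm{Attr}'(l))$ with the locus where $\beta_{l}\colon H^{0}(\mathcal{M}_{l}\otimes\mathcal{L}^{-1})\to H^{1}(\mathcal{V}^{(l-1)}\otimes\mathcal{L}^{-1})$ has a kernel, show this locus is closed, and conclude by continuity. The paper simply asserts that $\{\ker\beta_{l}\neq 0\}$ is a closed condition. Your Step 1 tries to justify this, and that is where the argument fails. The locus where a section of a coherent sheaf is nonzero in the fibre need not be open: the section $1$ of $\mathcal{O}_{\mathbb{A}^{1}}/(t)$ is fibrewise nonzero only at $t=0$. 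Nakayama only makes the section part of a minimal generating set at the given point; at generizations the fibre dimension can drop and the section can die. This is exactly what happens when $h^{1}(\mathcal{V}^{(l-1)}\otimes\mathcal{L}^{-1})$ jumps. Then $R^{1}p_{*}(\mathcal{V}^{(l-1)}\otimes\mathcal{L}^{-1})$ acquires torsion, $\delta(\sigma)$ can lie in it, and $W_{l}$ need not be closed. Your closing paragraph spots the right danger, and it is not a technicality.

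In fact, with no further hypotheses on $\vec{\mathcal{M}}$, the conclusion itself can fail. Take $n+1=3$, $k=1$ and $\mathcal{M}_{2}\cong\mathcal{M}_{3}\cong\mathcal{M}_{1}\otimes\mathcal{K}_{C}^{-1}$, so the pieces $\mathcal{M}_{j}\otimes\mathcal{L}^{-1}$ are $\mathcal{K}_{C},\mathcal{O}_{C},\mathcal{O}_{C}$. Let $\mathcal{V}^{(3)}_{t}\otimes\mathcal{L}^{-1}$ be the extension of $\mathcal{O}_{C}\oplus\mathcal{O}_{C}$ by $\mathcal{K}_{C}$ with class $(t,c)\in H^{1}(\mathcal{K}_{C})^{\oplus 2}$, $c\neq 0$, and let $\mathcal{V}^{(2)}_{t}$ be the preimage of the first summand.
\begin{itemize}
\item $h^{0}(\mathcal{V}^{(2)}_{t}\otimes\mathcal{L}^{-1})$ equals $g$ for $t\neq 0$ and $g+1$ for $t=0$, while $h^{0}(\mathcal{V}^{(3)}_{t}\otimes\mathcal{L}^{-1})=g+1$ for all $t$.
\item Hence the points $t\neq 0$ lie in $\pi_{\mathcal{V}^{\bullet}}(\mathrm{Attr}'(3))$ and $t=0$ does not.
\item Yet lifting the kernel vector $(c,-t)$ of $H^{0}(\mathcal{O}_{C}^{\oplus 2})\to H^{1}(\mathcal{K}_{C})$ gives sections in $\mathrm{Attr}'(3)$ for $t\neq 0$. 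Their limit at $t=0$ is a point of $\overline{\mathrm{Attr}'(3)}$ over $t=0$.
\end{itemize}
Here $\delta(1)$ is the image of $c$ in the torsion subsheaf $H^{1}(\mathcal{K}_{C})\otimes\mathcal{O}/(t)$ of $R^{1}p_{*}(\mathcal{V}^{(2)}\otimes\mathcal{L}^{-1})$.

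What your argument needs is an input making $R^{1}p_{*}(\mathcal{V}^{(l-1)}\otimes\mathcal{L}^{-1})$ locally free. An example is constancy of $h^{1}(\mathcal{V}^{(l-1)}\otimes\mathcal{L}^{-1})$ over the (reduced) space of extensions, which holds for $l\leq 2$ since the target of $\beta_{l}$ then does not vary. Under such a hypothesis the vanishing locus of a section of a vector bundle is closed and your Steps 1--3 go through. The paper's one-line claim tacitly relies on the same assumption.
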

\begin{proof}
    Consider a long exact sequence
\begin{align*}
  \cdots \rightarrow H^{0}(\mathcal{V}^{(l)}\otimes \mathcal{M}_{k}^{-1} \otimes \mathcal{K}_{C}) \overset{\mathrm{pr}_{l}}{\longrightarrow} H^{0}(\mathcal{M}_{l}\otimes \mathcal{M}_{k}^{-1} \otimes \mathcal{K}_{C}) \overset{\beta_{l}}{\rightarrow} H^{1}(\mathcal{V}^{(l-1)}\otimes \mathcal{M}_{k}^{-1} \otimes \mathcal{K}_{C}) \rightarrow \cdots,
\end{align*}    
where $\beta_{l}$ is multiplication by the corresponding extension class in $\mathrm{Ext}^{1}(\mathcal{M}_{l}, \mathcal{V}^{(l-1)})$. An iterated extension $\mathcal{V}^{\bullet}$ is in the image of $\mathrm{Attr}'(l)$ if and only if $\mathrm{Im}(\mathrm{pr}_{l})$ is nonzero, i.e.\ the set $\pi_{\mathcal{V}^{\bullet}}(\mathrm{Attr}'(l))$ is cut out in $\{\mathcal{V}^{\bullet}\}$ by the closed condition
\begin{align*}
    \pi_{\mathcal{V}^{\bullet}}(\mathrm{Attr}'(l)) = \{\mathrm{Ker}(\beta_{l}) \neq 0\} \subset \{ \mathcal{V}^{\bullet}\}.
\end{align*}
Since $\pi_{\mathcal{V}^{\bullet}}(\mathrm{Attr}'(l))$ is closed, it coincides with the image of the closure. 
\end{proof}
After dualizing~\eqref{etak-1k}, passing to the rigidification, and reducing common shifts, we obtain a map
\begin{align}\label{etaBMh}
    \eta\colon H^{BM}_{\mathrm{top}}(\mathrm{Attr}'(s)) = \overline{\mathbb{Q}}_{l}[\mathrm{Attr}'(s)] \rightarrow \overline{\mathbb{Q}}_{l}[\mathrm{Attr}'(k)] = H^{BM}_{\mathrm{top}}(\mathrm{Attr}'(k)).
\end{align}
\begin{lemma}\label{lem:eqproject}
    If the map~\eqref{etaBMh} is nonzero, then $\pi_{\mathcal{V}^{\bullet}} \left( \mathrm{Attr}'(k) \right) = \pi_{\mathcal{V}^{\bullet}} \left( \mathrm{Attr}'(s) \right)$.
\end{lemma}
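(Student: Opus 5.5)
We argue by contraposition. Suppose $\pi_{\mathcal{V}^{\bullet}}(\mathrm{Attr}'(k)) \neq \pi_{\mathcal{V}^{\bullet}}(\mathrm{Attr}'(s))$; we show that \eqref{etaBMh} vanishes. The map $\eta$ on Borel--Moore homology is capping with $c_1(\mathcal{O}_{QM\vec N}(1))$. On the rigidification $\widetilde{QM\vec{N}}_{\mathcal{M}_{k}^{-1}\otimes\mathcal{K}_C}$ this is the relative hyperplane class of the projective bundle-like map \eqref{pifromrqmntoext}. Moreover, $\eta$ is compatible with pushforward along the inclusion of the closed union of strata. Hence the component of \eqref{etaBMh} landing in $[\mathrm{Attr}'(k)]$ is computed as follows: take $\eta\cap[\overline{\mathrm{Attr}'(s)}]$, a cycle of dimension $\dim\mathrm{Attr}'(s)-1=\dim\mathrm{Attr}'(k)$ by \eqref{eqprimedims}, and read off its multiplicity along $\mathrm{Attr}'(k)$. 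So the first step is to identify \eqref{etaBMh} with this multiplicity. This uses the filtration of Proposition~\ref{pr:birula-filtration} restricted to the fiber of $\pi_{\mathcal{L}}$. Since all strata involved have pure cohomology, the top-degree classes are the fundamental classes of the closures.

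Next, write $\eta\cap[\overline{\mathrm{Attr}'(s)}]$ as a divisor of a section of $\mathcal{O}(1)$ on $\overline{\mathrm{Attr}'(s)}$. Choose a general hyperplane-type section pulled back from a fiberwise linear form on $H^0(\mathcal{V}\otimes\mathcal{L}^{-1})$. The resulting Weil divisor $H\cap\overline{\mathrm{Attr}'(s)}$ maps under $\pi_{\mathcal{V}^{\bullet}}$ into $\pi_{\mathcal{V}^{\bullet}}(\overline{\mathrm{Attr}'(s)})$. By Lemma~\ref{imageofcl}, that set equals $\pi_{\mathcal{V}^{\bullet}}(\mathrm{Attr}'(s))$. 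For $[\mathrm{Attr}'(k)]$ to appear with nonzero coefficient, $\mathrm{Attr}'(k)$ must lie in $\overline{\mathrm{Attr}'(s)}$, which gives the inclusion $\pi_{\mathcal{V}^{\bullet}}(\mathrm{Attr}'(k))\subset\pi_{\mathcal{V}^{\bullet}}(\mathrm{Attr}'(s))$.

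For the reverse inclusion we compare dimensions of the images and of the fibers. Over $\mathcal{V}^{\bullet}$ the fiber of $\mathrm{Attr}'(l)$ is an open subset of $\mathbb{P}(H^0(\mathcal{V}^{(l)}\otimes\mathcal{L}^{-1}))$ minus the smaller one. Since the cycle $\eta\cap[\overline{\mathrm{Attr}'(s)}]$ is cut by a relative hyperplane, it cannot contain a component whose fiber dimension over its image has not dropped. Concretely, if $\pi_{\mathcal{V}^{\bullet}}(\mathrm{Attr}'(k))$ were a proper closed subset of $\pi_{\mathcal{V}^{\bullet}}(\mathrm{Attr}'(s))$, there are two cases.

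\emph{Case 1: the image of $\mathrm{Attr}'(k)$ has codimension $\geq1$.} Then $\dim\mathrm{Attr}'(k)=\dim\mathrm{Attr}'(s)-1$ forces the fibers of $\mathrm{Attr}'(k)$ to be at least as large as the generic fibers of $\overline{\mathrm{Attr}'(s)}$. We then argue that a generic relative hyperplane section meets each fiber of $\overline{\mathrm{Attr}'(s)}$ over the image of $\mathrm{Attr}'(k)$ properly. Its intersection with that locus therefore has dimension at most $\dim\mathrm{Attr}'(k)-1$, so the multiplicity is zero.

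\emph{Case 2: the images coincide.} This is the conclusion we want.

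The main obstacle is the first step: making rigorous that the abstract $\mathcal{A}_\chi^{(k,s)}$-module map \eqref{etaBMh}, obtained after dualizing and rigidifying, really is the intersection multiplicity of $\eta\cap[\overline{\mathrm{Attr}'(s)}]$ along $\mathrm{Attr}'(k)$. In particular we must control the contributions of intermediate strata $\mathrm{Attr}'(l)$, $k<l<s$; these vanish by the minimality of $s$ in the proof of Proposition~\ref{prop:eta-necessary}. I would handle this via the long exact sequences of closed/open pairs, using purity to identify top-degree pieces with fundamental classes.
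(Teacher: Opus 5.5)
Your inclusion $\subset$ is the paper's argument, but only once it is phrased in terms of supports rather than cycle multiplicities. Write $\eta\cap[\overline{\mathrm{Attr}'(s)}]=i_*\alpha$. The class $j_k^*\alpha$ is supported on $\mathrm{Attr}'(k)\cap\overline{\mathrm{Attr}'(s)}$. So if $\pi_{\mathcal V^\bullet}(\mathrm{Attr}'(k))\not\subset\pi_{\mathcal V^\bullet}(\overline{\mathrm{Attr}'(s)})$, this class vanishes on a dense open subset of $\mathrm{Attr}'(k)$ where $[\mathrm{Attr}'(k)]$ does not, and Lemma~\ref{imageofcl} turns this into the stated inclusion.

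The real problem is your Step~1, on which the reverse inclusion rests. The map \eqref{etaBMh} is \emph{not} the multiplicity along $\mathrm{Attr}'(k)$ of the divisor $H\cap\overline{\mathrm{Attr}'(s)}$ of a general relative hyperplane section. Components of that divisor whose generic point lies in a higher stratum can have classes that vanish in the Borel--Moore homology of that affine-bundle-like stratum. Such components are then homologous to multiples of $[\overline{\mathrm{Attr}'(k)}]$. Compare a general line in $\mathbb P^2=\mathbb A^2\sqcup\mathbb A^1\sqcup\mathrm{pt}$, which is homologous to the closure of the $\mathbb A^1$-stratum without containing it.

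This is exactly what happens in the exceptional configuration. Over a generic $\mathcal V^\bullet$, the fiber of $\overline{\mathrm{Attr}'(s)}$ is $\mathbb P(H^0(\mathcal V^{(s)}\otimes\mathcal M_k^{-1}\otimes\mathcal K_C))$, and that of $\overline{\mathrm{Attr}'(k)}$ is the hyperplane $\mathbb P(H^0(\mathcal V^{(k)}\otimes\mathcal M_k^{-1}\otimes\mathcal K_C))$. A general $H$ does not contain this hyperplane, so your multiplicity is $0$. Yet Proposition~\ref{pr:eta-cap} gives $\eta\cap[\mathrm{Attr}'(s)]=[\mathrm{Attr}'(k)]\neq0$. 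Hence the conclusion ``the multiplicity is zero'' in your Case~1 says nothing about \eqref{etaBMh}, and your $\subset$ step also fails if read literally as a statement about cycle coefficients.

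The reverse inclusion needs no hyperplane sections at all; the paper gets it from dimensions alone once $\subset$ is known. Suppose $\pi_{\mathcal V^\bullet}(\overline{\mathrm{Attr}'(k)})\subsetneq\pi_{\mathcal V^\bullet}(\overline{\mathrm{Attr}'(s)})$. Then $W=\pi_{\mathcal V^\bullet}^{-1}\bigl(\pi_{\mathcal V^\bullet}(\overline{\mathrm{Attr}'(k)})\bigr)\cap\overline{\mathrm{Attr}'(s)}$ is a proper closed subset of $\overline{\mathrm{Attr}'(s)}$. On the other hand, at every point of $\pi_{\mathcal V^\bullet}(\mathrm{Attr}'(s))$ one has $H^0(\mathcal V^{(k)}\otimes\mathcal M_k^{-1}\otimes\mathcal K_C)\subsetneq H^0(\mathcal V^{(s)}\otimes\mathcal M_k^{-1}\otimes\mathcal K_C)$. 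By $\subset$, this holds at every point of $\pi_{\mathcal V^\bullet}(\mathrm{Attr}'(k))$, so $W$ properly contains $\overline{\mathrm{Attr}'(k)}$ with strictly larger fibers. Together these give $\dim\mathrm{Attr}'(s)\ge\dim\mathrm{Attr}'(k)+2$, contradicting \eqref{eqprimedims}.

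Your Case~1 is one step from this. You compared the fibers of $\mathrm{Attr}'(k)$ with the \emph{generic} fibers of $\overline{\mathrm{Attr}'(s)}$. Comparing them instead with the fibers of $\mathrm{Attr}'(s)$ over the same points shows that Case~1 cannot occur. In particular, $\eta$ enters the proof only through the inclusion $\subset$.
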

\begin{proof}
    First, we prove the inclusion $\subset$. If this is not the case, then
\begin{align*}
    \pi_{\mathcal{V}^{\bullet}} \left( \mathrm{Attr}'(k) \right) \centernot\subset \pi_{\mathcal{V}^{\bullet}} \left( \overline{\mathrm{Attr}'(s)} \right),
\end{align*}
and consequently
\begin{align*}
    Z \coloneqq \pi_{\mathcal{V}^{\bullet}} \left( \mathrm{Attr}'(k) \right) \cap \pi_{\mathcal{V}^{\bullet}} \left( \overline{\mathrm{Attr}'(s)} \right) \varsubsetneq  \pi_{\mathcal{V}^{\bullet}} \left( \mathrm{Attr}'(k) \right)
\end{align*}
is a proper closed subset. Thus,
\begin{align*}
    U = \mathrm{Attr}'(k) \setminus \pi_{\mathcal{V}^{\bullet}}^{-1}(Z)
\end{align*}
is an open dense subset of $\mathrm{Attr}'(k)$.

To understand how $\eta$ acts in~\eqref{etaBMh}, we need to take a representative in $H^{BM}_{*}\left(\bigcup\limits_{l = 1}^{s}\mathrm{Attr}'(l)\right) \subset H^{BM}_{*}\left(\widetilde{QM\vec{N}}_{\mathcal{M}_{k}^{-1} \otimes \mathcal{K}_{C}} \right)$, which under the pullback 
\begin{align*}
    j^{*}_{s}\colon H_{*}^{BM}\left( \bigcup\limits_{l = 1}^{s}\mathrm{Attr}'(l) \right) \twoheadrightarrow H^{BM}_{*}\left( \mathrm{Attr}'(s)\right)
\end{align*}
by open immersion
\begin{align*}
    j_{s}\colon \mathrm{Attr}'(s) \hookrightarrow \bigcup\limits_{l = 1}^{s}\mathrm{Attr}'(l)
\end{align*}
restricts to $[\mathrm{Attr}'(s)]$. There is a canonical representative given by $[\overline{\mathrm{Attr}'(s)}]$. Since the map~\eqref{etaBMh} is well defined, there is a class
\begin{align*}
    \alpha \in H^{BM}_{*}\left( \bigcup\limits_{l=1}^{k}\mathrm{Attr}'(l)\right)
\end{align*}
which under the pushforward 
\begin{align*}
    i_{*}\colon H_{*}^{BM}\left( \bigcup\limits_{l = 1}^{k}\mathrm{Attr}'(l) \right) \hookrightarrow H_{*}^{BM}\left( \bigcup\limits_{l = 1}^{s}\mathrm{Attr}'(l) \right)
\end{align*}
by the closed immersion
\begin{align*}
    i\colon \bigcup\limits_{l = 1}^{k}\mathrm{Attr}'(l) \hookrightarrow \bigcup\limits_{l = 1}^{s}\mathrm{Attr}'(l)
\end{align*}
is mapped to the cap product:
\begin{align}\label{ipushalpha}
    i_{*}(\alpha) = \eta \cap [\overline{\mathrm{Attr}'(s)}],
\end{align}
and $\eta$ applied to $[\mathrm{Attr}'(s)]$ is given by
\begin{align}\label{jpullalpha}
    \eta [\mathrm{Attr}'(s)] = j_{k}^{*}(\alpha)
\end{align}
for
\begin{align*}
    j_{k}\colon \mathrm{Attr}'(k) \hookrightarrow \bigcup\limits_{l = 1}^{k}\mathrm{Attr}'(l).
\end{align*}
From~\eqref{ipushalpha} and~\eqref{jpullalpha} we see that the support of $\eta [\mathrm{Attr}'(s)]$ is contained in
\begin{align*}
    \mathrm{Attr}'(k) \cap \overline{\mathrm{Attr}'(s)} \subset \pi_{\mathcal{V}^{\bullet}}^{-1}(Z),
\end{align*}
so the restriction of $\eta [\mathrm{Attr}'(s)]$ to $U$ is zero, but the restriction of $[\mathrm{Attr}'(k)]$ to $U$ is equal to $[U] \neq 0$, so $\eta$ must vanish.

Now we prove the inclusion $\supset$. If this is not the case, then by Lemma~\ref{imageofcl} and the inclusion $\subset$ proved above,
\begin{align*}
\pi_{\mathcal{V}^{\bullet}}\left(\overline{\mathrm{Attr}'(k)}\right) \varsubsetneq  \pi_{\mathcal{V}^{\bullet}} \left( \overline{\mathrm{Attr}'(s)} \right)
\end{align*}
is a proper closed subset. Then 
\begin{align}\label{inc1}
    \pi_{\mathcal{V}^{\bullet}}^{-1} \left( \pi_{\mathcal{V}^{\bullet}}\left(\overline{\mathrm{Attr}'(k)}\right) \right) \cap \overline{\mathrm{Attr}'(s)} \varsubsetneq \overline{\mathrm{Attr}'(s)}
\end{align}
is a proper closed subset. Also, it follows from the inclusion $\subset$ that for $\mathcal{V}^{\bullet} \in \pi_{\mathcal{V}^{\bullet}}\left( \overline{\mathrm{Attr}'(k)}\right) = \pi_{\mathcal{V}^{\bullet}}\left( \mathrm{Attr}'(k)\right)$
\begin{align*}
    H^{0}(\mathcal{V}^{(k)}\otimes \mathcal{M}_{k}^{-1}\otimes\mathcal{K}_{C}) \varsubsetneq  H^{0}(\mathcal{V}^{(s)}\otimes \mathcal{M}_{k}^{-1}\otimes\mathcal{K}_{C}),
\end{align*}
hence
\begin{align}\label{inc2}
    \overline{\mathrm{Attr}'(k)} \varsubsetneq \pi_{\mathcal{V}^{\bullet}}^{-1} \left( \pi_{\mathcal{V}^{\bullet}}\left(\overline{\mathrm{Attr}'(k)}\right) \right) \cap \overline{\mathrm{Attr}'(s)}
\end{align}
is a proper closed subset. Comparing~\eqref{inc1} and~\eqref{inc2}, we see that
\begin{align*}
  \dim\mathrm{Attr}'(s) \geq \dim\mathrm{Attr}'(k) + 2,
\end{align*}
which contradicts~\eqref{eqprimedims}.
\end{proof}
\begin{corollary}\label{vanishingh1h0}
    If the map~\eqref{etaBMh} is nonzero, then $h^{1}(\mathcal{M}_{l} \otimes \mathcal{M}_{k}^{-1}\otimes\mathcal{K}_{C}) = 0$ for $l < k$, and \\
    $h^{0}(\mathcal{M}_{l} \otimes \mathcal{M}_{k}^{-1}\otimes\mathcal{K}_{C}) = 0$ for $k < l < s$. 
\end{corollary}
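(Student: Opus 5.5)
We deduce the corollary from Lemma~\ref{lem:eqproject}, which under the hypothesis gives
\[
\pi_{\mathcal{V}^{\bullet}}(\mathrm{Attr}'(k)) = \pi_{\mathcal{V}^{\bullet}}(\mathrm{Attr}'(s)).
\]
The idea is to make both images explicit via the long exact sequences in the proof of Lemma~\ref{imageofcl}, and then test the equality on well-chosen iterated extensions.

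First, consider the split extension $\mathcal{V}^{\bullet}_{\mathrm{fix}}$. It lies in $\pi_{\mathcal{V}^{\bullet}}(\mathrm{Attr}'(s))$, because all $\beta_{l}$ vanish and $h^{0}(\mathcal{M}_{s}\otimes\mathcal{M}_{k}^{-1}\otimes\mathcal{K}_{C}) = h^{0}(\mathcal{K}_{C}) = g \neq 0$ by~\eqref{mkmsiso}. Hence it lies in $\pi_{\mathcal{V}^{\bullet}}(\mathrm{Attr}'(k))$ too, which gives no constraint. We therefore deform away from the split point.

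For the first claim, suppose $h^{1}(\mathcal{M}_{l}\otimes\mathcal{M}_{k}^{-1}\otimes\mathcal{K}_{C}) = h^{0}(\mathcal{M}_{k}\otimes\mathcal{M}_{l}^{-1})\neq 0$ for some $l<k$, i.e.\ there is a nonzero map $\mathcal{M}_{l}\to\mathcal{M}_{k}$ and so $\mathrm{Ext}^{1}(\mathcal{M}_{k},\mathcal{M}_{l})\supset H^{1}(\mathcal{M}_{l}\otimes\mathcal{M}_{k}^{-1})$ is dual to $H^{0}(\mathcal{M}_{k}\otimes\mathcal{M}_{l}^{-1}\otimes\mathcal{K}_{C})$. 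We choose an iterated extension in which only the $(l,k)$ extension class $e$ is nonzero. The connecting map
\[
\beta_{k}\colon H^{0}(\mathcal{K}_{C})\to H^{1}(\mathcal{V}^{(k-1)}\otimes\mathcal{M}_{k}^{-1}\otimes\mathcal{K}_{C})
\]
then projects to cup product with $e$ into $H^{1}(\mathcal{M}_{l}\otimes\mathcal{M}_{k}^{-1}\otimes\mathcal{K}_{C})$. By Serre duality this is the pairing
\[
H^{0}(\mathcal{K}_{C})\otimes H^{0}(\mathcal{M}_{k}\otimes\mathcal{M}_{l}^{-1}) \to H^{0}(\mathcal{M}_{k}\otimes\mathcal{M}_{l}^{-1}\otimes\mathcal{K}_{C}),
\]
which is injective in the first factor for fixed nonzero section. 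Choosing $e$ generically, we want $\beta_{k}$ injective, so that this $\mathcal{V}^{\bullet}$ is not in the image of $\mathrm{Attr}'(k)$. Meanwhile $\beta_{s}$ is unaffected: the classes in the $s$-th step are split, and the $\mathcal{M}_{s}$-summand still contributes $H^{0}(\mathcal{K}_{C})\neq 0$ (since $s>k$, $\mathcal{M}_{s}$ splits off). So $\mathcal{V}^{\bullet}\in\pi(\mathrm{Attr}'(s))\setminus\pi(\mathrm{Attr}'(k))$, a contradiction.

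For the second claim, suppose $h^{0}(\mathcal{M}_{l}\otimes\mathcal{M}_{k}^{-1}\otimes\mathcal{K}_{C})\neq 0$ for some $k<l<s$. The argument is symmetric: we choose an extension whose only nonzero class is in $\mathrm{Ext}^{1}(\mathcal{M}_{s},\mathcal{M}_{l})$, dual to $H^{0}(\mathcal{M}_{l}\otimes\mathcal{M}_{s}^{-1}\otimes\mathcal{K}_{C}) = H^{0}(\mathcal{M}_{l}\otimes\mathcal{M}_{k}^{-1}\otimes\mathcal{K}_{C})$ via~\eqref{mkmsiso}. A generic class makes $\beta_{s}$ restricted to the $\mathcal{M}_{s}$-part injective on $H^{0}(\mathcal{K}_{C})$. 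This kills the $\mathcal{M}_{s}$-contribution, so $\mathcal{V}^{\bullet}\notin\pi(\mathrm{Attr}'(s))$. On the other hand $\beta_{k}=0$, so $\mathcal{V}^{\bullet}\in\pi(\mathrm{Attr}'(k))$, a contradiction.

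The main obstacle is the injectivity of $\beta$ for a generic extension class. Nonvanishing of $H^{0}(\mathcal{M}\otimes\mathcal{K}_{C})$ alone only guarantees that cupping with a generic class is nonzero on $H^{0}(\mathcal{K}_{C})$, not that it is injective; for that one needs a multiplication map $H^{0}(\mathcal{K}_{C})\otimes H^{0}(A)\to H^{0}(A\otimes\mathcal{K}_{C})$ to have no kernel in the appropriate slot. To get around this, I would instead compare dimensions. Equality of the images together with~\eqref{eqprimedims} forces the fibers of $\pi_{\mathcal{V}^{\bullet}}$ over the common image to be projectivizations whose dimensions differ by exactly one. I would then compute the generic fiber dimensions of $\overline{\mathrm{Attr}'(k)}$ and $\overline{\mathrm{Attr}'(s)}$ via $h^{0}(\mathcal{V}^{(k)}\otimes\mathcal{M}_{k}^{-1}\otimes\mathcal{K}_{C})$ and $h^{0}(\mathcal{V}^{(s)}\otimes\mathcal{M}_{k}^{-1}\otimes\mathcal{K}_{C})$ at the generic point. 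Any extra sections coming from the $\mathcal{M}_{l}$ with $l<k$ (which make $h^{1}$ terms contribute) or with $k<l<s$ (through $h^{0}$ terms) would push the difference to at least $2$, contradicting~\eqref{chi(Vs/Vk)}.
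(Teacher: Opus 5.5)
You have a genuine gap, and it goes beyond the genericity issue you raise at the end: both constructions look for the contradiction in the wrong place.

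\textbf{First claim.} You try to remove $\mathcal{V}^{\bullet}$ from $\pi_{\mathcal{V}^{\bullet}}(\mathrm{Attr}'(k))$ by making $\beta_{k}\colon H^{0}(\mathcal{K}_{C})\to H^{1}(\mathcal{V}^{(k-1)}\otimes\mathcal{M}_{k}^{-1}\otimes\mathcal{K}_{C})$ injective. But the target has dimension at most $\sum_{l<k}h^{1}(\mathcal{M}_{l}\otimes\mathcal{M}_{k}^{-1}\otimes\mathcal{K}_{C})$. This can be as small as $1$ (e.g.\ a single $l<k$ with $\mathcal{M}_{k}\cong\mathcal{M}_{l}(x)$), while $h^{0}(\mathcal{K}_{C})=g\geq 2$. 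Then $\beta_{k}$ is never injective, every iterated extension lies in $\pi_{\mathcal{V}^{\bullet}}(\mathrm{Attr}'(k))$, and no choice of $e$ gives a contradiction.

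\textbf{Second claim.} It fails for the same reason. With a single nonzero class in $\mathrm{Ext}^{1}(\mathcal{M}_{s},\mathcal{M}_{l})$, the image of $\beta_{s}$ lies in the one summand $H^{1}(\mathcal{M}_{l}\otimes\mathcal{M}_{k}^{-1}\otimes\mathcal{K}_{C})$, whose dimension need not be $\geq g$. Also, $\mathrm{Ext}^{1}(\mathcal{M}_{s},\mathcal{M}_{l})^{\vee}\cong H^{0}(\mathcal{M}_{s}\otimes\mathcal{M}_{l}^{-1}\otimes\mathcal{K}_{C})$, not $H^{0}(\mathcal{M}_{l}\otimes\mathcal{M}_{s}^{-1}\otimes\mathcal{K}_{C})$. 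So your hypothesis does not even produce a nonzero class.

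Neither construction uses \eqref{chi(Vs/Vk)}, and that is exactly what supplies the missing dimensions. The fallback does not help either. Extra $h^{1}$ from $l<k$ enlarges the target of $H^{0}((\mathcal{V}^{(s)}/\mathcal{V}^{(k)})\otimes\mathcal{M}_{k}^{-1}\otimes\mathcal{K}_{C})\to H^{1}(\mathcal{V}^{(k)}\otimes\mathcal{M}_{k}^{-1}\otimes\mathcal{K}_{C})$. That tends to decrease, not increase, the jump $h^{0}(\mathcal{V}^{(s)}\otimes\cdots)-h^{0}(\mathcal{V}^{(k)}\otimes\cdots)$, so no bound $\geq 2$ comes out.

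\textbf{The missing idea.} Work only at the $s$-th step, using all components at once.

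Take $\mathcal{V}^{(s-1)}=\mathcal{M}_{1}\oplus\cdots\oplus\mathcal{M}_{s-1}$ split, so that $\mathcal{V}^{\bullet}\in\pi_{\mathcal{V}^{\bullet}}(\mathrm{Attr}'(k))$ automatically. Then take a generic $\beta\in\mathrm{Ext}^{1}(\mathcal{M}_{s},\mathcal{V}^{(s-1)})$.

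By \eqref{mkmsiso} and \eqref{chi(Vs/Vk)}, the summands $k\leq l<s$ have total $h^{1}$ equal to $g-1+\sum_{k<l<s}h^{0}(\mathcal{M}_{l}\otimes\mathcal{M}_{k}^{-1}\otimes\mathcal{K}_{C})$. This includes $l=k$, which contributes $H^{1}(\mathcal{K}_{C})$. Hence
\[
h^{1}(\mathcal{V}^{(s-1)}\otimes\mathcal{M}_{k}^{-1}\otimes\mathcal{K}_{C})=g-1+\sum_{l<k}h^{1}(\mathcal{M}_{l}\otimes\mathcal{M}_{k}^{-1}\otimes\mathcal{K}_{C})+\sum_{k<l<s}h^{0}(\mathcal{M}_{l}\otimes\mathcal{M}_{k}^{-1}\otimes\mathcal{K}_{C}),
\]
which is $\geq g$ as soon as one offending term is nonzero.

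Your genericity worry then reduces to a dimension count. For fixed $0\neq\omega\in H^{0}(\mathcal{K}_{C})$, the map $\beta\mapsto\omega\cup\beta$ onto this $H^{1}$ is surjective, since the cokernel of multiplication by $\omega$ is torsion. So the classes annihilating some $\omega$ form a locus of dimension at most $g-1+\dim\mathrm{Ext}^{1}-h^{1}<\dim\mathrm{Ext}^{1}$.

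A generic $\beta$ therefore makes $\beta_{s}$ injective. This gives $\mathcal{V}^{\bullet}\in\pi_{\mathcal{V}^{\bullet}}(\mathrm{Attr}'(k))\setminus\pi_{\mathcal{V}^{\bullet}}(\mathrm{Attr}'(s))$, contrary to Lemma~\ref{lem:eqproject}.

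This is the paper's argument. It treats both vanishing statements simultaneously, and the $l<k$ terms are detected through $\pi_{\mathcal{V}^{\bullet}}(\mathrm{Attr}'(s))$, not through $\pi_{\mathcal{V}^{\bullet}}(\mathrm{Attr}'(k))$.
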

\begin{proof}
Consider a vector bundle
\begin{align*}
    \tilde{\mathcal{V}} = \mathcal{M}_{k} \oplus \cdots \oplus \mathcal{M}_{s-1}.
\end{align*}
The rank of zero cohomology is given by
\begin{align*}
    h^{0}(\tilde{\mathcal{V}}\otimes \mathcal{M}_{k}^{-1}\otimes \mathcal{K}_{C}) = h^{0}(\mathcal{K}_{C}) + \sum\limits_{l = k+1}^{s-1} h^{0}(\mathcal{M}_{l} \otimes \mathcal{M}_{k}^{-1}\otimes\mathcal{K}_{C})= g + \sum\limits_{l = k+1}^{s-1} h^{0}(\mathcal{M}_{l} \otimes \mathcal{M}_{k}^{-1}\otimes\mathcal{K}_{C}).
\end{align*}
Combined with~\eqref{chi(Vs/Vk)}, this gives the rank of the first cohomology
\begin{align*}
    &h^{1}(\tilde{\mathcal{V}} \otimes \mathcal{M}_{k}^{-1}\otimes\mathcal{K}_{C}) = h^{0}(\tilde{\mathcal{V}}\otimes \mathcal{M}_{k}^{-1}\otimes \mathcal{K}_{C}) - \chi_C(\tilde{\mathcal{V}}) = g - 1 + \sum\limits_{l = k+1}^{s-1} h^{0}(\mathcal{M}_{l} \otimes \mathcal{M}_{k}^{-1}\otimes\mathcal{K}_{C})
\end{align*}
Then for a vector bundle
\begin{align*}
    \tilde{\tilde{\mathcal{V}}} = \mathcal{M}_{1} \oplus \cdots \oplus \mathcal{M}_{s-1}
\end{align*}
we have the following expression for the rank of the first cohomology:
\begin{align*}
   h^{1}(\tilde{\tilde{\mathcal{V}}} \otimes \mathcal{M}_{k}^{-1}\otimes\mathcal{K}_{C}) &= h^{1}(\tilde{\mathcal{V}} \otimes \mathcal{M}_{k}^{-1}\otimes\mathcal{K}_{C}) + \sum\limits_{l = 1}^{k-1}h^{1}(\mathcal{M}_{l} \otimes \mathcal{M}_{k}^{-1}\otimes\mathcal{K}_{C}) =\\
   &=g - 1 + \sum\limits_{l = 1}^{k-1}h^{1}(\mathcal{M}_{l} \otimes \mathcal{M}_{k}^{-1}\otimes\mathcal{K}_{C}) + \sum\limits_{l = k+1}^{s-1} h^{0}(\mathcal{M}_{l} \otimes \mathcal{M}_{k}^{-1}\otimes\mathcal{K}_{C}).
\end{align*}
For every $\beta \in \mathrm{Ext}^{1}(\mathcal{M}_{s}, \tilde{\tilde{\mathcal{V}}})$ the boundary map
\begin{align*}
    \beta\colon H^{0}(\mathcal{K}_{C}) = H^{0}(\mathcal{M}_{s}\otimes\mathcal{M}_{k}^{-1} \otimes \mathcal{K}_{C}) \rightarrow H^{1}(\tilde{\tilde{\mathcal{V}}}\otimes\mathcal{M}_{k}^{-1}\otimes\mathcal{K}_{C})
\end{align*}
has dimension of the domain $h^{0}(\mathcal{K}_{C}) = g$. If any of the $h^{0}$'s or $h^{1}$'s in the sum is nonzero, then the dimension of the codomain of $\beta$ is at least $g$, so for generic $\beta$ this map will be injective. Such a $\beta$ corresponds to an extension
\begin{align*}
    0 \rightarrow \tilde{\tilde{\mathcal{V}}} \rightarrow \mathcal{V}^{(s)} \rightarrow \mathcal{M}_{s} \rightarrow 0,
\end{align*}
which fits in an obvious way into an iterated filtration $\mathcal{V}^{\bullet} \in \pi_{\mathcal{V}^{\bullet}}(\mathrm{Attr}'(k)) \setminus \pi_{\mathcal{V}^{\bullet}}(\mathrm{Attr}'(s))$, which contradicts Lemma~\ref{lem:eqproject}.
\end{proof}
\begin{remark}
    The conditions from Corollary~\ref{vanishingh1h0} and~\eqref{chi(Vs/Vk)} give
\begin{align*}
    \sum\limits_{k < l < s} h^{1}\left( \mathcal{M}_{l} \otimes \mathcal{M}_{k}^{-1} \otimes\mathcal{K}_{C}\right) = g-2.
\end{align*}
In particular, this implies $g \geq 2$.

\end{remark}
\begin{corollary}\label{h1Vs-1}
    For any iterated extension $\mathcal{V}^{\bullet}$
\begin{align*}
    h^{1}(\mathcal{V}^{(s-1)}\otimes \mathcal{M}_{k}^{-1}\otimes\mathcal{K}_{C}) = g-1.
\end{align*}
\end{corollary}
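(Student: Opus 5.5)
The plan is to squeeze $h^{1}(\mathcal{V}^{(s-1)}\otimes\mathcal{M}_{k}^{-1}\otimes\mathcal{K}_{C})$ between two bounds that both equal $g-1$. Throughout I work under the standing hypothesis of this part of the argument: the map~\eqref{etaBMh} is nonzero. Then Corollary~\ref{vanishingh1h0}, the isomorphism~\eqref{mkmsiso} $\mathcal{M}_{k}\cong\mathcal{M}_{s}$, the condition~\eqref{chi(Vs/Vk)}, and the subsequent Remark are all available. In particular
\[
\sum_{k<l<s}h^{1}(\mathcal{M}_{l}\otimes\mathcal{M}_{k}^{-1}\otimes\mathcal{K}_{C})=g-2 .
\]
Write $\mathcal{F}=\mathcal{M}_{k}^{-1}\otimes\mathcal{K}_{C}$ for brevity.

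\emph{Upper bound.} The sheaf $\mathcal{V}^{(s-1)}$ is filtered with graded pieces $\mathcal{M}_{1},\dots,\mathcal{M}_{s-1}$. I will use the long exact sequences of the successive extensions~\eqref{VviaM} tensored with $\mathcal{F}$, together with the vanishing of $H^{2}$ on a curve. These show that $h^{1}$ is subadditive along the filtration:
\[
h^{1}(\mathcal{V}^{(s-1)}\otimes\mathcal{F})\le\sum_{l=1}^{s-1}h^{1}(\mathcal{M}_{l}\otimes\mathcal{F}).
\]
By Corollary~\ref{vanishingh1h0}, the terms with $l<k$ vanish. The term $l=k$ is $h^{1}(\mathcal{K}_{C})=1$, and the terms with $k<l<s$ sum to $g-2$. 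Hence $h^{1}(\mathcal{V}^{(s-1)}\otimes\mathcal{F})\le g-1$.

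\emph{Lower bound.} Because $H^{2}=0$ on $C$, the surjection $\mathcal{V}^{(s-1)}\twoheadrightarrow\mathcal{W}:=\mathcal{V}^{(s-1)}/\mathcal{V}^{(k-1)}$ induces a surjection on $H^{1}$ after tensoring with $\mathcal{F}$. So it suffices to show $h^{1}(\mathcal{W}\otimes\mathcal{F})\ge g-1$. Now $\mathcal{W}$ sits in
\[
0\to\mathcal{M}_{k}\to\mathcal{W}\to\mathcal{Q}\to0,\qquad \mathcal{Q}=\mathcal{V}^{(s-1)}/\mathcal{V}^{(k)} .
\]
The sheaf $\mathcal{Q}$ is filtered by $\mathcal{M}_{k+1},\dots,\mathcal{M}_{s-1}$, and each piece satisfies $h^{0}(\mathcal{M}_{l}\otimes\mathcal{F})=0$ by Corollary~\ref{vanishingh1h0}. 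By left exactness along the filtration, $h^{0}(\mathcal{Q}\otimes\mathcal{F})=0$. Consequently $h^{1}(\mathcal{Q}\otimes\mathcal{F})=-\chi_C(\mathcal{Q}\otimes\mathcal{F})$. This Euler characteristic is additive, so it equals $-\sum_{k<l<s}\chi_C(\mathcal{M}_{l}\otimes\mathcal{F})$, which is $g-2$ since the corresponding $h^{0}$ vanish.

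The long exact sequence of the displayed extension tensored with $\mathcal{F}$ reads
\[
0\to H^{0}(\mathcal{K}_{C})\to H^{0}(\mathcal{W}\otimes\mathcal{F})\to 0\to H^{1}(\mathcal{K}_{C})\to H^{1}(\mathcal{W}\otimes\mathcal{F})\to H^{1}(\mathcal{Q}\otimes\mathcal{F})\to 0 .
\]
Therefore $h^{1}(\mathcal{W}\otimes\mathcal{F})=1+(g-2)=g-1$. This gives the lower bound, and combining it with the upper bound proves the equality for every iterated extension $\mathcal{V}^{\bullet}$.

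The only delicate point is checking that the hypotheses really are in force, namely that Corollary~\ref{vanishingh1h0} and the Remark apply for the same pair $k<s$. Once that is granted, the rest is bookkeeping with long exact sequences on a curve. I would also double-check that the identity $\mathcal{M}_{s}\cong\mathcal{M}_{k}$ is not needed here; it is used only for $\mathcal{V}^{(s)}$, not for $\mathcal{V}^{(s-1)}$.
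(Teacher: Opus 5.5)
Your argument is correct and is essentially the paper's. The paper also passes to $\mathcal{V}^{(s-1)}/\mathcal{V}^{(k-1)}$, but it observes that $H^{1}(\mathcal{V}^{(k-1)}\otimes\mathcal{F})=0$ (the $l<k$ part of your subadditivity bound) already makes $H^{1}(\mathcal{V}^{(s-1)}\otimes\mathcal{F})\to H^{1}(\mathcal{V}^{(s-1)}/\mathcal{V}^{(k-1)}\otimes\mathcal{F})$ an isomorphism, so your separate upper bound is redundant; it then gets $h^{1}=h^{0}-\chi_C=g-1$ directly from~\eqref{chi(Vs/Vk)}. One correction to your closing remark: the isomorphism $\mathcal{M}_s\cong\mathcal{M}_k$ is not needed, but the degree equality $\deg\mathcal{M}_s=\deg\mathcal{M}_k$ is, because it is what turns~\eqref{chi(Vs/Vk)} into the identity $\sum_{k<l<s}h^{1}(\mathcal{M}_l\otimes\mathcal{F})=g-2$ from the Remark that you rely on.
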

\begin{proof}
Use a long exact sequence
\begin{align*}
    \rightarrow H^{1}(\mathcal{V}^{(k-1)}\otimes \mathcal{M}_{k}^{-1}\otimes\mathcal{K}_{C}) \rightarrow H^{1}(\mathcal{V}^{(s-1)}\otimes \mathcal{M}_{k}^{-1}\otimes\mathcal{K}_{C}) \rightarrow H^{1}(\mathcal{V}^{(s-1)}/\mathcal{V}^{(k-1)}\otimes \mathcal{M}_{k}^{-1}\otimes\mathcal{K}_{C}) \rightarrow 0
\end{align*}
Since $H^{1}(\mathcal{M}_{l}\otimes \mathcal{M}_{k}^{-1}\otimes\mathcal{K}_{C}) = 0$ for $l < k$, the first term vanishes, and we have
\begin{align*}
    h^{1}(\mathcal{V}^{(s-1)}\otimes \mathcal{M}_{k}^{-1}\otimes\mathcal{K}_{C}) = h^{1}(\mathcal{V}^{(s-1)}/\mathcal{V}^{(k-1)}\otimes \mathcal{M}_{k}^{-1}\otimes\mathcal{K}_{C}).
\end{align*}
The latter can be computed with the help of~\eqref{chi(Vs/Vk)}:
\begin{align*}
   h^{1}(\mathcal{V}^{(s-1)}/\mathcal{V}^{(k-1)}\otimes \mathcal{M}_{k}^{-1}\otimes\mathcal{K}_{C}) = h^{0}(\mathcal{V}^{(s-1)}/\mathcal{V}^{(k-1)}\otimes \mathcal{M}_{k}^{-1}\otimes\mathcal{K}_{C}) - 1 = g-1.
\end{align*}
The last equality comes from the vanishing of $H^{0}(\mathcal{M}_{l}\otimes \mathcal{M}_{k}^{-1}\otimes\mathcal{K}_{C})$ for $k < l < s$.
\end{proof}
From Corollary~\ref{h1Vs-1} we conclude that the boundary map
\begin{align*}
    H^{0}(\mathcal{M}_{s} \otimes \mathcal{M}_{k}^{-1}\otimes\mathcal{K}_{C}) \rightarrow H^{1}(\mathcal{V}^{(s-1)}\otimes \mathcal{M}_{k}^{-1}\otimes\mathcal{K}_{C})
\end{align*}
is surjective for any $\mathcal{V}^{\bullet}$ (so $\pi_{\mathcal{V}^{\bullet}}(\mathrm{Attr}'(s)) = \{\mathcal{V}^{\bullet}\}$) and generically has one-dimensional kernel, hence for generic $\mathcal{V}^{\bullet}$ we have
\begin{align*}
    h^{0}(\mathcal{V}^{(s)} \otimes \mathcal{M}_{k}^{-1}\otimes\mathcal{K}_{C}) = h^{0}(\mathcal{V}^{(s-1)} \otimes \mathcal{M}_{k}^{-1}\otimes\mathcal{K}_{C}) + 1 = h^{0}(\mathcal{V}^{(k)} \otimes \mathcal{M}_{k}^{-1}\otimes\mathcal{K}_{C}) + 1.
\end{align*}

We can now prove the following.
\begin{proposition}\label{pr:eta-sufficient}
    The conditions from Corollary~\ref{vanishingh1h0} with~\eqref{chi(Vs/Vk)} and~\eqref{mkmsiso} are sufficient for the map~\eqref{etaBMh} to be non-vanishing.
\end{proposition}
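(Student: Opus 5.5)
Under the stated hypotheses ($\mathcal M_k\cong\mathcal M_s$, \eqref{chi(Vs/Vk)}, and the vanishings of Corollary~\ref{vanishingh1h0}) I would compute $\eta\cap[\overline{\mathrm{Attr}'(s)}]$ explicitly and show that its restriction to the open stratum $\mathrm{Attr}'(k)$ is a nonzero multiple of the fundamental class. This suffices by the description \eqref{ipushalpha}--\eqref{jpullalpha} of the map \eqref{etaBMh}. The first step is to use Corollary~\ref{h1Vs-1} and the discussion after it to describe both strata fibrewise over $\{\mathcal V^\bullet\}$, working on the rigidification \eqref{rig}:
\begin{align*}
\overline{\mathrm{Attr}'(s)}\cap\pi_{\mathcal V^\bullet}^{-1}(\mathcal V^\bullet)&=\mathbb P\bigl(H^0(\mathcal V^{(s)}\otimes\mathcal M_k^{-1}\otimes\mathcal K_C)\bigr),\\
\overline{\mathrm{Attr}'(k)}\cap\pi_{\mathcal V^\bullet}^{-1}(\mathcal V^\bullet)&=\mathbb P\bigl(H^0(\mathcal V^{(k)}\otimes\mathcal M_k^{-1}\otimes\mathcal K_C)\bigr).
\end{align*}
For generic $\mathcal V^\bullet$ the first projective space has dimension exactly one more than the second. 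The vanishing of $H^0(\mathcal M_l\otimes\mathcal M_k^{-1}\otimes\mathcal K_C)$ for $k<l<s$ gives $H^0(\mathcal V^{(s-1)}\otimes\cdots)=H^0(\mathcal V^{(k)}\otimes\cdots)$. Hence, over the open dense locus of $\{\mathcal V^\bullet\}$ where the boundary map has one-dimensional kernel, $\overline{\mathrm{Attr}'(k)}$ is a hyperplane subbundle of the projective bundle $\overline{\mathrm{Attr}'(s)}$.

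The second step is to identify this hyperplane as the zero locus of a section of $\mathcal O(1)$. The composite
\[
\mathcal L\xrightarrow{s}\mathcal V^{(s)}\to\mathcal V^{(s)}/\mathcal V^{(s-1)}=\mathcal M_s
\]
gives a section of $\mathcal M_s\otimes\mathcal L^{-1}\otimes\mathcal O(1)$. Since $\mathcal L=\mathcal M_k\otimes\mathcal K_C^{-1}$ and $\mathcal M_s\cong\mathcal M_k$, this is a section of
\[
H^0(\mathcal K_C)\otimes\mathcal O(1)
\]
over $\overline{\mathrm{Attr}'(s)}$, and its vanishing locus is exactly $\bigcup_{l<s}\mathrm{Attr}'(l)$ intersected with it. On the generic locus, however, its image lies in the one-dimensional kernel line $\ker\beta\subset H^0(\mathcal K_C)$. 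So it factors as a section of $\mathcal O(1)\otimes\ker\beta$, a line bundle whose first Chern class agrees with $\eta$ up to classes pulled back from $\{\mathcal V^\bullet\}$, and these act trivially in top degree on an affine base. That section cuts out $\overline{\mathrm{Attr}'(k)}$ with multiplicity one, because the linear function on $H^0(\mathcal V^{(s)}\otimes\cdots)$ defining the hyperplane is reduced. Hence $\eta\cap[\overline{\mathrm{Attr}'(s)}]=[\overline{\mathrm{Attr}'(k)}]+(\text{classes supported over the nongeneric locus})$.

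The third step handles those correction terms, which live over the proper closed subset of $\{\mathcal V^\bullet\}$ where the kernel jumps, and over the strata $\mathrm{Attr}'(l)$ with $l<k$ where the conditions $h^1(\mathcal M_l\otimes\mathcal M_k^{-1}\otimes\mathcal K_C)=0$ for $l<k$ enter. I must show they have dimension less than $\dim\mathrm{Attr}'(k)$, so they cannot cancel $[\mathrm{Attr}'(k)]$ after applying $j_k^*$. I expect this dimension estimate to be the main obstacle. My first attempt is to use Lemma~\ref{lem:eqproject}-style arguments with the equality $\pi_{\mathcal V^\bullet}(\mathrm{Attr}'(s))=\{\mathcal V^\bullet\}$ together with \eqref{eqprimedims}. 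Any excess component would have dimension at least $\dim\mathrm{Attr}'(s)-1=\dim\mathrm{Attr}'(k)$, so it would have to dominate $\{\mathcal V^\bullet\}$, contradicting genericity. Finally I translate back through the rigidification and the dual of \eqref{etak-1k} to conclude that \eqref{etaBMh}, and hence $\eta$ on $M$, is nonzero.
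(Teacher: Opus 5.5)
Your route differs from the paper's. The paper obtains sufficiency from Proposition~\ref{pr:eta-cap} (Appendix~\ref{app:BM-eta}) in three steps:
\begin{itemize}
\item the modified cocharacter \eqref{newgm} contracts $\{\mathcal V^\bullet\}$ to the split extension;
\item the equivariant Gysin pullback $i_F^*$ to the fibre $F$ over that point is injective;
\item explicit virtual classes of $F^{(s)}$ and $F^{(k)}$ give $(\eta+(s-k)u)\cap[\mathrm{Attr}'(s)]=[\mathrm{Attr}'(k)]$.
\end{itemize}
You work instead at the generic point, where the section $\mathcal L\to\mathcal V^{(s)}\to\mathcal M_s$ is transverse. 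This avoids virtual classes and localization, but relies on the generic one-dimensionality of $\ker\beta$ (asserted after Corollary~\ref{h1Vs-1}). Steps~1 and~2 are sound, with one correction. The kernel $\ker\beta$ is a line bundle only on the open set $U_{\mathrm{gen}}\subset\{\mathcal V^\bullet\}$, which is not affine. What actually kills $c_1(\ker\beta)$ is that the Picard group of an open subset of affine space is trivial. Alternatively, read off the coefficient of $\eta\cap\pi^![U_{\mathrm{gen}}]$ in the projective bundle formula.

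The gap is in Step~3: your dimension estimate fails in the space where you apply it. Set $r=h^0(\mathcal V^{(k)}\otimes\mathcal M_k^{-1}\otimes\mathcal K_C)$. Over a point where $\dim\ker\beta=j$, the fibre of $\overline{\mathrm{Attr}'(s)}$ is $\mathbb P^{r+j-1}$. Suppose the locus $\{\dim\ker\beta\ge 2\}$ has its expected codimension $2$, which is compatible with irreducibility of $\mathrm{Attr}'(s)$. Then its preimage has dimension exactly $\dim\mathrm{Attr}'(k)$ without dominating $\{\mathcal V^\bullet\}$. So correction terms in $H^{BM}(\bigcup_{l\le s}\mathrm{Attr}'(l))$ can live in top degree, and ``contradicting genericity'' does not follow.

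The missing observation is to measure everything on $\bigcup_{l\le k}\mathrm{Attr}'(l)$, where $\alpha$ lives.
\begin{itemize}
\item Since $h^1(\mathcal M_l\otimes\mathcal M_k^{-1}\otimes\mathcal K_C)=0$ for $l<k$, we get $H^1(\mathcal V^{(k-1)}\otimes\mathcal M_k^{-1}\otimes\mathcal K_C)=0$, so $r$ is constant on all of $\{\mathcal V^\bullet\}$.
\item Hence $\bigcup_{l\le k}\mathrm{Attr}'(l)=\overline{\mathrm{Attr}'(k)}$ is a $\mathbb P^{r-1}$-bundle over the whole base, irreducible of dimension $d=\dim\mathrm{Attr}'(k)$, and therefore $\alpha=c\,[\overline{\mathrm{Attr}'(k)}]$ for some scalar $c$.
\item Restrict \eqref{ipushalpha} to $\pi_{\mathcal V^\bullet}^{-1}(U_{\mathrm{gen}})$; this is compatible with $i_*$ and with capping by $\eta$. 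Your Step~2 then gives, on this open set, $c\, i_*[\overline{\mathrm{Attr}'(k)}]=\eta\cap[\overline{\mathrm{Attr}'(s)}]=i_*[\overline{\mathrm{Attr}'(k)}]$.
\item This class is nonzero by the projective bundle formula, so $c=1$ and $j_k^*\alpha=[\mathrm{Attr}'(k)]$.
\end{itemize}
This is where the $l<k$ vanishing is genuinely used, not on the strata $\mathrm{Attr}'(l)$ with $l<k$, which $j_k^*$ discards anyway. With this replacement Step~3 is immediate, and your argument recovers Proposition~\ref{pr:eta-cap} non-equivariantly.
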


\begin{proposition}\label{pr:eta-cap}
Assume \((k,s)\) is exceptional. Then
\[
\eta\cap[\operatorname{Attr}'(s)]
=
[\operatorname{Attr}'(k)]
\]
in Borel--Moore homology.
\end{proposition}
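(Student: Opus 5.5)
Under the exceptional hypotheses (Corollary~\ref{vanishingh1h0}, \eqref{chi(Vs/Vk)}, \eqref{mkmsiso}), we work on the rigidification $\widetilde{QM\vec{N}}_{\mathcal{M}_{k}^{-1}\otimes\mathcal{K}_{C}}$. It fibres over the affine space $\{\mathcal{V}^{\bullet}\}$ with projective-space fibres $\mathbb{P}(H^{0}(\mathcal{V}\otimes\mathcal{L}^{-1}))$, where $\mathcal{L}^{-1}=\mathcal{M}_{k}^{-1}\otimes\mathcal{K}_{C}$. On these fibres $\eta$ is the hyperplane class of $\mathcal{O}(1)$. We take the representative $[\overline{\mathrm{Attr}'(s)}]$ of $[\mathrm{Attr}'(s)]$, as in the proof of Lemma~\ref{lem:eqproject}. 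The computation then reduces to cutting $\overline{\mathrm{Attr}'(s)}$ by a hyperplane section and identifying the result, restricted to $\mathrm{Attr}'(k)$, with $[\mathrm{Attr}'(k)]$.

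\emph{Step 1.} Fibrewise over $\mathcal{V}^{\bullet}$, $\overline{\mathrm{Attr}'(s)}$ is the projectivisation $\mathbb{P}(W_{s})$, with $W_{s}=H^{0}(\mathcal{V}^{(s)}\otimes\mathcal{L}^{-1})$. By Corollary~\ref{h1Vs-1} and the remark after it, every $\mathcal{V}^{\bullet}$ lies in the image. Moreover $W_{s}$ contains $W_{s-1}=W_{k}=H^{0}(\mathcal{V}^{(k)}\otimes\mathcal{L}^{-1})$ with quotient $K_{\mathcal{V}}=\ker\beta_{s}\subset H^{0}(\mathcal{K}_{C})$. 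This quotient is generically one-dimensional, and it jumps exactly over the locus where $\beta_{s}$ drops rank. Over the open locus where $\dim K_{\mathcal{V}}=1$, the space $\mathbb{P}(W_{s})\setminus\mathbb{P}(W_{k})$ is a line bundle over a point of $\mathbb{P}(K_{\mathcal{V}})$. The hyperplane class restricted to $\mathbb{P}(W_{s})$ is then represented by the zero locus of a linear functional on $W_{s}$ vanishing on $W_{k}$. That zero locus is exactly $\mathbb{P}(W_{k})$, which over this locus is $\overline{\mathrm{Attr}'(k)}\cap\pi_{\mathcal{V}^{\bullet}}^{-1}(\cdot)$, with multiplicity one.

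To make the functional global we use the projection $W_{s}\to W_{s}/W_{k}=K_{\mathcal{V}}\subset H^{0}(\mathcal{K}_{C})$. It assembles into a morphism of the tautological line bundle $\mathcal{O}(-1)$ into the trivial bundle $H^{0}(\mathcal{K}_{C})\otimes\mathcal{O}$. The degeneracy locus of this morphism is precisely $\bigcup_{l\le k}\mathrm{Attr}'(l)$, because $s(\mathcal{L})\subset\mathcal{V}^{(k)}$ there. The refined Euler class localises on that locus, so $\eta\cap[\overline{\mathrm{Attr}'(s)}]=i_{*}\alpha$. This is exactly \eqref{ipushalpha}.

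\emph{Step 2.} Restrict $\alpha$ to $\mathrm{Attr}'(k)$ by $j_{k}^{*}$. Both sides now live in top-degree Borel--Moore homology of the irreducible $\mathrm{Attr}'(k)$, whose dimension is one less than that of $\mathrm{Attr}'(s)$ by \eqref{eqprimedims}. So $j_{k}^{*}\alpha=c\,[\mathrm{Attr}'(k)]$ for a scalar $c$, and $c$ can be computed at the generic point. By Lemma~\ref{lem:eqproject} and Corollary~\ref{h1Vs-1}, for generic $\mathcal{V}^{\bullet}$ the fibre of $\mathrm{Attr}'(k)$ is a dense open subset of $\mathbb{P}(W_{k})$. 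Step 1 shows the intersection there is transverse with multiplicity one, which gives $c=1$.

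\emph{Main obstacle.} The delicate point is Step 1's claim that the excess or degeneracy contribution does not alter the multiplicity. The locus where $\dim K_{\mathcal{V}}\ge 2$ could in principle carry a component of dimension $\dim\mathrm{Attr}'(k)$ inside $\mathrm{Attr}'(k)$. To rule this out I would show that this jump locus has codimension at least one in $\{\mathcal{V}^{\bullet}\}$. This should follow from generic injectivity-up-to-one of $\beta_{s}$, using the vanishings in Corollary~\ref{vanishingh1h0}, via the same generic-extension argument as in its proof. With this, its preimage in $\mathrm{Attr}'(k)$ is a proper closed subset, and top-degree BM homology ignores it. Finally, we check that the derived structure introduces no extra Euler-class factor. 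The virtual tangent bundle of the rigidification has no obstruction terms of the relevant weight (cf.\ Proposition~\ref{thm:attr-smooth}), so the fundamental classes are the classical ones.
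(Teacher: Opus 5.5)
There is a genuine gap in Step~1, and it is not where your ``main obstacle'' paragraph looks.

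\emph{The refined Euler class computes the wrong class.} The morphism $\mathcal{O}(-1)\to H^{0}(\mathcal{K}_{C})\otimes\mathcal{O}$ is a section of the rank-$g$ bundle $H^{0}(\mathcal{K}_{C})\otimes\mathcal{O}(1)$. Its localized Euler class therefore refines $\eta^{g}\cap[\overline{\mathrm{Attr}'(s)}]$, not $\eta\cap[\overline{\mathrm{Attr}'(s)}]$. Its zero locus also has codimension $1$, not $g$. The section factors through the line bundle $\mathcal{K}\otimes\mathcal{O}(1)$, with $\mathcal{K}=\ker\beta_{s}$, only over $U=\{\dim K_{\mathcal{V}}=1\}$. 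There you must also note that $c_{1}(\mathcal{K})=0$, which holds because line bundles on an open subset of an affine space are trivial.

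\emph{The identity fails over the jump locus.} Let $J=\{\dim K_{\mathcal{V}}\geq 2\}$. It is nonempty, since it contains $\mathcal{V}^{\bullet}_{\mathrm{fix}}$, where $\beta_{s}=0$. Over $J$ the identity you want is false in untwisted Borel--Moore homology. On $\mathrm{Attr}'(s)$ the line $\langle s\rangle$ maps isomorphically onto $\langle\bar{s}\rangle\subset H^{0}(\mathcal{K}_{C})$. Hence $\eta$ is pulled back from the hyperplane class along $(\mathcal{V}^{\bullet},[s])\mapsto[\bar{s}]\in\mathbb{P}(H^{0}(\mathcal{K}_{C}))$. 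Since $\mathrm{Attr}'(s)$ is an affine bundle over its fixed locus $|\mathcal{K}_{C}|\cong\mathbb{P}^{g-1}$, this gives $\eta\cap[\mathrm{Attr}'(s)]\neq 0$ in $H^{BM}_{*}(\mathrm{Attr}'(s))$ for $g\geq 2$. By the localization sequence, $\eta\cap[\overline{\mathrm{Attr}'(s)}]$ is not in the image of $i_{*}$, so the class $\alpha$ you restrict in Step~2 does not exist.

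\emph{Where the extra class lives.} Over $U$, $\eta|_{\mathrm{Attr}'(s)}$ is pulled back from $c_{1}(\mathcal{K}^{\vee})=0$. So this nonzero class, of dimension $\dim\mathrm{Attr}'(s)-1=\dim\mathrm{Attr}'(k)$, is supported on $\mathrm{Attr}'(s)|_{J}$. In particular $\mathrm{Attr}'(s)|_{J}$ has that dimension, and no codimension estimate on $J$ makes it disappear. The dangerous top-dimensional contribution therefore sits in $\mathrm{Attr}'(s)$, outside $\bigcup_{l\le k}\mathrm{Attr}'(l)$. The preimage of $J$ in $\mathrm{Attr}'(k)$, which is what you worry about, is harmless, because $\mathrm{Attr}'(k)\to\{\mathcal{V}^{\bullet}\}$ has constant fibres. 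Your multiplicity-one computation over $U$ is correct, but no generic computation can see this term. In the paper, \eqref{ipushalpha} comes from the $\chi$-twisted filtration, where $\eta$ kills the graded pieces by \eqref{eta0onk}; the paper's proof of this proposition does not use it.

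The paper instead proves the identity in the Borel--Moore homology of the ambient $\widetilde{QM\vec{N}}_{\mathcal{M}_{k}^{-1}\otimes\mathcal{K}_{C}}$. It works equivariantly for the torus \eqref{newgm} and sets $u=0$ only at the end. That torus contracts $\{\mathcal{V}^{\bullet}\}$ onto $\mathcal{V}^{\bullet}_{\mathrm{fix}}$, the most degenerate point of $J$. Equivariant Gysin restriction to the projective space $F$ is then injective; this is an equivariant localization statement. On $F$ both classes are explicit, and $[F^{(s)}]^{\mathrm{vir}}$ and $[F^{(k)}]^{\mathrm{vir}}$ differ by exactly one factor $\eta+(s-k)u$. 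The factors $(\eta+u_{l}-u_{k})^{h^{1}(\mathcal{M}_{l}\otimes\mathcal{M}_{k}^{-1}\otimes\mathcal{K}_{C})}$, for $k<l\le s$, record precisely the jump that your generic argument misses. To repair your route you would need a comparable global tool, not a computation at the generic point.

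Your final claim that the derived structure adds no Euler factors is also inaccurate on the rigidified fibre. With $\mathcal{L}$ frozen there is a weight-zero obstruction $H^{1}(\mathcal{M}_{k}\otimes\mathcal{L}^{-1})\otimes\mathcal{O}(1)=H^{1}(\mathcal{K}_{C})\otimes\mathcal{O}(1)$, visible as the common factor $\eta$ in both virtual classes of the paper. Proposition~\ref{thm:attr-smooth} concerns the unrigidified stack, where deformations of $\mathcal{L}$ cancel this term.
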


\begin{proof}
See Appendix~\ref{app:BM-eta}.
\end{proof}

We can now prove the main result, Theorem~\ref{thm:main}.

\begin{proof}[Proof of Theorem~\ref{thm:main}]
For $\chi_{0}^{n+1}=1$, the isomorphism is Proposition~\ref{pr:comparison-triv}.
For $\chi_{0}^{n+1}\neq 1$, by Proposition~\ref{pr:splitting} and the
decomposition $H^{*}_{L}(X^{\vee}, \hat{\mathcal{O}}_{\mathrm{vir}}\otimes \mathcal{O}(\vec{\mathcal{M}})) \cong \bigoplus_{k} H^{*}_{L^{k}}$,
it suffices to show that $\eta = 0$ on $M$. The preceding analysis
exhibits~\eqref{exceptionalcond} as the only possible
obstruction.
\end{proof}

\appendix

\section{Relations in the algebra of correspondences}\label{relations}

\subsection{Supercommutator relations in $\mathcal{A}_{\chi}$}\label{sub:supcominA}
Substituting $\alpha, \beta \in \{\mathbf{p}, \gamma_{i}, 1\}$ in~\eqref{compsup} and
using the explicit formula for $\psi$ of Proposition~\ref{pr:psi}
gives the relations in $\mathcal{A}_{\chi}$ for the trivial $\chi_{0}^{n+1}$:
\begin{align}
    &[e\langle \mathbf{p} \rangle, f \langle \mathbf{p} \rangle] = 0, \label{commutef0}\\
    &[e\langle \mathbf{p} \rangle, f \langle \gamma_{i} \rangle] = 0,\\
    &[e\langle \gamma_{i} \rangle, f \langle \mathbf{p} \rangle] = 0,\\
    &[e\langle \gamma_{i} \rangle, f \langle \gamma_{j}^{\vee} \rangle]
    = \delta_{ij} (n+1)\eta^{n},\label{[egfg]}\\
    &[e\langle 1 \rangle, f \langle 1 \rangle]
    = n\,\eta^{n-1}\chi_C(\mathcal{V}\otimes \mathcal{L}^{-1})
    - (n+1)n(n-1)\eta^{n-2}\Theta,\label{[e1f1]}\\
    &[e\langle 1 \rangle, f \langle \mathbf{p} \rangle]
    = -[f\langle 1 \rangle, e \langle \mathbf{p} \rangle]
    = (n+1)\eta^{n},\label{[e1fp]}\\
    &[e\langle 1 \rangle, f \langle \gamma_{i} \rangle]
    = -[f\langle 1 \rangle, e \langle \gamma_{i} \rangle]
    = n(n+1)\eta^{n-1}\gamma_{i}.\label{[e1fg]}
\end{align}

By Lemma~\ref{lem:pullbacks}, the operators on $\hat{H}^{*}_{c}(QM\vec{N})$ of multiplication by $\eta, \gamma_{i} \in H^{*}(QM\vec{N})$ have the following commutators:
\begin{align}
     [e\langle \mathbf{p} \rangle, \eta]
    &= 0,
   &[f\langle \mathbf{p} \rangle, \eta]
    &= 0,\label{[epeta]}\\
       [e\langle \gamma_{i} \rangle, \eta]
    &= 0,
    &[f\langle \gamma_{i} \rangle, \eta]
    &= 0,\label{[egammaeta]}\\
    [e\langle 1 \rangle, \eta]
    &= -e\langle \mathbf{p} \rangle,
    &[f\langle 1 \rangle, \eta]
    &= f\langle \mathbf{p} \rangle,\label{[e1eta]}\\
    [e\langle 1 \rangle, \gamma_{i}]
    &= -e \langle \gamma_{i} \rangle,
    &[f\langle 1 \rangle, \gamma_{i}]
    &= f \langle \gamma_{i} \rangle,\label{[e1g]}\\
    [e\langle \gamma_{i} \rangle, \gamma_{j}^{\vee}]
    &= -\delta_{ij} e\langle \mathbf{p} \rangle,
    &[f\langle \gamma_{i} \rangle, \gamma_{j}^{\vee}]
    &= \delta_{ij} f \langle \mathbf{p} \rangle,\label{[egg]}\\
    [e\langle \mathbf{p} \rangle, \gamma_{i}]
    &= 0,
    &[f\langle \mathbf{p} \rangle, \gamma_{i}]
    &= 0,\label{[epg]}
\end{align}

Commutators with $\chi_C(\mathcal{V}\otimes \mathcal{L}^{-1})$ are given by
\begin{align}
      [e\langle \alpha \rangle, \chi_C(\mathcal{V}\otimes \mathcal{L}^{-1})]
    &= -(n+1)e\langle \alpha \rangle,
   &[f\langle \alpha \rangle, \chi_C(\mathcal{V}\otimes \mathcal{L}^{-1})]
    &= (n+1)f\langle \alpha \rangle, \;\;\; \alpha \in H^{1}_{c}(C, \chi). \label{[epdeg]}
\end{align}
Using the identity
\begin{align*}
    H_{c}^{*}(C, \overline{\mathbb{Q}}_{l}) &\rightarrow H_{c}^{*}(C \times C, \chi_{0}^{n+1} \boxtimes \chi_{0}^{-n-1}),\\
    \Delta_{C*}(1) &= -\sum\limits_{i = 1}^{2g-2}\gamma_{i}^{\chi} \otimes \gamma_{i}^{\chi^{-1}},
\end{align*}
we analogously compute supercommutators in $\mathcal{A}_\chi$ when $\chi_{0}^{n+1} \neq 1$:
\begin{equation}\label{commutinAchin}
    [e\langle \gamma_{i}^{\chi} \rangle, f\langle \gamma_{j}^{\chi^{-1}} \rangle]
    = \delta_{ij}(n+1)\eta^{n},\qquad
    [e\langle \gamma_{i}^{\chi} \rangle, \eta]
    = [f\langle \gamma_{i}^{\chi^{-1}} \rangle, \eta] =[e\langle \gamma_{i}^{\chi} \rangle, \gamma_{j}]
    = [f\langle \gamma_{i}^{\chi^{-1}} \rangle, \gamma_{j}] = 0. 
\end{equation}
\subsection{Casimir relations in $\mathcal{A}_{\chi}$}
By Proposition~\ref{pr:anticomm}, and the identities
\begin{equation}\label{deltapush}
\begin{aligned}
    \Delta_{C*}(\mathbf{p}) &= \mathbf{p} \otimes \mathbf{p},\\
    \Delta_{C*}(\gamma_{i}) &= \mathbf{p} \otimes \gamma_{i} + \gamma_{i} \otimes \mathbf{p},\\
    \Delta_{C*}(1) &= \mathbf{p} \otimes 1 + 1 \otimes \mathbf{p} - \sum_{i=1}^{2g} \gamma_{i} \otimes \gamma_{i}^{\vee},
\end{aligned}
\end{equation}
the operators in $\mathcal{A}_{\chi}$ for the trivial $\chi_{0}^{n+1}$ satisfy
\begin{align}
    &e\langle \mathbf{p} \rangle f \langle \mathbf{p} \rangle + \eta^{n+1} = 0, \label{cas0}\\
    &e\langle \mathbf{p} \rangle f\langle \gamma_{i} \rangle
    + f\langle \mathbf{p} \rangle e\langle \gamma_{i} \rangle
    + (n+1)\eta^{n}\gamma_{i} = 0,\label{cas1}\\
    &\begin{aligned}
    &f\langle \mathbf{p} \rangle e \langle 1 \rangle
    + e \langle \mathbf{p} \rangle f\langle 1 \rangle
    - \sum_{i=1}^{g}\bigl( e \langle \gamma_{i} \rangle f \langle \gamma_{i}^{\vee} \rangle
    + f \langle \gamma_{i} \rangle e \langle \gamma_{i}^{\vee} \rangle \bigr)\\
    &\quad + \eta^{n}\chi_C(\mathcal{V} \otimes \mathcal{L}^{-1})
    - n(n+1) \eta^{n-1} \Theta
    \end{aligned}
    = 0\label{cas2}.
\end{align}
For the nontrivial $\chi_{0}^{n+1}$ we have
\begin{align}
    &\eta^{n+1} = 0, \qquad \eta^{n}\gamma_{i} = 0, \qquad \sum\limits_{i = 1}^{2g-2}f\langle \gamma_{i}^{\chi^{-1}}\rangle e\langle\gamma_{i}^{\chi}\rangle + \eta^{n}\chi_C(\mathcal{V}\otimes\mathcal{K}_{C}^{-1/2}\otimes\mathcal{L}^{-1})-n(n+1)\eta^{n-1}\Theta=0. \nonumber
\end{align}

\subsection{Supercommutator relations in $\mathcal{A}^{(k)}_{\chi}$}\label{sub:supcominAk}
Analogously to Subsection~\ref{sub:supcominA} we get supercommutators for descendants in $\mathcal{A}_{\chi}^{(k)}$ for the trivial $\chi_{0}^{n+1}$:
\begin{align}
    &[e^{(k)}\langle \mathbf{p} \rangle , f^{(k)}\langle \mathbf{p} \rangle ] = 0, \label{comk2}\\
     &[e^{(k)}\langle 1 \rangle , f^{(k)}\langle 1 \rangle ] = 0, \label{comk1}\\
    &[e^{(k)}\langle 1 \rangle , f^{(k)}\langle \mathbf{p} \rangle ] = -[f^{(k)}\langle 1 \rangle , e^{(k)}\langle \mathbf{p} \rangle ] =  1,\label{commk1rest}\\
    &[e^{(k)}\langle 1 \rangle , \eta ] = - e^{(k)}\langle \mathbf{p} \rangle, \nonumber \\
    &[f^{(k)}\langle 1 \rangle , \eta ] =  f^{(k)}\langle \mathbf{p} \rangle, \nonumber\\
    &[e^{(k)}\langle 1 \rangle , \gamma_{i}] = - e^{(k)}\langle \gamma_i \rangle, \nonumber\\
    &[f^{(k)}\langle 1 \rangle , \gamma_{i}] =  f^{(k)}\langle \gamma_i \rangle, \nonumber \\
    &[e^{(k)}\langle \gamma_{i} \rangle , f^{(k)}\langle \gamma_{j}^{\vee} \rangle ] = \delta_{ij}, \label{comk11}\\
    &[e^{(k)}\langle \gamma_{i} \rangle , \gamma_{j}^{\vee} ] = - \delta_{ij} e^{(k)}\langle \mathbf{p} \rangle, \nonumber \\
    &[f^{(k)}\langle \gamma_{i} \rangle , \gamma_{j}^{\vee} ] = \delta_{ij} f^{(k)}\langle \mathbf{p} \rangle. \nonumber
\end{align}
For $\chi_{0}^{n+1} \neq 1$, the only nontrivial supercommutator is
\begin{align}
    [e^{(k)}\langle \gamma_{i}^{\chi} \rangle, f^{(k)}\langle \gamma_{j}^{\chi^{-1}} \rangle] = \delta_{ij}.
\end{align}
\subsection{Casimir relations in $\mathcal{A}^{(k)}_{\chi}$}\label{sub:CasrelinAk}
Using~\eqref{deltapush} we compute the Casimir relations in $\mathcal{A}_{\chi}^{(k)}$ for the trivial~$\chi_{0}^{n+1}$:
\begin{align}
    &e^{(k)}\langle \mathbf{p} \rangle f^{(k)} \langle \mathbf{p} \rangle + \eta = 0 \label{cask2}\\
    &e^{(k)}\langle \mathbf{p} \rangle f^{(k)}\langle \gamma_{i} \rangle + f^{(k)}\langle \mathbf{p} \rangle e^{(k)}\langle \gamma_{i} \rangle + \gamma_{i} = 0 \label{cask1} \\
    &f^{(k)}\langle \mathbf{p} \rangle e^{(k)} \langle 1 \rangle + e^{(k)} \langle \mathbf{p} \rangle f^{(k)}\langle 1 \rangle - \sum\limits_{i = 1}^{g} \left( e^{(k)} \langle \gamma_{i} \rangle f^{(k)} \langle \gamma_{i}^{\vee} \rangle + f^{(k)} \langle \gamma_{i} \rangle e^{(k)} \langle \gamma_{i}^{\vee} \rangle \right) + \nonumber \\
    &+\chi_C(\mathcal{M}_{k} \otimes \mathcal{L}^{-1})  = 0 \label{cask0}
\end{align}
Analogously, for $\chi_{0}^{n+1} \neq 1$ the operators in $\mathcal{A}_{\chi}^{(k)}$ satisfy
\begin{align}
    & \eta = 0, \qquad \gamma_{i} = 0, \qquad \sum_{i = 1}^{2g-2} f^{(k)}\langle \gamma_{i}^{\chi^{-1}} \rangle e^{(k)}\langle \gamma_{i}^{\chi} \rangle + \chi_C(\mathcal{M}_{k} \otimes \mathcal{K}_{C}^{-1/2} \otimes \mathcal{L}^{-1}) = 0. \label{eta0onk}
\end{align}

\section{Degree zero descendants as differential operators}\label{Degreezerodescendants}
In this section we describe the full algebra of correspondences. The degree-zero descendants will be identified with the global vector fields on the resolution $X^{\vee}$ preserving the exceptional divisor. This will produce a natural module for the algebra $\mathcal{A}$.
\subsection{The degree-zero descendants in $\mathcal{A}_{\chi}$.}
Consider the decomposition~\eqref{HAHB} of $H^{1}_{c}(C)$ into two Lagrangian subspaces
\begin{align*}
    H^{1}_{c}(C) = H_{A} \oplus H_{B}.
\end{align*}
This induces a Lagrangian submodule of $\mathcal{E}$,
\begin{align*}
    \mathcal{E}_{A} =  q^{-1}\Omega_{R} \otimes H_{A}.
\end{align*}

Let us introduce a super commutative algebra
\begin{equation*}
    \mathcal{R} = \bigwedge\nolimits^{\bullet}_{R} \mathcal{E}_{A},
\end{equation*}
and denote the corresponding super variety $\mathcal{X}_{0}^{\vee}$.
It has a natural super Poisson structure, extending the Poisson structure~\eqref{Poissonbraket} on $R$: 
\begin{align}
    &\pi \colon q^{-1} \otimes \mathcal{R}\otimes \mathcal{R}   \rightarrow \mathcal{R} \\
    &\{ x , \theta_{i}^{y} \} = - \{ y , \theta_{i}^{x} \} = n(n+1)\eta^{n-1}\gamma_i , \qquad \{x , \gamma_i \} = \theta_{i}^{x} \qquad \{y , \gamma_{i} \} = -\theta_i^{y},\\
    &\{ x , \theta_{i}^{x}\} = \{ y , \theta_{i}^{y}\} = \{ \theta_{i}^{x} , \theta_{j}^{x}\} = \{ \theta_{i}^{x} , \theta_{j}^{x}\} = \{ \theta_{i}^{y} , \theta_{j}^{y}\} = 0, \;\; i, j = 1, ..., g.
\end{align}
For $r \in R$ let 
\begin{align*}
    \xi_{r} = - \{r , \cdot \} \in \mathrm{Der}(R)
\end{align*}
be the corresponding Poisson derivation of $R$. Let us denote by
\begin{align*}
    \mathrm{Der}_{\pi}(R) \subset \mathrm{Der}(R)
\end{align*}
the submodule of Poisson derivations.
The Lie derivative extends $\xi_{r}$ to an even derivation of $\mathcal{R}$:
\begin{equation*}
    \mathrm{Lie}_{\xi_{r}} \in \mathrm{Der}(\mathcal{R}).
\end{equation*}
With the help of the pairing~\eqref{clifpair}, the elements $r \in R$ and $\gamma_{i}^{\vee} \in H_{B},\;  i = 1, ..., g$ produce a map
\begin{equation*}
    \iota_{\xi_{r}} \otimes \langle \gamma_{i}^{\vee}, \cdot \rangle \colon \mathcal{E}_{A} \rightarrow R,
\end{equation*}
which gives rise to an odd derivation
\begin{equation*}
    \theta_{i}^{\vee r} \in \mathrm{Der}(\mathcal{R}).
\end{equation*}
The elements $\mathrm{Lie}_{\xi_{r}}, \theta_{i}^{\vee r}, \;\; i = 1, ..., g, \;\; r = x, y, \eta$ generate a (proper) super Lie subalgebra
\begin{align}
    \mathrm{Der}_{\pi}(\mathcal{R}) \subset \mathrm{Der}(\mathcal{R}).
\end{align}
Let $\mathcal{D}(\mathcal{R})$ denote the algebra of differential operators on $\mathcal{R}$, and let
\begin{align}
  \mathcal{D}_{\pi}(\mathcal{R}) \subset \mathcal{D}(\mathcal{R})
\end{align}
denote the subalgebra generated by $\mathrm{Der}_{\pi}(\mathcal{R})$. A possible construction of $\mathcal{D}_{\pi}$ goes as follows. Let $\mathfrak{a}_{\pi}$ be a super Lie algebroid
\begin{align}\label{Liealgebroid}
    \mathfrak{a}_{\pi} = \mathcal{R} \oplus \mathrm{Der}_{\pi}(\mathcal{R}),
\end{align}
where the bracket between the summands is given by the natural pairing, and is trivial when restricted to $\mathcal{R}$. The natural projection and inclusion define the symbol map and the connection, respectively,
\begin{align}\label{symcon}
    \sigma: \mathfrak{a}_{\pi} \rightarrow \mathrm{Der}_{\pi}(\mathcal{R}) \qquad \nabla: \mathrm{Der}_{\pi}(\mathcal{R}) \rightarrow \mathfrak{a}_{\pi}.
\end{align}

The algebra $\mathcal{D}_{\pi}(\mathcal{R})$ is defined as
\begin{equation}\label{eq:UR}
    \mathcal{D}_{\pi}(\mathcal{R}) = U_{\mathcal{R}}(\mathfrak{a}_{\pi}) \coloneqq U_{\overline{\mathbb{Q}}_{l}}(\mathfrak{a}_{\pi})/\left(1_{\mathcal{R}} - 1_{U_{\overline{\mathbb{Q}}_{l}}(\mathfrak{a}_{\pi})}, r \otimes D - r \cdot D, \;\; r \in \mathcal{R} \subset \mathfrak{a}_{\pi}, D \in \mathfrak{a}_{\pi} \right).
\end{equation}
Let $\mathcal{A}^{\geq 1.5} \subset \mathcal{A}^{\geq1}$ be a subalgebra of $\mathcal{A}$, generated by $\mathcal{A}^{2}$ and $\theta_{i}^{x}$, $\theta_{i}^{y}, \gamma_i \;\; i = 1, ..., g$.
It follows from Proposition~\ref{ClcongA} that $\Phi$ maps $\mathcal{R} \subset \mathcal{D}_{\pi}(\mathcal{R})$ isomorphically onto $\mathcal{A}^{\geq 1.5}$, and in particular, identifies $R \subset \mathcal{D}_{\pi}(\mathcal{R})$ with $\mathcal{A}^{2}$.
The relations from Appendix~\ref{relations} in the algebra $\mathcal{A}$ imply that the $\mathcal{R}$-algebra map
\begin{align}\label{DtoA}
  \Phi \colon  &\mathcal{D}_{\pi}(\mathcal{R}) \rightarrow \mathcal{A} \qquad \mathrm{Lie}_{\xi_{x}} \mapsto e \langle 1 \rangle, \qquad \mathrm{Lie}_{\xi_{y}} \mapsto - f \langle 1 \rangle \qquad \mathrm{Lie}_{\xi_{\eta}} \mapsto \frac{\chi_{C}(\mathcal{V}\otimes\mathcal{L}^{-1})}{n+1}\\
   & \theta_{i}^{\vee x} \mapsto e \langle \gamma_i^{\vee} \rangle  \qquad \theta_{i}^{\vee y}  \mapsto - f \langle \gamma_i^{\vee} \rangle \qquad \theta_{i}^{\vee \eta} \mapsto \gamma_{i}^{\vee}, \;\; i = 1, ..., g. \nonumber
\end{align}
is a homomorphism of algebras.

Analogously, we define an $\mathcal{A}^{(k)2}$-module
\begin{equation*}
     \mathcal{E}_{A}^{(k)} =  q^{-1}\Omega^{1}_{\mathcal{A}^{(k)2}} \otimes H_{A}
\end{equation*}
and consider a super commutative algebra
\begin{equation*}
    \mathcal{R}^{(k)} = \bigwedge\nolimits^{\bullet} \mathcal{E}^{(k)}_{A},
\end{equation*}
which we view as a structure algebra of a super variety $\mathcal{U}_{k}$.
Also there is a natural extension of the symplectic form~\eqref{Poissonbraketk} to a super Poisson structure
\begin{align*}
    &\{ \cdot, \cdot\} \colon q^{-1}\otimes \mathcal{R}^{(k)} \otimes \mathcal{R}^{(k)}  \rightarrow \mathcal{R}^{(k)},
\end{align*}
for which $\theta_{i}^{x_{k}}, \theta_{i}^{y_{k}}, \; i = 1, ..., g$ are Poisson-central elements. The maps of super algebras $\mathcal{R}^{(k)} \rightarrow \mathcal{R}$ can be glued in a natural way to define a super variety with a map
\begin{equation*}
    \mathcal{X}^{\vee} \rightarrow \mathcal{X}_{0}^{\vee}.
\end{equation*}
By Remark~\ref{phik1aspullback} the structure sheaf of $\mathcal{X}^{\vee}$ is given by
\begin{equation*}
\mathcal{O}_{\mathcal{X}^{\vee}} = \bigwedge\nolimits^{\bullet} \Omega_{X^{\vee}} \otimes H_{A}.
\end{equation*}

Using the pairing~\eqref{eq:pairingEk}, for elements $r \in \mathcal{A}^{(k)2}$ and $\gamma_i^{\vee}, \; i = 1, ..., g$, we define a map
\begin{equation*}
        \iota_{\xi_{r}} \otimes \langle \gamma_{i}^{\vee}, \cdot \rangle \colon \mathcal{E}_{A}^{(k)} \rightarrow \mathcal{A}^{(k)2}, 
\end{equation*}
which gives rise to an odd derivation. With a slight abuse of notation~\eqref{eq:thetaveek} we denote this derivation by
\begin{equation*}
    \theta_{i}^{\vee r} \in \mathrm{Der}(\mathcal{R}^{(k)}).
\end{equation*}
The operators $\mathrm{Lie}_{\xi_{x_k}}, \mathrm{Lie}_{\xi_{y_k}}, \theta_{i}^{\vee x_k}, \theta_{i}^{\vee y_k}$ generate a super Lie algebra $\mathrm{Der}(\mathcal{R}^{(k)})$. Analogously to~\eqref{Liealgebroid}, we consider a Lie algebroid
\begin{equation*}
    \mathrm{At}(\mathcal{R}^{(k)}) = \mathcal{R}^{(k)} \oplus \mathrm{Der}(\mathcal{R}^{(k)}),
\end{equation*}
of first-order differential operators on $\mathcal{R}^{(k)}$, also referred to as the \emph{Atiyah algebra} of $\mathcal{R}^{(k)}$.
The algebra of differential operators on $\mathcal{R}^{(k)}$ can be defined as
\begin{equation*}
    \mathcal{D}(\mathcal{R}^{(k)}) = U_{\mathcal{R}^{(k)}}\left(\mathrm{At}(\mathcal{R}^{(k)})\right)/\left( 1_{\mathcal{R}^{(k)}} - 1_{U}\right),
\end{equation*}
where the 'universal enveloping over $\mathcal{R}^{(k)}$' is defined similarly to~\eqref{eq:UR}. This algebra is another description of the Weyl-Clifford algebra in Definition~\ref{WeylCliffalg}. Accordingly, Theorem~\ref{thm:Ak2-triv} can be reformulated as the isomorphism of $\mathcal{R}^{(k)}$-algebras
\begin{align*}
   &\mathcal{D}(\mathcal{R}^{(k)}) \rightarrow \mathcal{A}^{(k)} \qquad \mathrm{Lie}_{\xi_{x_k}} \mapsto e^{(k)} \langle 1 \rangle , \qquad \mathrm{Lie}_{\xi_{y_k}} \mapsto - f^{(k)} \langle 1 \rangle, \qquad \mathrm{Lie}_{ \xi_{\eta} } \mapsto \chi_{C}(\mathcal{M}_{k} \otimes \mathcal{L}^{-1}),\\
   &\theta_{i}^{\vee x_{k}} \mapsto e\langle \gamma_{i}^{\vee} \rangle, \qquad \theta_{i}^{\vee y_{k}} \mapsto - f\langle \gamma_{i}^{\vee} \rangle, \qquad \theta_{i}^{\vee \eta} \mapsto - x_{k} f\langle \gamma_{i}^{\vee} \rangle - y_{k}e\langle \gamma_{i}^{\vee} \rangle, \; i = 1, ..., g.
\end{align*}
We will not make a distinction between these algebras.

By~\eqref{pushiotainchart} and~\eqref{pulliotainchart} the map~\eqref{eq:AtoAk} from $\mathcal{A}$ to $\mathcal{A}^{(k)}$ is defined on degree zero generators by 
\begin{align}
     &e \langle 1 \rangle \mapsto \int\limits_{C} c_{n+1-k} (\mathcal{N}/\mathcal{N}^{(k)} ) \iota^{(k)}_{*} 1 \otimes \cdot =  \nonumber \\
    &\eta^{n+1-k} e^{(k)}\langle 1 \rangle + x_{k} \eta^{n-k}\chi_C(\mathcal{V}/\mathcal{V}^{(k)}\otimes \mathcal{L}^{-1}) - x_{k} (n+1-k)(n-k)\eta^{n-k-1}\Theta  \nonumber \\
    &-(n+1-k)\eta^{n-k} \sum\limits_{i = 1}^{g} \left( e^{(k)}\langle \gamma_i \rangle \gamma_{i}^{\vee} + \gamma_{i} e^{(k)}\langle \gamma_i^{\vee} \rangle \right) \label{e1trans}\\
    &f \langle 1 \rangle \mapsto -\int\limits_{C} c_{k-1} (\mathcal{N}^{(k-1)} \otimes TC) \iota^{(k)*} 1 \otimes \cdot = \nonumber \\
    &\eta^{k-1}f^{(k)}\langle 1 \rangle - y_{k} \eta^{k-2}\chi_C(\mathcal{V}^{(k-1)}\otimes \mathcal{L}^{-1}) + y_{k} (k-1)(k-2)\eta^{k-3}\Theta \nonumber \\
    &-(k-1)\eta^{k-2} \sum\limits_{i = 1}^{g} \left( f^{(k)}\langle \gamma_{i} \rangle \gamma_{i}^{\vee} + \gamma_{i} f^{(k)}\langle \gamma_{i}^{\vee} \rangle \right),  \nonumber \\
    &\chi_{C}(\mathcal{V}\otimes \mathcal{L}^{-1}) \mapsto (n+1)\chi_{C}(\mathcal{M}_{k}\otimes\mathcal{L}^{-1}) + \sum\limits_{l = 1}^{n+1}(m_l - m_k ) \label{f1trans}
\end{align}

This homomorphism allows us to prove the following.
\begin{proposition}\label{pr:DtoA-iso}
    The map~\eqref{DtoA} is an isomorphism.
\end{proposition}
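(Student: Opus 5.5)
The plan follows the pattern of Propositions~\ref{iso2} and~\ref{ClcongA}: surjectivity holds essentially by construction, and injectivity comes from composing with the chart map to $k=1$ and arguing by torsion-freeness.

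\emph{Surjectivity.} I would check that the image of $\Phi$ contains every generator of $\mathcal{A}$ listed in Definition~\ref{corralg}. Three families need to be covered.
\begin{itemize}
\item Degree two and the odd generators with indices $i\le g$: by Proposition~\ref{ClcongA}, $\mathcal{R}$ maps isomorphically onto $\mathcal{A}^{\geq 1.5}$.
\item Odd descendants of the dual classes $\gamma_i^\vee$: these are the images of $\theta_i^{\vee x},\theta_i^{\vee y},\theta_i^{\vee\eta}$.
\item Degree-zero descendants: $e\langle 1\rangle$, $f\langle 1\rangle$ and $\chi_C(\mathcal{V}\otimes\mathcal{L}^{-1})$ are the images of $\mathrm{Lie}_{\xi_x}$, $\mathrm{Lie}_{\xi_y}$ and $(n+1)\mathrm{Lie}_{\xi_\eta}$.
\end{itemize}
Hence $\Phi$ is surjective.

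\emph{Injectivity: reduction to the chart $U_1$.} I would compose $\Phi$ with $\varphi^{(1)}\colon\mathcal{A}\to\mathcal{A}^{(1)}\cong\mathcal{D}(\mathcal{R}^{(1)})$, using \eqref{e1trans}, \eqref{f1trans} and \eqref{egammatrans}, \eqref{fgammatrans}. By Remark~\ref{phik1aspullback}, the composite $\varphi^{(1)}\circ\Phi$ on $\mathcal{R}$ is pullback of functions and forms along $\pi_1\colon U_1\to X_0^\vee$. On $\mathrm{Der}_\pi(\mathcal{R})$ it should be the unique lift of a derivation along this birational map. The point is that Poisson vector fields $\xi_r$ lift to $U_1$, since $\pi_1$ is Poisson. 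I would verify this on generators by comparing \eqref{e1trans} with $\mathrm{Lie}_{\pi_1^*\xi_x}$ expressed through $x_1,y_1$ and $\mathrm{Lie}_{\xi_{x_1}},\mathrm{Lie}_{\xi_{y_1}}$.

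Granting this, $\varphi^{(1)}\circ\Phi$ is the natural map $\mathcal{D}_\pi(\mathcal{R})\to\mathcal{D}(\mathcal{R}^{(1)})$, and I would show it is injective. Filter both sides by order, so that $\mathrm{gr}\,\mathcal{D}_\pi(\mathcal{R})$ is a quotient of $\mathrm{Sym}_{\mathcal{R}}\mathrm{Der}_\pi(\mathcal{R})$. After inverting $\eta$, $\pi_1$ becomes an isomorphism onto its image, and so does the map on Atiyah algebroids. So it suffices to show that $\mathrm{gr}\,\mathcal{D}_\pi(\mathcal{R})$ is $\eta$-torsion free.

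\emph{Main obstacle: torsion-freeness.} The difficulty is that \eqref{eq:UR} is a quotient, and it must be shown to impose no hidden relations, i.e.\ a PBW theorem. The natural approach would be to prove that $\mathrm{Der}_\pi(\mathcal{R})$ is a torsion-free $\mathcal{R}$-module, being a submodule of $\mathrm{Der}(\mathcal{R})$, and then apply Rinehart's PBW theorem, which needs projectivity and is not available here. I would instead argue directly: the images in $\mathcal{D}(\mathcal{R}^{(1)})$ of the ordered monomials in $\mathrm{Lie}_{\xi_x},\mathrm{Lie}_{\xi_y},\mathrm{Lie}_{\xi_\eta},\theta^{\vee}_i$ have leading symbols that are $\mathcal{R}$-linearly independent on $U_1$. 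This should follow from computing their principal symbols in the coordinates $x_1,y_1$. A faithful action on the module of Proposition~\ref{pr:comparison-triv} can serve as a cross-check. Together with surjectivity, this gives that $\Phi$ is an isomorphism.
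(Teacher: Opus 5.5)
Your outline is the paper's own proof. Surjectivity is read off from the generators. Injectivity of $\varphi^{(1)}\circ\Phi$ is deduced from its becoming an isomorphism after inverting $\eta$, together with torsion-freeness. The paper simply writes $\mathcal{D}_\pi(\mathcal{R})\cong\mathrm{Sym}^\bullet\mathrm{Der}_\pi(R)\otimes_R\bigwedge^\bullet\mathcal{E}$ and infers torsion-freeness from that of $\mathrm{Der}_\pi(R)$ and $\Omega_R$. You rightly flag this as the crux, but your substitute for it is false, because the listed generators are $\mathcal{R}$-dependent already in order one. Since $\xi_r=-\{r,\cdot\}$ depends $R$-linearly on $dr$, and $y\,dx+x\,dy-(n+1)\eta^n\,d\eta=d(xy-\eta^{n+1})=0$ in $\Omega_R$, one has $y\,\xi_x+x\,\xi_y=(n+1)\eta^n\xi_\eta$ in $\mathrm{Der}_\pi(R)$. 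So the symbols of $\mathrm{Lie}_{\xi_x},\mathrm{Lie}_{\xi_y},\mathrm{Lie}_{\xi_\eta}$ are dependent modulo those of the $\theta^{\vee}_i$. Exactly, one also has $y\,\theta_i^{\vee x}+x\,\theta_i^{\vee y}=(n+1)\eta^n\,\theta_i^{\vee\eta}$: three vector fields on a surface cannot have independent symbols, so neither can your ordered monomials.

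\textbf{Torsion-freeness itself fails.} The property you reduce to, $\eta$-torsion-freeness of $\mathrm{gr}\,\mathcal{D}_\pi(\mathcal{R})$, fails already in order zero, so no argument through a chart can give injectivity. Over the singular ring $R$, exterior powers of the torsion-free module $\Omega_R$ have torsion. Put $F=R\,dx\oplus R\,dy\oplus R\,d\eta$ and $\omega=y\,dx+x\,dy-(n+1)\eta^n\,d\eta$, so that $\bigwedge^2_R\Omega_R=\bigwedge^2F/(\omega\wedge F)$, and take $\tau=\eta\,dx\wedge dy+(n+1)x\,dy\wedge d\eta$. The class $\tau$ is nonzero: every element of $\omega\wedge F$ has $dx\wedge dy$-coefficient in $(x,y)$, while $\eta\notin(x,y)$ because $R/(x,y)\cong\overline{\mathbb{Q}}_l[\eta]/(\eta^{n+1})$. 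Yet $x\tau=-\eta\,\omega\wedge dx+(n+1)x\,\omega\wedge d\eta$ and $y\tau=\eta\,\omega\wedge dy$ both vanish, hence $\eta^{n+1}\tau=xy\,\tau=0$.

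Tensoring both factors with $\gamma_1\in H_A$ gives a nonzero $t\in\bigwedge^2_R\mathcal{E}_A\subset\mathcal{R}\subset\mathcal{D}_\pi(\mathcal{R})$. Explicitly $t=\eta\,\theta^x_1\theta^y_1+(n+1)x\,\theta^y_1\gamma_1$, up to the sign convention for $\theta^y$; $\mathcal{R}$ injects because $\mathcal{D}_\pi(\mathcal{R})$ acts on $\mathcal{R}$. Its image in $\mathcal{A}^{(1)}$, which is free over $\overline{\mathbb{Q}}_l[x_1,y_1]\supset R$, must vanish; indeed $\pi_1^*\tau=0$, cf.\ Remark~\ref{phik1aspullback}. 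Hence $\varphi^{(1)}\circ\Phi$ is not injective once $g\geq 1$. The paper's deduction has the same defect, as does the proof of Proposition~\ref{ClcongA}; you only use its surjective half, which is fine.

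\textbf{For $n=1$ this torsion really dies in $\mathcal{A}$.} $\Phi(t)$ acts by zero on both graded pieces of \eqref{birulaqmn}, since $\pi_1^*\tau=\pi_2^*\tau=0$. So it maps $\hat H^*_c(QM\vec N)$ into the submodule $\hat H^*_c(\mathrm{Attr}(2))$, on which $x$ acts injectively as $x_2$. On the other hand $x\,\Phi(t)=\Phi(xt)=0$. Thus $\Phi(t)=0$ although $t\neq 0$, and the statement as literally formulated is false. Any correct version must replace $\mathcal{R}$ and $\mathcal{D}_\pi(\mathcal{R})$ by torsion-free quotients. Before that, one must prove that their torsion acts by zero on the cohomology, which neither your plan nor the paper supplies.
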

\begin{proof}
We need to prove the injectivity. Consider a composition of $\mathcal{R}$-algebra maps
\begin{align*}
    \varphi^{(1)} \circ \Phi : \mathcal{D}_{\pi}(\mathcal{R}) \rightarrow \mathcal{A}^{(1)},
\end{align*}
where $\varphi^{(1)}$ was defined in~\eqref{eq:AtoAk}. As a map of $R$-modules, it is 
\begin{align*}
\varphi^{(1)} \circ \Phi  \colon &\mathrm{Sym}^{\bullet}\mathrm{Der}_{\pi}(R)\underset{R}{\otimes} \bigwedge\nolimits^{\bullet} \mathcal{E} \rightarrow \mathrm{Sym}^{\bullet}_{\mathcal{A}^{(k)2}} \langle e\langle 1 \rangle , f \langle 1 \rangle \rangle \otimes \bigwedge\nolimits^{\bullet}_{\mathcal{A}^{(k)2}} \langle \theta_{i}^{x_k } , \theta_{i}^{y_k }, \theta_i^{\vee x_k },
    \theta_i^{\vee y_k }, \; i = 1, ..., g \rangle \\
    &\theta_{i}^{x} \mapsto (n+1)x_1^{n}y_1^{n}\theta^{x_{1}}_{i} + n x_1^{n+1}y_1^{n-1}\theta_{i}^{y_1}, \qquad \theta_{i}^{y} \mapsto \theta^{y_{1}}_{i}, \;\; \gamma_{i} \mapsto x_{1}\theta_i^{y_1} + y_1 \theta_i^{x_1}, \; i = 1, ..., g \\
    &\theta_{i}^{\vee x} \mapsto (n+1)x_1^{n}y_1^{n}\theta^{\vee x_{1}}_{i} + n x_1^{n+1}y_1^{n-1}\theta_{i}^{\vee y_1}, \; \; \theta_{i}^{\vee y} \mapsto \theta^{\vee y_{1}}_{i}, \;  \theta_{i}^{\vee \eta} \mapsto x_{1}\theta_i^{y_1} + y_1 \theta_i^{x_1}, \; i = 1, ..., g \\
     &\mathrm{Lie}_{\xi_{x}} \mapsto \mathrm{Lie}_{\xi_{x(x_1 , y_1 )}} + x_1^{n}y_1^{n-1} \sum\limits_{l > 1}(m_l - m_1)\\
    &\mathrm{Lie}_{\xi_{y}} \mapsto \mathrm{Lie}_{\xi_{y_{1}}} \\
    &\mathrm{Lie}_{\xi_{\eta}} \mapsto \mathrm{Lie}_{\xi_{\eta(x_1 , y_1 )}} + \frac{1}{n+1} \sum\limits_{l = 1}^{n+1}(m_l - m_1 ),
\end{align*}
where $\mathrm{Lie}_{\xi_{x(x_1 , y_1 )}}$ and $\mathrm{Lie}_{\xi_{\eta(x_1 , y_1 )}}$ are given by~\eqref{Lietransx} and~\eqref{Lietranseta}.
After localization at $\eta$ it becomes an isomorphism. Since $\mathrm{Der}_{\pi}(R)$ and $\Omega_{R}$ are torsion-free, this implies that the composition, and consequently $\Phi$, is injective.
\end{proof}

From now on, we make no distinction between $\mathcal{D}_{\pi}(\mathcal{R})$ and $\mathcal{A}$. With the help of the formulas~\eqref{fgammatrans},~\eqref{egammatrans},~\eqref{e1trans}, and~\eqref{f1trans}, we can explicitly compute the $\mathcal{R}$-algebra maps
\begin{align}\label{phikmaps}
   \varphi^{(k)} \colon  \mathcal{A} &\rightarrow \mathcal{A}^{(k)} \\
    \theta_{i}^{\vee x} &\mapsto (n+2-k)(x_k y_k )^{n+1-k}\theta^{\vee x_{k}}_{i} + (n+1-k)x_{k}^{n+2-k}y_k^{n-k}\theta^{\vee y_k}_{i}, \; i = 1, ..., g \nonumber \\
    \theta_{i}^{\vee y} &\mapsto k(x_k y_k )^{k-1}\theta^{\vee y_{k}}_{i} + (k-1)x_k^{k-2}y_{k}^{k}\theta^{\vee x_k}_{i}, \;i = 1, ... g \nonumber \\
    \theta_{i}^{\vee \eta} &\mapsto \gamma_{i}^{\vee}, \; i = 1, ..., g \nonumber \\
    \mathrm{Lie}_{\xi_{x}} &\mapsto \mathrm{Lie}_{\xi_{x(x_k , y_k )}} + x_k^{n+1-k}y_k^{n-k} \sum\limits_{l > k}(m_l - m_k) \nonumber \\
    \mathrm{Lie}_{\xi_{y}} &\mapsto \mathrm{Lie}_{\xi_{y(x_k , y_k)}} + x_{k}^{k - 2}y_{k}^{k-1}\sum\limits_{l < k}(m_{l} - m_{k}) \nonumber \\
    \mathrm{Lie}_{\xi_{\eta}} &\mapsto \mathrm{Lie}_{\xi_{\eta(x_k , y_k)}} + \frac{1}{n+1} \sum\limits_{l = 1}^{n+1}(m_l - m_k) \nonumber
\end{align}
Here $\mathrm{Lie}_{\xi_{x(x_k , y_k )}}$, $\mathrm{Lie}_{\xi_{y(x_k , y_k )}}$, and $\mathrm{Lie}_{\xi_{\eta(x_k , y_k)}}$ are the transformations under the change of variables~\eqref{projectk} of Lie derivatives acting on $\mathcal{O}_{X^{\vee}}$. Explicitly, they are given by
\begin{align}
    &\mathrm{Lie}_{\xi_{x(x_k , y_k )}} = (n+2-k)(x_k y_k )^{n+1-k} \mathrm{Lie}_{\xi_{x_k}} + (n+1-k)x_{k}^{2}(x_k y_k )^{n-k}\mathrm{Lie}_{\xi_{y_k}} \nonumber \\
    &- (n+2 - k)(n+1-k)x_k^{n+1-k}y_k^{n-k}\sum\limits_{i = 1}^{g}(\theta_{i}^{y_{k}} \theta_{i}^{\vee x_k } + \theta_{i}^{x_k}\theta_{i}^{\vee y_k }) \label{Lietransx}\\
    &- (n+2-k)(n+1-k)y_{k}(x_{k}y_{k})^{n-k}\sum\limits_{i = 1}^{g}\theta_{i}^{x_k}\theta_{i}^{\vee x_k } - (n+1-k)(n-k)x_{k}^{n+2-k}y_{k}^{n-1-k}\sum\limits_{i = 1}^{g}\theta_{i}^{y_{k}}\theta_{i}^{\vee y_k } \nonumber \\
    &\mathrm{Lie}_{\xi_{y(x_k , y_k )}} = (k-1)x_k^{k-2} y_k^{k} \mathrm{Lie}_{\xi_{x_k}} + k (x_k y_k )^{k-1}\mathrm{Lie}_{\xi_{y_k}}  \nonumber \\
    &-k(k-1)x_k^{k-2}y_k^{k-1}\sum\limits_{i = 1}^{g}(\theta_{i}^{x_k}\theta_{i}^{\vee y_k} + \theta_{i}^{y_k}\theta_{i}^{\vee x_{k}}) \nonumber \\
    &-(k-1)(k-2)x_{k}^{k-1}y_{k}^{k} \sum\limits_{i = 1}^{g} \theta_{i}^{x_k}\theta_{i}^{\vee x_k} - k(k-1)x_k (x_{k}y_{k})^{k-2}\sum\limits_{i = 1}^{g} \theta_{i}^{y_k}\theta_{i}^{\vee y_k} \label{Lietransy}\\
    &\mathrm{Lie}_{\xi_{\eta(x_k , y_k)}} = y_k \mathrm{Lie}_{\xi_{x(x_k , y_k)}} + x_k \mathrm{Lie}_{\xi_{y(x_k , y_k)}} - \sum\limits_{i = 1}^{g}(\theta_{i}^{x_k}\theta_{i}^{\vee y_k} + \theta_{i}^{y_k} \theta_{i}^{\vee x_k}) \label{Lietranseta}
\end{align}
The algebra $\mathcal{A}^{(k)}$ can be viewed as an algebra of differential operators on the supervariety $\mathcal{U}_{k}$. The homomorphisms~\eqref{phikmaps} uniquely define the transition maps
\begin{align*}
     &\mathrm{At}(\mathcal{R}^{(k)})\big|_{\mathcal{U}_{k} \cap \mathcal{U}_{s}} \rightarrow  \mathrm{At}(\mathcal{R}^{(s)})\big|_{\mathcal{U}_{s} \cap \mathcal{U}_{k}}, \qquad k < s\\
     & \left( r , \;\mathrm{Lie}_{\xi_{x_{k}}} \right) \mapsto \left( r  - \frac{1}{y_{k}}\sum\limits_{l = k+1}^{s - 1} (m_{l} - m_{s}) , \; \mathrm{Lie}_{\xi_{x_{k}(x_s , y_s)}} \right)\\
     &\left( r , \; \mathrm{Lie}_{\xi_{y_{k}}} \right) \mapsto \left( r + \frac{1}{x_{k}} \sum\limits_{l = k}^{s - 1} (m_{l} - m_{s}) , \; \mathrm{Lie}_{\xi_{y_{k}(x_{s}, y_{s})}} \right) , \;\; r \in \mathcal{R}^{(k)}.
\end{align*}

Using the commutator relations~\eqref{commk1rest}, we can identify $\xi_{x_{k}}$ and $\xi_{y_{k}}$ with the coordinate basis vectors
\begin{align*}
    \xi_{x_{k}} = -\partial_{y_{k}} \qquad \xi_{y_{k}} = \partial_{x_{k}}.
\end{align*}
Thus, we can rewrite the transition maps in the case $s = k+1$ in the form
\begin{align}\label{transkk+1}
     &\mathrm{At}(\mathcal{R}^{(k)})\big|_{\mathcal{U}_{k} \cap \mathcal{U}_{k+1}} \rightarrow  \mathrm{At}(\mathcal{R}^{(s)})\big|_{\mathcal{U}_{k+1} \cap \mathcal{U}_{k}}  \\
     & \left( r ,\; \mathrm{Lie}_{\partial_{y_{k}}} \right) \mapsto \left( r , \;\mathrm{Lie}_{\partial_{y_{k}}} \right) \nonumber \\
     &\left( r , \mathrm{Lie}_{\partial_{x_{k}}} \right) \mapsto \left( r + \frac{m_k - m_{k+1}}{x_{k}} , \; \mathrm{Lie}_{\partial_{x_{k}}} \right) , \;\; r \in \mathcal{R}^{(k)} \nonumber
\end{align}
Let $\mathcal{O}_{X^{\vee}}(\vec{\mathcal{M}})$ be a $\mathrm{Fr}\times A^{\vee}$-equivariant invertible sheaf of $\mathcal{O}_{X^{\vee}}$-modules, satisfying
\begin{equation}\label{eq:OM}
      \mathcal{O}_{X^{\vee}}(\vec{\mathcal{M}})\big|_{\bar{L}_{k}} \cong \mathcal{O}_{\mathbb{P}^{1}}(m_k - m_{k+1} ), \;\;\; k = 1, ..., n.
\end{equation}
This condition defines $\mathcal{O}_{X^{\vee}}(\vec{\mathcal{M}})$ uniquely up to linearization (i.e. a choice of $\mathrm{Fr}\times A^{\vee}$-weight of a fiber over some fixed point), which we specify in~\eqref{linearization}. Then we define an invertible sheaf
\begin{equation*}
\mathcal{O}_{\mathcal{X}^{\vee}}(\vec{\mathcal{M}}) = \mathcal{O}_{\mathcal{X}^{\vee}}\underset{\mathcal{O}_{X^{\vee}}}{\otimes}  \mathcal{O}_{X^{\vee}}(\vec{\mathcal{M}}) 
\end{equation*}
of $\mathcal{O}_{\mathcal{X}^{\vee}}(\vec{\mathcal{M}})$-modules.
The transition maps~\eqref{transkk+1} glue the algebras $\mathrm{At}(\mathcal{R}^{(k)})$ to a sheaf $\mathrm{At}_{\mathcal{O}_{\mathcal{X}^{\vee}}(\vec{\mathcal{M}})}$
of first-order differential operators on $\mathcal{O}_{\mathcal{X}^{\vee}}(\vec{\mathcal{M}})$. Indeed, the transition maps for $\mathrm{At}_{\mathcal{O}_{\mathcal{X}^{\vee}}(\vec{\mathcal{M}})}$ have the form
\begin{align*}
&\mathrm{At}_{\mathcal{O}_{\mathcal{X}^{\vee}}(\vec{\mathcal{M}})}\big|_{\mathcal{U}_{k}} \rightarrow \mathrm{At}_{\mathcal{O}_{\mathcal{X}^{\vee}}(\vec{\mathcal{M}})}\big|_{\mathcal{U}_{k+1}} \\
&\left( r , \; D \right) \mapsto \left( r + D \log g_{k , k+1}, \; D \right), \;\; r \in \mathcal{R}^{(k)}, \; D \in \mathrm{Der}(\mathcal{R}^{(k)}),
\end{align*}
where $g_{k, k+1}$ is the clutching function
\begin{equation}
\mathcal{O}_{\mathcal{X}^{\vee}}(\vec{\mathcal{M}})\big|_{\mathcal{U}_{k}}\rightarrow \mathcal{O}_{\mathcal{X}^{\vee}}(\vec{\mathcal{M}})\big|_{\mathcal{U}_{k+1}} \qquad s_{k} \mapsto g_{k, k+1}s_{k}.
\end{equation}
The condition~\eqref{eq:OM} forces it to be equal to $g_{k , k+1} = x_{k}^{m_{k} - m_{k+1}}$ up to a multiplicative constant, so we recover~\eqref{transkk+1}.

From the Atiyah algebra sheaf $\mathrm{At}_{\mathcal{O}_{\mathcal{X}^{\vee}}(\vec{\mathcal{M}})}$ we reconstruct the sheaf of differential operators on $\mathcal{O}_{\mathcal{X}^{\vee}}(\vec{\mathcal{M}})$
\begin{equation*}
\mathcal{D}_{\mathcal{O}_{\mathcal{X}^{\vee}}(\vec{\mathcal{M}})} = U_{\mathcal{O}_{\mathcal{X}^{\vee}}}\left(\mathrm{At}_{\mathcal{O}_{\mathcal{X}^{\vee}}(\vec{\mathcal{M}})}\right),
\end{equation*}
where the 'universal enveloping over $\mathcal{O}_{\mathcal{X}^{\vee}}$' is defined similarly to~\eqref{eq:UR}.
The homomorphisms~\eqref{phikmaps} define an inclusion of $\mathcal{A}$ into a space of global differential operators
\begin{equation}\label{eq:AtoGamma}
    \mathcal{A} \rightarrow \Gamma\left(\mathcal{X}^{\vee}, \mathcal{D}_{\mathcal{O}_{\mathcal{X}^{\vee}}(\vec{\mathcal{M}})}\right).
\end{equation}

As an $\mathcal{R}$-algebra,   $\mathcal{A} \cong \mathcal{D}_{\pi}(\mathcal{R})$ is generated by $\mathrm{Lie}_{\xi_{r}}, \; \theta_{i}^{\vee r}, \;\; i = 1, ..., g, \;\; r = x, y, \eta$. One can characterize the image of~\eqref{eq:AtoGamma} by noting that $\xi_{x}, \xi_{y}, \xi_{\eta}$ generate a Lie subalgebra $TX^{\vee}(-\log E)$ of the global vector fields on $X^{\vee}$ preserving the exceptional divisor
\begin{equation*}
    E = \pi^{-1}(0) \subset X^{\vee}.
\end{equation*}
It follows that the connection~\eqref{symcon} gives rise to a flat logarithmic connection
\begin{align*}
   \nabla \colon \mathcal{T}\mathcal{X}^{\vee}(-\log (E)) \rightarrow \mathrm{At}_{\mathcal{O}_{\mathcal{X}^{\vee}}(\vec{\mathcal{M}})}.
\end{align*}

\section{The action on attractor classes via equivariant localization}\label{app:BM-eta}

The following calculation is the only place where the exceptional
conditions are used.
The $T$-action~\eqref{Taction} lifts to the action on $\widetilde{QM\vec{N}}_{\mathcal{M}_{k}^{-1} \otimes \mathcal{K}_{C}}$. We consider a one-dimensional subtorus of $T$, slightly different from~\eqref{1torus}:
\begin{align}\label{newgm}
    &\mathbb{G}_{m} \hookrightarrow T\\
    &t_{l} =
    \begin{cases}
        z^{n+1 - s}, \;k < l \leq s\\
        z^{n+1 - l}, \;\text{otherwise.}
     \end{cases}
\end{align}
Since $h^{0}(\mathcal{M}_{l} \otimes \mathcal{M}_{k}^{-1} \otimes \mathcal{K}_{C}) = 0$ for $k < l < s$, $\mathrm{Attr}'(l) = \varnothing$ for this range of $l$'s, and the $\mathbb{G}_{m}$-action~\eqref{newgm} contracts the space $\{\mathcal{V}^{\bullet}\}$ to the single point $\mathcal{V}_{\mathrm{fix}}^{\bullet}$, namely
$\mathcal{V}_{\mathrm{fix}} = \mathcal{M}_{1} \oplus \cdots \oplus \mathcal{M}_{n+1}$
with the natural filtration. The fiber $F$ over $\mathcal{V}^{\bullet}_{\mathrm{fix}}$ is a projective space
\begin{align}\label{Finclusion}
    F = \mathbb{P}\left( H^{0}(\mathcal{V}_{\mathrm{fix}}\otimes \mathcal{M}_{k}^{-1} \otimes \mathcal{K}_{C})\right) \overset{i_{F}}{\hookrightarrow} \widetilde{QM\vec{N}}_{\mathcal{M}_{k}^{-1} \otimes \mathcal{K}_{C}}.
\end{align}
Because of the vanishing $h^{1}(\mathcal{M}_{l} \otimes \mathcal{M}_{k}^{-1}\otimes \mathcal{K}_{C})$, the attractor $\mathrm{Attr}'(k)$ maps surjectively onto the space $\{\mathcal{V}^{\bullet}\}$ of iterated extensions. The map~\eqref{Finclusion} is quasi-smooth with normal bundle
\begin{align*}
    N_{i_{F}} = \sum\limits_{l < r}\mathrm{Ext}^{1}(\mathcal{M}_{r},
    \mathcal{M}_{l}).
\end{align*}
The fiber is quasi-smooth as well, with virtual tangent bundle
\begin{align*}
    T^{\mathrm{vir}}F = T^{\mathrm{vir}} \widetilde{QM\vec{N}}_{\mathcal{M}_{k}^{-1} \otimes \mathcal{K}_{C}} - N_{i_{F}}
    = \sum\limits_{l = 1}^{n+1} \mathrm{Ext}^{\bullet}(\mathcal{L},
    \mathcal{M}_{l})\otimes \mathcal{O}_{QM\vec{N}}(1)
    -H^{0}(\mathcal{O}_{C}).
\end{align*}
The fiber $F$ has a chain of subspaces
\begin{align*}
    F^{(r)} = \mathrm{Attr}'(r) \cap F = \mathbb{P}\left( H^{0}(\mathcal{V}^{(r)}\otimes \mathcal{M}_{k}^{-1}\otimes \mathcal{K}_{C})\right),\; r = 1, \ldots, n+1,
\end{align*}
which carry a natural derived structure with virtual tangent bundle
\begin{align}\label{TvirFr}
     T^{\mathrm{vir}}F^{(r)} = T^{\mathrm{vir}} \widetilde{QM\vec{N}}_{\mathcal{M}_{k}^{-1} \otimes \mathcal{K}_{C}} - N_{i_{F}}
    = \sum\limits_{l = 1}^{r} \mathrm{Ext}^{\bullet}(\mathcal{L},
    \mathcal{M}_{l})\otimes \mathcal{O}_{QM\vec{N}}(1)
    -H^{0}(\mathcal{O}_{C}).
\end{align}
Since the action is $\pi_{\mathcal{V}^{\bullet}}$-equivariant, the fixed locus 
\begin{align*}
    \mathrm{Fix} \overset{i_{\mathrm{Fix}}}{\hookrightarrow} \widetilde{QM\vec{N}}_{\mathcal{M}_{k}^{-1} \otimes \mathcal{K}_{C}}
\end{align*}
of the $\mathbb{G}_{m}$-action is contained in $F$. Since $F$ is proper, attractors to fixed points cover the space $\widetilde{QM\vec{N}}_{\mathcal{M}_{k}^{-1} \otimes \mathcal{K}_{C}}$, hence the Gysin pullback in $\mathbb{G}_{m}$-equivariant Borel--Moore homology \cite{ARANHA2025110434}
\begin{align*}
    i_{\mathrm{Fix}}^{*}\colon H^{BM, \mathbb{G}_{m}}_{*}(\widetilde{QM\vec{N}}_{\mathcal{M}_{k}^{-1} \otimes \mathcal{K}_{C}}) \rightarrow H^{BM, \mathbb{G}_{m}}_{*}(\mathrm{Fix})
\end{align*}
is injective. Moreover, this map factorizes through the Gysin pullback on the fiber:
\begin{align*}
    i_{F}^{*}\colon H^{BM, \mathbb{G}_{m}}_{*}(\widetilde{QM\vec{N}}_{\mathcal{M}_{k}^{-1} \otimes \mathcal{K}_{C}}) \rightarrow H^{BM, \mathbb{G}_{m}}_{*}(F),
\end{align*}
which implies the injectivity of $i_{F}^{*}$.

Let 
\begin{align*}
    (u_{l} = d_{e}t_{l})_{l = 1}^{n+1}, \;\;\; u = d_{e}z
\end{align*}
be coordinate systems on $\mathrm{Lie}(T)$ and $\mathrm{Lie}(\mathbb{G}_{m})$, respectively.
From~\eqref{TvirFr} we compute
\begin{align*}
    i_{F}^{*} [\mathrm{Attr}'(s)]^{\mathrm{equiv}} &= [F^{(s)}]^{\mathrm{equiv, vir}}=\\
    &=\prod\limits_{l \leq s}(\eta + u_{l} - u_{k})^{h^{1}(\mathcal{M}_{l}\otimes \mathcal{M}_{k}^{-1} \otimes \mathcal{K}_{C})} \prod\limits_{l > s}(\eta + u_{l} - u_{k})^{h^{0}(\mathcal{M}_{l}\otimes \mathcal{M}_{k}^{-1} \otimes \mathcal{K}_{C})} \cap [F]=\\
    &= \eta (\eta + (s-k)u)^{g-1} \prod\limits_{l > s}(\eta + u_{l} - u_{k})^{h^{0}(\mathcal{M}_{l}\otimes \mathcal{M}_{k}^{-1} \otimes \mathcal{K}_{C})} \cap [F],
\end{align*}
\begin{align*}
    i_{F}^{*} [\mathrm{Attr}'(k)]^{\mathrm{equiv}} &= [F^{(k)}]^{\mathrm{equiv, vir}}=\\
    &= \prod\limits_{l \leq k}(\eta + u_{l} - u_{k})^{h^{1}(\mathcal{M}_{l}\otimes \mathcal{M}_{k}^{-1} \otimes \mathcal{K}_{C})} \prod\limits_{l > k}(\eta + u_{l} - u_{k})^{h^{0}(\mathcal{M}_{l}\otimes \mathcal{M}_{k}^{-1} \otimes \mathcal{K}_{C})} \cap [F]=\\
    &= \eta (\eta + (s-k)u)^{g} \prod\limits_{l > s}(\eta + u_{l} - u_{k})^{h^{0}(\mathcal{M}_{l}\otimes \mathcal{M}_{k}^{-1} \otimes \mathcal{K}_{C})} \cap [F].
\end{align*}
We see that
\begin{align*}
    i^{*}_{F}\left( (\eta + (s-k)u ) \cap [\mathrm{Attr}'(s)]^{\mathrm{equiv}} - [\mathrm{Attr}'(k)]^{\mathrm{equiv}}\right)= 0.
\end{align*}
Injectivity of $i^{*}_{F}$ implies
\begin{align*}
    (\eta + (s-k)u ) \cap [\mathrm{Attr}'(s)]^{\mathrm{equiv}} = [\mathrm{Attr}'(k)]^{\mathrm{equiv}}
\end{align*}
in $\mathbb{G}_{m}$-equivariant Borel--Moore homology of $\widetilde{QM\vec{N}}_{\mathcal{M}_{k}^{-1} \otimes \mathcal{K}_{C}}$. The required statement follows by taking $u = 0$.

\bibliographystyle{amsalpha}
\bibliography{bib}

@article{BravermanGaitsgory02,
  author  = {Braverman, Alexander and Gaitsgory, Dennis},
  title   = {Geometric {E}isenstein series},
  journal = {Invent. Math.},
  volume  = {150},
  number  = {2},
  year    = {2002},
  pages   = {287--384},
}

@incollection{Laumon90,
  author    = {G{\'e}rard Laumon},
  title     = {Faisceaux automorphes li{\'e}s aux s{\'e}ries d'Eisenstein},
  booktitle = {Automorphic Forms, Shimura Varieties, and L-functions, Vol. I},
  editor    = {Laurent Clozel and James S. Milne},
  series    = {Perspectives in Mathematics},
  volume    = {10},
  publisher = {Academic Press},
  address   = {Boston, MA},
  year      = {1990},
  pages     = {227--281},
  note      = {Proceedings of the conference held in Ann Arbor, MI, 1988}
}

@misc{Khan19,
  author       = {Khan, Adeel A.},
  title        = {Virtual fundamental classes of derived stacks {I}},
  year         = {2019},
  eprint       = {1909.01332},
  archivePrefix = {arXiv},
  primaryClass = {math.AG},
  url          = {https://arxiv.org/abs/1909.01332}
}

@misc{KazOko23,
  author       = {Kazhdan, David and Okounkov, Andrei},
  title        = {{$L$}-function genera and applications},
  year         = {2023},
  eprint       = {2311.17747},
  archivePrefix = {arXiv},
  primaryClass = {math.NT},
  url          = {https://arxiv.org/abs/2311.17747}
}

@misc{Khan26lec,
      title={Lectures on algebraic stacks}, 
      author={Adeel A. Khan},
      year={2026},
      eprint={2310.12456},
      archivePrefix={arXiv},
      primaryClass={math.AG},
      url={https://arxiv.org/abs/2310.12456}, 
}

@misc{MarianNegut23,
  author       = {Marian, Alina and Negu{\c{t}}, Andrei},
  title        = {The cohomology of the {Q}uot scheme on a smooth curve as a {Y}angian representation},
  year         = {2023},
  eprint       = {2307.13671},
  archivePrefix = {arXiv},
  primaryClass = {math.AG},
  url          = {https://arxiv.org/abs/2307.13671}
}

@article{KamnitzerJoel22,
author = {Kamnitzer, Joel},
title = {Symplectic resolutions, symplectic duality, and Coulomb branches},
journal = {Bulletin of the London Mathematical Society},
volume = {54},
number = {5},
pages = {1515-1551},
doi = {https://doi.org/10.1112/blms.12711},
url = {https://londmathsoc.onlinelibrary.wiley.com/doi/abs/10.1112/blms.12711},
eprint = {https://londmathsoc.onlinelibrary.wiley.com/doi/pdf/10.1112/blms.12711},
year = {2022}
}

@misc{webster20233dimensionalmirrorsymmetry,
      title={3-dimensional mirror symmetry}, 
      author={Ben Webster and Philsang Yoo},
      year={2023},
      eprint={2308.06191},
      archivePrefix={arXiv},
      primaryClass={math-ph},
      url={https://arxiv.org/abs/2308.06191}, 
}

@article{ARANHA2025110434,
title = {Virtual localization revisited},
journal = {Advances in Mathematics},
volume = {479},
pages = {110434},
year = {2025},
issn = {0001-8708},
doi = {https://doi.org/10.1016/j.aim.2025.110434},
url = {https://www.sciencedirect.com/science/article/pii/S0001870825003329},
author = {Dhyan Aranha and Adeel A. Khan and Alexei Latyntsev and Hyeonjun Park and Charanya Ravi}
}

@misc{Kuznetsov1996TheLR,
  author = {Kuznetsov, Alexander},
  title  = {The {L}aumon's resolution of {D}rinfeld's compactification is small},
  year   = {1996},
  note   = {preprint}
}

@article{Ciocan-Fontanine:2011gqf,
    author = "Ciocan-Fontanine, Ionut and Kim, Bumsig and Maulik, Davesh",
    title = "{Stable quasimaps to GIT quotients}",
    eprint = "1106.3724",
    archivePrefix = "arXiv",
    primaryClass = "math.AG",
    doi = "10.1016/j.geomphys.2013.08.019",
    journal = "J. Geom. Phys.",
    volume = "75",
    pages = "17--47",
    year = "2014"
}

@article{CIOCANFONTANINE20103022,
title = {Moduli stacks of stable toric quasimaps},
journal = {Advances in Mathematics},
volume = {225},
number = {6},
pages = {3022-3051},
year = {2010},
issn = {0001-8708},
doi = {https://doi.org/10.1016/j.aim.2010.05.023},
url = {https://www.sciencedirect.com/science/article/pii/S0001870810002173},
author = {Ionuţ Ciocan-Fontanine and Bumsig Kim}
}

@incollection{KimBumsig,
  author    = {Kim, Bumsig},
  title     = {Stable quasimaps to holomorphic symplectic quotients},
  booktitle = {Schubert Calculus---{O}saka 2012},
  series    = {Adv. Stud. Pure Math.},
  volume    = {71},
  publisher = {Math. Soc. Japan, Tokyo},
  year      = {2016},
  pages     = {139--160},
  note      = {\texttt{arXiv:1005.4125}}
}

@article{DrinfeldGaitsgory16,
author = {Drinfeld, Vladimir and Gaitsgory, Dennis},
year = {2016},
month = {10},
pages = {},
title = {Geometric constant term functor(s)},
volume = {22},
journal = {Selecta Mathematica},
doi = {10.1007/s00029-016-0269-3}
}

@article{Bullimore:2016hdc,
    author = "Bullimore, Mathew and Dimofte, Tudor and Gaiotto, Davide and Hilburn, Justin and Kim, Hee-Cheol",
    title = "{Vortices and Vermas}",
    eprint = "1609.04406",
    archivePrefix = "arXiv",
    primaryClass = "hep-th",
    doi = "10.4310/ATMP.2018.v22.n4.a1",
    journal = "Adv. Theor. Math. Phys.",
    volume = "22",
    pages = "803--917",
    year = "2018"
}

@article{Hilburn:2020aau,
    author = "Hilburn, Justin and Kamnitzer, Joel and Weekes, Alex",
    title = "{BFN Springer Theory}",
    eprint = "2004.14998",
    archivePrefix = "arXiv",
    primaryClass = "math.RT",
    doi = "10.1007/s00220-023-04735-4",
    journal = "Commun. Math. Phys.",
    volume = "402",
    number = "1",
    pages = "765--832",
    year = "2023"
}

@article{FFNR11,
	author = {Feigin, Boris and Finkelberg, Michael and Negut, Andrei and Rybnikov, Leonid},
	date = {2011/09/01},
	doi = {10.1007/s00029-011-0059-x},
	id = {Feigin2011},
	isbn = {1420-9020},
	journal = {Selecta Mathematica},
	number = {3},
	pages = {573--607},
	title = {Yangians and cohomology rings of Laumon spaces},
	url = {https://doi.org/10.1007/s00029-011-0059-x},
	volume = {17},
	year = {2011}}

@article{Nakajima15,
author = {Nakajima, Hiraku},
year = {2015},
month = {03},
pages = {},
title = {Towards a mathematical definition of Coulomb branches of $3$-dimensional $\mathcal N=4$ gauge theories, I},
volume = {20},
journal = {Advances in Theoretical and Mathematical Physics},
doi = {10.4310/ATMP.2016.v20.n3.a4}
}

@article{BFN:2016wma,
    author = "Braverman, Alexander and Finkelberg, Michael and Nakajima, Hiraku",
    title = "{Towards a mathematical definition of Coulomb branches of $3$-dimensional $\mathcal{N} = 4$ gauge theories, II}",
    eprint = "1601.03586",
    archivePrefix = "arXiv",
    primaryClass = "math.RT",
    doi = "10.4310/ATMP.2018.v22.n5.a1",
    journal = "Adv. Theor. Math. Phys.",
    volume = "22",
    pages = "1071--1147",
    year = "2018"
}

@article{BFN2017,
author = {Braverman, Alexander and Finkelberg, Michael and Nakajima, Hiraku},
year = {2017},
month = {06},
pages = {},
title = {Ring objects in the equivariant derived Satake category arising from Coulomb branches},
volume = {23},
journal = {Advances in Theoretical and Mathematical Physics},
doi = {10.4310/ATMP.2019.v23.n2.a1}
}

@article{Finkelberg1997GlobalIC,
  title={Global Intersection Cohomology of Quasimaps' Spaces},
  author={Michael Finkelberg and Alexander M. Kuznetsov},
  journal={International Mathematics Research Notices},
  year={1997},
  volume={1997},
  pages={301-328},
  url={https://api.semanticscholar.org/CorpusID:15854670}
}

@article{MACDONALD1962319,
title = {Symmetric products of an algebraic curve},
journal = {Topology},
volume = {1},
number = {4},
pages = {319-343},
year = {1962},
issn = {0040-9383},
doi = {https://doi.org/10.1016/0040-9383(62)90019-8},
url = {https://www.sciencedirect.com/science/article/pii/0040938362900198},
author = {I.G. MacDonald}
}

@book{LecturesNakajima,
  author    = {Nakajima, Hiraku},
  title     = {Lectures on {H}ilbert Schemes of Points on Surfaces},
  series    = {University Lecture Series},
  volume    = {18},
  publisher = {American Mathematical Society},
  address   = {Providence, RI},
  year      = {1999}
}

\end{document}